%%%%%%%%%%%%%%%%%%%%%%%%%%%%%%%%%%%%%%%%%%%%%%%%%%%%%%%%%%%%%%%%%%%%%%%%%%%%
%% Author template for Operations Research (opre) for articles with e-companion (EC)
%% Mirko Janc, Ph.D., INFORMS, mirko.janc@informs.org
%% ver. 0.96, 11/30/2012
%%%%%%%%%%%%%%%%%%%%%%%%%%%%%%%%%%%%%%%%%%%%%%%%%%%%%%%%%%%%%%%%%%%%%%%%%%%%
%\documentclass[opre,blindrev]{informs3} % current default for manuscript submission
\documentclass[opre,nonblindrev]{informs3}

\DoubleSpacedXI % Made default 4/4/2014 at request
%%\OneAndAHalfSpacedXI % current default line spacing
%%\OneAndAHalfSpacedXII 
%%\DoubleSpacedXII

% If hyperref is used, dvi-to-ps driver of choice must be declared as
%   an additional option to the \documentclass. For example
%\documentclass[dvips,opre]{informs3}      % if dvips is used 
%\documentclass[dvipsone,opre]{informs3}   % if dvipsone is used, etc. 

%%% OPRE uses endnotes
\usepackage{endnotes}
\let\footnote=\endnote

%

% Private macros here (check that there is no clash with the style)
\usepackage{tikz}
\usetikzlibrary{shapes}
\usetikzlibrary{decorations}
\usetikzlibrary{backgrounds}
\usetikzlibrary{calc}
\usetikzlibrary{shapes}
\usetikzlibrary{arrows}
\usetikzlibrary{automata,positioning}
\usepackage{soul}
\usepackage{enumitem}
\usepackage{bbm}
\usepackage{hyperref}
\newcommand{\ba}{\mathbf{a}}
\newcommand{\bb}{\mathbf{b}}
\newcommand{\bc}{\mathbf{c}}
\newcommand{\bd}{\mathbf{d}}
\renewcommand{\be}{\mathbf{e}}

\newcommand{\bh}{\mathbf{h}}

\newcommand{\bp}{\mathbf{p}}
\newcommand{\bq}{\mathbf{q}}

\newcommand{\bs}{\mathbf{s}}

\newcommand{\bu}{\mathbf{u}}
\newcommand{\bv}{\mathbf{v}}

\newcommand{\bx}{\mathbf{x}}
\newcommand{\by}{\mathbf{y}}
\newcommand{\bz}{\mathbf{z}}
\newcommand{\bzero}{\mathbf{0}}
\newcommand{\bone}{\mathbf{1}}
\newcommand{\bA}{\mathbf{A}}
\newcommand{\bB}{\mathbf{B}}
\newcommand{\bbeta}{\boldsymbol{\beta}}
\newcommand{\btbeta}{\boldsymbol{\tilde{\beta}}}
\newcommand{\tbeta}{\tilde{\beta}}
\newcommand{\bkappa}{\boldsymbol{\kappa}}
\newcommand{\bxi}{\boldsymbol{\xi}}
\newcommand{\tzeta}{\tilde{\zeta}}

\newcommand{\bbR}{\mathbb{R}}

\newcommand{\bbZ}{\mathbb{Z}}

\newcommand{\calS}{\mathcal{S}}
\newcommand{\calB}{\mathcal{B}}
\newcommand{\calC}{\mathcal{C}}
\newcommand{\calP}{\mathcal{P}}

\newcommand{\hy}{\hat{y}}
\newcommand{\bhy}{\mathbf{\hat{y}}}

\newcommand{\talpha}{\tilde{\alpha}}
\newcommand{\tl}{\tilde{\lambda}}
\newcommand{\hlambda}{\hat{\lambda}}
\newcommand{\tnu}{\tilde{\nu}}
\newcommand{\tmu}{\tilde{\mu}}
\newcommand{\tchi}{\tilde{\chi}}
\newcommand{\bchi}{\boldsymbol{\chi}}
\newcommand{\bhchi}{\boldsymbol{\hat{\chi}}}
\newcommand{\hchi}{\hat{\chi}}
\newcommand{\btnu}{\boldsymbol{\tilde{\nu}}}
\newcommand{\barq}{\bar{q}}
\newcommand{\barw}{\bar{w}}
\newcommand{\bara}{\bar{a}}
\newcommand{\barx}{\bar{x}}
\newcommand{\barz}{\bar{z}}
\newcommand{\baru}{\bar{u}}
\newcommand{\bars}{\bar{s}}

\newcommand{\hatmu}{\hat{\mu}}

\newcommand{\bhatmu}{\boldsymbol{\hat{\mu}}}

\newcommand{\bbarq}{\mathbf{\bar{q}}}
\newcommand{\bbarw}{\mathbf{\bar{w}}}
\newcommand{\bbara}{\mathbf{\bar{a}}}
\newcommand{\bbarx}{\mathbf{\bar{x}}}
\newcommand{\bbary}{\mathbf{\bar{y}}}
\newcommand{\bary}{\bar{y}}

\newcommand{\dfn}{\overset{\Delta}{=}}

\newcommand{\blambda}{\boldsymbol{\lambda}}
\newcommand{\bmu}{\boldsymbol{\mu}}
\newcommand{\bnu}{\boldsymbol{\nu}}

\newcommand{\btlambda}{\boldsymbol{\tilde{\lambda}}}
\newcommand{\btmu}{\boldsymbol{\tilde{\mu}}}
\newcommand{\btchi}{\boldsymbol{\tilde{\chi}}}

\newcommand\inner[2]{\left \langle #1, #2 \right \rangle}
\newcommand\E[2]{\mathbb{E}_{#1}\left[#2\right]}
\renewcommand\P[2]{\mathbb{P}_{#1}\left[#2\right]}
\newcommand\Var[1]{\textrm{Var}\left[#1\right]}
\newcommand\Cov[2]{\textrm{Cov}\left[#1,#2\right]}
\newcommand{\stg}{\geq_{st}
}
\newcommand{\stl}{\leq_{st}
}

\allowdisplaybreaks % To allow page breaks inside equation environment
\interdisplaylinepenalty=2500 % allows page breaks within aligned equations
%Francisco's commands:
\newcommand{\cost}{c}
\newcommand{\Peq}{\mathcal{P}^{\text{EQ}}}
\newcommand{\StablePol}{\mathcal{E}}

% Natbib setup for author-year style
\usepackage{natbib}
 \bibpunct[, ]{(}{)}{,}{a}{}{,}%
 %
 %
 %
 %
 %

%% Setup of theorem styles. Outcomment only one. 
%% Preferred default is the first option.
\TheoremsNumberedThrough     % Preferred (Theorem 1, Lemma 1, Theorem 2)
%\TheoremsNumberedByChapter  % (Theorem 1.1, Lema 1.1, Theorem 1.2)
\ECRepeatTheorems

%% Setup of the equation numbering system. Outcomment only one.
%% Preferred default is the first option.
\EquationsNumberedThrough    % Default: (1), (2), ...
%\EquationsNumberedBySection % (1.1), (1.2), ...

% In the reviewing and copyediting stage enter the manuscript number.
%\MANUSCRIPTNO{} % When the article is logged in and DOI assigned to it,
                 %   this manuscript number is no longer necessary

%%%%%%%%%%%%%%%%
\begin{document}
%%%%%%%%%%%%%%%%

% Outcomment only when entries are known. Otherwise leave as is and
%   default values will be used.
%\setcounter{page}{1}
%\VOLUME{00}%
%\NO{0}%
%\MONTH{Xxxxx}% (month or a similar seasonal id)
%\YEAR{0000}% e.g., 2005
%\FIRSTPAGE{000}%
%\LASTPAGE{000}%
%\SHORTYEAR{00}% shortened year (two-digit)
%\ISSUE{0000} %
%\LONGFIRSTPAGE{0001} %
%\DOI{10.1287/xxxx.0000.0000}%

% Author's names for the running heads
% Sample depending on the number of authors;
% \RUNAUTHOR{Jones}
% \RUNAUTHOR{Jones and Wilson}
% \RUNAUTHOR{Jones, Miller, and Wilson}
% \RUNAUTHOR{Jones et al.} % for four or more authors
% Enter authors following the given pattern:
\RUNAUTHOR{Varma, Castro and Maguluri}

% Title or shortened title suitable for running heads. Sample:
% \RUNTITLE{Bundling Information Goods of Decreasing Value}
% Enter the (shortened) title:
\RUNTITLE{Near Optimal Control in Ride Hailing Platforms with Strategic Servers}

% Full title. Sample:
% \TITLE{Bundling Information Goods of Decreasing Value}
% Enter the full title:

\TITLE{Near Optimal Control in Ride Hailing Platforms\\ with Strategic Servers}

% Block of authors and their affiliations starts here:
% NOTE: Authors with same affiliation, if the order of authors allows,
%   should be entered in ONE field, separated by a comma.
%   \EMAIL field can be repeated if more than one author
\ARTICLEAUTHORS{%
\AUTHOR{Sushil Mahavir Varma}
\AFF{H.Milton Stewart School of Industrial and Systems Engineering, Georgia Institute of Technology 30332 \EMAIL{sushil@gatech.edu}} %, \URL{}}
\AUTHOR{Francisco Castro}
\AFF{Anderson School of Management, University of California Los Angeles 90095 \EMAIL{francisco.castro@anderson.ucla.edu}}
\AUTHOR{Siva Theja Maguluri}
\AFF{H.Milton Stewart School of Industrial and Systems Engineering, Georgia Institute of Technology 30332 \EMAIL{siva.theja@gatech.edu}}
} % end of the block

\ABSTRACT{%
Motivated by applications in online marketplaces such as ride-hailing, we study how strategic servers impact the system performance. We consider a discrete-time process in which, heterogeneous types of customers and servers arrive. Each customer joins their type's queue, while servers might join a different type's queue depending on the prices posted by the system operator and an inconvenience cost.
Then the system operator, constrained by a compatibility graph, decides
the matching.
The objective is to design an optimal control (pricing and matching scheme) to maximize the profit minus the expected waiting times.  
We develop a general framework that enables us to analyze a broad range of strategic behaviors. In particular, we encode servers' behavior in a properly defined \emph{cost function} that can be tailored to various settings. Using this general cost function, we introduce a novel probabilistic fluid problem. 
The probabilistic fluid model provides an upper bound on the achievable net profit. We then study the system under a large market regime in which the arrival rates are scaled by $\eta$ and present a probabilistic two-price policy and a max-weight matching policy which results in a net profit-loss of at most $O(\eta^{1/3})$. 
In addition, under a broad class of customer pricing policies, we show that any matching policy has net profit-loss of at least $\Omega(\eta^{1/3})$. To show generality of our framework, we present multiple extensions to our model and analysis. 
%We explain the asymptotic scaling by establishing a fundamental scale free trade-off between waiting time and profit. 
%In addition, we capture the server behavior more accurately by introducing a more detailed utility function and show optimality. 
We conclude the discussion by presenting numerical simulations comparing different cost models and analyzing performance of the proposed pricing and matching policies.
}%

% Sample
%\KEYWORDS{deterministic inventory theory; infinite linear programming duality;
%  existence of optimal policies; semi-Markov decision process; cyclic schedule}

% Fill in data. If unknown, outcomment the field
%\KEYWORDS{queueing games, max-weight matching, dynamic pricing, two-sided queues, ridehailing}

\maketitle
%%%%%%%%%%%%%%%%%%%%%%%%%%%%%%%%%%%%%%%%%%%%%%%%%%%%%%%%%%%%%%%%%%%%%%

% Samples of sectioning (and labeling) in OPRE
% NOTE: (1) \section and \subsection do NOT end with a period
%       (2) \subsubsection and lower need end punctuation
%       (3) capitalization is as shown (title style).
%
%\section{Introduction.}\label{intro} %%1.
%\subsection{Duality and the Classical EOQ Problem.}\label{class-EOQ} %% 1.1.
%\subsection{Outline.}\label{outline1} %% 1.2.
%\subsubsection{Cyclic Schedules for the General Deterministic SMDP.}
%  \label{cyclic-schedules} %% 1.2.1
%\section{Problem Description.}\label{problemdescription} %% 2.

% Text of your paper here
\section{Introduction}
The rise of the gig economy has brought dynamic pricing and matching to the foreground
of two-sided markets. Ridehailing and meal delivery platforms such as Uber, Lyft, or Doordash adjust these levers so as to maintain a reliable system operation and manage their revenue. Control mechanisms --- dynamic pricing and matching are essential for these markets as they determine not only the demand response but also the behavior of strategic heterogeneous supply agents. A fundamental issue that emerges in this context, however, is the misalignment of supply and demand preferences (or types). At any given time, a supply unit might not be compatible with a specific type of demand request. To mitigate this, in addition to implementing dynamic pricing and matching, some platforms have designed their market to give supply units the option of reporting their type. In ridehailing, for example, drivers have the ability to set a specific destination (such as their home or their children's school)--effectively filtering the trips they are willing to serve--and the platform will match them with riders going in that direction. However, this can lead to undesirable outcomes because supply agents might misreport their types in order to boost their earnings. Indeed, highly profitable destinations such as airports or concert venues are widely preferred by drivers in the ridehailing market. This can negatively impact the performance of such systems by reducing the availability of agents that are willing to serve other trips. The goal of this paper is to provide a framework to analyze and characterize the optimal dynamic pricing and matching decisions in a two-sided market in which supply units strategically report their type to the system operator. 

We consider a general two-sided queueing system with strategic servers and customers arriving stochastically in a discrete time setting. Servers and customers have different types and their compatibility is captured by a bipartite graph. In each time period, the system operator posts a price for each type of customer and server. This leads to a fraction of customers and servers accepting the price and joining the system. Each customer entering the system pays the price posted by the system operator. Meanwhile, servers can misreport their type in order to maximize their utility, which is given by the price of the reported type minus a type-specific inconvenience cost. At the end of a time period, the system operator, constrained by the compatibility graph, decides which server-customer pairs to match,  and the matched pairs depart from the system.
%instantaneously. 

The system operator’s objective is to formulate a pricing and matching policy so that the difference of the long-run profit obtained by the system operator and the long-run cost incurred due to waiting times is maximized.  
\subsection{Main Contributions}
\label{sec: main_contributions}
We develop a general discrete-time, game-theoretical, stochastic framework to study pricing and matching decisions in two-sided markets. In line with our ridehailing motivation, supply agents can strategically misreport their type while demand agents report their type truthfully. The main challenge in this problem is that the selfish behavior of servers leads to correlated arrival rates across different types of servers and customers. In particular, if the price for one type of server changes, the arrival rates of all types of servers are affected which also affects the customer arrival rates. Consequently, a mathematical model that incorporates general kinds of strategic behavior that is also amenable to analysis is not clear. %it is not clear how to model this problem so that one can write a fluid model.
%\siva{Francisco, please check if the previous sentence is fine. We deleted the following sentence:  In turn, we show that the inherent game-theoretical nature of the problem leads to a combinatorial non-linear problem formulation.}

We overcome this challenge by judiciously defining a \emph{cost function} that  corresponds to the total price paid to the servers by the system operator. %This cost function enables us to model a broad class of strategic behaviors of the servers. 
%To tackle this challenge, we synthesize the strategic behavior of servers via a  
We formulate this function as an optimization problem with equilibrium constraints for a broad range of strategic behaviors that servers might exhibit. Indeed, the generality of this formulation enables us to encompass different scenarios: 1) selfish servers maximizing their own utility, 2) a system in which incentive compatibility (truthful) constraints must be satisfied, 3) a partially incentive-compatible system such that at least a fraction of servers are truthful, and 4) a non-game theoretic model in which servers always report truthfully (c.f. \citep{varma2020dynamic}).  We present relations between these models using simulations and theoretical results.

In order to maximize profit in presence of strategic servers, we allow the platform to set randomized server prices. We then present a 
%Under such a general cost function, we present the 
first-order approximation of the system as a \emph{probabilistic fluid problem}. 
This is novel in that it
%Our fluid problem is novel in that it 
allows for probabilistic  pricing policies given by probability measures defined over the set of feasible prices, even in the fluid limit. 
We begin the analysis by providing structural properties for the probabilistic fluid problem. First, we establish that its optimal value provides an upper bound on the profit obtained under any policy.
The strategic behaviour of servers leads to possible non-convexity of the cost function, due to which, %Due to the possible non-convexity of the cost function, 
the standard fluid model with deterministic pricing policy does not give an upper bound on the profit, in general. Thus, extending the space of pricing policies and proposing a probabilistic fluid model is a key contribution. We then present conditions under which the probabilistic fluid problem is equivalent to a standard fluid problem, that only considers deterministic pricing policies. %that is, conditions under which the optimal probability measure is a Dirac measure.
The proposed probabilistic fluid problem is an infinite dimensional optimization problem. We reformulate it as a finite dimensional MINLP which can be solved using standard optimization packages.
We use simulations to compare the optimal fluid objective under the four different cost function models mentioned before, thus presenting a case study on the loss to the platform due to the servers' strategic behavior.

To analyze the stochastic system, for a given policy, we consider a large market regime in which all the arrival rates are scaled by $\eta \rightarrow \infty$. We develop a simple \emph{probabilistic two-price policy} and max-weight matching policy which attains the fluid upper bound asymptotically with $O(\eta^{1/3})$ rate of convergence.  An attractive feature of the matching policy is that it doesn't use the solution of the fluid problem. Instead, it follows state-dependent matching decisions. The pricing policy is a %an $\epsilon$
perturbation of the one prescribed by the fluid problem. It is probabilistic and state-independent for servers, and a dynamic two-price policy for customers. We also show that under a broad class of customer pricing policies, any matching policy will result in $\Omega(\eta^{1/3})$ profit loss. %In addition, under the probabilistic two-price policy, given a desired expected queue length, we present scale-free bounds on the achievable profit. This provides insights on the attainable profit for a firm given a desired service level.

We conclude our discussion by presenting four extensions of our model to exhibit the generality of our framework. (1) We show that an equivalent formulation of our results establishes a fundamental trade off between the profit and queue length, in a scale free manner for any given arrival rate. %\siva{Does this sentence look OK? }
(2) We consider a slightly different model, wherein we penalize the system based on the expected waiting time rather than the expected queue length and show that our proposed policies achieves the optimal scaling of $O(\eta^{-1/3})$. (3) We generalize the utility function to also depend on the steady-state rate of matching for all customer-server pairs. Under this endogenous model, we formulate a probabilistic fluid model and prove that it provides an upper bound on the net-profit obtained under any pricing and matching policy. Following our previous framework, we propose a stochastic policy which is a perturbation of the fluid optimal policy to prove that it attains the fluid upper bound asymptotically with $O(\eta^{1/3})$ rate of convergence. (4) We allow the servers to choose equilibrium of their choice among the ones that maximize their utility. We analyze the worst case scenario by considering adversarial servers. We formulate it as an min-max problem, present a probabilistic fluid model, and show that it provides an upper bound on the achievable net profit.

\subsection{Literature Review}
% \begin{figure}
% \FIGURE{
%  \begin{tikzpicture}[scale=0.6]
%  \draw (0,0) rectangle (6,1); \node[black,very thick] at (3,0.5) {Dynamic Pricing};
%   \draw (0,-1) rectangle (6,-2); \node[black,very thick] at (3,-1.5) {Dynamic Matching};
%     \draw (0,-3) rectangle (6,-4); \node[black,very thick] at (3,-3.5) {Games};
%  \draw (8,-1) rectangle (10,-2); 
%  \node[black,very thick] at (9,-1.5) {for};
%   \draw (12,-3) rectangle (20,-4); \node[black,very thick] at (16,-3.5) {Two Sided Queues};
%   \draw (12,-1) rectangle (20,-2); \node[black,very thick] at (16,-1.5) {Queues};
%   \draw (12,1) rectangle (20,0); \node[black,very thick] at (16,0.5) {Ride Hailing (Applications)};
%   \draw (6,0.5) -- (8,-1.5);
% \draw (6,-1.5) -- (8,-1.5);
% \draw (6,-3.5) -- (8,-1.5);
% \draw (10,-1.5) -- (12,0.5);
% \draw (10,-1.5) -- (12,-1.5);
% \draw (10,-1.5) -- (12,-3.5);
% \end{tikzpicture}}
%     {\centering{Overview: Literature Review.}
%     \label{fig:literature_review}}{}
% \end{figure}
In this paper, we consider dynamic pricing and matching for two sided queues with servers as individual decision makers.
We discuss prior literature on each of these  separately.
\subsubsection{Two-Sided Queues.}
Different variants of two-sided queues were studied in the literature.  \cite{caldentey2009fcfs} pointed out that a two-sided queue is fundamentally unstable. They  analyzed a two-sided queueing model given by a bipartite graph and deduced necessary conditions on the arrival rates for stability. The results were extended by \cite{adan2012exact}. \cite{matchingqueues} considered a more general model of two-sided queues: matching queues, which is a multi-sided queue. They presented a matching policy and proved that it is asymptotically optimal with rate of convergence $O(\eta^{1/2})$ where arrival rates are scaled by $\eta$. \cite{ondemandservers} considered a two-sided queueing model with server arrivals by invitation and also allowed customers and servers to abandon the system. There are numerous applications of two sided queues, such as routing cryptocurrency in payment processing networks \citep{varma2019throughput}, ridehailing systems \citep{banerjee2016dynamic} and \citep{banerjee2018state},  general setting of dynamic matching markets \citep{akbarpour2017thickness}, and dynamic barter exchange \citep{yashkanoriabarter}.
\subsubsection{Dynamic Pricing.}
Dynamic pricing is a fundamental problem in the revenue management literature \citep{talluri2006theory}. In the context of queueing theory, different models have been considered in \citep{low1974optimal}, \citep{pricinglow1974}, \citep{pricinginqueuing2001}, and \citep{Tsitsiklis2000congestion}. The main results in these paper present different structural properties of the optimal pricing policy by studying the underlying control problem. Some of the papers involving dynamic pricing which are closely related to our work are presented in detail below.

\cite{amywardpricingmatching} consider a general dynamic pricing and matching problem. Their goal is to maximize the total number of matches in a finite time. In the same asymptotic regime as ours, they provide an asymptotically optimal policy but do not provide the rate of convergence. The setting of joint optimization of pricing and matching decisions was extended in \citep{ozkan2020joint} to study strategic servers but does not establish rate of convergence to the fluid limit.

\cite{kim2017value} consider the fundamental problem of dynamic pricing in an M/M/1 queue. They consider customers joining the system depending on the offered price and their waiting times. The objective is to maximize the profit of the system operator.  They present an asymptotically optimal pricing policy and also prove that its rate of convergence is $O(\eta^{1/3})$
\subsubsection{Dynamic Matching.}
Dynamic Matching is a fundamental problem in two-sided queues with heterogeneous customer and server arrivals. A FCFS matching discipline was studied by \cite{caldentey2009fcfs} and \cite{adan2012exact}. In a related context, \cite{matchingqueues} considered a multi-sided matching queue and provided an asymptotically optimal matching policy. Delayed matching (batching) in the hope that better matching opportunities will arrive in the future was analyzed by \cite{yashkanoriabarter} and \cite{akbarpour2017thickness}. In both these papers, they concluded that delayed matching does not provide significant benefits. Some of the papers involving dynamic matching that are close to our work are presented in detail below.

 \cite{dynamictypematchinghu}  consider a two sided market given by a bipartite graph with  associated penalties depending on the type of demand and supply matched. Their objective is to find a matching policy which maximizes the discounted reward in finite time. They present multiple structural properties of the optimal matching policy and also present an asymptotically optimal matching policy.

Our paper is an extension of the work by \cite{varma2020dynamic}, where a similar model was considered. There are two key differences. They did not consider the strategic behavior of  servers. This addition to the model results in fundamentally different problem and optimal policy. Moreover, in contrast to the simplistic Poisson arrival model stated in \cite{varma2020dynamic}, this paper consider a more realistic general arrival process in discrete time. This generalization results in technical difficulties and more involved proofs.
\subsubsection{Queueing Games.}
The book by
\cite{rationalqueueing} provides a comprehensive overview on game theory applied to queueing systems. In the present paper, with a large number of servers arriving in the system, we are dealing with non atomic games. \cite{nonatomicgames} deal with non atomic games and show that equilibrium constraints can be equivalently written as a fixed point equation. \cite{mpec} provides a comprehensive theory of solving and reformulation of the optimization problem with equilibrium constraint which is known to be NP-hard. A comprehensive background on algorithmic game theory can be found in \cite{algorithmicgametheory}.

We combine all these aspects that has been studied in the literature. In particular, we combine dynamic pricing, dynamic matching in a strategic setting and carry out fluid as well as stochastic analysis. Allowing probabilistic policies is a novel approach of formulating the fluid model. These leads to a probabilistic optimal pricing policy and it seems to be fundamental to the systems with strategic behavior and is novel in the literature.  

\subsection{Notation}
We denote the set of  real numbers, the set of non negative real numbers, the set of integers and the set of non negative integers by $\bbR$, $\bbR_+$, $\bbZ$ and $\bbZ_+$, respectively. In addition, we denote the extended real line $\bbR \cup \{\infty,-\infty\}$ by $\bar{\bbR}$. We denote the set of natural numbers from 1 to $n$ by $[n]$. In the entire paper, we use $1$, $i$ and $n$ for parameters concerning servers and $2$, $j$ and $m$ for customers. We refer to servers by she/her/her and to customers by he/his/him. In the entire paper, vectors are boldfaced. We denote a vector of zeros of dimension $n$ by $\bzero_n$ and a vector of ones of size $m$ by $\bone_m$. We omit the subscript if the dimension of the vector is clear from the context.  For two vectors $\bx \in \bbR^n$ and $\by \in \bbR^m$, we denote the concatenated vector $\bz \in \bbR^{n+m}$ by $\bz=(\bx,\by)$. The dot product of two vectors is denoted by $\inner{.}{.}$. An $n$ dimensional probability simplex is denoted by $\Delta_n \subseteq \bbR_+^n$. In particular, if $\bnu \in \Delta_n$, then $\inner{\bone_n}{\bnu}=1$. In addition, a collection of $m$ such probability simplex-es is denoted by $\Delta_n^m \subseteq \bbR_+^{m \times n}$. In particular, if $\bnu' \in \Delta_n^m$, then $\sum_{l=1}^n \nu_{il}=1$ for all $i \in [m]$. For functions $F_j: \bbR \rightarrow \bbR$ with $j \in [m]$ and a vector $\blambda \in \bbR^m$, we write $F(\blambda)$  to denote $(F_1(\lambda_1),\hdots, F_m(\lambda_m))$. For two matrices $\bA$ and $\bB$ of size $m \times n$, the sum of the entries of their Hadamard product is denoted by $\bA \circ \bB$, that is, $\bA \circ \bB=\sum_{i=1}^m \sum_{j=1}^n A_{ij}B_{ij}$. The inequality $\bA \leq \bB$ is component wise, i.e. $A_{ij} \leq B_{ij}$ for all $i\in [m], j \in [n]$. Variance of a random variable is denoted by $\Var{.}$ and co-variance is denoted by $\Cov{.}{.}$. For two random variables $X$ and $Y$, if $\P{}{X \leq a} \leq \P{}{Y \leq a}$ for all $a \in \bbR$, then we say that $X$ stochastically dominates $Y$ and denote it by $X \stg Y$. Quantities pertaining to the fluid model are denoted with a `tilde' on top and  quantities pertaining to the steady state of the stochastic model are denoted with a `bar' on top.

Let $\alpha$ be a probability measure defined on the Borel sigma-algebra generated by $\Omega \subseteq \bbR^d$ for some $d \in \bbZ_+$. Let $f : \Omega \rightarrow \bbR$ be a Borel measurable function and $\omega \in \Omega$ be an element of the probability space. Then, expectation of $f$ with respect to $\alpha$ is denoted by $\E{\alpha}{f(\omega)}=\int_{\Omega} f(\omega) d\alpha(\omega)$. This can also be interpreted as $f(\omega)$ is a random variable such that $\P{}{f(\omega) \in B}=\alpha(B)$ for all Borel subsets $B \subseteq \Omega$. For a Markov chain $\{\bq(k): k \in \bbZ_+\}$ with state space $S$, we denote the expectation of $f : S \rightarrow \bbR$, with respect to its stationary distribution by $\E{\bbarq}{f(\bbarq)}$ and sometimes omit the subscript when it is clear from the context.

\section{Model} \label{sec: model}
We consider a general two-sided, discrete time queueing system modeled as 
a bipartite graph $G(N_1 \cup N_2, E)$.
We refer to 
$G(N_1 \cup N_2, E)$ as the compatibility graph, where $N_1=[n]$ is the set of server types, $N_2=[m]$ is the set of customer types, and $E$ is the set of compatible edges that represent the feasible matches between  customers and servers.
%edges which can be used to match a customer-driver pair and we call it the compatibility graph. 
Each node in the graph denotes a queue of a customer/server waiting to be matched. We consider two control mechanisms --- dynamic pricing and matching.
%We consider a discrete time model, where in each time slot, the system operator carry out the following two tasks:
%We consider a discrete time model 
In each time slot,
first, the system operator determines prices for each customer and server queue. Then,
agents arrive to the system and, given the prices, make joining decisions. Customers always join their type's queue, while servers can strategically choose which queue to join. After this, the system operator possibly matches the compatible pairs of customers and servers waiting in the system. 
%1) Determines the price for each type of customer and server which governs the arrival process. 2) Matches (possibly) the compatible pairs of customers and drivers waiting in the system. 
%Now, we will define a discrete time queueing system and expound the arrival and service process below:
Next, we present each component of the model in detail.
%will define a discrete time queueing system and expound the arrival and service process below:
%\subsubsection*{State:}
We denote the state of the system at time $k$ by $\{\bq(k) \in \bbZ_+^{n+m}: k \in \bbZ_+\}$, where the vector is defined as $\bq=(q_1^{(1)},\hdots,q_n^{(1)},q_1^{(2)},\hdots,q_m^{(2)})$ and $q_i^{(1)}(k)$
is the number of servers in the $i$ type queue waiting in the system at time $k$, and $q_j^{(2)}(k)$ is the number of $j$ type customers waiting in the system at time $k$. The state space of the system is denoted by $\calS \subseteq \bbZ_+^{(n+m)}$. Note that $\calS$ depends on the pricing and matching policy set by the system operator which are defined later in this section. 

\textbf{Customers} arrive in the system with an exogenous arrival rate. In each time epoch $k \in \bbZ_+$, the system operator posts a price $\bp^{(2)}(k) \in \bbR_+^m$ which leads to only a fraction of arriving customers to join the system. This results in an effective customer arrival with rate denoted by $\blambda(k) \in \bbR_+^m$. We capture this relation between the posted price and the effective arrival rate by inverse supply curves $F_j: \bbR_+ \rightarrow \bbR_+$ for all $j \in [m]$. In particular, we have $F_j(\lambda_j(k))=p_j^{(2)}(k)$. We allow the arrivals to be correlated across types but they are independent across time. This models a broad range of situations. For example, in the context of ride hailing, at certain times of the day, the number of customers going in certain directions can be correlated. We denote the co-variance matrix of the arrivals in $k^{th}$ time epoch by $\Sigma^{(2)}(k) \in \bbR^{m \times m}$ which depends on the posted price $\bp^{(2)}(k)$. In particular, we denote the effective arrival of customers of type $j$
 by a sequence of independent random variables $\{a^{(2)}_j(k) : k \in \bbZ_+\}$ for all $j \in [m]$ with mean $\E{}{a^{(2)}_j(k)}=\lambda_j(k)$ and co-variance matrix $\Var{\ba(k)}=\Sigma^{(2)}(k)$. Specifically, $a^{(2)}_j(k)$ is the number of customers of type $j$ that arrive to the system at time $k$. We assume $|a_j^{(2)}(k)| \leq A_{\max}$ with probability 1 for all $j \in [m]$ and for all $k \in \bbZ_+$. As $|a_j^{(2)}(k)| \leq A_{\max}$, there exists $\Sigma_{\max}^{(2)}, \Sigma_{\min}^{(2)} \in \bbR^{m \times m}$ such that $\Sigma_{\min}^{(2)} \leq \Sigma^{(2)}(k) \leq \Sigma_{\max}^{(2)}$. We make the following standard assumptions on the inverse demand curve.
\begin{assumption} \label{ass: monotonic}
The inverse demand curve $F_j(\cdot)$ is strictly decreasing and twice continuously differentiable for all $j \in [m]$.
\end{assumption}
In words, %we make the assumption that 
if the posted price for customers is higher, then less customers would be willing to avail that service and vice versa. In addition, we assume that the inverse demand curve is twice continuously differentiable which is a technical assumption required for our analysis. 
\begin{assumption} \label{ass: concave}
The function $\lambda_j F_j(\lambda_j)$ is  concave  for all $j \in [m]$.
\end{assumption}
By the law of diminishing marginal utility, if the arrival rate increases, then the marginal utility derived from each new customer $(\frac{d}{d\lambda_j} \lambda_j F(\lambda_j))$ decreases. This condition is equivalent to requiring that 
the demand curve comes from a regular distribution -- a typical assumption in economics.

%\fc{---a typical assumption in economics. }

\textbf{Servers.} Now, we define the arrival process of servers. We identify servers as decision makers that make strategic joining decisions. A type $i$ server arriving to the system can join the $l$ type queue for some $l \in [m]$ or leave the system depending on its personal utility $u_{il}$ which, in turn, depends on the price set by the system operator $\bp^{(1)} \in \bbR_+^n$ and the detour penalties $\bc \in \bbR^{n \times n}$. In particular,
a server of type $i$ who joins the $l$ type queue earns $u_{il}=f_{il}(\bp^{(1)})$ for some function $f_{il}: \bbR_+^{n} \rightarrow \bbR$. One typical example of utility function which we will use later for simulations is given by
\begin{align}
    u_{il}=p^{(1)}_l-c_{il} \quad \forall i \in [n], \ \forall l \in [n], \label{eq: utility}
\end{align}
where
$p_l^{(1)}$ is the price set by the system operator for servers that join the $l$ type
queue, and $c_{il}$ is the penalty due to lying incurred by a server of type $i$ when she joins the  $l$ type server queue.
%This penalty captures the cost incurred by the server \delfc{commute  \delfc{or time} due to lying.
In our ridehailing application, $c_{il}$ represents a detour cost experienced  by a driver when she is assigned a non compatible trip. An illustration of this as a tripartite graph is given in Fig. \ref{fig: multiple_link}. %\fc{Explain why we are introducing $f$, why do we want to be general.}
A type $i$ driver entering the system will compare her utilities $u_{il} \ \forall l \in [n]$ with her outside option. If the maximum possible utility $u_i\triangleq \max_{l \in [n]} u_{il}$ is greater than 
her outside option, then the driver will join the queue which maximizes her utility. Otherwise, the driver will not join the system at all. For the real life system, the utility may depend on several other factors and may not vary with the posted prices linearly. Thus, we consider a general utility function. Note that all our results holds for any continuous utility function given by $\mathbf{f}(\cdot)$.
\begin{figure}[hbt]
\FIGURE{
 \begin{tikzpicture}[scale=0.6]
\draw[black, very thick] (0,0) -- (2,0) -- (2,1) -- (0,1);
\node[black,very thick] at (0.25,0.5) {2};
\draw[black, very thick] (0,1.5) -- (2,1.5) -- (2,2.5) -- (0,2.5);
\node[black,very thick] at (0.25,2) {1};
\draw[black,very thick] (8,0) -- (6,0) -- (6,1) -- (8,1);
\node[black,very thick] at (7.75,0.5) {2};
\draw[black, very thick] (8,1.5) -- (6,1.5) -- (6,2.5) -- (8,2.5);
\node[black,very thick] at (7.75,2) {1};
\draw[black,very thick] (13,0) -- (11,0) -- (11,1) -- (13,1);
\node[black,very thick] at (12.75,0.5) {2};
\draw[black, very thick] (13,1.5) -- (11,1.5) -- (11,2.5) -- (13,2.5);
\node[black,very thick] at (12.75,2) {1};
\draw[black,thick]  (2.75, 2.1) edge[<->]  (5.25, 2.1);
\draw[black,thick]  (2.75, 0.6) edge[<->]  (5.25, 1.9);
\draw[black,thick]  (2.75, 0.4) edge[<->]  (5.25, 0.4);
\draw[black,thick]  (8.25, 0.4) edge[<->]  (10.75, 0.4);
\draw[black,thick]  (8.25, 0.6) edge[<->]  (10.75, 1.9);
\draw[black,thick]  (8.25, 2.1) edge[<->]  (10.75, 2.1);
\draw[black,thick]  (8.25, 1.9) edge[<->]  (10.75, 0.6);
\node[black, align=center] at (9.5,2.4){\footnotesize$c_{11}=0$};
\node[black, align=center] at (9.5,0.2){\footnotesize$c_{22}=0$};
\node[black, align=center] at (9.85,1.7){\footnotesize$c_{12}$};
\node[black, align=center] at (9.95,0.8){\footnotesize$c_{21}$};
\node[black, align=center] at (1,3.5) {\footnotesize Customer};
\node[black, align=center] at (4,3.5) {\footnotesize \shortstack{Compatible\\Matchings}};
\node[black, align=center] at (7,3.5) {\footnotesize \shortstack {Server \\  Queue}};
\node[black, align=center] at (12,3.5) {\footnotesize \shortstack {Server \\ Type}};
\node[black, align=center] at (9.5,3.5) {\footnotesize \shortstack {Complete \\ Graph}};
\fill[black] (7,-0.3) circle (0.05);
\fill[black] (7,-0.5) circle (0.05);
\fill[black] (7,-0.7) circle (0.05);
\draw[black, very thick] (8,-1) -- (6,-1) -- (6,-2) -- (8,-2);
\node[black,very thick] at (7.75,-1.5) {n};
\fill[black] (1,-0.3) circle (0.05);
\fill[black] (1,-0.5) circle (0.05);
\fill[black] (1,-0.7) circle (0.05);
\draw[black, very thick] (0,-1) -- (2,-1) -- (2,-2) -- (0,-2);
\node[black,very thick] at (0.25,-1.5) {m};
\fill[black] (12,-0.3) circle (0.05);
\fill[black] (12,-0.5) circle (0.05);
\fill[black] (12,-0.7) circle (0.05);
\draw[black, very thick] (13,-1) -- (11,-1) -- (11,-2) -- (13,-2);
\node[black,very thick] at (12.75,-1.5) {n};
\draw[black,thick]  (2.75, -1.5) edge[<->]  (5.25, -1.5);
\draw[black,thick]  (2.75, 1.9) edge[<->]  (5.25, -1.4);
\fill[black] (4,-0.3) circle (0.05);
\fill[black] (4,-0.5) circle (0.05);
\fill[black] (4,-0.7) circle (0.05);
\fill[black] (9.5,-0.3) circle (0.05);
\fill[black] (9.5,-0.5) circle (0.05);
\fill[black] (9.5,-0.7) circle (0.05);
\draw[black,thick]  (8.25, 1.75) edge[<->]  (10.75, -1.4);
\draw[black,thick]  (8.25, -1.4) edge[<->]  (10.75, -1.4);
\node[black, align=center] at (9.5,-1.1){\footnotesize$c_{nn}=0$};
\node[black, align=center] at (10.45,-0.5){\footnotesize$c_{n1}$};
\end{tikzpicture}}{
\centering{A tripartite graph for two-sided queues with strategic servers.}
    \label{fig: multiple_link}}{}
\end{figure}

A server of type $i$ who joins the system at time $k$ uses the strategy  $\boldsymbol{\nu}_i(k)\in \Delta_n$, where, for each $l\in [n]$, $\nu_{il}(k)$ is the probability with which a type $i$ server joins queue $l$.
We say that the strategy profile 
$\boldsymbol{\nu}\triangleq(\boldsymbol{\nu}_1(k),\dots, \boldsymbol{\nu}_n(k)) \in \bbR_+^{n\times n}$
is an \emph{equilibrium} if and only if 
\begin{align}\tag{EQ}\label{eq: variational_inequality}
\nu_{il}(k)&>0 \Rightarrow u_{il}(k) \geq u_{il'}(k) \:\: \forall i,l' \in [n].
\end{align}
The above constraint implies that $\bnu_i$ for all $i \in [n]$ has a positive support only for the queues where the servers' utility is the largest. Given the server joins the system, the equilibrium constraints \eqref{eq: variational_inequality} determines which queue it will join. To consider the case when servers do not join the system, we will introduce continuous inverse supply curves denoted by $G_i : \bbR_+ \rightarrow \bbR$ for all $i \in [n]$. This is defined similar to the inverse demand curve with one crucial distinction. The system operator sets a price vector $\bp^{(1)}(k)$ which will determine the maximum possible utilities $\{u_i(k)\}_{i \in [n]}$. Then, we have $G_i(\hatmu_i(k))=u_i(k)$, where $\hatmu_{i}(k)$ represents the arrival rate of type $i$ servers. Moreover, the effective arrival rate of servers to queue $i$ is given by $\mu_i(k) \triangleq \sum_{l=1}^n \hatmu_{l}(k)\nu_{li}(k)$. Observe that because a given queue may receive servers of different type, the arrival processes to different queues can be correlated. Denote the co-variance matrix of the server arrivals by $\Sigma^{(1)}(k)$ which depends on the posted price $\bp^{(1)}(k)$. 

We define the effective arrival process of servers to queue $i$ as a sequence of independent random variables $\{a^{(1)}_i(k) : k \in \bbZ_+\}$ with mean $\E{}{a^{(1)}_i(k)}=\mu_i(k)$ for all $i \in [n]$ and $\Var{\ba^{(1)}(k)}=\Sigma^{(1)}(k)$. We assume that $|a_i^{(1)}(k)| \leq A_{\max}$ with probability 1 for all $i \in [n]$ and for all $k \in \bbZ_+$. As $|a_j^{(2)}(k)| \leq A_{\max}$, there exists $\Sigma_{\max}^{(1)}, \Sigma_{\min}^{(1)} \in \bbR_+^{m \times m}$ such that $\Sigma_{\min}^{(1)} \leq \Sigma^{(1)}(k) \leq \Sigma_{\max}^{(1)}$.

\textbf{Policies.}
%We now specify the decisions made by the system operator. 
The system operator uses a stationary policy and makes both pricing and matching decisions. We describe the pricing policy first. Given the state of the system $(\bq \in \calS)$, a stationary pricing policy is
a vector $(\bp^{(1)}(\bq),\bp^{(2)}(\bq))\in \bbR_{+}^n\times\bbR_{+}^m$ where $p^{(1)}_i(\bq)$ is the payment to servers in queue $i\in [n]$, and $p^{(2)}_j(\bq)$ is the price charged to customers in queue $j\in[m]$.
In order to simplify the analysis, we work in a general space of feasible rates instead of prices. For any stationary rates, we identify a corresponding stationary pricing policy, hence, with some abuse of language, we will refer to the stationary rates as  stationary pricing policies. Note that, we are only interested in stationary, Markovian, state dependent pricing policies and thus, we omit the dependence of $\blambda$ and $\bmu$ on the time index $k$.

On the customer side,  there is a bijection between prices and the arrival rate of customers to the customer queues. Therefore, for ease of exposition, we consider the arrival rates to be our controls, instead of the prices, and so, we define a \emph{customer stationary pricing policy} by the arrival rate vector $\blambda(\bq)\in \bbR^m_+$. In addition, as the co-variance matrix only depends on the price posted by the system operator, we can re-write it as $\Sigma^{(2)}(\blambda(\bq))$.

%\siva{Next two paras need to be rewitten. Separate the cost function thing and the randomized poly thing. First say that on the customer side, there is  a simple bijection between the prices and the arrival rates, and so we work directly with the arrival rates as control levers. That cannot be done here because for a given arrival vectors into server queues, it is not even clear if there is a price vector that leads to it. Moreover, there could be multiple price vectors that lead to it. To over come the first, we define the $\Omega$ set. To address the latter, among all the price vectors that lead to a given arrival rate, we assume that we use the prices that minimize the cost of the platform, u.e., this is an optimistic model... In Sec... we relax this assumption.}

On the server side, such a bijection may not exist. Firstly, it is not clear if there exists a price vector $\bp^{(1)}$ that results in a given arrival rate of servers to the server queues $\bmu$. If there exists such a price vector, it may not be unique, and %we may not have a one-to-one correspondence --- for a given rate $\bmu$ there may not exist or there could be 
many price vector(s) can lead to the arrival rate $\bmu$.
%that are consistent with $\bmu$.
To address these difficulties, we define the set of prices that are consistent with $\bmu$ by
  \begin{align*}
     \mathcal{M}(\bmu)\triangleq\left\{
     \bp^{(1)}\in \bbR_+^n:\exists \boldsymbol{\nu}
     \in \bbR_+^{n\times n}\:\: \text{satisfying} \:\: \eqref{eq: variational_inequality},\: 
     G_i(\hatmu_i) = u_i,\: 
     \mu_i = \sum_{l=1}^n \hatmu_{l}\nu_{li}\:\: \forall i\in[n]
     \right\}. 
 \end{align*}
The set $\mathcal{M}(\bmu)$
is composed by those prices for which there exists an equilibrium 
that leads to the arrival rates $\bmu$ in the servers queues. We define $\Omega$ to be the
set of rates $\bmu$ such that $\mathcal{M}(\bmu)\neq  \emptyset$ which addresses the first difficulty. The corresponding price $\bp^{(1)}$ is then selected among the consistent prices so that it minimizes the cost $\inner{\bmu}{\bp^{(1)}}$ for the service provider which addresses the second difficulty. The optimal cost function $\cost:\Omega\rightarrow \bbR_+$ is defined by (We show that the cost function defined below is Borel measurable in Appendix \ref{app: borel}.)
\begin{align}
    \cost(\bmu)\triangleq\min \inner{\bmu}{\bp^{(1)}} \quad \textit{subject to} \quad  \bp^{(1)} \in\mathcal{M}(\bmu) .  \label{eq: cost_function}
\end{align}
In this paper, we are interested in different types of equilibrium behavior depicting different objectives, for example, incentive compatible system wherein the servers are incentivised appropriately such that they join their own queue. Thus, to keep the model general enough, we will work with a general cost function $\cost(\bmu)$ throughout the paper and discuss the behavior of each different equilibria or objective by specializing the definition of
$\cost(\bmu)$ in later sections.

Note that, we are implicitly assuming that if there are multiple possible equilibrium for the servers to choose from, they will choose the one that leads to the lowest cost for the system operator. This is often referred as the optimistic model and is often considered in the literature (see: \cite{optimistic_highway_pricing} \cite{optimistic_bilevel} and the references therein). It is interpreted as the system operator nudging the servers to choose the equilibrium which is best for the system performance. We relax this condition in Section \ref{sec: pess_nash_eq}.

%\siva{In the next paragraph, again use the words from main contribution, talk about need for allowing for randomized pricing scheme for servers. So, we want to work with randomized policies. Then, say that instead of working with measure on $\Omega$, we work with $\Omega \times \calS$ for technical reasons }

 Similar to the customers, we will operate in the space of arrival rates as opposed to the space of prices as it is more amenable to analysis. In order to maximize profit in presence of strategic servers, we allow the platform to set randomized server prices. We define a \emph{server stationary pricing policy} as a set of probability measures $\alpha_\bq(\cdot)$ for all $\bq \in \calS$ such that $\alpha_\bq$ determines the randomized arrival rate set by the system operator when the state of the system is $\bq$. For technical reasons, we define $\alpha_\bq$ as a measure on the Borel sigma-algebra generated by $\Omega \times \calS$ for all $\bq \in \calS$ such that it is non zero only on $\Omega \times\{\bq\}$, i.e. $\alpha_\bq(\Omega \times \calS \backslash \Omega \times \{\bq\})=0$. 
%Thus, it acts as a measure over $\mu \in \Omega$ for a given state $\bq \in \calS$. 
We use $\Peq_\bq$ to denote the set of such probability measures. For the simplicity of notation, we denote $(\alpha_\bq(\cdot))_{\bq \in \calS}$ by $\alpha(\cdot)$. Note that the co-variance matrix of servers will only depend on the price posted by the system operator or equivalently, the probability measure $\alpha(\cdot)$. Thus, we denote it by $\Sigma^{(1)}(\alpha(\cdot))$. 

We make two important remarks about the server side policy. First, the reason we allow randomized policies for servers is because they result in  a richer class of pricing policies; also, they enable us to tackle the inherent combinatorial structure and non convexity of the service provider's objective. The latter, materializes through the cost function $\cost(\cdot)$ and the strategic
behavior of servers. Second, by allowing randomized policies for servers may result in a higher overall profit for the system operator. We will later show in Section \ref{sec: prob_fluid_model} that allowing randomized policies for the customers will not result in an increased overall profit. %Note that allowing randomized policies for the customers will not result in an increased overall profit due to the concavity of the revenue function $(F_j(\lambda_j)\lambda_j)$ by the Assumption \ref{ass: concave}.  \siva{Replace the last sentence with: `We will later show in Section ?? that allowing randomized policies for the customers will not result in an increased overall profit'. Put the concavity comment later where it is understandable. At this point, it is not clear why concavity has anything to do with this. }

Now we specify the matching policy. We denote by $\{\by(k) \in \bbZ_+^{n \times m}: k \in \bbZ_+\}$  the decision of matching customer-servers pairs at time $k$. Here, $y_{ij}(k)$ is the number of servers in the $i$ type queue that are matched with 
customers of type $j$ at time $k$. For the matching decisions to be feasible, the following conditions must be satisfied 
\begin{subequations} 
\begin{align} 
    x_i^{(1)}(k)&\dfn\sum_{j=1}^m y_{ij}(k) \leq q_i^{(1)}(k)+a_i^{(1)}(k) \ \forall i \in [n], \label{eq: column_sums}  \\
    x_j^{(2)}(k)&\dfn\sum_{i=1}^n y_{ij}(k) \leq q_j^{(2)}(k)+a_j^{(2)}(k) \ \forall j \in [m], \label{eq: row_sums} \\
    y_{ij}(k)&=0 \ \forall (i,j) \notin E, \quad y_{ij}(k) \geq 0 \ \forall (i,j) \in E,
\end{align}
\label{eq: matching_constraints}
\end{subequations}
where $\bx(k)=(\bx^{(1)}(k),\bx^{(2)}(k))$ for all $k \in \bbZ_+$ denote the total amount of servers and customers matched in each queue at a given time period. 
The set of constraints \eqref{eq: matching_constraints} ensures
that the number of servers in the type $i$ queue that are matched cannot be larger than the total number of servers in that queue plus the arrivals. Similarly, the  number of customers in the type $j$ queue that are matched are at most  equal to the total number of customers in that queue plus the arrivals.
Moreover, the only matches allowed are those given by the compatibility graph $G(N_1 \cup N_2, E)$. In turn,
given the state of the system after  arrivals ($\bq+\ba$),  a \emph{stationary  matching policy} is defined as the decision of choosing $\by(\bq+\ba)$ or, equivalently, $\bx(\bq+\ba)$ subject to \eqref{eq: matching_constraints}.

In sum, a policy is a triplet $(\blambda(\bq),\alpha_\bq(\cdot),\bx(\bq))_{\bq \in \calS}$ where $\blambda(\cdot)$ denotes the state dependent customer arrival rates, $\alpha_\bq$ denotes the state dependent probabilistic server arrival rates and $\bx(\cdot)$ are the matching decisions.
%\fc{Maybe summarize with what constitutes a policy: In summary, a policy is a triplet $(\blambda,\alpha,\bx)$ where...}

\textbf{System dynamic.} Given the pricing and matching policy, the system evolves as a discrete time Markov chain. 
%Thus, the time between two epochs is a given finite constant.
%In practice, the pricing and matching decisions are made typically every $\sim 15$ seconds. 
The queue evolution equation is given by:
\begin{align*}
    \bq(k+1)=\bq(k)+\ba(k)-\bx(k),
\end{align*}
where $\bx$ satisfies \eqref{eq: matching_constraints}. 
%\delfc{We define the system is stable as follows:} 
We consider policies that render the system stable.
\begin{definition}[Stability] \label{defn: stability}
The discrete time Markov chain is stable if under a given pricing and matching policy, the communicating class containing the state $\bzero_{n+m}$ is positive recurrent and all the other states (if any) are transient.
\end{definition} We use $\StablePol$ to denote the set of stationary Markovian pricing and matching  policies that make the system stable. For a stable system, we denote the steady state parameters with a bar on top. In particular, $\bq(k)$ converges in distribution as $k \rightarrow \infty$ to a random vector denoted by $\bbarq$. The arrival rate vector given the queue length $\bbarq$, is denoted by $\bbara$ such that $\E{}{\bbara}=(\blambda(\bbarq),\E{\alpha_\bbarq}{\bmu})$ and
the co-variance matrix of $\bbara^{(2)}$ is $\Sigma^{(2)}(\bbarq)$ and that of $\bbara^{(1)}$ is $\Sigma^{(1)}(\bbarq)$. The matching decision given the queue length ($\bbarq$) and arrival rate vector ($\bbara$) is denoted by $\bbarx/\bbary$.

\textbf{Objective.} 
Each customer entering the system pays the posted price and each server receives the posted price. In addition, the system operator incurs a type specific penalty $\bs\in \bbR_+^{m+n}$ due to the waiting of customers and servers. The objective of the system operator is to design the pricing and matching policies such that the difference of average profit  obtained and the average penalty incurred due to waiting---the net average profit---is maximized.
Mathematically,
\begin{subequations}
\begin{align}
    R^\star\triangleq \sup_{(\blambda(\cdot),\alpha(\cdot),\bx(\cdot))\in \StablePol} \E{\bbarq}{\inner{F(\blambda(\bbarq))}{\blambda(\bbarq)}-\E{\alpha_\bbarq}{\cost(\bmu)}-\inner{\bs}{\bbarq}} \span \label{eq: net_profit_stochastic}\\
    \textit{subject to,} \quad &\blambda(\bq)\in \bbR^m_+,\quad \forall \bq\in \calS\\
    &\alpha_\bq(\cdot)\in \Peq_\bq \quad \forall \bq \in \calS \\
    &\bx(\cdot)\quad \textit{satisfies} \:\: \eqref{eq: matching_constraints}  
\end{align}
\label{eq:opt_stoch}
\end{subequations}
%The constraint \eqref{eq: consistency_alpha_nu_model} is a consistency condition making sure that the marginal distribution of $\alpha(\cdot)$ coincides with the stationary distribution of the system. 
We will use $\pi$ to refer to a policy $(\blambda,\alpha,\bx)$ and denote by $R(\pi)$ the expected net profit associated to that policy. Moreover, 
$P^\star$ and $P(\pi)$ will denote the the optimal profit (when $\bs=\bzero_{n+m}$) and the profit evaluated at $\pi$, respectively.

For a given pricing and matching policy, computing the objective function of the above optimization problem is itself challenging as the state space of the DTMC can be very large. In addition, the optimization problem becomes a non-convex, integer optimization problem due to the equilibrium constraints captured in $\Peq$.

Moreover, if the price of one type of the server is changed, it will lead to a change of arrival rates of all  types of servers. Due to this, the system operator will be required to adjust the customer prices as well to compensate for the server arrival rates. This dependence of server arrival rates and its influence on the customer arrival rates makes the analysis of the pricing policy non trivial.

To tackle these challenging problem, we start by introducing a novel probabilistic fluid model. Intuitively, we ignore the stochasticity of the system to make the optimization problem tractable and, in addition, we relax the stability constraint. In  further sections, 
based on the solution to the fluid model, we propose ``near-optimal'' pricing and matching policies for the stochastic system respecting the stability constraints.

\section{A Probabilistic Fluid Model} \label{sec: prob_fluid_model}
In this section, we introduce a fluid counterpart of the optimization problem \eqref{eq:opt_stoch} and study some of its structural properties.
A novel feature in our fluid optimization problem is that in order to determine the pricing policy, we must optimize 
over the space of probability measures.
%system as an optimization problem over the space of measures. 

Firstly, we present the \emph{probabilistic fluid optimization problem} (We show that there exists an optimal solution to the fluid optimization problem in Appendix \ref{app: existence}).
\begin{subequations}
\begin{align}
    \tilde{R}^\star\triangleq \lefteqn{\max_{\btlambda,\talpha,\btchi} \inner{F(\btlambda)}{\btlambda}-\E{\talpha}{\cost(\btmu)}}\\
 \textit{subject to} \quad \tl_j&=\sum_{i=1}^n \tchi_{ij} \quad \forall j \in [m] \label{eq: cust_rate_balance} \\
    \E{\talpha}{\tmu_i}&=\sum_{j=1}^m \tchi_{ij} \quad \forall i \in [n] \label{eq: serv_rate_balance} \\
    \tchi_{ij}&=0 \quad \forall (i,j) \notin E, \quad \tchi_{ij} \geq 0 \quad \forall (i,j) \in E, \label{eq: compatibility}
\end{align}
\label{eq: prob_fluid_model}
\end{subequations}
where $\cost(\cdot)$ is the cost function given in \eqref{eq: cost_function}, $\btlambda \in \bbR_+^m$ is the `average' flow of customers in the system. Next, $\talpha$ is the `average' probability measure defined on the Borel sigma algebra of $\Omega$ governing the probabilistic server pricing policy which can be interpreted as a distribution over the feasible rates $\btmu$. Lastly, $\tchi_{ij} \in \bbR_+^{n \times m}$ is the `average' flow of $i$ type of servers matched to $j$ type of customer. The objective function is the profit obtained by the system operator. Equations
 \eqref{eq: cust_rate_balance}, \eqref{eq: serv_rate_balance} are flow conserving constraints and \eqref{eq: compatibility} is the compatibility constraint. As the model formulation involves the richer class of probabilistic policies for server, the respective fluid model is also probabilistic. We discuss such a formulation may result in a higher profit compared to its deterministic counterpart in the next section.

%\fc{Would be nice here if we could say something about how this can apply to other settings and how we expect this to be a general approach.}

\subsection{Bounds and Structural Properties}

 We leverage the convexity of the objective in \eqref{eq: prob_fluid_model} to draw a connection between the probabilistic fluid problem and the stochastic problem in \eqref{eq:opt_stoch}.
\begin{proposition} \label{prop: fluid_model}
Let $\pi=(\blambda(\cdot),\alpha(\cdot),\bx(\cdot))$ be a feasible solution of \eqref{eq:opt_stoch} then 
\begin{equation*}
R(\pi)\leq P(\pi)
%\E{\bbarq}{\inner{F(\blambda(\bbarq))}{\blambda(\bbarq)}}-\E{\alpha(\bbarq)}{\cost(\bmu)}
    \leq \tilde{R}^\star.
\end{equation*}
That is, the fluid profit is an upper bound for the stochastic profit and net profit. 
\end{proposition}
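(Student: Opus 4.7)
\emph{Step 1 (the easy half).} The inequality $R(\pi) \leq P(\pi)$ is immediate: by definition $R(\pi) = P(\pi) - \E{}{\inner{\bs}{\bbarq}}$, and since $\bs \in \bbR_+^{n+m}$ and $\bbarq \in \bbZ_+^{n+m}$, the waiting penalty $\inner{\bs}{\bbarq}$ is almost surely non-negative. So all the work is in showing $P(\pi) \leq \tilde{R}^\star$.

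\emph{Step 2 (build a fluid candidate from the stochastic policy).} The plan is to construct a feasible triple $(\btlambda, \talpha, \btchi)$ for the probabilistic fluid problem \eqref{eq: prob_fluid_model} from the steady state of the stochastic system, and show its fluid objective value is at least $P(\pi)$. Set
\begin{align*}
\tl_j \triangleq \E{\bbarq}{\lambda_j(\bbarq)}, \quad \tchi_{ij} \triangleq \E{}{\bary_{ij}}, \quad \talpha(B) \triangleq \E{\bbarq}{\alpha_\bbarq(B\times\calS)} \text{ for Borel } B\subseteq \Omega.
\end{align*}
Thus $\talpha$ is the mixture of the state-dependent measures $\alpha_\bbarq$ under the stationary law of $\bbarq$.

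\emph{Step 3 (verify the fluid constraints via rate conservation).} Since $\pi \in \StablePol$, the chain is positive recurrent on the communicating class of $\bzero$, so in stationarity $\E{}{\bbarq(k+1)} = \E{}{\bbarq(k)}$. Taking expectations in the queue evolution equation $\bq(k+1) = \bq(k) + \ba(k) - \bx(k)$ gives $\E{}{\bbara} = \E{}{\bbarx}$ componentwise. Spelling this out on the customer side: $\tl_j = \E{}{\bara_j^{(2)}} = \E{}{\sum_i \bary_{ij}} = \sum_i \tchi_{ij}$, which is \eqref{eq: cust_rate_balance}. On the server side: $\E{\talpha}{\tmu_i} = \E{\bbarq}{\E{\alpha_\bbarq}{\mu_i}} = \E{}{\bara_i^{(1)}} = \E{}{\sum_j \bary_{ij}} = \sum_j \tchi_{ij}$, which is \eqref{eq: serv_rate_balance}. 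The compatibility constraint \eqref{eq: compatibility} holds trivially because $\bary_{ij} = 0$ almost surely whenever $(i,j)\notin E$.

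\emph{Step 4 (compare objectives — the one technical step).} For the server cost, the construction of $\talpha$ as a mixture yields the exact identity
\begin{align*}
\E{\talpha}{\cost(\btmu)} = \E{\bbarq}{\E{\alpha_\bbarq}{\cost(\bmu)}},
\end{align*}
so that term matches on the nose. For the customer revenue, Assumption~\ref{ass: concave} says $\lambda_j F_j(\lambda_j)$ is concave, and hence by Jensen's inequality applied coordinatewise,
\begin{align*}
\E{\bbarq}{\lambda_j(\bbarq) F_j(\lambda_j(\bbarq))} \;\leq\; \tl_j F_j(\tl_j).
\end{align*}
Summing over $j$ and combining with the server-cost identity gives $P(\pi) \leq \inner{F(\btlambda)}{\btlambda} - \E{\talpha}{\cost(\btmu)}$. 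Since $(\btlambda, \talpha, \btchi)$ is feasible for \eqref{eq: prob_fluid_model} by Step~3, the right-hand side is at most $\tilde R^\star$, finishing the proof.

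\emph{Main obstacle.} The only delicate point is Step~2/4: verifying that $\talpha$, defined as a mixture of the $\alpha_\bbarq$'s, is itself a valid probability measure on the Borel $\sigma$-algebra of $\Omega$ and that the Fubini-type exchange $\E{\talpha}{\cost(\btmu)} = \E{\bbarq}{\E{\alpha_\bbarq}{\cost}}$ is justified. This in turn needs Borel-measurability of the cost function $\cost(\cdot)$ (promised in Appendix~\ref{app: borel}) and measurability of $\bq \mapsto \alpha_\bq$. Beyond that, everything else is essentially a bookkeeping application of steady-state rate conservation and Jensen, so no further structural arguments should be required.
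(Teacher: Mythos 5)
Your proof is correct and follows essentially the same route as the paper: the paper packages your Steps~2--3 (the construction $(\btlambda,\talpha,\btchi)$ and the verification of the fluid constraints via steady-state rate conservation $\E{}{\bbara}=\E{}{\bbarx}$) into a separate Lemma~\ref{lemma: necessary_constraints}, and then the proposition's proof carries out exactly your Steps~1 and~4 (dropping the nonnegative waiting penalty, Jensen on $\lambda_j F_j(\lambda_j)$, and the mixture/marginal identity for the server cost). The only point the paper flags that you elide is the edge case $\E{}{\inner{\bone_{n+m}}{\bbarq}}=\infty$, which the paper dismisses at the outset because then $R(\pi)=-\infty$; otherwise the arguments coincide.
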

The proposition provides an upper bound on the net average profit achievable under any policy. We now present the following lemma which is a crucial step in the proof of the proposition.
\begin{lemma} \label{lemma: necessary_constraints}
For a given stationary Markovian pricing and matching policy $(\blambda(\cdot),\alpha(\cdot),\bx(\cdot))$ with stationary distribution %\fc{shall we use a different symbol here given \eqref{eq: variational_inequality}, or maybe different notation in \eqref{eq: variational_inequality}.}
$\psi(\cdot)$, let $\btlambda=\E{}{\blambda(\bbarq)}$, $\talpha(A)=\sum_{\bq \in \calS} \alpha_\bq(A \times\{\bq\}) \psi(\bq)$ for all Borel $A \subseteq \Omega$ and $\btchi=\E{}{\bx(\bbarq)}$. If $(\blambda(\cdot),\alpha(\cdot),\bx(\cdot))\in \StablePol$, $\E{}{\inner{\bone_{n+m}}{\bbarq}}<\infty$ and $\bx(\cdot)$ satisfies \eqref{eq: matching_constraints},
then $(\btlambda,\talpha,\btchi)$ is feasible in the probabilistic fluid problem \eqref{eq: prob_fluid_model}.
%the constraints of the optimization problem \eqref{eq: prob_fluid_model} are necessary for stability.
\end{lemma}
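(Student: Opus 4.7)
The plan is to verify that the candidate triple $(\btlambda, \talpha, \btchi)$ satisfies the three constraint families \eqref{eq: cust_rate_balance}, \eqref{eq: serv_rate_balance}, and \eqref{eq: compatibility} of the probabilistic fluid problem. The backbone of the argument is a single steady-state flow identity: $\E{}{\bbara} = \E{}{\bbarx}$. This identity follows from the evolution equation $\bq(k+1) = \bq(k) + \ba(k) - \bx(k)$, stationarity of $\psi$ (so $\bq(k+1)$ and $\bq(k)$ share the distribution $\psi$), and the hypothesis $\E{}{\inner{\bone_{n+m}}{\bbarq}} < \infty$, which makes the marginal means finite and allows cancellation after taking expectations on both sides. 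The compatibility and non-negativity constraint \eqref{eq: compatibility} is then automatic because $y_{ij}(\bq) \equiv 0$ for $(i,j) \notin E$ and $y_{ij}(\bq) \geq 0$ for $(i,j) \in E$ follow from the feasibility of $\bx(\cdot)$ with respect to \eqref{eq: matching_constraints}, so the same holds after taking expectations.

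For the customer balance \eqref{eq: cust_rate_balance}, I would look at the $j$-th customer coordinate of $\E{}{\bbara} = \E{}{\bbarx}$ and apply \eqref{eq: row_sums}. On the arrival side, $\E{}{\bara_j^{(2)}} = \E{}{\lambda_j(\bbarq)} = \tl_j$ by definition of $\btlambda$; on the departure side, $\E{}{\barx_j^{(2)}} = \sum_i \E{}{y_{ij}(\bbarq)} = \sum_i \tchi_{ij}$. For the server balance \eqref{eq: serv_rate_balance}, I would look at the $i$-th server coordinate together with \eqref{eq: column_sums}: the departure side gives $\sum_j \tchi_{ij}$, and for the arrival side I would apply the tower property, conditioning first on $\bbarq$ and then on the outcome of the randomization $\alpha_\bbarq$, to obtain
\begin{align*}
\E{}{\bara_i^{(1)}} = \sum_{\bq \in \calS} \psi(\bq)\, \E{\alpha_\bq}{\mu_i}.
\end{align*}

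The main measure-theoretic step — and where care is needed — is identifying the right-hand side above with $\E{\talpha}{\tmu_i}$. For this I would first verify that $\talpha$ is a bona fide probability measure on $\Omega$: non-negativity and countable additivity follow from linearity of $\talpha(A) = \sum_{\bq} \psi(\bq)\, \alpha_\bq(A\times\{\bq\})$ together with Tonelli, while total mass one follows from $\alpha_\bq(\Omega \times \{\bq\}) = 1$ for every $\bq$ and $\sum_\bq \psi(\bq) = 1$. Next, I would establish the pushforward identity $\int_\Omega f \, d\talpha = \sum_\bq \psi(\bq) \int_{\Omega\times\calS} f(\bmu)\, d\alpha_\bq(\bmu,\bq')$ first for indicator functions directly from the definition of $\talpha$, then extend to non-negative Borel-measurable $f$ by simple-function approximation and monotone convergence. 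Applying this to the coordinate map $f(\bmu) = \mu_i$, which is $\talpha$-integrable because $|a_i^{(1)}| \leq A_{\max}$ a.s. bounds $\E{\alpha_\bq}{\mu_i}$ uniformly, yields $\E{\talpha}{\tmu_i} = \sum_\bq \psi(\bq) \E{\alpha_\bq}{\mu_i} = \E{}{\bara_i^{(1)}}$, completing \eqref{eq: serv_rate_balance}.

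The principal obstacle is this measure-theoretic bookkeeping, i.e.\ checking that $\talpha$ is well-defined and that the iterated integral $\sum_\bq \psi(\bq) \E{\alpha_\bq}{\cdot}$ coincides with $\E{\talpha}{\cdot}$; once this is done, the rest of the argument is a direct application of stationary flow balance and the linearity/feasibility of the matching decisions. The boundedness assumption $|a_i^{(1)}|, |a_j^{(2)}| \leq A_{\max}$ ensures absolute convergence throughout, so no subtlety arises in interchanging sums and integrals.
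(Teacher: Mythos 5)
Your proposal is correct and takes essentially the same route as the paper: derive $\E{}{\bbara} = \E{}{\bbarx}$ from stationarity and finite expected queue length, then read the flow-balance and compatibility constraints off the coordinates using the policy definitions and the matching feasibility constraints. You spend more effort than the paper justifying that $\talpha$ is a well-defined probability measure and that the iterated integral equals $\E{\talpha}{\cdot}$, which is careful but not a departure in approach.
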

Intuitively, the above lemma is enforcing that the 'average' arrival rates over the states must be balanced, otherwise, in the long run, some of the queues will keep accumulating the arrivals and that will lead to an unstable system. Thus, the constraints of the fluid model are necessary for stability. This implies that the set $\StablePol$ is a subset of the feasible region of the fluid model. In addition, as the objective \eqref{eq: net_profit_stochastic} is convex in its parameters, we can use Jensen's inequality to obtain the objective of the fluid solution. These two key steps, Lemma \ref{lemma: necessary_constraints} and Jensen's inequality, together deliver Proposition \ref{prop: fluid_model}.

As mentioned above, an important feature of our probabilistic fluid formulation is that by optimizing over the spaces of measures we gain a convex objective. As a result, we obtain an infinite dimensional program, however, as we will see in Section \ref{sec: variations}, this program can be converted into a finite dimensional one. Importantly, there are conditions under which we can reduce our fluid problem to a traditional fluid formulation which showcases the connection between our new approach with classical ones that typically replace stochastic quantities by their deterministic counterparts.
%and also discuss the implications of the solution to the optimal pricing policy.
We identify a condition for the cost function $\cost(\cdot)$ such that the optimal value of the probabilistic fluid optimization problem \eqref{eq: prob_fluid_model} coincides with that of 
a non-probabilistic fluid problem. Moreover, we establish that, under this condition, the optimal fluid server pricing policy is a Dirac probability measure, i.e. a deterministic policy. The optimization problem and the result are presented below.
\vspace{-30pt}
\begin{subequations}
\begin{align}
    \span \tilde{R}^{\star}_{co}\triangleq\max_{(\btlambda,\btmu,\btchi):\btmu\in \Omega} \inner{F(\btlambda)}{\btlambda}-\cost(\btmu)\\
 \textit{subject to,} \ \tl_j&=\sum_{i=1}^n \tchi_{ij} \ \forall j \in [m], \quad \\
    \tchi_{ij}&=0 \ \forall (i,j) \notin E, \ \tchi_{ij} \geq 0 \ \forall (i,j) \in E, \label{eq: convex_compatibility}
\end{align}
\label{eq: convex_prob_fluid_model}%
\end{subequations}
\vspace{-30pt}
\begin{proposition} 
If $\cost(\cdot)$ is convex, then $\tilde{R}^\star = \tilde{R}^\star_{co}$ and there exists an optimal solution of \eqref{eq: prob_fluid_model}
$(\btlambda^\star,\talpha^\star,\btchi^\star)$
such that $\talpha^\star$ is a  Dirac probability measure.
\label{prop: convexity_LP_reduction}%
\end{proposition}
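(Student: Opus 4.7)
The plan is to apply Jensen's inequality to the convex cost $\cost(\cdot)$: any probability measure $\talpha$ in \eqref{eq: prob_fluid_model} can be collapsed to its mean without hurting the objective, and conversely any deterministic solution trivially corresponds to a Dirac $\talpha$. Both inequalities $\tilde{R}^\star \leq \tilde{R}^\star_{co}$ and $\tilde{R}^\star \geq \tilde{R}^\star_{co}$ will follow, and the construction used for the second direction automatically yields an optimal Dirac $\talpha^\star$, giving the existence statement.

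For $\tilde{R}^\star \leq \tilde{R}^\star_{co}$, I would fix any feasible triple $(\btlambda, \talpha, \btchi)$ of \eqref{eq: prob_fluid_model} and let $\bxi$ denote the random vector distributed according to $\talpha$. Define $\btmu := \E{\talpha}{\bxi}$ coordinate-wise. Convexity of $\cost$ implies convexity of its effective domain $\Omega$, so $\btmu \in \Omega$. The server flow-balance constraint \eqref{eq: serv_rate_balance}, $\E{\talpha}{\xi_i} = \sum_j \tchi_{ij}$, then gives $\tmu_i = \sum_j \tchi_{ij}$, while customer flow balance and compatibility on $\btchi$ carry over verbatim, so $(\btlambda, \btmu, \btchi)$ is feasible for \eqref{eq: convex_prob_fluid_model}. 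Jensen's inequality then yields
\begin{equation*}
\E{\talpha}{\cost(\bxi)} \ \geq \ \cost\bigl(\E{\talpha}{\bxi}\bigr) \ = \ \cost(\btmu),
\end{equation*}
so the objective of \eqref{eq: convex_prob_fluid_model} at $(\btlambda, \btmu, \btchi)$ dominates the objective of \eqref{eq: prob_fluid_model} at $(\btlambda, \talpha, \btchi)$. Taking suprema over feasible triples delivers $\tilde{R}^\star \leq \tilde{R}^\star_{co}$.

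For the reverse inequality and the Dirac property, I would take an optimal $(\btlambda^\dagger, \btmu^\dagger, \btchi^\dagger)$ of \eqref{eq: convex_prob_fluid_model} (whose existence is proved in Appendix~\ref{app: existence}) and set $\talpha^\star$ equal to the Dirac probability measure concentrated at $\btmu^\dagger$ on the Borel sigma-algebra of $\Omega$. Then $\E{\talpha^\star}{\xi_i} = \tmu_i^\dagger = \sum_j \tchi_{ij}^\dagger$, verifying the server flow balance \eqref{eq: serv_rate_balance}, while $\E{\talpha^\star}{\cost(\bxi)} = \cost(\btmu^\dagger)$. Customer flow balance and compatibility are inherited, so $(\btlambda^\dagger, \talpha^\star, \btchi^\dagger)$ is feasible for \eqref{eq: prob_fluid_model} with objective exactly $\tilde{R}^\star_{co}$. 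This proves $\tilde{R}^\star \geq \tilde{R}^\star_{co}$ and simultaneously exhibits an optimal triple whose $\talpha^\star$ is Dirac.

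The only delicate point is verifying $\btmu := \E{\talpha}{\bxi} \in \Omega$ in the first step, which rests on convexity of $\Omega$. I would treat this as implicit in the hypothesis ``$\cost(\cdot)$ is convex'' since a convex function has convex effective domain; if needed one can pass to $\mathrm{conv}(\Omega)$ and the convex envelope of $\cost$ with no substantive change to the argument. This mild domain issue is the only step that requires care; the remainder of the proof is routine feasibility bookkeeping and a one-line application of Jensen.
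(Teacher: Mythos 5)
Your proof is correct and follows essentially the same route as the paper's: Jensen's inequality collapses $\talpha$ to its mean for the direction $\tilde{R}^\star \leq \tilde{R}^\star_{co}$, and a Dirac measure at an optimizer of \eqref{eq: convex_prob_fluid_model} gives the reverse inequality plus the existence claim. The one place you are slightly more careful than the paper is in flagging that $\E{\talpha}{\btmu}\in\Omega$ requires convexity of the effective domain $\Omega$; the paper leaves this implicit in Proposition~\ref{prop: convexity_LP_reduction} and only verifies it explicitly later in the proof of Corollary~\ref{corollary: IC_convex}.
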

The proof follows by using Jensen's inequality in the objective function of \eqref{eq: prob_fluid_model} and then viewing the optimization problem in the space of 
$(\btlambda,\E{\talpha}{\btmu},\btchi)$.
This proposition simplifies the fluid model and also provides sufficient conditions under which the optimal fluid pricing policy is deterministic. In turn, in our problem, whenever the cost function $\cost(\cdot)$ is convex, it is possible to simply replace the stochastic quantities in \eqref{eq:opt_stoch} by their deterministic counterparts and obtain a fluid upper bound. However, when $\cost(\cdot)$ is general so that it can encompass different strategic settings (c.f. Section \ref{sec: variations}), the convexity assumption might not hold and our probabilistic fluid formulation is needed.

To understand further, consider the case when the cost function is not convex. Then, 
under a probabilistic policy $\talpha$, Jensen's inequality might be violated, that is, $\E{\talpha}{\cost(\btmu)} < \cost(\E{\talpha}{\btmu})$. This will lead to lower cost under the probabilistic policy compared to the corresponding deterministic policy, $\E{\talpha}{\btmu}$. Thus, the richer space of probabilistic policies may obtains a larger profit with respect to the deterministic space of policies, and it also makes the optimization problem more amenable to analysis by turning the objective function into a convex function. This is the main advantage of our probabilistic fluid model: it convexifies the objective of an otherwise intractable problem. Lastly, as the revenue function $(F_j(\lambda_j)\lambda_j)$ is concave, probabilistic policies on the customer side is not essential as they will not result in higher profit compared to its deterministic counterpart. 

To extend this analysis to the stochastic setting, if we naively try to use the optimal solution of the probabilistic fluid model as the pricing policy for all $\bq \in \calS$, then in each time slot, we will receive fluid profit in expectation. However, the system becomes unstable as argued in \citep{caldentey2009fcfs} and thus, the stationary distribution doesn't exist. To see why the system is unstable, consider the case of single link two sided queue operating under the pricing policy given by the fluid solution. It will just be a random walk on
$\bbZ$ which is known to be null recurrent. This provides us with the intuition that we need to operate close to the fluid solution but we need to perturb the arrival rates for customers and/or the probability measure for servers such that the system becomes stable. In the next sections, we will consider pricing policies which are a small perturbation of the fluid solution and show that the net profit and profit under that policy is `sufficiently' close to $\tilde{R}^\star$. 

\section{Asymptotic Optimality of Two Price Policy}
In this section, we will analyze the stochastic system and show that the net profit obtained is `sufficiently' close to the upper bound $\tilde{R}^\star$. To show this, we will consider a large market asymptotic regime indexed by $\eta$.
In this regime, we propose a dynamic two-price policy   and show that its corresponding net profit  converges to that of the scaled optimal fluid solution. As we consider a DTMC, let $w$ denote the time between two transitions. We define our asymptotic regime below.
\begin{definition}[Asymptotic Regime] \label{defn: asymptotic_regime}
We study the system in the large market regime, wherein for the $\eta^{th}$ system, the time between two decision epochs is scaled by $1/\eta$ and the arrivals between two decision epochs remains the same in the stochastic sense. Mathematically,
\begin{align*}
    w_\eta=\frac{w}{\eta}, \quad \ba_\eta(k)=\ba(k), \bq_\eta(k)=\bq(k) \ \forall k 
    \in \bbZ_+.
\end{align*}
\end{definition}
Our convention is to subscript by $\eta$ all the parameters which are associated with the $\eta^{th}$ system. For example, the steady state queue length vector is denoted as $\bbarq_{\eta}$ and the corresponding arrival and matching random variables by $\bbara_{\eta}$ and $\bbarx_{\eta}$, respectively.
The time scaling leads to a large volume of arrivals per unit time and more frequent matching decisions. This is desirable as the inflow of customers and servers increases, it is advantageous to make the matching decision more frequently. 

Note that, under the asymptotic regime, the optimal fluid solution will be $\tilde{R}^\star_\eta=\eta \tilde{R}^\star$. This is because the time is scaled by $\eta$ which leads to the profit per unit time to be scaled by $\eta$.
Now, motivated by our upper bound in Proposition \ref{prop: fluid_model}, we will define our main metric of analysis, the `net profit-loss'.
%\fccut{the `net profit-loss', a metric to analyze various policies.}
\begin{definition}[Net Profit-Loss]
For a given pricing and matching policy $\pi_\eta$, the net profit-loss, $L_\eta(\pi_\eta)$, for the $\eta^{th}$ system is defined as the difference of the optimal profit $\tilde{R}^\star_\eta$
%\fc{should this be $\tilde{R}^\star_\eta$ (also look at the proof of lemma 6 to make sure the notation is right).} 
and the long run average net profit obtained under that policy
\begin{equation*}
L_\eta(\pi_\eta) \triangleq \tilde{R}^\star_\eta-R_\eta(\pi_\eta).    
\end{equation*}
In addition, we define the profit-loss as $L_\eta^P(\pi_\eta)
\triangleq \tilde{R}_\eta^\star -P_\eta(\pi_\eta)$.
\end{definition}
We say that a sequence of policies $\{\pi_\eta\}$ is asymptotically optimal if 
\begin{equation}
\limsup_{\eta \rightarrow \infty} \frac{L_{\eta}(\pi_\eta)}{\eta}=0.
\end{equation}
%A sequence of policies is optimal if $\lim\sup_{\eta \rightarrow \infty} \frac{L^{\eta}}{\eta}=0$ \fc{we say that a sequence of policies is optimal if.....? check}.
Thus, any policy which leads to $o(\eta)$ net profit-loss is asymptotically optimal. 

Now that we have defined a criterion to analyze a given policy, we introduce a sequence of policies which are asymptotically optimal.
The idea is to design a policy that operates as close to the fluid solution as possible because that will result in fluid optimal profit. Denote the optimal solution of the probabilistic fluid problem as $(\btlambda^\star,\talpha^\star,\btchi^\star)$. Note that, without loss of generality, we can assume $\btlambda^\star>\bzero_m$, $\E{\talpha^\star}{\bmu}>\bzero_n$ and $\tchi^\star_{ij}>0$ for all $(i,j) \in E$, otherwise, we can remove that vertex/edge from the graph and work with a smaller graph such that the above conditions are satisfied.  
%For the pricing policy, the idea is to operate as close to the fluid solution as possible because that will result in fluid optimal profit. 
%Making this intuition concrete, 
Now, we introduce the two price policy:
\begin{align}
    \lambda_{\eta,j}(\bq)&=\begin{cases}
    \tilde{\lambda}_j^\star+\epsilon_{\eta} &\textit{if } q_{j}^{(2)}=0; \\
     \tilde{\lambda}_j^\star-\epsilon_{\eta} &\textit{otherwise}; 
    \end{cases} \quad\quad
    \alpha_{\eta,\bq}=\tilde{\alpha}^\star, \ \forall \bq \in \calS, \label{eq: two_price_policy}
\end{align}
%The co-variance matrix of the customer arrival is $\Sigma^{(2)}$ and that of drivers is $\Sigma^{(1)}(\alpha^\star)$ for all $\bq \in \calS$. 
where $\alpha_{\eta,\bq}=\tilde{\alpha}^\star$ is to denote that for all Borel subsets $A \subseteq \Omega$ and all $\bbarq, \bbarq' \in \calS$, we have $\alpha_{\eta,\bq}(A \times\{\bbarq'\})=\tilde{\alpha}^\star(A)\mathbbm{1}\{\bbarq'=\bbarq\}$. We assume that $\epsilon_\eta \rightarrow 0$ as $\eta \rightarrow \infty$ as we want to approach the fluid optimal pricing policy. Without loss of generality, we can assume $\epsilon_\eta \leq 1$ for all $\eta$.
%Note that, the pricing policy introduced here is considerably simpler than \citep{} as we use two different rates only on customer side and on the driver side, we use a fixed probabilistic policy. 
We highlight the simplicity of this pricing policy in which
 we use two different rates only on the customer side and on the server side, we use the fluid optimal probabilistic policy. 
 In addition, the threshold at which we change the rate is at $q_j^{(2)}=0$.
 This captures how the service provider needs to adjust its pricing policy 
 to maintain a stable system and, in turn, sustain the proper balance of supply and demand. 

The matching policy we use is the max-weight matching policy which is defined as:
\begin{align}
  \by_\eta(k)=\arg\max_{\bv \in \bbZ_+^{|E|}} \sum_{(i,j) \in E} v_{ij}(q_{\eta,i}^{(1)}(k)+q_{\eta,j}^{(2)}(k)) \quad
    \textit{subject to } \eqref{eq: matching_constraints}. \label{eq: modified_max_weight_matching}
\end{align}
We use $\pi_\eta$ to refer to the policy defined in 
\eqref{eq: two_price_policy} and \eqref{eq: modified_max_weight_matching}.
Now, we will present the main theorem of this paper.

\begin{theorem} \label{theo: two_price_policy}
Consider a sequence of DTMCs parametrized by $\eta$ operating under the pricing and matching policy $\pi_\eta$.
%as defined in \eqref{eq: two_price_policy} and \eqref{eq:modified_max_weight_matching}. 
Then the net profit-loss $L_{\eta}(\pi_\eta)$ is $O(\eta^{1/3})$ for the choice $\epsilon^{\eta}=\eta^{-1/3}$.
\end{theorem}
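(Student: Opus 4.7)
The plan is to decompose the net profit-loss $L_\eta(\pi_\eta)$ as a sum of a pure profit-loss and a waiting-cost penalty, then bound each by $O(\eta^{1/3})$ separately. Concretely, combining Proposition~\ref{prop: fluid_model} with the asymptotic scaling $\tilde{R}^\star_\eta=\eta\tilde{R}^\star$ one may write
\[
L_\eta(\pi_\eta) \;\leq\; \eta\bigl(\tilde{R}^\star - \E{}{\inner{F(\blambda_\eta(\bbarq_\eta))}{\blambda_\eta(\bbarq_\eta)} - \E{\alpha_{\eta,\bbarq_\eta}}{\cost(\bmu)}}\bigr) \;+\; \E{}{\inner{\bs}{\bbarq_\eta}},
\]
so it suffices to prove that (i) the waiting-cost term is $O(1/\epsilon_\eta)=O(\eta^{1/3})$, and (ii) the per-epoch profit gap inside the parenthesis is $O(\epsilon_\eta^2)=O(\eta^{-2/3})$, so that after multiplication by the factor $\eta$ it is also $O(\eta^{1/3})$. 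The choice $\epsilon_\eta=\eta^{-1/3}$ is exactly the one that balances these two error sources.

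For (i), I would run a Foster--Lyapunov argument with the quadratic Lyapunov function $V(\bq)=\tfrac{1}{2}\inner{\bone}{\bq}^2$. The two-price customer policy \eqref{eq: two_price_policy} injects a negative mean drift of size $\epsilon_\eta$ on every customer component whose queue is non-empty, while the server-side policy is the state-independent fluid-optimal measure $\tilde{\alpha}^\star$, whose mean rate equals the fluid-optimal service rate exactly. Combined with the max-weight rule \eqref{eq: modified_max_weight_matching}, which keeps the chain stable whenever the effective arrival rate vector lies in the interior of the capacity region, one obtains a one-step drift of the form $\E{}{V(\bq_\eta(k+1))-V(\bq_\eta(k))\mid\bq_\eta(k)=\bq}\leq -C_1\epsilon_\eta\inner{\bone}{\bq}+C_2$, with $C_1,C_2>0$ depending only on $A_{\max}$ and the graph $G$. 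Foster--Lyapunov then yields both stability (so that the stationary distribution of $\bbarq_\eta$ exists) and the moment bound $\E{}{\inner{\bone}{\bbarq_\eta}}=O(1/\epsilon_\eta)$, giving (i).

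For (ii), I would Taylor-expand each concave revenue $h_j(\lambda)\triangleq F_j(\lambda)\lambda$ around $\tilde{\lambda}^\star_j$. Since $\lambda_{\eta,j}(\bbarq)$ only takes the values $\tilde{\lambda}^\star_j\pm\epsilon_\eta$ and Assumptions~\ref{ass: monotonic}--\ref{ass: concave} ensure $h_j\in C^2$ near $\tilde{\lambda}^\star_j$, the second-order remainder contributes at most $O(\epsilon_\eta^2)$ per epoch per component. Because $\alpha_{\eta,\bq}=\tilde{\alpha}^\star$ is state-independent, the cost term $\E{\alpha_{\eta,\bbarq_\eta}}{\cost(\bmu)}$ exactly matches its fluid counterpart $\E{\tilde{\alpha}^\star}{\cost(\btmu)}$ and contributes no loss at all. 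The delicate piece is the first-order term $\sum_j h_j'(\tilde{\lambda}^\star_j)\bigl(\E{}{\lambda_{\eta,j}(\bbarq_\eta)}-\tilde{\lambda}^\star_j\bigr)$, which a priori is only of order $\epsilon_\eta$. I would show it is actually $O(\epsilon_\eta^2)$ by combining three ingredients: the KKT conditions of the fluid program \eqref{eq: prob_fluid_model} (which yield Lagrange multipliers $\alpha_j,\beta_i$ with $h_j'(\tilde{\lambda}^\star_j)=\alpha_j$ and $\alpha_j+\beta_i=0$ along every edge with $\tilde{\chi}^\star_{ij}>0$, and $\alpha_j+\beta_i\leq 0$ otherwise); the steady-state flow-balance identities $\E{}{\bbara^{(2)}_\eta}=\E{}{\bbarx^{(2)}_\eta}$ and $\E{\tilde{\alpha}^\star}{\btmu}=\E{}{\bbarx^{(1)}_\eta}$ from Lemma~\ref{lemma: necessary_constraints}; and the fact that, under max-weight matching coupled with the queue bound of step (i), the expected steady-state flow on any non-support edge is itself $O(\epsilon_\eta)$. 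Telescoping the first-order term along each connected component of the fluid support graph rewrites it as $\sum_{(i,j)\notin\mathrm{supp}}(\alpha_j+\beta_i)\bar{\chi}_{\eta,ij}$, which is then bounded by $O(\epsilon_\eta^2)$.

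The main obstacle is this first-order cancellation. The two-price policy is inherently asymmetric (the customer queue is non-empty on the vast majority of stationary states), so $\E{}{\lambda_{\eta,j}(\bbarq_\eta)}-\tilde{\lambda}^\star_j$ really is of order $\epsilon_\eta$, and without the KKT-plus-complementary-slackness structure above the per-epoch loss would be $\Theta(\epsilon_\eta)$, blowing up to $\Theta(\eta^{2/3})$ after multiplication by $\eta$. A secondary hurdle is proving the drift inequality in step (i) uniformly in $\eta$ under max-weight matching, since the realised matching $\by_\eta$ can deviate state by state from the fluid $\btchi^\star$; this is handled by the standard max-weight quadratic-potential argument, exploiting that under the two-price policy the effective mean arrival vector is kept strictly inside the capacity region by a margin $\epsilon_\eta$. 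Combining (i) and (ii) with $\epsilon_\eta=\eta^{-1/3}$ yields $L_\eta(\pi_\eta)=O(\eta\epsilon_\eta^2)+O(1/\epsilon_\eta)=O(\eta^{1/3})$, as claimed.
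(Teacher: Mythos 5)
Your skeleton matches the paper's exactly: decompose $L_\eta$ into a waiting-cost penalty bounded via Foster--Lyapunov by $O(1/\epsilon_\eta)$ and a per-epoch profit gap bounded via a Taylor expansion around $\btlambda^\star$ by $O(\epsilon_\eta^2)$, then balance at $\epsilon_\eta=\eta^{-1/3}$. These are Lemma~\ref{lemma: stability} (drift and moment bound under max-weight), Lemma~\ref{lemma: first_order_optimality} (first-order vanishing via KKT applied to the fluid program with $\talpha$ frozen at $\talpha^\star$), and Lemma~\ref{lemma: profit_loss} (second-order Taylor remainder is $\Theta(\epsilon_\eta^2)$ with negative-definite curvature from Assumptions~\ref{ass: monotonic}--\ref{ass: concave}).

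There is, however, a genuine gap in your first-order-cancellation step. You telescope the first-order term to $\sum_{(i,j)\notin\mathrm{supp}}(\alpha_j+\beta_i)\bar{\chi}_{\eta,ij}$ and assert that this is $O(\epsilon_\eta^2)$. But your own supporting claim is only that the steady-state flows $\bar{\chi}_{\eta,ij}$ on non-support edges are $O(\epsilon_\eta)$; since the Lagrange multipliers $\alpha_j+\beta_i$ on those edges are generically order-one constants, the resulting bound is only $O(\epsilon_\eta)$, which would blow the per-epoch gap up to $\Theta(\epsilon_\eta)$ and, after multiplication by $\eta$, to $\Theta(\eta^{2/3})$ --- precisely the failure mode you warn about. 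The paper avoids this issue entirely by the up-front reduction (stated just before equation~\eqref{eq: two_price_policy}): without loss of generality it assumes $\tchi^\star_{ij}>0$ for every $(i,j)\in E$, deleting from $E$ any edge not in the support of the fluid optimum, so that the max-weight matching~\eqref{eq: modified_max_weight_matching} never routes mass over a non-support edge and complementary slackness makes every inequality multiplier vanish. With that reduction, the proof of Lemma~\ref{lemma: first_order} shows the inner product of the fluid gradient with the feasible direction $(\E{}{\blambda_\eta(\bbarq_\eta)}-\btlambda^\star,\bchi^\dagger-\btchi^\star)$ is \emph{exactly} zero, not merely $O(\epsilon_\eta^2)$, and the whole per-epoch loss is carried by the second-order Taylor remainder. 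Incorporating that WLOG step --- or, equivalently, proving that max-weight plus the two-price policy gives $\bar{\chi}_{\eta,ij}=O(\epsilon_\eta^2)$ on non-support edges, a considerably harder claim that you have not established --- is what your proposal needs to close the argument.
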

\proof{Proof sketch.}
 The main reason we obtain an $\eta^{1/3}$ net profit loss is due to the trade off between the expected queue length and profit-loss. Consider a pricing policy which deviates from the fluid optimal pricing policy by at-most $\epsilon$, that is, for all $\bq \in \calS$, we have $|\lambda_j(\bq)-\tilde{\lambda}_j^\star|\leq \epsilon$ for all $j \in [m]$ and $|\E{\alpha(\bq)}{\mu_i}-\E{\talpha^\star}{\mu_i}|\leq \epsilon$ for all $i \in [n]$. 
 Then, then the  expected queue length is of the order $\frac{1}{\epsilon}$ and the profit loss is of the order $\eta\epsilon^2$. In particular, $\epsilon$ characterizes the drift of the DTMC towards zero which is analogous to the traffic intensity in a single sided queue. It is known that the queue length in a single sided queue scales as $1/\epsilon$ when $\epsilon$ is small (which is called the heavy traffic regime). In addition, the expression of profit-loss can be expanded using Taylor's series expansion. The first order term can be shown to be zero by using the optimality of $(\btlambda^\star,\talpha^\star)$. The second order term results in order $\epsilon^2$ loss.
Hence, considering the trade off between expected queue length and profit loss, the best $\epsilon$ is $\eta^{-1/3}$ which results in $\eta^{1/3}$ net profit loss.

 %There are two major steps in proving this theorem. First, we need to bound the expected sum of queue lengths under the given policy and second, we need to bound the loss in average profit $(R^\star_\eta-P_\eta)$.
 
 Both the steps mentioned above require special treatment because of the strategic 
%\fc{I think we are using both behaviour and behavior, let's stick to behavior which I believe it the American version.}
behavior of the servers. In addition, analyzing the queueing system is more complicated than \citep{varma2020dynamic} as the arrival process has a general distribution. 
$\Halmos$
\endproof
Here, we present multiple lemmas which assists us in proving the theorem and outline the major steps in the proof. Firstly, under the given pricing and matching policy, we show that the system is stable and we upper bound the expected sum of queue lengths. 
\begin{lemma} \label{lemma: stability}
For all $\eta>0$, the discrete time Markov chain operating under the pricing and matching policy $\pi_\eta$ is positive recurrent and there exists a constant $B>0$ such that $
 \E{}{\inner{\bs}{\bbarq_\eta}}\leq B\frac{1}{\epsilon_\eta}.$
\end{lemma}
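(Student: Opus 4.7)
\emph{Plan.} I would prove both conclusions via a Foster--Lyapunov argument with the quadratic Lyapunov function $V(\bq) = \tfrac{1}{2}\|\bq\|_2^2$. Define the one-step drift $W_V(\bq) \dfn \E{}{V(\bq(k+1)) - V(\bq(k)) \mid \bq(k)=\bq}$; expanding $\bq(k+1) = \bq(k) + \ba(k) - \bx(k)$ yields
\[
W_V(\bq) = \inner{\bq}{\E{}{\ba(k) - \bx(k) \mid \bq}} + \tfrac{1}{2}\, \E{}{\|\ba(k) - \bx(k)\|^2 \mid \bq}.
\]
My target is a drift inequality of the form $W_V(\bq) \leq -c\, \epsilon_\eta \inner{\bone}{\bq} + K$ for constants $c, K > 0$ independent of $\bq$ and $\eta$. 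Given such an inequality, Foster--Lyapunov implies positive recurrence on the communicating class of $\bzero$, and taking stationary expectations (so that $\E{}{W_V(\bbarq_\eta)} = 0$) yields $\E{}{\inner{\bone}{\bbarq_\eta}} \leq K / (c\, \epsilon_\eta)$. The claimed bound then follows with $B \dfn \|\bs\|_\infty K / c$, since $\inner{\bs}{\bbarq_\eta} \leq \|\bs\|_\infty \inner{\bone}{\bbarq_\eta}$.

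For the first-order term I would exploit the defining property of max-weight matching: $\inner{\bq}{\bx(\bq,\ba)} \geq \inner{\bq}{\bx'}$ for every $\bx'$ feasible under \eqref{eq: matching_constraints}. Constructing $\bx'$ as a randomized rounding of the fluid-optimal flow $\btchi^\star$---which by \eqref{eq: cust_rate_balance}--\eqref{eq: serv_rate_balance} satisfies $\sum_i \tchi^\star_{ij} = \tl_j^\star$ and $\sum_j \tchi^\star_{ij} = \E{\talpha^\star}{\tmu_i}$---one obtains $\inner{\bq}{\E{}{\bx \mid \bq}} \geq \sum_j q_j^{(2)} \tl_j^\star + \sum_i q_i^{(1)} \E{\talpha^\star}{\tmu_i} - K_1$ for a constant $K_1$ that absorbs rounding and arrival variability. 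Meanwhile, the two-price structure makes $\inner{\bq^{(2)}}{\E{}{\ba^{(2)} \mid \bq}} = \sum_j q_j^{(2)} (\tl_j^\star - \epsilon_\eta)$ because the $+\epsilon_\eta$ branch is active only at $q_j^{(2)}=0$ and therefore contributes zero to the inner product with $\bq^{(2)}$; the server-side expected arrivals equal $\E{\talpha^\star}{\tmu_i}$ exactly since $\alpha_{\eta,\bq} = \talpha^\star$. Subtracting yields $\inner{\bq}{\E{}{\ba - \bx \mid \bq}} \leq -\epsilon_\eta \|\bq^{(2)}\|_1 + K_1$, and the analogous drain on $\bq^{(1)}$ is inherited through the bipartite conservation identity
\[
\textstyle\sum_i q_i^{(1)}(k+1) - \sum_j q_j^{(2)}(k+1) = \sum_i q_i^{(1)}(k) - \sum_j q_j^{(2)}(k) + \sum_i a_i^{(1)}(k) - \sum_j a_j^{(2)}(k),
\]
whose state-dependent drift $\epsilon_\eta \bigl(|\{j:q_j^{(2)}>0\}|-|\{j:q_j^{(2)}=0\}|\bigr)$ is the self-balancing mechanism that forces $\|\bq^{(1)}\|_1$ to track $\|\bq^{(2)}\|_1$ up to an $O(1/\epsilon_\eta)$ correction.

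For the second-order term, boundedness of arrivals $|a_j(k)| \leq A_{\max}$ together with the max-weight structural property that, after matching, every edge $(i,j) \in E$ satisfies $q_i^{(1)}(k+1) \cdot q_j^{(2)}(k+1) = 0$ (otherwise max-weight could match one additional pair and strictly increase the objective) imply that the per-slot net change in each queue is controlled by the arriving volume in its connected component, giving $\E{}{\|\ba(k) - \bx(k)\|^2 \mid \bq} \leq K_2$ uniformly in $\bq$---a direct adaptation of the argument in \citep{varma2020dynamic} to the present discrete-time, general-arrival setting. Collecting the two bounds yields $W_V(\bq) \leq -c\, \epsilon_\eta \inner{\bone}{\bq} + K$ outside a compact set of size $O(1/\epsilon_\eta)$, completing the proof. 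The main obstacle is the server-side first-order bound: since the two-price perturbation is applied only on the customer side, the drain on $\sum_i q_i^{(1)}$ must be deduced indirectly from the bipartite conservation law and max-weight's throughput optimality, and the aggregate imbalance $\sum_i q_i^{(1)} - \sum_j q_j^{(2)}$---itself a random walk with state-dependent drift of order $\epsilon_\eta$---must be carefully controlled to yield a joint negative drift on $\inner{\bone}{\bq}$.
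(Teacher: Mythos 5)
Your plan correctly identifies the Lyapunov function (quadratic), the decomposition of the drift into a first-order cross term and a bounded second-order term, the use of Foster--Lyapunov plus a stationary-moment bound, and---most importantly---the real difficulty: the two-price perturbation acts only on the customer side, so a direct negative drift on $\inner{\bone_n}{\bq^{(1)}}$ is not immediate. All of that tracks the paper. The problem is that you do not actually close that gap, and the device you propose for closing it does not combine with the single quadratic Lyapunov function you fixed.

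Concretely: your matching lower bound is
$\inner{\bq}{\E{}{\bx\mid\bq}} \ge \sum_j q_j^{(2)}\tl_j^\star + \sum_i q_i^{(1)}\E{\talpha^\star}{\mu_i} - K_1$,
which uses only the fluid rates $\btchi^\star$. Subtracting the arrival term leaves you with $W_V(\bq)\le -\epsilon_\eta\inner{\bone_m}{\bq^{(2)}} + K$ and \emph{no} $\epsilon_\eta$-order negative term in $\bq^{(1)}$ at all. On the region $\{\bq^{(2)}=\bzero,\ \bq^{(1)}\to\infty\}$ this drift bound is merely $\le K$, so Foster--Lyapunov does not apply and the stationary-expectation step is circular. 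Appealing to the conservation identity for the imbalance $z = \inner{\bone_n}{\bq^{(1)}}-\inner{\bone_m}{\bq^{(2)}}$ is a genuinely different object with its own (one-dimensional) drift analysis; it is not a free add-on to the quadratic drift of $\bq$ and you cannot wave it in to ``inherit'' the drain on $\bq^{(1)}$ without running a separate argument and then combining it with the first one (e.g.\ via a compound Lyapunov function or a coupling)---which you do not do.

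The paper's resolution (its Lemma~\ref{lemma: negative_drift}) is sharper and specifically tied to max-weight. The chosen feasible fractional matching $\hy_{ij}=\min\{(a_i^{(1)}+q_i^{(1)})\tchi_{ij}^\star/\E{\talpha^\star}{\mu_i}, (a_j^{(2)}+q_j^{(2)})\tchi_{ij}^\star/\tl_j^\star\}$ exploits the invariant that under max-weight there are no compatible waiting pairs at the start of a slot: when $q_i^{(1)}$ is large, every neighboring $q_j^{(2)}$ is $0$ and hence receives the \emph{high} rate $\tl_j^\star+\epsilon_\eta$, which pushes $\E{}{\hy_{ij}\mid q_i^{(1)}\text{ large}} = \tchi_{ij}^\star + \epsilon_\eta\tchi_{ij}^\star/\tl_j^\star$. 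This is where the extra $\epsilon_\eta\min_i\{\sum_j\tchi_{ij}^\star/\tl_j^\star\}\inner{\bone_n}{\bq^{(1)}}$ term comes from, and it is precisely what turns your bound into $W_V(\bq)\le -c\,\epsilon_\eta\inner{\bone_{n+m}}{\bq}+K$. Your proof as written is missing this coupling between the emptiness structure enforced by max-weight, the state-dependent customer rates, and the server-queue drain; replacing it with a randomized rounding of $\btchi^\star$ (which ignores which side of the two-price threshold each customer queue sits on) cannot produce the $\epsilon_\eta\inner{\bone_n}{\bq^{(1)}}$ term, so the drift inequality you need is not available.
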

We use the Foster-Lyapunov theorem \citep[Theorem 3.3.7]{srikantbook}  to prove positive recurrence. In particular, we considered a quadratic Lyapunov function and analyzed its drift. Then we use the moment bound theorem \cite[Proposition 6.14]{hajek2015random} to get bounds on the sum of expected queue lengths. Due to the strategic behaviors of the servers, the arrivals to different queues are co-related and the co-variance of the arrival process appears in the constant %\fc{make sure this is the right constant from the statement}
$B$.

After we prove that the system is positive recurrent by Lemma \ref{lemma: necessary_constraints}, we know that the arrival rates under the two price policy satisfy the constraints of the fluid optimization problem \eqref{eq: prob_fluid_model}. We will use this idea to show the following equality which, in turn, will be useful to obtain the profit-loss bound.
\begin{lemma} \label{lemma: first_order_optimality}
For all $\eta>0$, the DTMC operating under the 
pricing and matching policy $\pi_\eta$,
%given by \eqref{eq: two_price_policy} \eqref{eq: modified_max_weight_matching}, 
such that $\E{}{\inner{\bone_{n+m}}{\bbarq_\eta}}<\infty$ the following holds:
\begin{align*}
    \sum_{j=1}^m \left(\tl^\star_jF'(\tl^\star_j)+F_j(\tl^\star_j)\right)\left(\P{}{\barq_{\eta,j}^{(2)}>0}-\P{}{\barq_{\eta,j}^{(2)}=0}\right)=0.
\end{align*}
\end{lemma}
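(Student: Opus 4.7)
The plan is to exploit the optimality of $(\btlambda^\star,\talpha^\star,\btchi^\star)$ in the probabilistic fluid problem \eqref{eq: prob_fluid_model} to derive a first-order condition on $\btlambda^\star$ restricted to the subspace where $\talpha=\talpha^\star$. First I would compute the stationary averages under $\pi_\eta$. Because the server policy is state-independent, the averaged measure of Lemma \ref{lemma: necessary_constraints} reduces to $\talpha_\eta=\talpha^\star$ exactly. On the customer side the two-price policy yields
\[
\tl_{\eta,j}\;\triangleq\;\E{}{\lambda_{\eta,j}(\bbarq_\eta)}\;=\;\tl^\star_j-\epsilon_\eta\bigl(\P{}{\barq_{\eta,j}^{(2)}>0}-\P{}{\barq_{\eta,j}^{(2)}=0}\bigr).
\]
Under the hypothesis $\E{}{\inner{\bone_{n+m}}{\bbarq_\eta}}<\infty$, Lemma \ref{lemma: necessary_constraints} then produces a $\btchi_\eta$ making $(\btlambda_\eta,\talpha^\star,\btchi_\eta)$ feasible in \eqref{eq: prob_fluid_model}.

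Next I would fix $\talpha=\talpha^\star$ in \eqref{eq: prob_fluid_model}; the cost $\E{\talpha^\star}{\cost(\btmu)}$ is then a constant, so the restricted problem reduces to maximizing the concave function $h(\btlambda)\triangleq\inner{F(\btlambda)}{\btlambda}$ (concave by Assumption \ref{ass: concave}) over the polytope $\Lambda$ of customer-rate vectors that admit a compatible flow $\btchi\geq\bzero$ (supported on $E$) balancing the fixed server totals $\E{\talpha^\star}{\btmu}$. Global optimality of $(\btlambda^\star,\talpha^\star,\btchi^\star)$ in \eqref{eq: prob_fluid_model} implies that $\btlambda^\star$ maximizes $h$ over $\Lambda$, and the previous step places $\btlambda_\eta$ in $\Lambda$.

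The heart of the proof is a two-sided first-order condition at $\btlambda^\star$. Since $\tl^\star_j>0$ and $\tchi^\star_{ij}>0$ for every $(i,j)\in E$ (the WLOG assumption stated just above \eqref{eq: two_price_policy}), I would verify that both $\btlambda^\star\pm t(\btlambda_\eta-\btlambda^\star)$ lie in $\Lambda$ for all sufficiently small $t>0$: the $+$ direction is feasible by convexity of $\Lambda$, and for the $-$ direction I would pair the perturbed rate with the explicit flow $\btchi^\star+t(\btchi^\star-\btchi_\eta)$, which satisfies both flow-conservation equations by linearity and remains non-negative for small $t$ by strict positivity of $\btchi^\star$ on $E$ (and stays identically zero off $E$, since the same holds for $\btchi_\eta$). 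Concavity of $h$ at its maximizer then forces $\nabla h(\btlambda^\star)\cdot(\btlambda_\eta-\btlambda^\star)=0$, that is,
\[
\sum_{j=1}^m\bigl(\tl^\star_j F'_j(\tl^\star_j)+F_j(\tl^\star_j)\bigr)(\tl_{\eta,j}-\tl^\star_j)\;=\;0.
\]
Substituting the expression for $\tl_{\eta,j}-\tl^\star_j$ from the first paragraph and dividing by $-\epsilon_\eta$ delivers the claimed identity.

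The main obstacle is precisely the backward feasibility just above: the optimality of $\btlambda^\star$ alone yields only the one-sided inequality $\nabla h(\btlambda^\star)\cdot(\btlambda_\eta-\btlambda^\star)\leq 0$, and upgrading this to an equality is what requires the strict interiority of $\btchi^\star$ on compatible edges, because the flow-conservation equations couple the coordinates of $\btlambda$ through the compatibility graph and any backward perturbation of $\btlambda^\star$ must be realized by a simultaneously consistent perturbation of the matching flow $\btchi^\star$.
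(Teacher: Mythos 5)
Your proposal is correct, and it tracks the paper's underlying logic: both arguments (i) invoke Lemma~\ref{lemma: necessary_constraints} to place the $\eta$-averaged two-price rates inside the feasible region of the probabilistic fluid problem with $\talpha$ pinned at $\talpha^\star$, (ii) observe that the cost term becomes a constant so the objective reduces to the concave function $h(\btlambda)=\inner{F(\btlambda)}{\btlambda}$, and (iii) exploit the strict positivity $\btchi^\star_{ij}>0$ on $E$ to rule out any binding inequality and obtain \emph{equality}, not just $\le$, in the first-order condition.

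Where you differ is in the execution of step (iii). The paper passes through the KKT stationarity equation of the restricted problem (the paper's \eqref{eq: mod_prob_fluid_model}), explicitly verifies regularity (LICQ) at the optimum, introduces Lagrange multipliers $(\bkappa,\bxi)$, and shows that every multiplier term annihilates against the feasible direction $\bd$ — the equality-constraint terms because both points satisfy the linear constraints, the inequality-constraint terms because all the active $\tchi$-nonnegativity constraints are off $E$ where both $\tchi^\star$ and $\tchi^\dagger$ vanish. You instead make a direct two-sided perturbation argument: $\btlambda^\star\pm t(\btlambda_\eta-\btlambda^\star)\in\Lambda$ for small $t>0$, the forward direction by convexity and the backward direction by exhibiting the explicit flow $(1+t)\btchi^\star-t\btchi_\eta$, which stays nonnegative for small $t$ because $\btchi^\star>\bzero$ on $E$ while $\btchi_\eta$ is bounded. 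At a maximizer of a concave function that is interior to a line segment in the feasible set, the directional derivative is zero, giving the same conclusion with no multiplier bookkeeping and no appeal to constraint qualification. The one thing your write-up should make explicit is the finite upper bound on $\btchi_\eta$ (which follows since $\tchi_{\eta,ij}\le\tl_{\eta,j}\le\tl^\star_j+1$) so that "small $t$" can be taken uniformly over the edges.
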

%\fc{Let's talk about this. I think here there is room to explain a bit more why the probabilistic fluid is different.}
The above lemma is proved based on the optimality of $(\btlambda^\star,\talpha^\star,\btchi^\star)$ in the probabilistic fluid model \eqref{eq: prob_fluid_model}. If the optimization problem \eqref{eq: prob_fluid_model} was over a finite dimensional vector space, we can simply use KKT conditions to get the result. The difficulty here is that  the optimization is  over the space of measures $\talpha$. We overcome this challenging by restricting over $\talpha=\talpha^\star$ and considering an optimization problem over $(\btlambda,\btchi)$ and then using the KKT conditions.

To apply KKT conditions, we find a feasible direction at the optimal point. A feasible point of the optimization problem is the `average' arrival rates of the two price policy \eqref{eq: two_price_policy} as the DTMC is stable under two price policy and $\StablePol$ contains the feasible region of the fluid model.

Now, we use the above lemma to find the profit loss $L_\eta^P$.
\begin{lemma} \label{lemma: profit_loss}
For all $\eta>0$, the profit loss of the DTMC operating under
the 
pricing and matching policy $\pi_\eta$
%given by \eqref{eq: two_price_policy} \eqref{eq: modified_max_weight_matching} 
is
\begin{align*}
    \frac{L_\eta^P}{\eta}= -\epsilon_\eta^2\sum_{j=1}^m \left(\frac{\tl^\star_jF''(\tl^\star_j)}{2}+F'_j(\tl_j^\star)\right)+O\left(\epsilon_\eta^3\right) 
   \quad  \textit{where,} \quad \sum_{j=1}^m \left(\frac{\tl^\star_jF''(\tl^\star_j)}{2}+F'_j(\tl_j^\star)\right)<0.
\end{align*}
\end{lemma}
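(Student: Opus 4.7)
The plan is to compute $L_\eta^P/\eta$ directly from the definitions and then Taylor expand the revenue function to second order, relying on Lemma \ref{lemma: first_order_optimality} to kill the first-order terms.

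First, I would observe a structural simplification: under the two-price policy $\pi_\eta$ the server side of the policy is $\alpha_{\eta,\bq}=\tilde{\alpha}^\star$ for every $\bq\in\calS$, so $\E{\alpha_{\bbarq}}{\cost(\bmu)}=\E{\tilde{\alpha}^\star}{\cost(\bmu)}$ is the same constant that appears in the fluid optimum $\tilde{R}^\star$. Consequently the cost terms cancel when I form $L_\eta^P/\eta=\tilde{R}^\star - P_\eta(\pi_\eta)/\eta$, leaving
\begin{align*}
\frac{L_\eta^P}{\eta} \;=\; \sum_{j=1}^m \Bigl(\tl_j^\star F_j(\tl_j^\star) - \E{}{\lambda_j(\bbarq_\eta)F_j(\lambda_j(\bbarq_\eta))}\Bigr).
\end{align*}
Under the two-price policy one has $\lambda_j(\bbarq_\eta)-\tl_j^\star=\pm\epsilon_\eta$ almost surely, so every realized perturbation is deterministically small. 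This lets me Taylor expand $h_j(\lambda)\triangleq \lambda F_j(\lambda)$ around $\tl_j^\star$ in a uniform way, with $h_j'(\tl_j^\star)=F_j(\tl_j^\star)+\tl_j^\star F_j'(\tl_j^\star)$, $h_j''(\tl_j^\star)=2F_j'(\tl_j^\star)+\tl_j^\star F_j''(\tl_j^\star)$, and a remainder bounded by $\tfrac{1}{6}\sup\|h_j'''\|\,\epsilon_\eta^3$ (the supremum is finite on a compact neighborhood of $\tl_j^\star$ by Assumption \ref{ass: monotonic}).

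Taking expectations term by term, the second-order term contributes exactly $\tfrac{\epsilon_\eta^2}{2}h_j''(\tl_j^\star)$ since $(\lambda_j(\bbarq_\eta)-\tl_j^\star)^2=\epsilon_\eta^2$ with probability one. The first-order term contributes $h_j'(\tl_j^\star)\cdot \E{}{\lambda_j(\bbarq_\eta)-\tl_j^\star}=h_j'(\tl_j^\star)\cdot \epsilon_\eta\bigl(\P{}{\barq_{\eta,j}^{(2)}=0}-\P{}{\barq_{\eta,j}^{(2)}>0}\bigr)$, and summing over $j$, this is precisely the expression that vanishes by Lemma \ref{lemma: first_order_optimality}. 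Combining, I recover
\begin{align*}
\frac{L_\eta^P}{\eta} \;=\; -\,\epsilon_\eta^2 \sum_{j=1}^m\!\Bigl(F_j'(\tl_j^\star)+\tfrac{\tl_j^\star F_j''(\tl_j^\star)}{2}\Bigr) + O(\epsilon_\eta^3),
\end{align*}
which is the claimed expansion.

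For the sign assertion, I would argue that $F_j'(\tl_j^\star)+\tfrac{\tl_j^\star F_j''(\tl_j^\star)}{2}=\tfrac12 h_j''(\tl_j^\star)$, and by Assumption \ref{ass: concave} concavity of $h_j$ gives $h_j''(\tl_j^\star)\le 0$ for every $j$, while Assumption \ref{ass: monotonic} (strict decrease of $F_j$ together with $\tl_j^\star>0$, which we assumed WLOG after the fluid optimum was introduced) forces strict negativity at the fluid optimum -- otherwise $\tl_j^\star F_j''(\tl_j^\star)=-2F_j'(\tl_j^\star)>0$ combined with the global concavity of $h_j$ would contradict the uniqueness/optimality of $\tl_j^\star$ in the probabilistic fluid program. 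The main subtlety of the proof is not the algebra but the careful bookkeeping that (i) the server-side cost contribution really does cancel out, and (ii) the first-order term matches exactly the vanishing quantity from Lemma \ref{lemma: first_order_optimality}; the $O(\epsilon_\eta^3)$ control is routine because the perturbations are deterministic and bounded by $\epsilon_\eta\le 1$.
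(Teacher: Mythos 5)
Your main argument—cancellation of the server-cost term, Taylor expansion of $h_j(\lambda)=\lambda F_j(\lambda)$ to second order, elimination of the linear term via Lemma \ref{lemma: first_order_optimality}, and using that $(\lambda_j(\bbarq_\eta)-\tl_j^\star)^2=\epsilon_\eta^2$ almost surely—is the same route the paper takes and is correct for the expansion; expanding $h_j$ directly rather than expanding $F_j$ and multiplying out is a cosmetic difference only.

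The gap is in your justification of strict negativity of the second-order coefficient. You correctly identify it as $\frac12\sum_j h_j''(\tl_j^\star)$, but your claim that $h_j''(\tl_j^\star)=0$ ``would contradict the uniqueness/optimality of $\tl_j^\star$'' has no basis: Assumption \ref{ass: concave} gives only $h_j''\le 0$, a concave $C^2$ function can have vanishing second derivative at its unique interior maximizer (e.g.\ $h(\lambda)=-(\lambda-a)^4+\text{linear}$), and neither uniqueness of the fluid solution nor strict concavity is hypothesized. Concavity alone delivers $\le 0$, not $<0$; for the strict inequality one genuinely needs something extra (e.g.\ strict concavity of $\lambda F_j(\lambda)$ near $\tl_j^\star$, or to combine $h_j''\le 0$ on one of the $j$'s with $h_j''<0$ on another). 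The paper's own treatment of this point is itself not airtight—it splits the coefficient as $\tfrac12(A+B)+\tfrac12 B$ with $A=\sum_j\tl_j^\star F_j''$, $B=\sum_j F_j'$ and invokes concavity for $A+B\le 0$, whereas concavity controls $A+2B$—so while the sign claim is plausible under an implicit strict-concavity reading of Assumption \ref{ass: concave}, your proposed deduction from ``uniqueness/optimality'' is not a repair of that gap, it is a different unsupported claim. A smaller technical quibble: both you and the paper report an $O(\epsilon_\eta^3)$ remainder (you via $\sup\|h_j'''\|$), but Assumption \ref{ass: monotonic} only guarantees $F_j\in C^2$, so without an extra smoothness hypothesis the remainder that is actually earned is $o(\epsilon_\eta^2)$; the conclusion of the lemma (and the $O(\eta^{1/3})$ theorem it feeds) is unaffected.
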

We use Assumption \ref{ass: monotonic} and Taylor's series expansion up to second order of the demand curve $F_j(\cdot)$ for all $j \in [m]$. We then apply Lemma \ref{lemma: first_order_optimality} to eliminate the first order term of the expansion which, in turn, delivers the desired result.

\section{Lower Bounds}
In this section, we will make the intuition provided for $\eta^{1/3}$ rigorous by showing that, under a broad class of policies, this is the best possible trade off between expected queue length and profit loss. 
We first establish that the
expected queue length is at least
$O\left(\frac{1}{\epsilon}\right)$. Then, we consider a broad class of policies and show that the profit loss is exactly of order $O(\eta \epsilon^2)$. 
In turn, by choosing $\epsilon=1/\eta^{1/3}$, we deduce that
the profit loss is of order $\eta^{1/3}$.

For the remainder of this section, we make the following mild additional assumptions on the arrival process.
%in this section. 
For a given $j$, if we have $\lambda_j(k) \geq \lambda_j(k')$ for some $k,k' \in \bbZ_+$, we assume that $a_j^{(2)}(k) \stg a_j^{(2)}(k')$. 
This assumption, in the economic context, translates to rationality of customers, i.e. if the system operator offers the same service for a lower price, the customer arrival distribution can only shift to the right.
Similarly, for servers, if $\E{\alpha(k)}{\mu_i} \geq \E{\alpha(k')}{\mu_i}$, we assume that $a^{(1)}_i(k) \stg a^{(1)}_i(k')$.

\subsection{Expected sum of queue length}

To provide intuition, we will first describe an illustrative example. Consider a single link two sided queue $(q^{(1)},q^{(2)})$ with Bernoulli arrivals, that is $a^{(1)}(\bq) \sim \textrm{Bernoulli}(\E{\alpha_\bq}{\mu})$ and $a^{(2)}(\bq) \sim \textrm{Bernoulli}(\lambda(\bq))$. Note that for the case of single link two sided queue, there is no selfish behavior of servers as there is only a single type of server. Now, let us analyze the imbalance given by $z(k)=q^{(1)}(k)-q^{(2)}(k)$. Note that, as there is no incentive to keep the customers or servers waiting in the system, we will immediately match any pair of customer-server waiting in the system. Thus, $q^{(1)}(k)q^{(2)}(k)=0$ for all $k \in \bbZ_+$ with probability 1. So, $z$ completely describes the state of the system and thus, it is a Markov chain. In fact, it is a birth and death process as shown in Figure \ref{fig: intuition_singlelinkbirthanddeath} where $l_z=\lambda(z)(1-\E{\alpha_z}{\mu})$, $m_z=\E{\alpha_z}{\mu}(1-\lambda(z))$ and $p_z=1-l_z-m_z$ for all $z \in \bbZ$. Now, consider a general pricing policy such that we are at most $\epsilon$ away from the optimal fluid solution, that is, for all $z \in \calS$, we have $|\lambda(z)-\tl^\star|\leq \epsilon$ for all $j \in [m]$ and $|\E{\alpha_z}{\mu}-\E{\talpha^\star}{\mu}|\leq\epsilon$ for all $i \in [n]$. Thus,
\begin{align*}
    |l_z-\tl^\star(1-\E{\talpha^\star}{\mu})| &\leq \epsilon-\epsilon^2,\quad l_{\min}\geq (\tl^\star-\epsilon)(1-\E{\talpha^\star}{\mu}-\epsilon), \\ |m_z-\E{\talpha^\star}{\mu}(1-\tl^\star)| &\leq \epsilon-\epsilon^2, \quad m_{\max}\leq (\E{\talpha^\star}{\mu}+\epsilon)(1-\tl^\star+\epsilon).
\end{align*}
We can couple this birth and death process with an M/M/1 queue, $q^\dagger$, with $\textrm{Bernoulli}(\tl^\star-\epsilon)$ arrival and $\textrm{Bernoulli}(\E{\talpha^\star}{\mu}+\epsilon)$ service. The coupling is such that $q^\dagger(k) \leq |z(k)|$ for all $k \in \bbZ_+$ with probability 1. By Kingman's bound, we know that $\E{}{\barq^\dagger}\sim \frac{1}{\epsilon}$. Thus, by the above defined coupling, $\E{}{|\barz|}$ is at least $\frac{1}{\epsilon}$. In short, if we perturb the arrival rates of a two sided queue by at most $\epsilon$ then it behaves like a single server queue in heavy traffic. 

In the next theorem, we show a similar lower bound for the more general system of multiple link two sided queue with arbitrary arrival process.
%We will make this intuition rigorous in Theorem \ref{theo: lowerbound_queue}, where we show this for the more general system of multiple link two sided queue with arbitrary arrival process.
\begin{figure}
\FIGURE{
      \begin{tikzpicture}[->, >=stealth', auto, semithick, node distance=2.5cm,scale=0.65,transform shape]
\tikzstyle{every state}=[fill=white,draw=black,thick,text=black,scale=1]
\node[state]    (0)                     {$0$};
\node[state]    (1)[right of=0]   {$1$};
\node[state]    (nminusone)[right of=1]   {\tiny{$n-1$}};
\node[state]    (n)[ right of=nminusone]   {$n$};
\node[state,white]    (nplusone)[ right of=n] {};
\node [state](-1)[left of=0] {$-1$};
\node[state](-n+1)[left of=-1]{\tiny{$-n+1$}};
\node[state](-n)[left of =-n+1]{{$-n$}};
\node[state,white](-n-1)[left of=-n]{};
\path
(0) edge[bend right,below]     node{$m_0$}         (1)
(1) edge[bend right,below,dashed] node{$m_1$} (nminusone)
(nminusone) edge[bend right,below]     node{$m_{n-1}$}         (n)
(n) edge[bend right,below]     node{$m_n$}         (nplusone)
(nplusone) edge[bend right,above]     node{$l_{n+1}$}         (n)
(n) edge[bend right,above]     node{$l_n$}         (nminusone)
(nminusone) edge[bend right,above,dashed]     node{$l_{n-1}$}         (1)
(1) edge[bend right,above]     node{$l_1$}         (0)
(0) edge[bend right,above] node{$l_0$} (-1)
(-1) edge[bend right,above,dashed] node{$l_{-1}$} (-n+1)
(-n+1) edge[bend right,above] node{$l_{-n+1}$} (-n)
(-n) edge[bend right,above] node{$l_{-n}$} (-n-1)
(-n-1) edge[bend right,below] node{$m_{-n-1}$} (-n)
(-n) edge[bend right,below] node{$m_{-n}$} (-n+1)
(-n+1) edge[bend right,below,dashed] node{$m_{-n+1}$} (-1)
(-1) edge[bend right,below] node{$m_{-1}$} (0)
(-n) edge[loop, above] node{$p_{-n}$} (-n)
(-n+1) edge[loop, above] node{$p_{-n+1}$} (-n+1)
(-1) edge[loop, above] node{$p_{-1}$} (-1)
(0) edge[loop, above] node{$p_{0}$} (0)
(1) edge[loop, above] node{$p_{1}$} (1)
(nminusone) edge[loop, above] node{$p_{n-1}$} (nminusone)
(n) edge[loop, above] node{$p_{n}$} (n);
\end{tikzpicture}}{
\centering{Single link two sided queue with Bernoulli arrivals}
\label{fig: intuition_singlelinkbirthanddeath}}{}
\end{figure}
\begin{theorem} \label{theo: lowerbound_queue}
Consider a DTMC operating under any matching policy and pricing policy in $\StablePol$ such that for all $\bq \in \calS$, we have $|\lambda_j(\bq)-\tl_j^\star|\leq \epsilon$ for all $j \in [m]$ and $|\E{\alpha_\bq}{\mu_i}-\E{\talpha^\star}{\mu_i}|\leq\epsilon$ for all $i \in [n]$, then there exists $\epsilon_0>0$ such that for all $\epsilon<\epsilon_0$
\begin{align*}
    \E{}{\inner{\bone_{n+m}}{\bbarq}}  \geq \frac{\bone_{n \times n}\circ \Sigma^{(1)}(\talpha^\star)+\bone_{m \times m} \circ \Sigma^{(2)}_{\min}}{8\max\{m,n\}\epsilon}.
\end{align*}
\end{theorem}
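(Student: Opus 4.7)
The plan is to apply a Lyapunov-type lower bound to the scalar imbalance $z(k)\triangleq \inner{\bone_n}{\bq^{(1)}(k)} - \inner{\bone_m}{\bq^{(2)}(k)}$, which satisfies $|z(k)|\leq \inner{\bone_{n+m}}{\bq(k)}$. The crucial feature is that any feasible matching pairs one server with one customer, so $\inner{\bone_n}{\bx^{(1)}(k)}=\inner{\bone_m}{\bx^{(2)}(k)}$ and matching decisions leave $z$ invariant; only arrivals drive it, namely
\begin{equation*}
\Delta z(k) \triangleq z(k+1)-z(k) = \inner{\bone_n}{\ba^{(1)}(k)} - \inner{\bone_m}{\ba^{(2)}(k)}.
\end{equation*}
This isolates the pricing perturbation as the only relevant control affecting $z$, which is what will allow the bound to hold uniformly over the matching policy.

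I would then compute the conditional mean and variance of $\Delta z$ under the $\epsilon$-perturbation hypothesis. The fluid flow balance conditions \eqref{eq: cust_rate_balance}--\eqref{eq: serv_rate_balance} enforce $\sum_{i=1}^n \E{\talpha^\star}{\mu_i}=\sum_{j=1}^m \tl_j^\star$, so
\begin{equation*}
\left|\E{}{\Delta z(k)\mid \bq}\right| = \left|\sum_{i=1}^n (\E{\alpha_\bq}{\mu_i} - \E{\talpha^\star}{\mu_i}) - \sum_{j=1}^m (\lambda_j(\bq) - \tl_j^\star)\right| \leq (n+m)\epsilon \leq 2\max\{m,n\}\epsilon.
\end{equation*}
For the variance, $\Var{\Delta z(k)\mid \bq} = \bone^\top \Sigma^{(1)}(\alpha_\bq)\bone + \bone^\top \Sigma^{(2)}(\blambda(\bq))\bone$. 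For $\epsilon$ below a threshold $\epsilon_0$, I would lower bound this by $\tfrac12(\bone_{n\times n}\circ \Sigma^{(1)}(\talpha^\star) + \bone_{m\times m}\circ \Sigma^{(2)}_{\min})$: the customer term follows from the standing minorant $\Sigma^{(2)}(\cdot)\geq \Sigma^{(2)}_{\min}$, while the server term invokes continuity of $\Sigma^{(1)}$ in the pricing measure so that an $\alpha_\bq$ close to $\talpha^\star$ yields a covariance at least half of $\Sigma^{(1)}(\talpha^\star)$ entrywise.

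The argument then closes by invoking stationarity of $z^2$. Expanding $z(k+1)^2=(z(k)+\Delta z(k))^2$ and taking expectations gives
\begin{equation*}
\E{}{(\Delta z)^2} = -2\,\E{}{\barz\cdot \E{}{\Delta z\mid \bbarq}} \leq 2\cdot 2\max\{m,n\}\epsilon \cdot \E{}{|\barz|} \leq 4\max\{m,n\}\epsilon \cdot \E{}{\inner{\bone_{n+m}}{\bbarq}}.
\end{equation*}
Lower bounding the left side by $\E{}{\Var{\Delta z\mid \bbarq}}\geq \tfrac12\bigl(\bone_{n\times n}\circ \Sigma^{(1)}(\talpha^\star) + \bone_{m\times m}\circ \Sigma^{(2)}_{\min}\bigr)$ and rearranging gives the desired bound with denominator $8\max\{m,n\}\epsilon$.

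The main obstacles are two technical points. First, the stationarity identity requires $\E{}{\barz^2}<\infty$, whereas stability alone does not guarantee a second moment. The standard workaround is to apply the same analysis to the truncated Lyapunov function $\min(z^2,K)$ and pass $K\to\infty$, noting further that the conclusion is vacuous when $\E{}{\inner{\bone_{n+m}}{\bbarq}}=\infty$. Second, the continuity-based lower bound on $\Sigma^{(1)}(\alpha_\bq)$ in terms of $\Sigma^{(1)}(\talpha^\star)$ requires leveraging how the server-arrival covariance varies with the pricing measure; since the hypothesis only bounds the mean $\E{\alpha_\bq}{\bmu}$, it is this continuity argument that fixes $\epsilon_0$ and contributes the factor of $2$ leading to the final constant $8$.
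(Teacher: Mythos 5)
Your argument takes a genuinely different route from the paper. You run a direct stationarity identity on the imbalance $z^2$ (expand $\E{}{(\barz^+)^2}=\E{}{\barz^2}$, isolate $\E{}{(\Delta z)^2}=-2\E{}{\barz\,\E{}{\Delta z\mid\bbarq}}$, then play variance against drift), whereas the paper constructs an explicit coupling: it builds an auxiliary G/G/1 queue $q^\dagger$ with carefully chosen arrival/service laws that minorize/majorize $\inner{\bone_n}{\ba^{(1)}}$ and $\inner{\bone_m}{\ba^{(2)}}$, proves by induction that $q^\dagger(k)\le|z(k)|$ pathwise, and then applies a Lyapunov/Kingman-style computation to $q^\dagger$ alone. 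Both approaches are organized around the same three observations that you correctly identify: matching cancels out of $z$, the conditional drift is $O(\max\{m,n\}\epsilon)$ by the fluid balance $\sum_i\E{\talpha^\star}{\mu_i}=\sum_j\tl_j^\star$, and the per-step variance is $\Theta(1)$. Your version is shorter and avoids the coupling machinery, which is a genuine gain in transparency.

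Two caveats, both of which you flag but neither of which you resolve, are where the coupling approach earns its keep. First, your identity requires $\E{}{\barz^2}<\infty$, and this is not implied by stability of the chain (positive recurrence with $\E{}{\inner{\bone_{n+m}}{\bbarq}}<\infty$ does not give a second moment of $z$). The truncation $\min(z^2,K)$ is not a ``standard workaround'' here: the drift of the truncated Lyapunov function near the cap does not decompose cleanly, so passing $K\to\infty$ needs a real argument. The paper avoids this entirely by placing the moment computation on $q^\dagger$, which is a birth--death chain with bounded increments in heavy traffic, where all the needed moments exist and the quadratic drift identity is unproblematic. Second, your lower bound $\Var{\Delta z\mid\bbarq}\ge\tfrac12\bone_{n\times n}\circ\Sigma^{(1)}(\talpha^\star)+\cdots$ cannot actually be obtained from the stated hypothesis: the hypothesis only constrains the \emph{mean} $\E{\alpha_\bq}{\bmu}$, and two measures with the same mean can have drastically different covariance, so ``continuity of $\Sigma^{(1)}$ in the pricing measure'' is not available. (The paper's own construction of $a_1^\dagger,s_1^\dagger$ with variance $\bone\circ\Sigma^{(1)}(\talpha^\star)$ is also loose on this point, but in all the downstream applications --- Theorem~\ref{theo: lowerbound_profit_loss}, Corollary~\ref{corollary: lower_bound_net_profit_loss} --- the server policy is pinned to $\talpha^\star$ exactly, so $\Sigma^{(1)}(\alpha_\bq)=\Sigma^{(1)}(\talpha^\star)$ and the issue disappears.) A minor bookkeeping remark: your customer term needs no $\tfrac12$ since $\Sigma^{(2)}(\cdot)\ge\Sigma^{(2)}_{\min}$ componentwise already; the paper's factor of $8$ instead comes from absorbing the residual $O(\epsilon)$ and $O(\epsilon^2)$ terms in the $q^\dagger$ drift computation.
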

The theorem is proved based on the intuition for the case of single-link two-sided queue by constructing a coupling between the imbalance of the original DTMC and a G/G/1 queue.
\subsection{(Net) Profit-Loss}
In this section, we will restrict ourselves to a broad class of pricing policies and show that the profit-loss is $\tilde{R}^\star_\eta-P_\eta \sim \eta\epsilon^2$. We fix the server pricing policy to the optimal fluid pricing policy and consider a broad class of pricing policies for customers. In particular, we consider policies of the following form:
\begin{align}
    \lambda_{j,\eta}(\bq)=\tilde{\lambda}_j^\star+\phi_j\left(\frac{\bq}{\eta^\alpha}\right)\eta^\beta, \quad \forall \bq \in \calS, \eta >0, j \in [m]. \label{eq: general_pricing_policy}
\end{align}
%The motivation behind this format is same as mentioned in the literature \citep{}. 
The first component is the optimal fluid rate and the second component is a queue length dependent adjustment
(c.f., \citep{kim2017value,varma2020dynamic}). The adjustment is decomposed into a scaled queue length dependent adjustment $\phi_j(\cdot)$ and a factor that depends on the scaling parameter $\eta$. We take $\beta<0$ as we want to approach the optimal fluid solution as $\eta \rightarrow \infty$. We impose some technical conditions on the functions $\phi_j(\cdot)$ for all $j \in [m]$.
\begin{assumption} For all $j\in [m]$, $\phi_j(\cdot)$ satisfies the following.
\begin{enumerate}[label=(\alph*)]
    \item There exists $M<\infty$ such that $\sup_{\bq \in \calS}\left(\phi_j(\bq)\right) \leq M$ for $j \in [m]$. \label{condition: bounded}
    \item We have $\alpha+\beta \leq 0$. \label{condition: alpha_beta}
    \item There exists $K>0$ and $\sigma>0$ such that for all $j \in [m]$, if $q_j^{(2)}/\eta^\alpha > K$ or there exists an $i$ such that $(i,j) \in E$ and $q_i^{(1)}/\eta^\alpha>K$, then $\bigg|\phi_j\left(\frac{\bq}{\eta^\alpha}\right)\bigg|>\sigma$. \label{condition: stability}
\end{enumerate}
\label{condition: general_pricing}
\end{assumption}
These conditions are similar to the conditions given in \citep{varma2020dynamic}. Condition \ref{condition: bounded} is a technical assumption which is required for our analysis. Condition  \ref{condition: alpha_beta} states that the scaling of the system state should be less than the scaling of the pricing policy converging to the fluid optimal.
%We believe, there is a phase transition at the critical point $\alpha+\beta=0$ and the system behaves differently when $\alpha+\beta>0$. Relaxing this condition and showing the lower bound is a future work.
Condition  \ref{condition: stability}
establishes the intuitive condition that 
 as the queue length of a customer (or any of its compatible counterparts) is very large, the system operator should decrease (or increase) the arrival rate of the customer. We now present the lower bound on profit-loss.
\begin{theorem} \label{theo: lowerbound_profit_loss}
Consider a sequence of DTMCs parametrized by $\eta$ operating under any pricing policy satisfying Assumption \ref{condition: general_pricing} and any matching policy $\pi_\eta \in \StablePol$. There exists a constant $K>0$, that depends on
$\sigma,M,K,\{F_j(\cdot)\}_{j \in [m]}$ and $\cost(\cdot)$,
and $\eta_1(\beta)>0$ such that for all $\eta>\eta_1$ we have $
    \tilde{R}_\eta^\star-P_\eta(\pi_\eta) \geq K\eta^{2\beta+1}.$
\end{theorem}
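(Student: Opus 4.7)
The plan is to expand the profit loss in $\eta^\beta$ via a second-order Taylor expansion of the customer revenue functions $h_j(\lambda)\triangleq \lambda F_j(\lambda)$ around the fluid optimum $\tilde\lambda_j^\star$. Since Assumption \ref{condition: general_pricing} holds for $\pi_\eta$ with $\alpha_{\eta,\bq}$ fixed at $\talpha^\star$, the server cost cancels between $\tilde R_\eta^\star$ and $P_\eta(\pi_\eta)$, and
\begin{align*}
\tilde R_\eta^\star - P_\eta(\pi_\eta) \;=\; \eta \sum_{j=1}^m \bigl(h_j(\tilde\lambda_j^\star) - \E{}{h_j(\lambda_{j,\eta}(\bbarq_\eta))}\bigr).
\end{align*}
Writing $\delta_j \triangleq \lambda_{j,\eta}(\bbarq_\eta) - \tilde\lambda_j^\star = \phi_j(\bbarq_\eta/\eta^\alpha)\eta^\beta$, condition (a) in Assumption \ref{condition: general_pricing} and the smoothness of $F_j$ (Assumption \ref{ass: monotonic}) give the uniform bound $|\delta_j|\leq M\eta^\beta$ and a third-order Taylor remainder of size $O(\eta^{3\beta})$, so a Taylor expansion of $h_j$ reduces the right-hand side to
\begin{align*}
-\eta\sum_j h_j'(\tilde\lambda_j^\star)\,\E{}{\delta_j} \;-\; \frac{\eta}{2}\sum_j h_j''(\tilde\lambda_j^\star)\,\E{}{\delta_j^2} \;+\; O(\eta^{3\beta+1}).
\end{align*}

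\textbf{Killing the first-order term.} I would next establish that the first sum vanishes, generalizing Lemma \ref{lemma: first_order_optimality} beyond the two-price policy. By Lemma \ref{lemma: necessary_constraints} the averages $\btlambda_j\triangleq \E{}{\lambda_{j,\eta}(\bbarq_\eta)}$, $\talpha^\star$, and $\btchi_{ij}\triangleq \E{}{x_{ij}(\bbarq_\eta)}$ are feasible in \eqref{eq: prob_fluid_model} (first establishing $\E{}{\inner{\bone_{n+m}}{\bbarq_\eta}}<\infty$ via a Foster--Lyapunov argument mirroring Lemma \ref{lemma: stability} that uses Conditions (b)--(c) to produce negative drift outside a compact set). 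KKT at $(\tilde\lambda^\star,\talpha^\star,\tilde\chi^\star)$---with $\tilde\chi_{ij}^\star>0$ on every edge per the standing normalization---yields multipliers $\nu_j = h_j'(\tilde\lambda_j^\star)$ on customer flow balances and $\mu_i$ on server flow balances satisfying $\nu_j+\mu_i=0$ for all $(i,j)\in E$. Using $\btlambda_j-\tilde\lambda_j^\star = \sum_{i:(i,j)\in E}(\btchi_{ij}-\tilde\chi_{ij}^\star)$ and the server-side identity $\sum_{j:(i,j)\in E}(\btchi_{ij}-\tilde\chi_{ij}^\star)=\E{\talpha^\star}{\mu_i}-\E{\talpha^\star}{\mu_i}=0$, the first-order contribution then telescopes to zero.

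\textbf{Second-order lower bound.} Substituting back and dividing out the factor of $\eta^{2\beta}$ in $\E{}{\delta_j^2}$ gives
\begin{align*}
\tilde R_\eta^\star - P_\eta(\pi_\eta) \;=\; \frac{\eta^{2\beta+1}}{2}\sum_{j=1}^m\bigl(-h_j''(\tilde\lambda_j^\star)\bigr)\,\E{}{\phi_j^2(\bbarq_\eta/\eta^\alpha)} \;+\; O(\eta^{3\beta+1}).
\end{align*}
By Assumption \ref{ass: concave} each $-h_j''(\tilde\lambda_j^\star)\geq 0$, and the strict inequality in Lemma \ref{lemma: profit_loss} guarantees $J^+\triangleq\{j:-h_j''(\tilde\lambda_j^\star)>0\}\neq\emptyset$. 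To close the argument I need a \emph{positive} constant lower bound on the displayed weighted sum for all large $\eta$. I would combine Condition (c) of Assumption \ref{condition: general_pricing}---which in contrapositive form states that $|\phi_j|\leq\sigma$ forces $q_j^{(2)}/\eta^\alpha\leq K$ and $q_i^{(1)}/\eta^\alpha\leq K$ for every $i$ with $(i,j)\in E$---with the G/G/1 coupling underlying Theorem \ref{theo: lowerbound_queue}. That coupling actually supplies more than a mean bound: since the stationary queue length of a G/G/1 queue in heavy traffic has first and second moments of order $1/\epsilon$ and $1/\epsilon^2$ respectively (with bounded ratio), Paley--Zygmund yields a constant-probability tail bound $\P{}{\inner{\bone_{n+m}}{\bbarq_\eta}\geq c_1\eta^{-\beta}}\geq c_2$. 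Because $\alpha+\beta\leq 0$, for large $\eta$ this tail event forces at least one component queue to exceed $K\eta^\alpha$, and by Condition (c) some $|\phi_j|$ then exceeds $\sigma$.

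\textbf{Main obstacle.} The delicate point is going from ``\emph{some} $|\phi_j|>\sigma$ with constant probability'' to the weighted conclusion ``some $j\in J^+$ has $\E{}{\phi_j^2}$ bounded below.'' I expect this step to require either (i) a type-resolved upgrade of the G/G/1 coupling in Theorem \ref{theo: lowerbound_queue}, giving an individual lower bound $\E{}{q_j^{(2)}}\gtrsim \eta^{-\beta}$ for each $j\in J^+$ so that Condition (c) can be applied to that specific customer type, or (ii) a graph-propagation argument: if the large queue is a server $i^\star$, then Condition (c) forces $|\phi_j|>\sigma$ for \emph{every} $j$ compatible with $i^\star$, and under the standing assumption that no server or customer type is isolated in $G$ at least one such $j$ lies in $J^+$, so the weighted sum is at least $\sigma^2\min_{j\in J^+}(-h_j''(\tilde\lambda_j^\star))\cdot c_2$. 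Once this positive constant $K_0$ is secured, the main term is $\ge K_0\eta^{2\beta+1}/2$, and since $\beta<0$ gives $\eta^{3\beta+1}=o(\eta^{2\beta+1})$, choosing $\eta_1(\beta)$ large enough absorbs the remainder to yield $\tilde R_\eta^\star - P_\eta(\pi_\eta)\geq K\eta^{2\beta+1}$ with $K=K_0/4$, as claimed.
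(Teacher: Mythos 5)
Your decomposition (Taylor expansion of $h_j(\lambda)=\lambda F_j(\lambda)$; kill the first-order term via Lemma \ref{lemma: necessary_constraints} followed by KKT at the fluid optimum, which is precisely Lemma \ref{lemma: first_order}; then lower-bound the second-order term) is exactly the paper's structure. The residual difference is the tail estimate. You route through Paley--Zygmund, using first- and second-moment control of the imbalance $z$; the paper instead packages the needed estimate as Lemma \ref{lemma: tail_probability}, whose proof couples $|z_\eta|$ from below by a single G/G/1 queue $q^\dagger_\eta$ and invokes a heavy-traffic distributional limit $\epsilon_\eta\bar q^\dagger_\eta \Rightarrow \mathrm{Exp}(\sigma_s^2)$ (from \citealp{hurtado2019heavy}), from which a constant-probability event $\{\|\bbarq_\eta\|_\infty > K\eta^\alpha\}$ is extracted and Condition (c) is applied verbatim as you describe. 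Both routes use Condition \ref{condition: alpha_beta} ($\alpha+\beta\le 0$) in the same place, to let a queue-scale $\eta^{-\beta}$ beat the threshold-scale $\eta^\alpha$; your Paley--Zygmund variant would, in addition, have you establish a second-moment bound $\E{}{(q^\dagger_\eta)^2}\lesssim\epsilon_\eta^{-2}$, which you state but do not prove.

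The one place where you diverge in substance is your ``Main obstacle.'' The paper does not face it: in the proof of Theorem \ref{theo: lowerbound_profit_loss} the second-order contribution is lower-bounded factor-by-factor as $\bigl(\min_{j\in[m]}\{-F_j''(\tl_j^\star)\tl_j^\star - F_j'(\tl_j^\star)\}\bigr)\cdot\sum_j \E{}{\phi_j^2}$, and the paper asserts that this minimum is strictly positive for \emph{every} $j$ (invoking Assumptions \ref{ass: monotonic}--\ref{ass: concave}). Equivalently, the paper's intent is that $J^+=[m]$; no graph-propagation or type-resolved coupling (your options (i)/(ii)) is used or needed in their argument. You are reading strictness only from the aggregate statement of Lemma \ref{lemma: profit_loss}, but the paper's argument for that lemma is in fact term-by-term, so under the paper's own reading your ``obstacle'' dissolves and the remainder of your derivation closes just as in the paper: bound the third-order remainder (using boundedness of $\phi_j$ and continuity of $F_j''$) as $O(\eta^{3\beta})$, which is $o(\eta^{2\beta})$ for $\beta<0$, and pick $\eta_1(\beta)$ to absorb it. In short: same proof modulo the Paley--Zygmund substitution, and your proposed fixes for the obstacle, while sensible as a backup, are not part of the paper's route.
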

The proof involves using Taylor's Theorem to expand the profit-loss and then using Lemma \ref{lemma: first_order} which is the generalization of Lemma \ref{lemma: first_order_optimality} for any policy,
%\fc{do you mean Lemma 5? Otherwise would need to explain a bit more here.}
to drop the first order term. The proof is concluded by showing that the coefficient of the second order term is non zero. 
From Theorem \ref{theo: lowerbound_queue}, we have $\E{}{\bbarq} \geq 1/\eta^\beta$. In turn, from Theorem \ref{theo: lowerbound_profit_loss}, we deduce $ \tilde{R}_\eta^\star-P_\eta \geq K\eta^{2\beta+1}$. To make the best use of the trade off, we should pick $\beta=-1/3$ which will give us $\eta^{1/3}$ net profit-loss. This shows that there exists a broad class of policies under which the upper bound given by Theorem \ref{theo: two_price_policy} is tight. We present the result formally in the following corollary.
\begin{corollary}
\label{corollary: lower_bound_net_profit_loss}
Under the hypothesis of Theorem \ref{theo: lowerbound_profit_loss}, for any sequence of policies $\pi_\eta \in \StablePol$, there exists a constant $K'$,
that depends on $\sigma,M,K,\{F_j(\cdot)\}_{j \in [m]}$ and $\cost(\cdot)$
, and $\eta_2(\beta)>0$ such that for all $\eta>\eta_2$, we have $L_\eta(\pi_\eta) \geq K'\eta^{1/3}$
\end{corollary}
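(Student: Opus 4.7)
The plan is to derive the corollary by combining the two lower bounds already established, through the identity
\begin{equation*}
L_\eta(\pi_\eta) \;=\; L_\eta^P(\pi_\eta) \;+\; \E{}{\inner{\bs}{\bbarq_\eta}}.
\end{equation*}
Theorem \ref{theo: lowerbound_profit_loss} controls the first summand in terms of the scaling exponent $\beta$, while Theorem \ref{theo: lowerbound_queue} controls the expected waiting cost via the size of the uniform rate perturbation $\epsilon$. Balancing these two contributions in $\eta$ is exactly what pins down the exponent $1/3$.

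\textbf{Executing the argument.} For any policy of the form \eqref{eq: general_pricing_policy} satisfying Assumption \ref{condition: general_pricing}, the boundedness from Assumption \ref{condition: general_pricing}\ref{condition: bounded} yields a state-uniform deviation bound $|\lambda_{j,\eta}(\bq) - \tl_j^\star| \leq M\eta^\beta$, while $|\E{\alpha_\bq}{\mu_i} - \E{\talpha^\star}{\mu_i}| = 0$ because the server side is fixed to $\talpha^\star$. Setting $\epsilon := M\eta^\beta$ and taking $\eta$ large enough that $\epsilon < \epsilon_0$, Theorem \ref{theo: lowerbound_queue} gives
\begin{equation*}
\E{}{\inner{\bone_{n+m}}{\bbarq_\eta}} \;\geq\; \frac{C_1}{M}\,\eta^{-\beta}, \quad \text{where }\; C_1 := \frac{\bone_{n\times n} \circ \Sigma^{(1)}(\talpha^\star) + \bone_{m\times m} \circ \Sigma^{(2)}_{\min}}{8\max\{m,n\}}.
\end{equation*}
With $s_{\min} := \min_k s_k > 0$ this implies $\E{}{\inner{\bs}{\bbarq_\eta}} \geq (s_{\min} C_1/M)\,\eta^{-\beta}$. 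Combining with Theorem \ref{theo: lowerbound_profit_loss} (valid for $\eta > \eta_1(\beta)$) yields
\begin{equation*}
L_\eta(\pi_\eta) \;\geq\; K\,\eta^{2\beta+1} \;+\; \frac{s_{\min} C_1}{M}\,\eta^{-\beta}.
\end{equation*}

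\textbf{Case split on $\beta$.} To conclude, one splits on the position of $\beta$ relative to $-1/3$: if $\beta \geq -1/3$ then $2\beta+1 \geq 1/3$, so the first term alone delivers $L_\eta(\pi_\eta) \geq K\eta^{1/3}$; if $\beta \leq -1/3$ then $-\beta \geq 1/3$, so the second term alone delivers $L_\eta(\pi_\eta) \geq (s_{\min} C_1/M)\,\eta^{1/3}$. Setting $K' := \min\{K,\, s_{\min} C_1/M\}$ and $\eta_2(\beta) := \max\{\eta_1(\beta),\,(M/\epsilon_0)^{1/|\beta|}\}$ completes the argument.

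\textbf{Main obstacle.} Since the heavy lifting sits in the two cited theorems, the corollary reduces to a clean balancing argument; the only real subtlety is the bookkeeping of the threshold $\eta_2(\beta)$ to simultaneously meet the hypotheses of both theorems, together with the (implicit) use of $s_{\min}>0$ to transfer the queue-length bound into a bound on $\E{}{\inner{\bs}{\bbarq_\eta}}$. The state-uniform boundedness in Assumption \ref{condition: general_pricing}\ref{condition: bounded} is crucial, since it is precisely what makes the single perturbation size $\epsilon = M\eta^\beta$ valid for every $\bq \in \calS$ and hence permits the application of Theorem \ref{theo: lowerbound_queue}.
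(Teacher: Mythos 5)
Your proof is correct and follows essentially the same route as the paper: decompose $L_\eta = L_\eta^P + \E{}{\inner{\bs}{\bbarq_\eta}}$, invoke Theorem~\ref{theo: lowerbound_profit_loss} for the first term and Theorem~\ref{theo: lowerbound_queue} with $\epsilon = M\eta^\beta$ for the second, and take $\eta_2$ large enough to satisfy both hypotheses. The only cosmetic difference is that the paper lower-bounds the sum by its infimum over $\beta<0$ (balanced at $\beta=-1/3$), whereas you split on $\beta\gtrless -1/3$ and let the dominant term carry the bound; both yield the same $K'\eta^{1/3}$.
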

The proof follows by Theorem \ref{theo: lowerbound_queue} and Theorem \ref{theo: lowerbound_profit_loss} and is presented in the Appendix \ref{app: lower_bound_corollary}.
\section{Variations of Cost function and Simulations} \label{sec: variations}
In this section, we demonstrate the generality of our framework by considering four different variations of the cost function and compare them using numerical simulations. Before that, we expound on the cost function $\cost(\cdot)$ and the equilibrium condition \eqref{eq: variational_inequality} which will provide further insights on the server pricing policy which, in turn, will further impact the customer pricing policy.
%\fc{Should we move the following to section 7? We should talk about this with Siva.}
\subsection{Cost Function Reformulation}
In this section, we will reformulate the cost function given in \eqref{eq: cost_function} as a mixed-integer non-linear program. To do this, we will first reformulate \eqref{eq: variational_inequality} as bi-linear constraints using the KKT conditions \citep{mpec} and the supply constraint $G_i(\bmu_i)=u_i$ as  mixed-integer non-linear constraints. The result is presented below.
\begin{proposition} \label{prop: cost_function_reformulation}
There exists $M>0$ such that the cost function defined in \eqref{eq: cost_function} is equivalent to the value function of the following optimization problem:
\begin{subequations}
\begin{align}
   \span c(\bmu)=\min_{\bp^{(1)}\in \bbR_+^n} \inner{\bmu}{\bp^{(1)}} \\
    \textit{subject to, } \bnu &\in \Delta_n^n, \ u_{il}=\kappa_i-\xi_{il}, \ \xi_{il}\nu_{il}=0  \quad \forall i,l \in [n] \label{eq: EQ_reformulation} \\
    G_i(\hatmu_i) &\geq u_{il}, \ G_i(\hatmu_i) \leq u_{il}+(1-b_{il})M, \ b_{il} \in \{0,1\}, \ \sum_{i=1}^n b_{il}=1 \quad \forall i,l \in [n] \label{eq: supply_curve_reformulation}\\
    u_{il}&=f_{il}(\bp^{(1)}) \quad \forall i,l \in [n] \label{eq: utility_reformulation} \\
    \mu_i&=\sum_{l=1}^n \hatmu_l\nu_{li} \quad \forall i \in [n] \label{eq: effective_arrival_reformulation}
\end{align}
\end{subequations}
\end{proposition}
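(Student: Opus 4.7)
The plan is to show that the feasible set defining $\mathcal{M}(\bmu)$ can be re-expressed, with the aid of auxiliary variables $(\bnu,\bkappa,\bxi,\hatmu,\bu,\mathbf{b})$, as the system of (in)equalities and binary constraints in the statement, without altering the value function. The definition of $\mathcal{M}(\bmu)$ contains two pieces of nonsmooth structure that must be handled: the equilibrium condition \eqref{eq: variational_inequality}, which is an implicit complementarity statement on $\bnu$, and the implicit max encoded by $G_i(\hatmu_i) = u_i = \max_l u_{il}$. I would reformulate these two pieces separately and then combine them with the remaining mass-balance constraints, which can be transcribed verbatim.

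First I would treat the equilibrium condition. Fixing $i$ and the scalars $\{u_{il}\}_{l \in [n]}$, condition \eqref{eq: variational_inequality} is equivalent to the support of $\bnu_i$ being contained in $\arg\max_l u_{il}$, which in turn is equivalent to $\bnu_i$ being optimal for the linear program $\max_{\bnu_i \in \Delta_n} \sum_l u_{il}\nu_{il}$. Since this LP has a compact feasible set and linear objective, its KKT conditions are both necessary and sufficient. Introducing a multiplier $\kappa_i$ for the simplex equality and multipliers $\xi_{il}\geq 0$ for the nonnegativity constraints yields the stationarity relation $u_{il} = \kappa_i - \xi_{il}$ together with complementary slackness $\xi_{il}\nu_{il} = 0$. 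These are exactly the constraints in \eqref{eq: EQ_reformulation}, and the equivalence is two-sided.

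Next I would handle $G_i(\hatmu_i) = u_i$. The identity $G_i(\hatmu_i) = \max_l u_{il}$ is equivalent to the conjunction of $G_i(\hatmu_i) \geq u_{il}$ for all $l$ with the existence of some $l^\star \in [n]$ for which $G_i(\hatmu_i) \leq u_{il^\star}$. The existence of such an $l^\star$ is precisely what the binary selector $\mathbf{b}$ encodes: the big-$M$ inequality $G_i(\hatmu_i) \leq u_{il} + (1-b_{il})M$ is vacuous when $b_{il}=0$ and forces $G_i(\hatmu_i)\leq u_{il}$ when $b_{il}=1$, and the summation constraint on $\mathbf{b}$ guarantees that at least one such index is picked for each server type. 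The key technical step is certifying that a single finite $M$ suffices: since $\bmu\in\Omega$ is fixed, the mass-balance $\mu_i=\sum_l \hatmu_l\nu_{li}$ together with $\bnu\in\Delta_n^n$ confines the relevant $\hatmu_l$ to a bounded set, and continuity of $G_i$ together with continuity of $f_{il}$ then gives a uniform bound on $|G_i(\hatmu_i) - u_{il}|$ over the feasible region; any $M$ above this bound works. This existence argument is the main obstacle and the only place where quantitative care is required.

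Finally, \eqref{eq: utility_reformulation} and \eqref{eq: effective_arrival_reformulation} are literal copies of the original definitions of the utility and of the effective server arrival rates, so no further argument is needed. Combining the two reformulated pieces with these verbatim constraints gives a system whose projection onto $\bp^{(1)}$ coincides with $\mathcal{M}(\bmu)$, and since the objective $\inner{\bmu}{\bp^{(1)}}$ depends only on $\bp^{(1)}$, the two value functions agree, establishing the claim.
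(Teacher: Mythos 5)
Your proposal is correct and follows essentially the same approach as the paper's own proof, which decomposes the argument into (i) a KKT/complementarity reformulation of the equilibrium condition \eqref{eq: variational_inequality} (the content of the paper's Lemma~\ref{lemma: variational_inequality}, which you reach by observing that \eqref{eq: variational_inequality} is LP optimality over $\Delta_n$), and (ii) a big-$M$ mixed-integer encoding of the implicit max $G_i(\hatmu_i)=\max_l u_{il}$ (the content of Lemma~\ref{lemma: integer_program}, where the paper chooses $M=\sum_i G_i(\inner{\bone_n}{\bmu})$ using $\sum_l\hatmu_l=\inner{\bone_n}{\bmu}$ and monotonicity of $G_i$). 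One small caution: your finiteness argument for $M$ appeals to a "uniform bound on $|G_i(\hatmu_i)-u_{il}|$ over the feasible region via continuity," but since $u_{il}=f_{il}(\bp^{(1)})$ and $\bp^{(1)}$ ranges over $\bbR_+^n$ a priori, the bound on $u_{il}$ from below (needed so that $G_i(\hatmu_i)-u_{il}$ is bounded above) should be justified explicitly rather than from compactness of $\hatmu$ alone — for instance, for $u_{il}=p_l^{(1)}-c_{il}$ with $p_l^{(1)}\ge 0$ one has $u_{il}\ge -c_{il}$, which closes the gap.
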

The constraint \eqref{eq: EQ_reformulation} is an equivalent reformulation of the equilibrium constraint \eqref{eq: variational_inequality}. This can be seen by interpreting $\kappa_i$ as $u_i$: If $\nu_{il}>0$, then $\xi_{il}=0$ which implies that $u_{il}=\kappa_i-\xi_{il}=\kappa_i=u_i$ and if $\nu_{il}=0$, then $\xi_{il} \geq 0$ which implies that $u_{il}=\kappa_i-\xi_{il}\leq \kappa_i=u_i$. Next, the constraint \eqref{eq: supply_curve_reformulation} is equivalent to the supply constraint $G_i(\hatmu_i)=u_i$ as $b_{il}=1$ for exactly one $l \in [n]$ which corresponds to the maximizer $(\argmax_{l \in [n]} u_{il})$. The constraint \eqref{eq: utility_reformulation} is the definition of the utility function and \eqref{eq: effective_arrival_reformulation} is the relation between the arrival rates of different types of servers $(\bhatmu)$ and the effective arrival rate of servers to different queue $(\bmu)$. The proof of the proposition follows by showing the equivalences discussed above and is deferred to the appendix. If the supply curve and utility function are linear, then the cost function becomes the value function of a  mixed-integer linear program which can be solved efficiently using standard optimization software packages.
\subsection{Cost Models}
Now we consider four different variations of the cost function. Each variation corresponds to a different model of strategic behavior we impose on servers. We restrict our attention to the utility function given by \eqref{eq: utility} and use numerical simulation to compare the different models. We begin by stating the variations of $\cost(\cdot)$.
% Now, we move our attention to variations of the cost function. We refer to the cost function defined in Section \ref{sec: model} as Selfish Drivers (SD). The cost function is given by \eqref{eq: cost_function} and denote the optimal objective value of \eqref{eq: prob_fluid_model} by $R_*^0$ and optimal solution with $(*,0)$ as the super-script. Now, we define three different variations.

\textbf{Selfish Servers (SD):} This corresponds to the cost function defined in Section \ref{sec: model} and given by \eqref{eq: cost_function}.
We denote the optimal objective value of \eqref{eq: prob_fluid_model} by $R_*^0$ and the optimal solution with $(*,0)$ as the super-script.

\textbf{Incentive Compatible (IC):} In this model we enforce the constraint that servers do not lie. This is equivalent to designing an incentive compatible pricing policy. That is, we ensure that $u_{ii} \geq u_{il}$ for all $l \in [n]$, for all $i \in [n]$. We make an additional assumption that a server will choose its own queue if possible. Thus, we will have $\hatmu_{il}=0$ for all $l \neq i$. The cost function with this new constraint can be re written as follows:
\begin{align*}
  \span \cost^1_{*}(\bmu)=\min_{\bp^{(1)}} \inner{\bmu}{\bp^{(1)}} \\
    \textit{subject to, } u_{il}&=p_l^{(1)}-c_{il} \ \forall i \in [n], \ \forall l \in [n], \quad
    G_i(\mu_i)=u_{ii} \ \forall i \in [n], \quad
    u_{ii} \geq u_{il} \ \forall i \in [n] \ \forall l \in [n].
\end{align*}
By setting the baseline as $c_{ii}=0$ for all $i \in [n]$ and eliminating $\bu$ and $\bp^{(1)}$, we get

\begin{align}
    \cost^1_*(\bmu)=\begin{cases}
    \sum_{i=1}^n G_i(\mu_i)\mu_i &\textit{if } G_i(\mu_i) \geq G_l(\mu_l)-c_{il}, \quad \forall i \in [n], \ \forall l \in [n], \\
    \infty &\textit{otherwise}.
    \end{cases} \label{eq: truthful_nash_eq}
\end{align}
This can be non-convex. To see this, consider a simple case with 2 customer types and 2 server types, i.e. $n=m=2$ and utility given by: $u_{il}=p_l^{(1)}$ for all $i,l \in [n]$. In this case, we have $\Omega=\{\bmu : G_1(\mu_1)=G_2(\mu_2)\}$. Now, even if we consider $G_1(\cdot)$ and $G_2(\cdot)$ to be twice continuously differentiable, monotonically increasing, convex functions, $c(\cdot)$ can still be highly non convex. In particular, $c(\cdot)$ is convex if $G_1-G_2$ is an affine function. We present it in the following corollary.
\begin{corollary}
If $G_i(\cdot)$ is an affine, monotonically increasing function for all $i \in [n]$, then $\cost(\cdot)$ is convex. Thus, by Proposition \ref{prop: convexity_LP_reduction}, we have $\tilde{R}_*=\tilde{R}_c^\star$.
\label{corollary: IC_convex}
\end{corollary}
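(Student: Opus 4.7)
The plan is to directly verify convexity of $\cost^1_{*}(\bmu)$ from its explicit formula, and then invoke Proposition \ref{prop: convexity_LP_reduction} as a black box.

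Write each affine, monotonically increasing $G_i$ as $G_i(\mu_i)=a_i\mu_i+b_i$ with $a_i\geq 0$. The proof then splits into two pieces: showing that the effective domain (the set on which $\cost^1_{*}$ is finite) is convex, and showing that on this domain the finite branch is a convex function. First I would rewrite each incentive-compatibility constraint $G_i(\mu_i)\geq G_l(\mu_l)-c_{il}$ as the linear inequality $a_i\mu_i - a_l\mu_l\geq b_l-b_i-c_{il}$. Since each such inequality is linear in $\bmu$, their intersection is a (closed) convex polyhedron, so the effective domain is convex.

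Next I would examine the finite-branch expression $\sum_{i=1}^n G_i(\mu_i)\mu_i=\sum_{i=1}^n\bigl(a_i\mu_i^2+b_i\mu_i\bigr)$. Because $a_i\geq 0$, each term $a_i\mu_i^2$ is a convex quadratic in $\mu_i$ (and hence convex in $\bmu$), each term $b_i\mu_i$ is linear, and a nonnegative sum of convex functions is convex. Combined with the convex polyhedral effective domain, standard facts from convex analysis (a convex function on a convex set, extended by $+\infty$ outside, is a convex function on $\bbR^n$; equivalently, the sum of a convex function with the indicator of a convex set is convex) yield that $\cost^1_{*}:\bbR_+^n\to\bar\bbR$ is convex.

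Finally I would apply Proposition \ref{prop: convexity_LP_reduction} with $\cost(\cdot)=\cost^1_{*}(\cdot)$ to conclude $\tilde{R}^\star=\tilde{R}^\star_{co}$. There is no serious obstacle here; the only subtlety is making sure the $+\infty$ extension is handled correctly, which is immediate from the linearity (and hence convexity) of the incentive-compatibility constraints under the affine-$G_i$ assumption. Monotonicity of $G_i$ enters solely through the sign condition $a_i\geq 0$, which is precisely what makes the quadratic objective convex; were any $a_i$ negative, the finite branch could be a concave quadratic and the argument would fail, explaining why both hypotheses (affine and monotone) are used.
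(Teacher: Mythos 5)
Your proof matches the paper's argument exactly: write $G_i(\mu_i)=a_i\mu_i+b_i$ with $a_i\geq 0$, note that $\inner{G(\bmu)}{\bmu}=\sum_i(a_i\mu_i^2+b_i\mu_i)$ is a convex quadratic because its Hessian is positive semi-definite, observe that the effective domain is a polyhedron defined by the linear inequalities $G_i(\mu_i)\geq G_l(\mu_l)-c_{il}$, and conclude via Proposition \ref{prop: convexity_LP_reduction}. The extra remarks on the $+\infty$ extension and on where monotonicity enters are correct but do not change the substance.
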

\proof{Proof}
We know that $G_i(\mu_i)=b_i\mu_i+b_i'$ such that $b_i \geq 0$. Thus, $\inner{G(\bmu)}{\bmu}$ is a quadratic function in $\bmu$ with a positive semi-definite Hessian. Thus, it is convex. In addition, the domain of $\cost(\cdot)$ is a polyhedron as it is defined by a finite number of affine inequalities, thus it is convex.  $\Halmos$
\endproof
The cost function in this variation is given by \eqref{eq: truthful_nash_eq}. We denote the optimal objective value of \eqref{eq: prob_fluid_model} by $R_*^1$ and  the optimal solution with $(*,1)$ as the super-script.

\textbf{$\beta-$ Incentive Compatible ($\beta-$ IC):} In this model we consider a convex combination of the two cases we considered before. That is, we enforce that at least $0<\beta<1$ fraction of each type of servers are truthful, that is, they join their own queue. Thus, we add an additional constraint $\nu_{ii} \geq \beta$ for all $i \in [n]$ or equivalently, $\hatmu_{ii} \geq \beta \sum_{l=1}^n \hatmu_{il}$ for all $i \in [n]$. For $\beta=0$, it is equivalent to the first case (SD) and for $\beta=1$, it is equivalent to the second case (IC). The cost function is given by \eqref{eq: cost_function} with an additional constraint $\hatmu_{ii} \geq \beta \sum_{l=1}^n \hatmu_{il}$ for all $i \in [n]$. We denote the optimal objective value of \eqref{eq: prob_fluid_model} by $R_*^\beta$ and the optimal solution with $(*,\beta)$ as the super-script.

\textbf{First Best, Incentive Compatible (FB-IC):} In this case, all the servers join their own queue irrespective of their utilities. The fluid model is given by \citep{varma2020dynamic}. We denote its optimal objective value by $R_{**}^{1}$ and the optimal solution with  super-script $(**,1)$.

We first present some straightforward relations between the optimal values of \eqref{eq: prob_fluid_model}.
\begin{proposition}
The following statements are true:
\begin{enumerate}
    \item $R_{**}^1 \geq R_*^1$ and if $c_{il} \geq G_l(\mu^{**,1}_l)-G_i(\mu_i^{**,1})$ for all $i,l \in [n]$, then $R_{**}^1 = R_*^1$.
    \item $R_*^{\beta_1} \geq R_*^{\beta_2}$ for all $1 \geq \beta_2 \geq \beta_1 \geq 0$.
    %\item For $\bc_1 \geq \bc_2$, we have $R_*^1(\bc_1) \leq R_*^1(\bc_2)$. %\fc{clarify this notation and explain this last part in the next paragraph.}
\end{enumerate}
\label{prop: comparison_fluid_models}
\end{proposition}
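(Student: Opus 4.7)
The overall approach is to reduce both claims to set-containment and pointwise monotonicity arguments on the cost functions, exploiting the fact that the four cost variants differ only through extra constraints layered onto a common objective of the form $\inner{F(\btlambda)}{\btlambda} - \E{\talpha}{\cost(\btmu)}$ subject to the flow and compatibility constraints \eqref{eq: cust_rate_balance}--\eqref{eq: compatibility}, which themselves do not depend on the choice of cost model.

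For Part 1, I would first note that the IC cost equals $\sum_i G_i(\tmu_i)\tmu_i$ on the set $\{\btmu : G_i(\tmu_i) \geq G_l(\tmu_l) - c_{il} \text{ for all } i,l \in [n]\}$ and is $+\infty$ elsewhere, while the FB-IC cost coincides with $\sum_i G_i(\tmu_i)\tmu_i$ on all of $\bbR_+^n$ without any IC inequalities. Hence the IC fluid problem is obtained from the FB-IC problem by imposing the extra constraints $G_i(\tmu_i) \geq G_l(\tmu_l) - c_{il}$, and $R_*^1 \leq R_{**}^1$ follows from feasible-region containment. For the converse direction under the stated hypothesis, the condition $c_{il} \geq G_l(\mu_l^{**,1}) - G_i(\mu_i^{**,1})$ for all $i,l \in [n]$ is exactly the IC constraint evaluated at the FB-IC optimizer $(\btlambda^{**,1}, \btmu^{**,1}, \btchi^{**,1})$. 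So this optimizer is IC-feasible and attains the value $R_{**}^1$ in the IC fluid problem, giving $R_*^1 \geq R_{**}^1$ and therefore equality.

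For Part 2, I would establish pointwise monotonicity of the cost function in $\beta$. Fix $\btmu$ and let $\beta_1 \leq \beta_2$. Any $(\bp^{(1)}, \bnu, \bhatmu)$ feasible in the inner minimization defining $\cost_*^{\beta_2}(\btmu)$ (the constraints of $\mathcal{M}(\btmu)$ together with $\hatmu_{ii} \geq \beta_2 \sum_l \hatmu_{il}$) automatically satisfies $\hatmu_{ii} \geq \beta_1 \sum_l \hatmu_{il}$, and is therefore also feasible in the analogous problem for $\beta_1$. This yields $\cost_*^{\beta_1}(\btmu) \leq \cost_*^{\beta_2}(\btmu)$ pointwise (with the convention that $\infty \leq \infty$), and the effective domain satisfies $\Omega^{\beta_2} \subseteq \Omega^{\beta_1}$. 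Since the flow and compatibility constraints in \eqref{eq: prob_fluid_model} do not involve $\beta$, any triple $(\btlambda, \talpha, \btchi)$ feasible for $\beta_2$-IC is also feasible for $\beta_1$-IC with an objective value at least as large, because $-\E{\talpha}{\cost_*^{\beta}(\btmu)}$ is non-increasing in $\beta$. Substituting the $\beta_2$-IC optimizer then gives $R_*^{\beta_1} \geq R_*^{\beta_2}$. The main obstacle is conceptual rather than technical: one has to verify that each cost variant is a well-defined instance of the general cost-function framework \eqref{eq: cost_function}, and in particular that the FB-IC fluid model of \citep{varma2020dynamic} coincides with \eqref{eq: prob_fluid_model} when $\cost$ is replaced by $\sum_i G_i(\tmu_i)\tmu_i$; once this alignment is in place, both claims reduce to the short feasibility arguments above without requiring any new analytic machinery.
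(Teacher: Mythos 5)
Your Part 2 argument coincides with the paper's: the constraint set defining $\cost_*^{\beta_2}$ is contained in that defining $\cost_*^{\beta_1}$ with identical objective, so $\cost_*^{\beta_1} \leq \cost_*^{\beta_2}$ pointwise on a larger effective domain, hence $R_*^{\beta_1}\geq R_*^{\beta_2}$. Nothing to add there.

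In Part 1, your structure is right but you have left the load-bearing step undone. The feasible-region containment you invoke establishes that the \emph{probabilistic} IC fluid value is at most the \emph{probabilistic} FB-IC fluid value (since $c_{**}^1 = c_*^1$ on $\Omega$ and $c_*^1=\infty$ off $\Omega$, any $\talpha$ supported on $\Omega$ is feasible for FB-IC with the same objective). But $R_{**}^1$ is defined as the value of the \emph{deterministic} FB-IC fluid model of \citep{varma2020dynamic}, not of \eqref{eq: prob_fluid_model}. To finish you must pass from probabilistic FB-IC to deterministic FB-IC, and that is precisely where the paper's proof is non-trivial: it uses that $c_{**}^1(\bmu)=\inner{G(\bmu)}{\bmu}$ is convex and applies Jensen's inequality, $\E{\talpha}{c_*^1(\bmu)} \geq \E{\talpha}{c_{**}^1(\bmu)} \geq c_{**}^1\bigl(\E{\talpha}{\bmu}\bigr)$, which is exactly the content of Proposition \ref{prop: convexity_LP_reduction} specialized to the FB-IC cost. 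You flag this "alignment" at the end but characterize it as needing "no new analytic machinery" — in fact it needs the convexity of $c_{**}^1$ and Jensen, and without it you have only bounded $R_*^1$ by the probabilistic FB-IC value, not by $R_{**}^1$. Your equality argument under $c_{il}\geq G_l(\mu^{**,1}_l)-G_i(\mu_i^{**,1})$ is correct and is the same as the paper's (the FB-IC optimizer lands in $\Omega$, so the Dirac measure at $\bmu^{**,1}$ is feasible for the IC probabilistic problem).

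Summarizing: your route factors the inequality into (a) pointwise cost comparison plus feasible-region nesting, then (b) probabilistic-to-deterministic collapse for FB-IC; the paper folds both into one Jensen step. Both are valid, but you must actually carry out step (b), and to do so you need to explicitly invoke — or assume, as \citep{varma2020dynamic} does — the convexity of $\bmu\mapsto\inner{G(\bmu)}{\bmu}$.
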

%It is obvious that (2) is true as the feasible region of the optimization problem defining the cost function for $\beta_1$-IC servers contains the feasible region of $\beta_2$-IC servers and their objective functions are identical. In addition, (1) follows by noting that the domain of $c_{*}^1$ is a subset of the domain of $c_{**}^1$ and they are equal in the domain of $c_{*}^1$. %\fc
(1) in the proposition, establishes that if the detour costs are high enough then the optimal IC solution achieves the first best. This is the case because when the costs are high, the service provider does not need to incentivize servers to act truthfully as not doing so is not in their best interest. %Next, (3) in the proposition implies that the optimal SD solution is monotonically decreasing in the detour cost $\bc$. We make the dependence of $R_*^1$ on $\bc$ explicit in the statement of the proposition by denoting it as $R_*^1(\bc)$. Higher cost component wise will result in lower utility for all types of drivers for a given pricing policy which will reduce the arrival rate of drivers joining the system. Thus, a higher incentive is required to maintain the driver inflow to meet the customer demand.

Now, to solve these fluid models numerically, we present an equivalent reformulation of the probabilistic fluid model as a finite dimensional optimization problem in the proposition below.
\begin{proposition}
The probabilistic fluid model \eqref{eq: prob_fluid_model} is equivalent to the following finite dimensional optimization program:
\begin{align*}
    \max_{\btlambda,\{\btmu^l\}_{l=1}^{n+1},\btchi,\btbeta} \inner{F(\btlambda)}{\btlambda}-\sum_{l=1}^{n+1}c(\btmu^{l})\tbeta_l \span \\
    \textit{subject to, } \tl_j&=\sum_{j=1}^n \tchi_{ij} \ \forall j \in [m], \quad
    \sum_{l=1}^{n+1}\tbeta_l \tmu^l_i=\sum_{j=1}^m \tchi_{ij} \quad \forall i \in [n] \\
    \tchi_{ij}&=0 \ \forall (i,j) \notin E, \quad \tchi_{ij} \geq 0 \ \forall (i,j) \in E, \quad
    \inner{\bone_{n+1}}{\btbeta}=1, \ \btbeta \geq \bzero_{n+1}.
\end{align*}
\end{proposition}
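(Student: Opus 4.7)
My plan is to establish equality of the two optimal values by exhibiting, for each feasible solution of one program, a feasible solution of the other with the same or weakly better objective value. The forward direction from the finite-dimensional program to \eqref{eq: prob_fluid_model} is immediate: given a feasible tuple $(\btlambda,\{\btmu^l\}_{l=1}^{n+1},\btchi,\btbeta)$ in the finite-dimensional program, define the discrete probability measure $\talpha=\sum_{l=1}^{n+1}\tbeta_l\delta_{\btmu^l}$. It is supported on $\Omega$ and satisfies $\E{\talpha}{\tmu_i}=\sum_l\tbeta_l\tmu^l_i=\sum_j\tchi_{ij}$, so constraint \eqref{eq: serv_rate_balance} is inherited; constraints \eqref{eq: cust_rate_balance} and \eqref{eq: compatibility} are identical in the two programs; and $\E{\talpha}{\cost(\btmu)}=\sum_l\tbeta_l\cost(\btmu^l)$, so the objective value is exactly preserved.

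For the reverse direction, I would start from an optimal triple $(\btlambda^\star,\talpha^\star,\btchi^\star)$ of \eqref{eq: prob_fluid_model} (existence being provided by Appendix \ref{app: existence}) and construct $\{\btmu^l\}_{l=1}^{n+1}\subseteq\Omega$ together with $\btbeta\in\Delta_{n+1}$ satisfying $\sum_l\tbeta_l\btmu^l=\E{\talpha^\star}{\btmu}$ and $\sum_l\tbeta_l\cost(\btmu^l)\le\E{\talpha^\star}{\cost(\btmu)}$, which together with $(\btlambda^\star,\btchi^\star)$ delivers a feasible solution of the finite-dimensional program whose objective weakly dominates that of \eqref{eq: prob_fluid_model}. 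Stage one is a Carath\'eodory-type argument in $\bbR^{n+1}$: the point $(\E{\talpha^\star}{\btmu},\E{\talpha^\star}{\cost(\btmu)})$ lies in $\mathrm{conv}(S)$, where $S=\{(\bmu,\cost(\bmu)):\bmu\in\Omega\}\subset\bbR^{n+1}$, and hence admits a representation as a convex combination of at most $n+2$ elements of $S$, say with atoms $\btmu^{(1)},\dots,\btmu^{(n+2)}\in\Omega$ and weights $\gamma_1,\dots,\gamma_{n+2}\in\Delta_{n+2}$. Stage two shaves the support from $n+2$ down to $n+1$ by an affine-dependence argument: the $n+2$ vectors $\{(\btmu^{(l)},1)\}_{l=1}^{n+2}\subset\bbR^{n+1}$ are linearly dependent, so there exists a nonzero $\bxi\in\bbR^{n+2}$ with $\sum_l\xi_l\btmu^{(l)}=\bzero$ and $\sum_l\xi_l=0$. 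The perturbation $\gamma_l\mapsto\gamma_l+t\xi_l$ preserves both the mean vector and the unit-sum constraint; if $\sum_l\xi_l\cost(\btmu^{(l)})=0$ the cost is also invariant and I drive $t$ to the boundary of nonnegativity to eliminate one atom, while if $\sum_l\xi_l\cost(\btmu^{(l)})\neq 0$ I push $t$ in the strictly cost-decreasing direction until a coefficient vanishes. In either case, one atom is removed without increasing the cost, leaving at most $n+1$ atoms.

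The principal obstacle is justifying, in stage one, that $(\E{\talpha^\star}{\btmu},\E{\talpha^\star}{\cost(\btmu)})$ lies in $\mathrm{conv}(S)$ and not merely in $\overline{\mathrm{conv}}(S)$, since $\talpha^\star$ may be a general non-atomic Borel probability measure on $\Omega$. Under boundedness of $\Omega$ and Borel measurability of $\cost$ (the latter verified in Appendix \ref{app: borel}), this gap can be closed by invoking the classical moment-representation theorem of Richter/Rogosinski, which directly produces a discrete probability measure on at most $n+2$ atoms of $\Omega$ matching the $n+1$ integrals $\{\E{\talpha^\star}{\tmu_i}\}_{i=1}^n$ and $\E{\talpha^\star}{\cost(\btmu)}$ exactly; alternatively, one can weakly approximate $\talpha^\star$ by finitely-supported measures, Carath\'eodory-truncate each approximant to at most $n+2$ atoms, and extract a limit via compactness of $\Omega$ and $\Delta_{n+2}$.
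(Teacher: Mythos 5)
Your proof is correct, but it proceeds by a genuinely different route from the paper. The paper's own proof is a one-line appeal to the theory of moment problems: it identifies \eqref{eq: prob_fluid_model} as an instance of the ``problem of moments'' in the sense of \citet[Section~6.6]{shapiro2014lectures} (the inner optimization over $\talpha$ fixes $n$ linear moment constraints $\E{\talpha}{\tmu_i}=\sum_j\tchi_{ij}$ plus the normalization, and the objective is linear in $\talpha$), and then directly invokes Proposition~6.40 of that reference, which asserts that the optimum over probability measures is attained by a discrete measure on at most $n+1$ atoms. What you do instead is reprove that atomicity result from scratch: your forward direction handles the easy embedding of discrete measures into the measure-valued formulation, and your reverse direction is precisely the standard proof of the Richter/Rogosinski theorem that underlies the cited result --- lift to $\mathrm{conv}(S)\subset\bbR^{n+1}$, Carath\'eodory down to $n+2$ atoms, then use an affine-dependence perturbation $\gamma_l\mapsto\gamma_l+t\xi_l$ (with $\sum_l\xi_l=0$ and $\sum_l\xi_l\btmu^{(l)}=\bzero$) to remove one more atom without increasing $\sum_l\gamma_l c(\btmu^{(l)})$. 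You correctly flag the only nontrivial gap, namely that the pair $(\E{\talpha^\star}{\btmu},\E{\talpha^\star}{c(\btmu)})$ lies in $\mathrm{conv}(S)$ rather than merely its closure, and you correctly note that this is exactly what Richter/Rogosinski delivers (with a fallback via weak approximation over compact $\Omega$). The trade-off is pedagogical: your version is self-contained and exposes the mechanism (it is essentially a hand-derivation of \citet[Proposition~6.40]{shapiro2014lectures}), while the paper trades transparency for brevity by delegating to a textbook citation.
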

\proof{Proof}
We first identify that the primal problem \eqref{eq: prob_fluid_model} is a class of risk averse optimization problem that falls into the category of the problem of moments (Section 6.6, \citep{shapiro2014lectures}). Thus, by \citep[Proposition 6.40]{shapiro2014lectures}, the result follows. \hfill $\Halmos$
\endproof
\subsection{N-Network}
\subsubsection{Cost Function and Fluid Model}
We compare the cost functions and the resultant fluid model for the different cases discussed above. In this subsection, we consider an N-network graph and carry out simulations by varying the inverse supply curves and the penalty due to lying. We start by plotting the contour plots of the cost functions with the penalty $c_{12}=2$ and $c_{21}=5$ for all the different cases and for two sets of supply curves. The results are summarized in Fig. \ref{fig: contour}. It can be observed that for the case of IC and FB-IC, the cost function is convex and for all the other cases, it is non convex. Although, for some choices of supply curves, the cost function is close to convex as in Fig.  \ref{fig: contour} (e), (f).
\begin{figure}[t]
\FIGURE{
    \includegraphics[width=\textwidth]{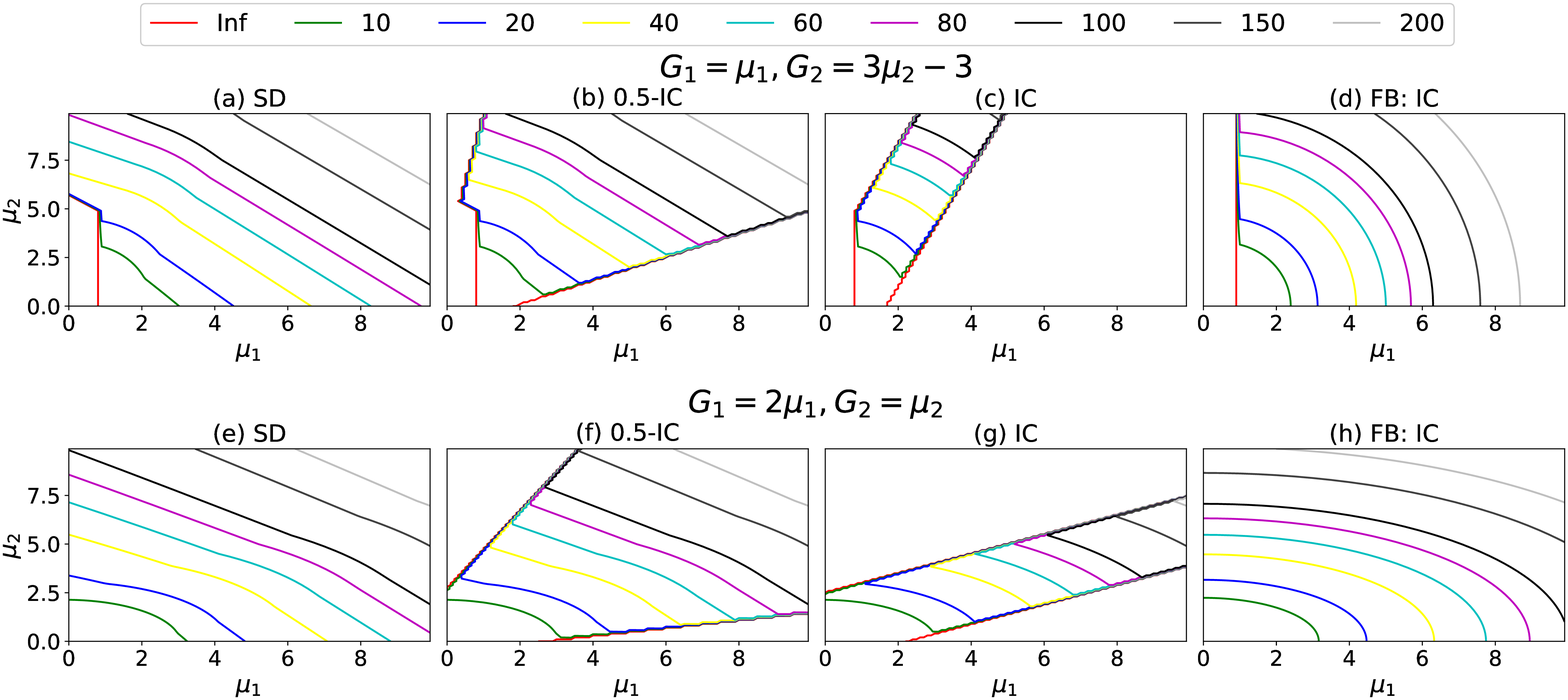}}{
    \centering{Contour plot of cost function $c(.)$ for SD, 0.5-IC, IC and FB: IC with $c_{12}=2, c_{21}=5$.}
    \label{fig: contour}}{}
\end{figure}
We pick linear demand curves given by $F_1(\lambda_1)=10-\lambda_1/2$ and $F_2=15-\lambda_2$. The resultant optimal objective values of \eqref{eq: prob_fluid_model} is summarized in Table \ref{table: fluid_modles}.
\begin{figure}[t]
\FIGURE{
 \begin{tikzpicture}[scale=0.6]
\draw[black, very thick] (0,0) -- (2,0) -- (2,1) -- (0,1);
\node[black,very thick] at (0.25,0.5) {2};
\draw[black, very thick] (0,1.5) -- (2,1.5) -- (2,2.5) -- (0,2.5);
\node[black,very thick] at (0.25,2) {1};
\draw[black,very thick] (8,0) -- (6,0) -- (6,1) -- (8,1);
\node[black,very thick] at (7.75,0.5) {2};
\draw[black, very thick] (8,1.5) -- (6,1.5) -- (6,2.5) -- (8,2.5);
\node[black,very thick] at (7.75,2) {1};
\draw[black,very thick] (13,0) -- (11,0) -- (11,1) -- (13,1);
\node[black,very thick] at (12.75,0.5) {2};
\draw[black, very thick] (13,1.5) -- (11,1.5) -- (11,2.5) -- (13,2.5);
\node[black,very thick] at (12.75,2) {1};
\draw[black,thick]  (2.75, 2.1) edge[<->]  (5.25, 2.1);
\draw[black,thick]  (2.75, 0.6) edge[<->]  (5.25, 1.9);
\draw[black,thick]  (2.75, 0.4) edge[<->]  (5.25, 0.4);
\draw[black,thick]  (8.25, 0.4) edge[<->]  (10.75, 0.4);
\draw[black,thick]  (8.25, 0.6) edge[<->]  (10.75, 1.9);
\draw[black,thick]  (8.25, 2.1) edge[<->]  (10.75, 2.1);
\draw[black,thick]  (8.25, 1.9) edge[<->]  (10.75, 0.6);
\node[black, align=center] at (9.5,2.4){\footnotesize$c_{11}=0$};
\node[black, align=center] at (9.5,0.2){\footnotesize$c_{22}=0$};
\node[black, align=center] at (9.85,1.7){\footnotesize$c_{12}$};
\node[black, align=center] at (9.95,0.8){\footnotesize$c_{21}$};
\node[black, align=center] at (1,3) {\footnotesize Customer};
\node[black, align=center] at (4,3) {\footnotesize \shortstack{Compatible\\Matchings}};
\node[black, align=center] at (7,3) {\footnotesize \shortstack {Server \\  Queue}};
\node[black, align=center] at (12,3) {\footnotesize \shortstack {Server \\ Type}};
\end{tikzpicture}}{
    \centering{N-Network.}
    \label{fig:ex_multiple_link}}{}
\end{figure}
\begin{table}[b]
\TABLE {\caption{Comparison of optimal value of different fluid models.}
\label{table: fluid_modles}}{
    \begin{tabular}{c|c c c| c c c}
        Supply Curve &  \multicolumn{3}{c|}{$G_1=2\mu_1$, $G_2=\mu_2$} & \multicolumn{3}{c}{$G_1=\mu_1$, $G_2=3\mu_2-3$}\\
         $(c_{12},c_{21})$ & $(0,0)$ & $(2,5)$&$(20,50)$ & $(0,0)$ & $(2,5)$&$(20,50)$ \\
         \hline
         $R_*^0$ & 38.19 & 38.19 & 38.19 & 39.75 & 37.37 & 36.91 \\
          $R_*^1$ & 38.19 & 38.19 & 38.19 & 36.86 & 36.91 & 36.91 \\
           $R_{**}^1$ & 38.19 & 38.19 & 38.19 & 36.91 & 36.91 & 36.91
    \end{tabular}}{}
\end{table}
The optimal solution in the case of incentive compatible servers for all the cases is a deterministic pricing policy for the servers as the supply curves are chosen to be linear and thus, the simulation results conform with Corollary \ref{corollary: IC_convex}. In addition, as expected, we have $R_{*}^1 \leq R_{**}^1$. For the first set of supply curves, by statement one of Proposition \ref{prop: comparison_fluid_models}, for all $c_{12},c_{21} \geq 0$, we have $R_{*}^1= R_{**}^1$ and for the second set of supply curves, for all $c_{12} \geq 0.42$ and $c_{21} \geq -0.42$, we have $R_{*}^1= R_{**}^1$. In words, if $\bc$ is large enough, the system operator doesn't need to incentivize the servers and this threshold of penalty depends on the network topology, supply and demand curves.

% The optimal solution in the case of selfish servers for the first and second set of supply curves is a deterministic and probabilistic policy, respectively. By the contour plots of the cost function given in Fig. \ref{fig: contour} (a), (d), we can see that in the first case, it is approximately convex and in the second case, it is non convex.
% This verifies Proposition \ref{prop: convexity_LP_reduction}. The optimal solution with $G_1=\mu_1$, $G_2=3\mu_2-3$, and $\bc=\bzero_{2\times 2}$ are as follows:
% \begin{align*}
%     (\bhatmu^{1})^{0,*}=\begin{bmatrix}
%     3.42 & 2.18 \\
%     0.12 & 0
%     \end{bmatrix} \ (\bhatmu^{2})^{0,*}=\begin{bmatrix}
%     0 & 0.21 \\
%     3.53 & 1.97
%     \end{bmatrix} \ (\bhatmu^{3})^{0,*}=\begin{bmatrix}
%     0.025 & 2.18 \\
%     3.52 & 0
%     \end{bmatrix} \ \begin{matrix}
%     \bbeta^{0,*}=(0.15, 0.31,  0.54) \\
%     \blambda^{0,*}=(2.15,3.58) 
%     \end{matrix}
% \end{align*}
% \begin{align*}
%     (\bhatmu^{1})^{1,*}=\begin{bmatrix}
%     3.42 & 0 \\
%     0 & 2.14
%     \end{bmatrix}  \quad \begin{matrix}
%     \bbeta^{1,*}=(1,0,0) \\
%     \blambda^{1,*}=(3.42,2.14) 
%     \end{matrix} \quad
%     (\bhatmu^{1})^{1,**}=\begin{bmatrix}
%     3.33 & 0 \\
%     0 & 2.25
%     \end{bmatrix}  \quad \begin{matrix}
%     \bbeta^{1,**}=(1,0,0) \\
%     \blambda^{1,**}=(3.33,2.25) 
%     \end{matrix}
% \end{align*}
One crucial observation is that the optimal solution of IC and FB-IC are close to each other even when $\bc$ is small. We analyze this further in the Appendix \ref{app: simulation}.
\subsubsection{Stochastic Simulation}
Now, we analyze the proposed two-price policy and max-weight matching policy for the stochastic system. To analyze the pre-limit behavior of the policy, we calculate the percentage loss compared to the upper bound $\eta R^\star$. Mathematically,
\begin{align*}
    \% \textit{Loss} = \frac{L_\eta}{\eta R^\star} \times 100.
\end{align*}
We will consider the same two sets of supply curves as in the fluid model simulations.

For the case when $G_1(\mu_1)=2\mu_1$ and $G_2(\mu_2)=\mu_2$, all the fluid models have the same optimal solution. We use this optimal fluid arrival rates in the two price policy and simulate the system for different distributions of the arrival rate. We consider binomial distribution $(n',p)$ with $n'=5$ and $n'=8$, and a perturbed uniform distribution with support $\{0,1,2,3,4,5\}$. The success probability of binomial are chosen so that the mean arrival rate matches the two-price policy. Similarly, the uniform distribution is perturbed to match the mean arrival rate with the two-price policy.

For the case when $G_1(\mu_1)=\mu_1$ and $G_2(\mu_2)=3\mu_2-3$, all the fluid models results in different optimal solutions. Thus, we analyze the stochastic system under all these optimal solutions for the case when $c_{12}=c_{21}=0$. For this case, the distribution of arrivals we use is uniform distribution with perturbation on support $\{0,1,2,3,4,5\}$.
\begin{figure}
    \begin{minipage}{0.33\textwidth}
    \FIGURE{
    \includegraphics[width=0.9\linewidth]{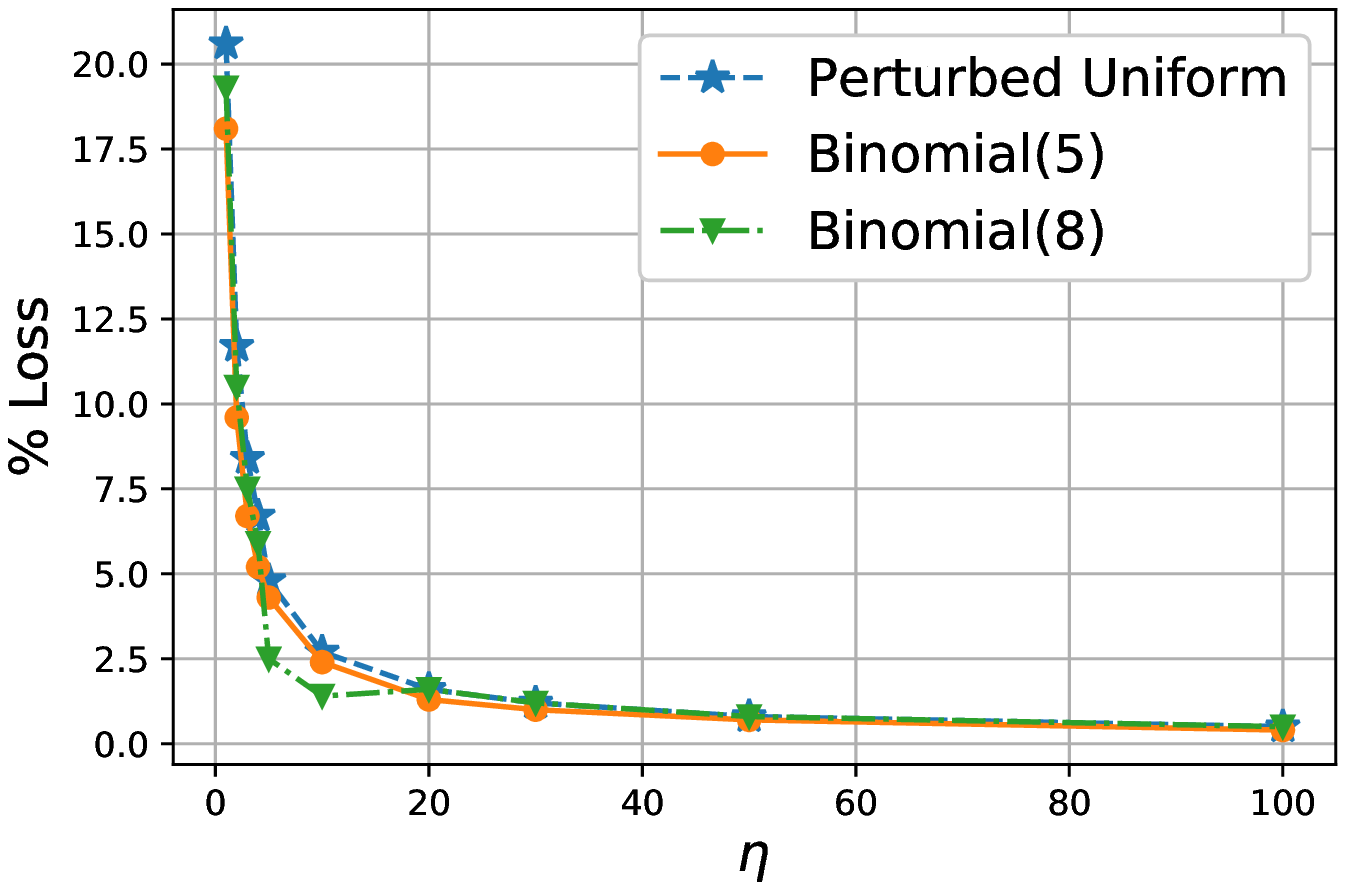}}{
    \centering{Percentage Loss vs $\eta$ with linear supply curves for different arrival distributions}\label{fig: percent_dist}}{}
    \end{minipage}
     \begin{minipage}{0.33\textwidth}
     \FIGURE{
    \includegraphics[width=0.9\linewidth]{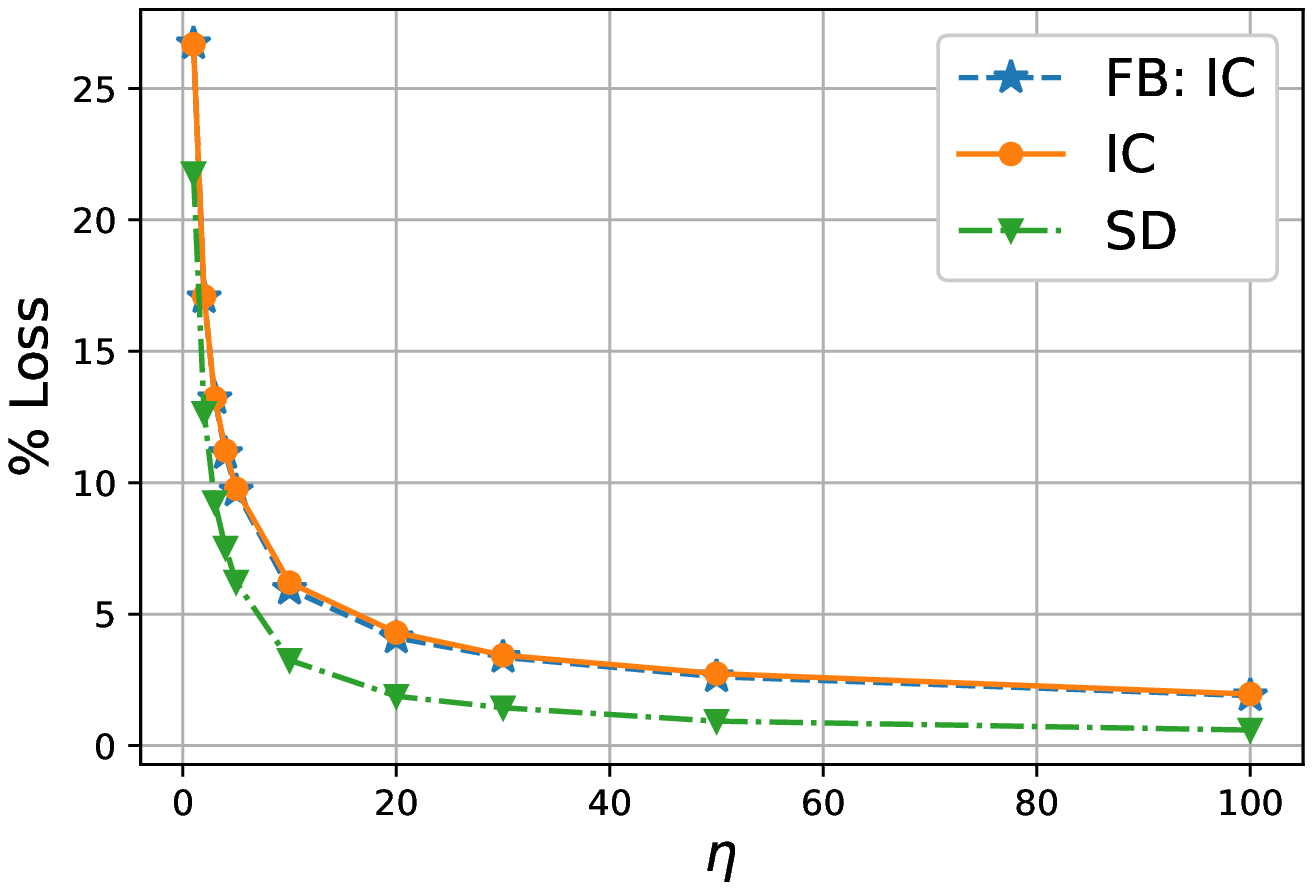}}{
    \centering{Percentage loss vs $\eta$ with affine supply curves under different cost functions}\label{fig: percent_cost}}{}
    \end{minipage}
    \begin{minipage}{0.33\textwidth}
   \FIGURE{\includegraphics[width=.9\linewidth]{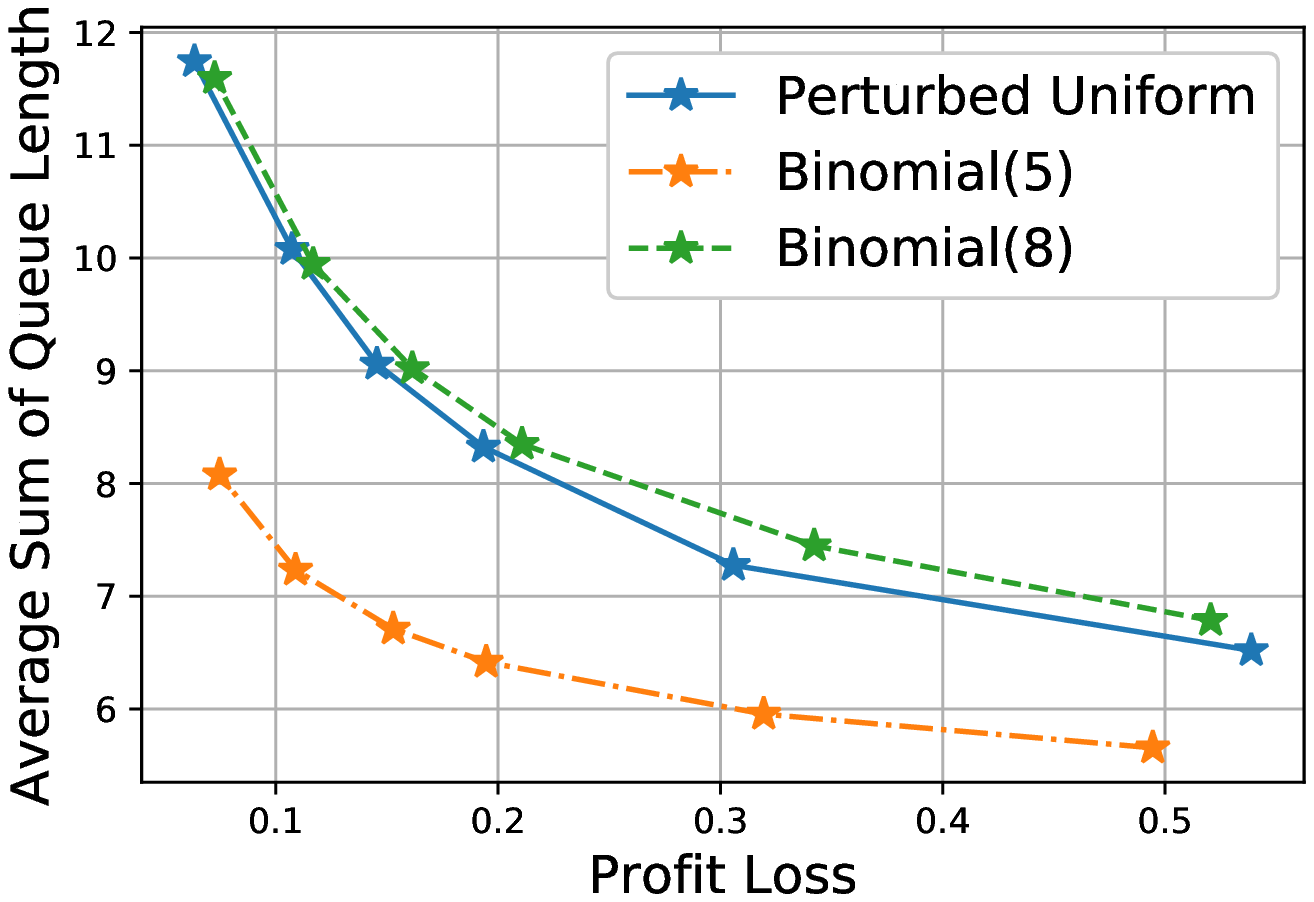}}{
    \centering{Average sum of queue length vs loss in profit with linear supply curves for N-Network}
    \label{fig: q_pl_dist}}{}
    \end{minipage}
    \end{figure}
    %\begin{figure}[t]
    % \FIGURE{\includegraphics[width=.432\linewidth]{trade_off.eps}}{
    % \centering{Average sum of queue length vs loss in profit for non asymptotic systems with linear supply curves for N-Network}
    % \label{fig: q_pl_dist}}{}
    % \end{figure}

The result for both  cases are summarized in Fig. \ref{fig: percent_dist} and Fig. \ref{fig: percent_cost}. The percentage loss decays very fast and less than 5\% error is achieved for $\eta$ as small as 10. This shows the effectiveness of the proposed policy even in the pre-limit system. By Fig. \ref{fig: percent_dist} we can observe that the percentage loss is robust to the change of distribution of the arrival rate and by Fig. \ref{fig: percent_cost} we conclude that it is robust to different cost functions as well.

In addition, we also study the system free of asymptotic regime and the result for the N-Network is plotted in Fig. \ref{fig: q_pl_dist}. Here, we can observe the trade-off between the average sum of queue length and the loss in profit incurred by the system operator. This provides insight on how to appropriately choose the parameter of two price policy to optimize this trade-off. We can observe that higher variance of the arrival process leads to higher queue lengths for the same loss in profit. This is coherent with Theorem \ref{theo: lower_bound}. 
\section{Conclusion}
In this paper, we considered a very general model of two-sided queues with strategic servers. The cost paid to the servers as a function of their arrival rates is formulated as an optimization problem with equilibrium constraints. We consider multiple different models and present their comparison using theoretical and simulation results. Using a general cost function, we introduced a novel probabilistic fluid model which provides an upper bound on the achievable profit under any policy. Then, we presented a two price policy and max-weight matching policy which achieves this upper bound under the large market regime with $O(\eta^{1/3})$ rate of convergence. We also showed that under a broad class of customer pricing policy, the rate of convergence is lower bounded by $\Omega(\eta^{1/3})$ under any matching policy. We conclude our discussion by comparing different equilibrium and analyzing the real-life performance of the probabilistic two-price policy and max-weight matching policy using simulations. We also consider several extensions of our paper which are presented in Appendix \ref{app: extensions}. This asserts that our framework of probabilistic fluid model and stochastic analysis is quite general. In particular, we consider the following four extensions: (1) We establish a concrete trade-off between waiting time and profit-loss by analyzing a scale free system. (2) We consider a slightly different model which penalizes based on the waiting time as opposed to the queue length and show that our proposed policy achieves optimal profit with $O(\eta^{-1/3})$ rate of convergence. (3) We generalize the utility function to additionally depend on the rate of matching customer-server pair and propose an optimal pricing and matching policy. (4) We allow the servers to choose an equilibrium among the ones that maximizes their utility. We analyze the system performance by considering adversarial servers and present a probabilistic fluid model and show that it provides an upper bound on the achievable profit.
% Next, given the service level of the system, we presented the bounds on the achievable profit under the two price policy and max-weight matching policy which is an asymptotic regime free result. Finally, we present some extensions of our model by generalizing the utility function and analyzing the pessimistic model which asserts that our framework of probabilistic fluid model and stochastic analysis is quite general.

\bibliographystyle{informs2014}
\bibliography{references}
\ECSwitch
\renewcommand{\theHsection}{A\arabic{section}}
\begin{APPENDICES}{}%
\section{Extensions} \label{app: extensions}
In this section, we will consider four extensions of our model: (1) We consider a scale-free setup, and analyze the profit obtained by the system operator given the service level constraint which establishes a fundamental trade off between profit and queue length. (2) We consider a slightly different model which penalizes the system operator based on the waiting time and not the queue length. (3) We generalize the utility function to additionally depend on the long run average matching rates $\E{}{\barx}$. (4) We allow the servers to choose equilibrium of their choice among the ones that maximize their utility and analyze the worst case scenario by considering adversarial servers. These extensions will make it apparent that our framework involving the probabilistic fluid model and stochastic analysis of the perturbed policies is very general.
\subsection{A Quality Driven View of the Near Optimal Policy} \label{sec: asymptotic_free}
We present an alternate view of the 
sequence of policies 
we considered in the previous sections. In particular, instead of considering
an asymptotic regime, we analyze the system under a near optimal policy and, critically, impose a given service quality requirement.

To gain intuition, let us consider the
 two price policy given by \eqref{eq: two_price_policy} and \eqref{eq: modified_max_weight_matching}. From Lemma \ref{lemma: stability} and Lemma \ref{lemma: profit_loss}, we know that 
 %\fc{$R_*$ notation}
 $\E{}{\inner{\bone_{n+m}}{\bbarq}}\sim\frac{1}{\epsilon}$ and $\tilde{R}_*-P \sim \epsilon^2$  with $\epsilon_\eta=\epsilon$. 
%In particular, from Lemma \ref{lemma: stability} and Lemma \ref{lemma: profit_loss}, we know that $\E{}{\inner{\bone_{n+m}}{\bbarq}}\sim\frac{1}{\epsilon}$ and $R_*-P \sim \epsilon^2$ under the two price policy given by \eqref{eq: two_price_policy} with $\epsilon_\eta=\epsilon$. 
Now, for the profit to approach the fluid solution, we need to let $\epsilon \rightarrow 0$. However, this causes the expected sum of queue length to go to infinity and, therefore, there could be an arbitrary large loss of service quality impacting both servers and customers.

%is like the service level of the system which measures the performance of the system as it is directly reflects the quality of service provided to the customers as well as drivers. 

In this section, we  maximize the profit (or equivalently, minimize the profit-loss) given a target service level of the system. 
Specifically, we consider the additional constraint that $\E{}{\inner{\bone_{n+m}}{\bbarq}}=C$ for some constant $C>0$. In this case, we need to pick $\epsilon$ to be of the order $\frac{1}{C}$. This will lead to an $O(\frac{1}{C^2})$ profit-loss as
%\fc{notation (also in the statement of theorem 8)} 
$\tilde{R}_*-P \sim \epsilon^2$. If $C$ is large, then the system is allowed to keep  customers and servers waiting  for a longer period of time. This allows the system operator
%which will allow them
to use the policy which is closer to the fluid optimal policy and thus, the profit-loss is lower; but, at the same time, the service quality is hurt. We make this discussion rigorous in the following theorem.
\begin{theorem} \label{theo: lower_bound}
Consider a DTMC operating under a pricing and matching policy $\pi \in \StablePol$ such that the following is true:
\begin{itemize}
    \item $|\lambda_j(\bq)-\tl_j^\star| \leq \epsilon$ for all $j \in [m]$, $\E{\alpha_\bq}{\mu_i}=\E{\talpha^\star}{\mu_i}$ for all $i \in [n]$ for all $\bq \in S$
    \item There exists $K,\sigma>0$ such that if $q_i^{(1)}>K$ or there exists $j \in [m]$ such that $(i,j) \in E$ and $q_j^{(2)}>K$, then $|\lambda_j(\bq)-\tl^\star_j|>\sigma\epsilon$ for all $j \in [m]$.
    \item $\E{}{\inner{\bone_{n+m}}{\bbarq}} = C$.
\end{itemize}
Then there exists $C_0, \epsilon_1>0$ such that for all $C>C_0$ and $\epsilon<\epsilon_1$, there exists some constant $B_3>0$ depending on $((\phi_j)_{j \in [m]},F(\cdot),c(\cdot),A_{\max},\Sigma^{(1)},\Sigma^{(2)},n,m,E)$ such that
\begin{align*}
    P(\pi) \leq \tilde{R}^\star-\frac{B_3}{C^2}.
\end{align*}
In addition, let $\pi$ be the pricing and matching policy given by \eqref{eq: two_price_policy} and \eqref{eq: modified_max_weight_matching}, and $\epsilon'=\epsilon_\eta$ such that $\E{}{\inner{\bone_{n+m}}{\bbarq}} = C$, then there exists a constant $B_4>B_3$ depending on $(F(\cdot),c(\cdot),A_{\max},\Sigma^{(1)},\Sigma^{(2)},n,m,E)$ such that
\begin{align*}
    P(\pi) \geq  \tilde{R}^*-\frac{B_4}{C^2}+O\left(\frac{1}{C^3}\right) 
%   \textit{where} \quad B_3&=-\sum_{j=1}^m \left(\frac{\tl^\star_jF''(\tl^\star_j)}{2}+F'_j(\tl_j^\star)\right)\left(\frac{B_1+2B_2}{2\min_{i \in [n]}\left\{\sum_{j : (i,j) \in E}\frac{\tchi_{ij}^\star}{\tl_j^\star}\right\}}\right)^2>0, \\
%   \textit{and} \quad B_4&=-\sum_{j=1}^m \left(\frac{\tl^\star_jF''(\tl^\star_j)}{2}+F'_j(\tl_j^\star)\right)\left(\frac{\bone_{n \times n}\circ \Sigma^{(1)}+\bone_{m \times m} \circ \Sigma^{(2)}-1}{4\max\{m,n\}}\right)^2>0.
\end{align*}
\end{theorem}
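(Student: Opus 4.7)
The plan is to obtain both inequalities as scale-free analogues of results we already have in the large-market regime, trading the scaling parameter $\eta$ for the service-quality budget $C$. The key observation is that whenever the customer pricing policy stays within $\epsilon$ of the fluid optimum, the expected queue length scales like $1/\epsilon$ (heavy-traffic behavior, Theorem \ref{theo: lowerbound_queue}) while the profit-loss scales like $\epsilon^2$ (second-order Taylor expansion, Lemmas \ref{lemma: first_order_optimality} and \ref{lemma: profit_loss}). Under the constraint $\E{}{\inner{\bone_{n+m}}{\bbarq}}=C$, this tension forces $\epsilon$ to be of order $1/C$ and the loss to be of order $1/C^2$.

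For the first inequality (lower bound on profit-loss), I would first invoke the G/G/1 coupling argument underlying Theorem \ref{theo: lowerbound_queue}. That argument uses only the perturbation hypothesis on $(\blambda(\bq),\E{\alpha_\bq}{\bmu})$, stability of $\pi$, and the variance lower bounds from $\Sigma^{(1)}(\talpha^\star)$ and $\Sigma^{(2)}_{\min}$; crucially none of it needed the asymptotic regime. It gives a constant $K_1>0$ with $C=\E{}{\inner{\bone_{n+m}}{\bbarq}} \geq K_1/\epsilon$, hence $\epsilon \geq K_1/C$. Next I would Taylor expand the per-slot revenue $\inner{F(\blambda(\bq))}{\blambda(\bq)}-\E{\alpha_\bq}{\cost(\bmu)}$ around $(\btlambda^\star,\talpha^\star)$, take expectation over $\bbarq$, and use Lemma \ref{lemma: first_order} (the policy-agnostic generalization of Lemma \ref{lemma: first_order_optimality}) to eliminate the first-order term, exactly as in the proof of Theorem \ref{theo: lowerbound_profit_loss}. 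What remains is
\[
\tilde{R}^\star - P(\pi) = -\sum_{j=1}^m\left(\tfrac{\tl_j^\star F_j''(\tl_j^\star)}{2}+F_j'(\tl_j^\star)\right)\E{}{(\lambda_j(\bbarq)-\tl_j^\star)^2}+O(\epsilon^3),
\]
whose bracketed coefficients are strictly negative by Assumption \ref{ass: concave}. The second hypothesis on $\pi$ (whenever any compatible queue exceeds the threshold $K$ we have $|\lambda_j(\bq)-\tl_j^\star|>\sigma\epsilon$), together with a reverse Markov argument using $\E{}{\inner{\bone_{n+m}}{\bbarq}}=C$ with $C\geq C_0$ large (which forces the event $\{\bbarq\text{ exceeds }K\}$ to carry probability uniformly bounded away from zero), yields $\E{}{(\lambda_j(\bbarq)-\tl_j^\star)^2}\geq c_0 \epsilon^2$ for some $c_0>0$. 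Combined with $\epsilon \geq K_1/C$, this delivers the $B_3/C^2$ lower bound on the loss.

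For the second inequality (upper bound on profit-loss under the two-price policy), I would substitute \eqref{eq: two_price_policy}-\eqref{eq: modified_max_weight_matching} with perturbation $\epsilon'$ into Lemmas \ref{lemma: stability} and \ref{lemma: profit_loss}. Lemma \ref{lemma: stability} gives $\E{}{\inner{\bone_{n+m}}{\bbarq_{\epsilon'}}} \leq B/\epsilon'$, and the G/G/1 coupling of Theorem \ref{theo: lowerbound_queue} applied to the two-price policy itself supplies a matching lower bound $\E{}{\inner{\bone_{n+m}}{\bbarq_{\epsilon'}}} \geq \underline{B}/\epsilon'$. Standard DTMC perturbation makes $\epsilon' \mapsto \E{}{\inner{\bone_{n+m}}{\bbarq_{\epsilon'}}}$ continuous, so by the intermediate value theorem one can choose $\epsilon'=\Theta(1/C)$ matching the target exactly. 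Substituting this $\epsilon'$ into Lemma \ref{lemma: profit_loss} gives
\[
\tilde{R}^\star - P(\pi) = -(\epsilon')^2\sum_{j=1}^m\left(\tfrac{\tl_j^\star F_j''(\tl_j^\star)}{2}+F_j'(\tl_j^\star)\right)+O((\epsilon')^3)= \frac{B_4}{C^2}+O\!\left(\frac{1}{C^3}\right),
\]
with $B_4>B_3$ automatic: the two-price policy lies in the class covered by Part~1, so its loss cannot fall below the universal lower bound $B_3/C^2$.

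The main obstacle will be establishing the two-sided relationship $\epsilon'=\Theta(1/C)$ with sharp enough constants to preserve the $1/C^2$ rate; this requires pairing Lemma \ref{lemma: stability} with a Theorem \ref{theo: lowerbound_queue}-style lower bound on queues under the two-price policy, and then showing that $\epsilon' \mapsto \E{}{\inner{\bone_{n+m}}{\bbarq_{\epsilon'}}}$ varies continuously and essentially monotonically so that the target $C$ is actually attainable. A secondary subtlety is controlling the third-order Taylor remainder uniformly in $\bq$ on the interval $[\tl_j^\star-\epsilon,\tl_j^\star+\epsilon]$, which needs Assumption \ref{ass: monotonic} together with the bounded-support hypotheses on the admissible policies; this is what legitimizes the $O(\epsilon^3)=O(1/C^3)$ remainders in both bounds.
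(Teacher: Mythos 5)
Your plan is essentially the right shape—exchange the scaling parameter $\eta$ for the service budget $C$, using $\E{}{\inner{\bone_{n+m}}{\bbarq}}\sim 1/\epsilon$ on one side and $\tilde{R}^\star-P \sim \epsilon^2$ on the other—and your treatment of the second inequality will go through. But there is one genuine gap in the first inequality, and one place where you are making things harder than necessary in the second.

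For Part~1 you propose to carry out the Taylor expansion directly and then establish $\E{}{(\lambda_j(\bbarq)-\tl_j^\star)^2}\ge c_0\epsilon^2$ from the second structural hypothesis on $\pi$ ``together with a reverse Markov argument'' using $\E{}{\inner{\bone_{n+m}}{\bbarq}}=C$ to force $\P{}{\inner{\bone_{n+m}}{\bbarq}>K}$ to be bounded away from zero. That inference does not hold: a constraint $\E{}{X}=C$ alone gives \emph{upper} bounds on tails via Markov, never a lower bound, because all the excess mass could sit arbitrarily far out with vanishing probability. To push the argument through this way you would need, in addition, a uniform second-moment bound of the form $\E{}{\inner{\bone_{n+m}}{\bbarq}^2}=O(C^2)$ so that a Cauchy--Schwarz/Paley--Zygmund step gives $\P{}{\inner{\bone_{n+m}}{\bbarq}>K}\ge (C-K)^2/\E{}{\inner{\bone_{n+m}}{\bbarq}^2}$; proving such a second-moment bound in this class of policies is itself nontrivial. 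The paper avoids this entirely: it embeds $\pi$ into a sequence $\{\pi_\eta\}$ of the form \eqref{eq: general_pricing_policy} with $\alpha=0$, $\beta=-1/2$, and $\phi_j(\bq)=(\lambda_j(\bq)-\tl_j^\star)/\epsilon$ (so that $\pi_{\epsilon^{-2}}$ coincides with $\pi$), verifies Assumption~\ref{condition: general_pricing}, and applies Theorem~\ref{theo: lowerbound_profit_loss} at $\eta=\epsilon^{-2}$. The needed tail-probability lower bound is then precisely Lemma~\ref{lemma: tail_probability}, which is proved via the coupling with a G/G/1 queue whose scaled stationary distribution converges weakly to an exponential—this is a distributional fact, not a Markov-type moment argument. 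Either adopt that route, or replace ``reverse Markov'' with an explicit second-moment bound and a Paley--Zygmund inequality.

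For Part~2 you worry about establishing a two-sided scaling $\epsilon'=\Theta(1/C)$, monotonicity and continuity of $\epsilon'\mapsto\E{}{\inner{\bone_{n+m}}{\bbarq_{\epsilon'}}}$, and an intermediate-value argument for the existence of $\epsilon'$. None of this is needed: the theorem \emph{hypothesizes} the existence of an $\epsilon'$ achieving $\E{}{\inner{\bone_{n+m}}{\bbarq}}=C$, and the only estimate used is the one-sided bound $\epsilon'\le B/(\min_{i,j}\{s_i^{(1)},s_j^{(2)}\}\,C)$ coming from Lemma~\ref{lemma: stability}; since the coefficient $-\sum_j\bigl(\tl_j^\star F_j''(\tl_j^\star)/2+F_j'(\tl_j^\star)\bigr)$ is positive, the profit-loss of Lemma~\ref{lemma: profit_loss} is increasing in $(\epsilon')^2$, so the upper bound on $\epsilon'$ alone gives $L_P\le B_4/C^2+O(1/C^3)$. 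Your observation that $B_4>B_3$ follows from the two-price policy lying in the class of Part~1 is correct.
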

%\sv{Siva, please look at the proof of this as well}
Note that the first condition is analogous to the condition required for the validity of Theorem \ref{theo: lowerbound_queue}, the second condition is analogous to the stability condition given by \ref{condition: general_pricing} \ref{condition: stability}. The first part of the proposition follows from Theorem \ref{theo: lowerbound_queue} and Theorem \ref{theo: lowerbound_profit_loss} and the second part follows from Lemma \ref{lemma: stability} and Lemma \ref{lemma: profit_loss} with
$\epsilon_\eta=\epsilon'$. There are several key conclusions that can be drawn from the above proposition: (1) It implies that two-price policy and max-weight matching policy provides the optimal order of profit given the service level. (2) It also explains the $O(\eta^{1/3})$ loss in net profit obtained in the asymptotic regime by establishing a fundamental trade off between the profit and queue length. (3) Lastly, this result can be directly applied in practice to estimate the profit given the service level constraint.

%\fc{We should try to think how to get more out of this. Can we say something about how sensitive things are to changes in $C$? for example , the function $a*x +1/x^2$ is kind of flat at the optimum if $a$ is small. This means that there is some margin of error that does not affect things by a lot. Can we say something similar here but in terms of service level? check with Siva.}
\subsection{Waiting Time Model}
In this section, we modify the objective \eqref{eq:opt_stoch} and impose penalty based on the total expected waiting times rather than the queue lengths. In particular, let $w_j^{(2)}(k)$ be the waiting time of the $k^{th}$ customer of type $j$ and similarly,  $w_i^{(1)}(k)$ be the waiting time of the $k^{th}$ server of type $i$. Then, the objective of the system operator is given by
\begin{subequations}
\begin{align}
    R^\star_w\triangleq \sup_{(\blambda(\cdot),\alpha(\cdot),\bx(\cdot))\in \StablePol} \E{\bbarq}{\inner{F(\blambda(\bbarq))}{\blambda(\bbarq)}-\E{\alpha_\bbarq}{\cost(\bmu)}-\inner{\bs}{\bbarw}} \span \label{eq: net_profit_stochastic_waiting}\\
    \textit{subject to,} \quad &\blambda(\bq)\in \bbR^m_+,\quad \forall \bq\in \calS\\
    &\alpha_\bq(\cdot)\in \Peq_\bq \quad \forall \bq \in \calS \\
    &\bx(\cdot)\quad \textit{satisfies} \:\: \eqref{eq: matching_constraints}  
\end{align}
\end{subequations}
For any pricing and matching policy, it is trivially true that $R_w \leq P$, where $R_w=P-\E{\bbarq}{\inner{\bs}{\bbarw}}$. Thus, by Proposition \ref{prop: fluid_model}, we have $R_w \leq \tilde{R}^\star$ for any pricing and matching policy. Denote the net-profit loss under this model for the policy $\pi \in \StablePol$ by $L_w(\pi)=\tilde{R}^\star-R_w(\pi)$. Now, we show that the probabilistic two-price policy and max-weight matching policy is optimal by showing that the net-profit loss decays to zero as $\eta \rightarrow \infty$. The result is presented below.
\begin{proposition} \label{prop: waiting_time}
Consider a sequence of DTMCs parameterized by $\eta$ operating under any pricing policy satisfying Assumption \ref{condition: general_pricing} and any matching policy denoted by $\pi_\eta \in \StablePol$. Then, there exists a constant $K_w$ that depends on $(\phi_j)_{j \in [m]},\{F_j(\cdot)\}_{j \in [m]}$ and $\cost(\cdot)$, and $\eta_3(\beta)>0$ such that for all $\eta>\eta_3$, we have
\begin{align*}
    L_{\eta,w}(\pi_\eta) \geq K_w \eta^{-1/3}.
\end{align*} 
In addition, consider a sequence of DTMCs operating under the two price policy and max-weight matching policy. Then the net profit loss is $O(\eta^{-1/3})$ for the choice of $\epsilon^\eta=\eta^{-2/3}$.
\end{proposition}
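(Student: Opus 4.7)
The plan rests on one clean identity: since $R_{\eta,w} = P_\eta - \E{}{\inner{\bs}{\bbarw_\eta}}$, the net profit loss decomposes as
\begin{equation*}
L_{\eta,w}(\pi_\eta) = L^P_\eta(\pi_\eta) + \E{}{\inner{\bs}{\bbarw_\eta}},
\end{equation*}
where $L^P_\eta = \tilde{R}^\star_\eta - P_\eta$ is the pure profit-loss already studied. To convert the waiting-time term to something I can bound using earlier results, I invoke Little's law in the $\eta$-th system: $\E{}{\bbarw_{\eta,j}^{(2)}} = \E{}{\bbarq_{\eta,j}^{(2)}}/(\eta\, \bar\lambda_j)$, where $\bar\lambda_j = \E{\bbarq_\eta}{\lambda_j(\bbarq_\eta)}$, and analogously on the server side. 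Under Assumption \ref{condition: general_pricing}, $\bar\lambda_j$ lies within $M\eta^\beta \to 0$ of $\tilde\lambda_j^\star>0$, so the translation between queue length and waiting time is tight up to a constant factor independent of $\eta$.

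For the lower bound, I apply Theorem \ref{theo: lowerbound_profit_loss} to get $L^P_\eta \geq K\eta^{2\beta+1}$, and Theorem \ref{theo: lowerbound_queue} with $\epsilon = M\eta^\beta$ to get $\E{}{\inner{\bone}{\bbarq_\eta}} \gtrsim \eta^{-\beta}$. By Little's law this yields $\E{}{\inner{\bs}{\bbarw_\eta}} \geq C\eta^{-1-\beta}$ for a constant $C$ depending on $\bs$, the $\tilde\lambda_j^\star$, and the bounded covariance matrices. Combining,
\begin{equation*}
L_{\eta,w}(\pi_\eta) \geq \max\{K\eta^{2\beta+1},\, C\eta^{-1-\beta}\}.
\end{equation*}
A case analysis finishes the argument: if $\beta \geq -2/3$ then $2\beta+1 \geq -1/3$ and the first term is already $\geq K\eta^{-1/3}$; if $\beta < -2/3$ then $-1-\beta > -1/3$ and the second term is $\geq C\eta^{-1/3}$. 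In either case, $L_{\eta,w} \geq K_w\eta^{-1/3}$ with $K_w = \min\{K,C\}$, giving the claimed lower bound for all policies in the Assumption \ref{condition: general_pricing} class.

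For the upper bound, I first verify that the two-price policy with $\epsilon_\eta = \eta^{-2/3}$ satisfies Assumption \ref{condition: general_pricing} with $\beta = -2/3$, $\alpha = 0$, bounded $|\phi_j| = 1$, and a suitable stability threshold. Then Lemma \ref{lemma: profit_loss} gives $L^P_\eta = O(\eta\epsilon_\eta^2) = O(\eta^{-1/3})$; Lemma \ref{lemma: stability} gives $\E{}{\inner{\bone}{\bbarq_\eta}} = O(1/\epsilon_\eta) = O(\eta^{2/3})$, which via Little's law becomes $\E{}{\inner{\bs}{\bbarw_\eta}} = O(\eta^{2/3}/\eta) = O(\eta^{-1/3})$. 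Adding the two contributions gives $L_{\eta,w}(\pi_\eta) = O(\eta^{-1/3})$. The exponent $-2/3$ for $\epsilon_\eta$ is the unique balance between the profit-loss term (scaling like $\eta\epsilon_\eta^2$) and the waiting-time term (scaling like $1/(\eta\epsilon_\eta)$), explaining why it differs from the $\eta^{-1/3}$ choice used in Theorem \ref{theo: two_price_policy}.

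The main obstacle is the careful use of Little's law in a discrete-time, state-dependent, correlated-arrival setting: arrivals depend on $\bbarq_\eta$ through $\lambda_j(\cdot)$ and $\alpha_{\bbarq_\eta}(\cdot)$, and server arrival rates are correlated across queues via the equilibrium coupling. The remedy is to work with the time-averaged arrival rates $\bar\lambda_j, \bar\mu_i$, which under Assumption \ref{condition: general_pricing} remain bounded away from zero and from infinity for $\eta$ large, so the Little's law conversion contributes only a constant factor. A secondary technical point is to separate the server-side and customer-side waiting contributions, but both are handled by the same queue-length bound since Theorem \ref{theo: lowerbound_queue} bounds $\E{}{\inner{\bone_{n+m}}{\bbarq_\eta}}$ and Lemma \ref{lemma: stability} bounds $\E{}{\inner{\bs}{\bbarq_\eta}}$.
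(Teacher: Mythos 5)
Your decomposition $L_{\eta,w} = L^P_\eta + \E{}{\inner{\bs}{\bbarw_\eta}}$, the use of Little's law (with the time-averaged rates $\bar\lambda_j,\bar\mu_i$ correctly accounting for state-dependence, and noting that Assumption \ref{condition: general_pricing}\ref{condition: bounded} keeps these rates within $M\eta^\beta$ of $\tilde\lambda^\star_j$ so the conversion constant is uniform), and the invocation of Theorem \ref{theo: lowerbound_queue} with $\epsilon = M\eta^\beta$ and Theorem \ref{theo: lowerbound_profit_loss} are exactly the paper's argument. The only cosmetic difference is at the end: you lower-bound the sum by the maximum and finish with a two-case split on $\beta\gtrless -2/3$, whereas the paper keeps both terms and takes $\inf_{\beta<0}\{K\eta^{2\beta+1}+C\eta^{-\beta-1}\}$; both yield $K_w\eta^{-1/3}$, and your case analysis is arguably the cleaner way to extract the exponent. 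The upper-bound direction (applying Lemma \ref{lemma: profit_loss} for $O(\eta\epsilon_\eta^2)$, Lemma \ref{lemma: stability} for $O(1/\epsilon_\eta)$, Little's law for the extra $1/\eta$, balancing at $\epsilon_\eta = \eta^{-2/3}$) is also the paper's proof, so this is essentially the same approach throughout.
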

The above proposition shows that two-price policy and max-weight matching policy achieves the optimal rate of convergence for the waiting time model. Also, note that the net profit-loss converges to 0 as $\eta \rightarrow \infty$ as opposed to the $O(\eta^{1/3})$ loss observed in the previous sections. The main reason for a lower net profit loss is the trade-off between the waiting time and profit loss. In particular, the expected waiting time is $1/\eta$ times the expected queue length by Little's Law and the definition of the asymptotic regime. Thus, the system can tolerate a larger expected queue length which allows the system operator to operate closer to the fluid solution which results in a higher profit.
\subsection{Generalizing the Utility Function}
\subsubsection{Model}
It is often the case in practice that the servers are aware of the probability with which they will be matched to a type of customer given the type of server. In particular, in steady state, an $i$ type of server is matched to a $j$ type of customer with rate $\E{}{\bary_{ij}}$ which is known to the servers. Motivated by this, we extend our model to incorporate a general utility function given by
\begin{align}
    u_{il}(k)=f_{il}\left(\bp^{(1)}(k),\E{}{\bbarx}\right) \quad \forall i,l \in [n] \label{eq: general_utility_function}
\end{align}
for any given continuous function $f_{il}$ for all $i,l \in [n]$. The expectation is with respect to the stationary distribution of the underlying Markov chain given the pricing and matching policy. Note that, utility function not only depends on the instantaneous actions but also depends on steady state quantities which leads to a convoluted dependence on the pricing and matching policy. This additional endogenity requires the pricing as well as the matching policy to be coherent to ensure equilibrium. After re-defining the utility function, the cost function is given similar to \eqref{eq: cost_function} with an additional dependence on $\E{}{\bbarx}$. We have \begin{align*}
     \cost\left(\bmu,\E{}{\bbarx}\right)\triangleq\min_{\bp^{(1)}} \inner{\bmu}{\bp^{(1)}} \quad \textit{subject to} \quad  \bp^{(1)} \in\mathcal{M}(\bmu, \E{}{\bbarx}) .
\end{align*}
Note that the set $\mathcal{M}(\cdot)$ is given by \eqref{eq: variational_inequality} but now we highlight the dependence of the utility function on $\E{}{\bbarx}$ by using the notation $\mathcal{M}(\bmu,\E{}{\bbarx})$. For a given $\E{}{\bbarx}=\bchi$, we denote the domain of the cost function by $\Omega(\bchi)$ and also define $\Omega=\cup_{\bchi \in \bbR_+^{n+m}} \Omega(\bchi)$.

Now, we will adopt the same framework here by first defining the probabilistic fluid model and then analyzing the perturbed stochastic policies. The probabilistic fluid model has been presented and analyzed in the Appendix \ref{app: prob_fluid_model_general}. In particular, we show that the solution of the probabilistic fluid model $(\btlambda^\star_e,\talpha^\star_e,\btchi^\star_e)$ provides an upper bound $\tilde{R}^\star_e$ on the achievable profit. Here, we will present asymptotically optimal stochastic policy.
\subsubsection{Optimality of Two Price and Random Matching Policy}
First, we will discuss the challenges that arises in the stochastic analysis by considering the max-weight matching policy \eqref{eq: modified_max_weight_matching}. From Lemma \ref{lemma: necessary_constraints_extended}, we know that under max-weight matching, the following will be satisfied:
\begin{align*}
    \E{}{\lambda_j(\bbarq)}&=\sum_{i=1}^n \E{}{\bary_{ij}} \ \forall j \in [m], \quad \E{}{\E{\alpha_{\bbarq_e}}{\mu_i}}=\sum_{j=1}^m \E{}{\bary_{ij}} \ \forall i \in [n] \\
    \E{}{\bary_{ij}}&=0 \ \forall (i,j) \notin E, \quad \E{}{\bary_{ij}} \geq 0 \ \forall (i,j) \in E.
\end{align*}
If multiple $\E{}{\bbary} \in \bbR_+^{n \times m}$ satisfies these set of equations, then, it is  difficult to characterize  $\E{}{\bbary}$ exactly or even approximately. This makes the task of verifying (let alone ensuring) if the system is in equilibrium difficult. We present the following simple example which shows that max-weight matching may not result in equilibrium.
\begin{example}
We consider a $2 \times 2$ complete graph with $F_1(\cdot)=F_2(\cdot)$, $G_1(\cdot)=G_2(\cdot)$, and utility function given by 
\begin{align*}
    u_{il}=p^{(1)}_l-K \E{}{\tilde{x}_{l \bar{l}}(\bbarq_e)} \quad \forall i,l \in \{1,2\},
\end{align*}
where $K>0$ is a constant, and $\bar{l}=3-l$. Now, if $K$ is large enough, the system operator is discouraged to match customer-server pairs using the edges $(1,2)$ and $(2,1)$ as otherwise, high prices must be offered to the drivers for them to join the system. This, along with the symmetry across the types of customers/servers, results in a fluid solution such that $\tchi_{12,e}^\star=\tchi_{21,e}^\star =0$. Although, applying max-weight matching will result in a non zero rate of matching using the edges $(1,2)$ and $(2,1)$ as intuitively, max-weight ensures that the queue lengths of both the types of customers are equal. This will result in poor utilities to the servers and thus, the system is not operating in an equilibrium. 
\end{example}
Thus, we need to modify the matching policy to ensure that the following two constraints are satisfied:
 1) $\E{}{\bbarx}$ and $\bp^{(1)}(\bq_e)$ are such that the system is operating under an equilibrium 2) The pricing and matching policy achieves $\eta^{1/3}$ loss in net-profit compared to the upper bound given by the probabilistic fluid model. To achieve optimality, from the intuition of Theorem \ref{theo: two_price_policy}, we need to operate close to the fluid solution. Thus, we need $\E{}{\bary_{ij}} \sim \tchi_{ij,e}^{*}$ for all $(i,j) \in E$ to ensure equilibrium. This motivates the introduction of random matching policy which routes the arrivals with probability proportional to the fluid solution $\btchi^{*}_e$. Although, this may lead to poor queueing performance and unstable system and we modify the pricing policy to ensure that doesn't happen. We will now formalize this intuition below.

 We extend our set of policies to be the set of stationary policies over an expanded state space $\bq_e \in S_e$ such that $\bq$ is completely determined by $\bq_e$. We define server pricing policy as the set of measures $\alpha_{\bq_e}(\cdot)$ over the Borel subsets of $\Omega \times S_e$ for all $\bq_e \in S_e$. In addition, we are only interested in the set of policies under which the Markov chain $\{\bq_e(k): k \in \bbZ_+\}$ is stable. We denote the set of such policies by $\StablePol_e$. The optimization problem \eqref{eq:opt_stoch} can now be extended to get the following:
\begin{subequations}
\begin{align}
    R^\star_e\triangleq \sup_{(\blambda(\cdot),\alpha(\cdot),\bx(\cdot))\in \StablePol_e} \E{\bbarq_e}{\inner{F(\blambda(\bbarq_e))}{\blambda(\bbarq_e)}-\E{\alpha_{\bbarq_e}}{\cost(\bmu, \E{}{\bbarx})}-\inner{\bs}{\bbarq}} \span \\
    \textit{subject to,} \quad &\blambda(\bq_e)\in \bbR^m_+,\quad \forall \bq_e\in S_e\\
    &\alpha_{\bq_e}((\Omega \times S_e) \backslash (\Omega(\E{}{\bbarx}) \times \{\bq_e\}))=0 \quad \forall \bq_e \in S_e \\
    &\bx(\cdot)\quad \textit{satisfies} \:\:  \eqref{eq: matching_constraints}
\end{align}
\label{eq:opt_stoch_extended}
\end{subequations}
\textbf{Pricing Policy:} We introduce secondary queues for each type of customers and servers. In particular, for a type $j$ customer, we introduce $q_{jr}^{(2)}$ for $r \in \left[|N(j)|\right]$ and for each type $i$ server, we introduce $q_{id}^{(1)}$ for $d \in \left[|N(i)|\right]$. Each secondary queue corresponds to one of its neighbours. The random matching policy will randomly route the incoming arrival immediately to one of these secondary queues. Then, the customers/servers wait in these secondary queues until they are matched. The compatibility graph between the secondary queues is one to one. That is, for a given $j$ and $r$, there exists a unique $i$ and $d$ such that $q_{jr}^{(2)}$ is the neighbor of $q_{id}^{(1)}$. In particular, denote the secondary graph by $G_e(N_{1,e} \cup N_{2,e},E_e)$. Then, with a slight abuse of notation, we have
\begin{align*}
    N_{1,e}=\left\{id : d \in [|N(i)|], i \in [n]\right\} \quad N_{2,e}=\left\{jr : r \in [|N(j)|], j \in [n]\right\} \\
    E_e=\left\{(id,jr) : (i,j) \in E, r=|\{N(j): N(j)\leq i\}|, d=|\{N(i) : N(i) \leq j\}| \right\}.
\end{align*}
In addition, denote the vector of all the secondary queue lengths by $\bq_e$ and it's state space by $S_e$. The pricing policy is a  two price policy corresponding to these secondary queues. In particular, the pricing policy is given by
\begin{subequations}
\label{eq: multiple_price_policy}
\begin{align}
    \lambda_{j,\eta}(\bq_e)&=\tl_{j,e}^\star-\epsilon_\eta \sum_{l=1}^{|N(j)|} \mathbbm{1}_{\left\{q_{jr}^{(2)}>0\right\}}+\epsilon_\eta \sum_{l=1}^{|N(j)|} \mathbbm{1}_{\left\{q_{jr}^{(2)}=0\right\}} \quad \forall j \in [m], \forall \bq_e \in S_e, \\
    \alpha_{\bq_e,\eta}&=\talpha^\star_e \quad \forall \bq_e \in S_e.
\end{align}
\end{subequations}
\textbf{Matching Policy:} The matching policy routes the arrivals to the secondary queue immediately such that the effective arrival rate to the secondary queues is given by
\begin{align}
    \left(\lambda_{jr,\eta}(\bq_e),\E{\alpha_{\bq_e,\eta}}{\mu_{id}}\right)&=\left(\tchi_{i j,e}^\star-\epsilon_\eta \mathbbm{1}_{\left\{q_{jr}^{(2)}>0\right\}}+\epsilon_\eta \mathbbm{1}_{\left\{q_{jr}^{(2)}=0\right\}},\tchi_{ij,e}^\star\right) \quad \forall i,j,d,r : (id,jr) \in E_e, \bq_e \in S_e. \label{eq: multiple_pricing_policy}
\end{align}
Note that, the graph formed by the secondary queues is operating under the two price policy and max-weight matching policy (trivially) and has a one to one compatibility structure with $|E|$ number of customer and server types. Thus, we can essentially apply Theorem \ref{theo: two_price_policy} to get $\eta^{1/3}$ loss in net profit. In addition, as the system is stable, by Lemma \ref{lemma: necessary_constraints} and the compatibility structure of the secondary queues, we will have $\E{}{\bary_{ij}}=\tchi_{ij,e}^\star$ for all $(i,j) \in E$. This will ensure equilibrium. We present this formally in the following corollary:
\begin{corollary} \label{corollary: optimality_nash_equilibrium}
Consider a sequence of DTMCs parametrized by $\eta$ operating under $\pi_\eta$ --- the pricing policy given by \eqref{eq: multiple_price_policy} and random matching policy. Then the the system is operating under an equilibrium such that the net profit loss $L_\eta(\pi_\eta)$ is $O(\eta^{1/3})$ for the choice of $\epsilon_\eta = \eta^{-1/3}$.
\end{corollary}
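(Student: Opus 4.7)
My plan is to lift Theorem~\ref{theo: two_price_policy} to the secondary-queue system, and then use Lemma~\ref{lemma: necessary_constraints} to pin down the steady-state matching rates so that the equilibrium condition is preserved.

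First, I observe that because the random matching policy routes each arrival immediately into a designated secondary queue with probability proportional to $\tchi_{ij,e}^\star$, the process $\{\bq_e(k):k\in\bbZ_+\}$ on the expanded state space is itself a two-sided queue over the bipartite graph $G_e(N_{1,e}\cup N_{2,e},E_e)$. By construction $G_e$ has a perfect-matching (one-to-one) compatibility structure, so the matching policy on $\bq_e$ is trivial: each secondary server queue $id$ can only be matched with its unique partner $jr$, and max-weight matching coincides with ``match as many as possible.'' The pricing policy on $\bq_e$ given in \eqref{eq: multiple_pricing_policy} is exactly a two-price policy in the sense of \eqref{eq: two_price_policy} applied separately on each secondary customer queue, with server-side measure concentrated on the deterministic rate $\tchi_{ij,e}^\star$ (delivered through $\talpha_e^\star$). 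Hence the hypotheses of Theorem~\ref{theo: two_price_policy} apply to the secondary system, with fluid optimum $\tilde R_e^\star$ and perturbation parameter $\epsilon_\eta$.

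Next, I would verify that the equilibrium constraint is respected. The generalized cost $\cost(\bmu,\E{}{\bbarx})$ is built assuming the matching rates $\bchi=\btchi_e^\star$, so I must confirm that the realized matching rates under the proposed policy actually agree with $\btchi_e^\star$. By Lemma~\ref{lemma: stability} (applied to the secondary queues) the DTMC on $\bq_e$ is positive recurrent with $\E{}{\inner{\bone}{\bbarq_e}}=O(1/\epsilon_\eta)$, so in particular $\E{}{\inner{\bone}{\bbarq_e}}<\infty$. Lemma~\ref{lemma: necessary_constraints} then guarantees that the ``average'' flows in the secondary graph are feasible in the probabilistic fluid problem for $G_e$. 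Because $G_e$ is one-to-one, feasibility pins down $\E{}{\bary_{id,jr}}=\E{}{\lambda_{jr,\eta}(\bbarq_e)}=\E{\talpha_e^\star}{\mu_{id}}=\tchi_{ij,e}^\star$ exactly (the $\pm\epsilon_\eta$ perturbations on the two sides of each queue cancel in expectation under stationarity). Summing over the secondary queues associated with the primary edge $(i,j)$ recovers $\E{}{\bary_{ij}}=\tchi_{ij,e}^\star$ for every $(i,j)\in E$. Since the server prices $\bp^{(1)}$ chosen by $\cost(\E{\talpha_e^\star}{\bmu},\btchi_e^\star)$ are precisely those under which the utilities $u_{il}=f_{il}(\bp^{(1)},\btchi_e^\star)$ admit the equilibrium strategy profile underlying $\talpha_e^\star$, the realized system is indeed in equilibrium.

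Finally, the net-profit bound follows by directly invoking Theorem~\ref{theo: two_price_policy} on the secondary system with $\epsilon_\eta=\eta^{-1/3}$: the instantaneous revenue and cost in the original problem are linear images of the secondary-queue variables, so $L_\eta(\pi_\eta)=\tilde R_{e,\eta}^\star-R_{e,\eta}(\pi_\eta)=O(\eta^{1/3})$. The main technical obstacle I anticipate is the second step: showing that equilibrium is exactly (not merely approximately) maintained. The danger is a circularity, since the equilibrium prices depend on $\E{}{\bbarx}$, which itself depends on the policy. The one-to-one structure of $G_e$ is what breaks this circularity, because it lets Lemma~\ref{lemma: necessary_constraints} identify $\E{}{\bary_{ij}}$ from the fluid-feasibility constraints alone, independently of any fixed-point argument; the rest of the proof is essentially a direct transcription of the argument for Theorem~\ref{theo: two_price_policy}.
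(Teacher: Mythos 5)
Your proposal follows essentially the same route as the paper: both apply Lemma~\ref{lemma: stability} to the secondary system for positive recurrence and the queue-length bound, both use Lemma~\ref{lemma: necessary_constraints} together with the one-to-one compatibility of $G_e$ to conclude $\E{}{\bary_{ij}}=\tchi_{ij,e}^\star$, and both turn this into the statement that the system is in equilibrium under $\talpha_e^\star$. The crucial observation you make---that flow balance in a one-to-one secondary queue forces the $\pm\epsilon_\eta$ perturbations to cancel exactly, i.e.\ $\P{}{q_{jd}^{(2)}=0}=\P{}{q_{jd}^{(2)}>0}$ for every secondary edge---is precisely the paper's equation \eqref{eq: zero_firstorderterm} and is what breaks the circularity in the endogenous-equilibrium argument.

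The one place where your write-up cuts a corner is the final step, ``the net-profit bound follows by directly invoking Theorem~\ref{theo: two_price_policy} on the secondary system.'' This cannot be a literal invocation: Theorem~\ref{theo: two_price_policy} is stated for a profit objective of the form $\sum_j F_j(\lambda_j)\lambda_j$ that is \emph{separable over customer queues}, whereas in the secondary system the revenue attached to the queues $\{jr\}_r$ belonging to primary type $j$ is $F_j\bigl(\sum_r \lambda_{jr}\bigr)\sum_r \lambda_{jr}$, which is not separable in the $\lambda_{jr}$'s; your parenthetical that revenue and cost are ``linear images of the secondary-queue variables'' is also not correct, since $F_j(\lambda_j)\lambda_j$ and $\cost(\bmu,\btchi)$ are nonlinear. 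The paper resolves this by running the Taylor expansion of the profit loss at the \emph{primary} level and then killing the first-order term using the exact per-secondary-queue identity from \eqref{eq: zero_firstorderterm}, rather than by re-establishing an analogue of the KKT-based Lemma~\ref{lemma: first_order_optimality} for the secondary fluid problem. Since you already derived that identity and explicitly noted that the remainder ``is essentially a direct transcription,'' this is a gap in packaging rather than in substance---but the phrase ``directly invoking'' should be replaced by that explicit Taylor expansion.
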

This is a strong result as this provides a stochastic pricing and matching policy which operates in an equilibrium governed by a general utility and cost function and achieves optimal rate of convergence to the optimum profit $\tilde{R}^\star_e$. %Although, it is known in the queueing literature that closed loop policies like max-weight performs much better than open loop policies like random matching in practice. In addition, we are not exploiting the network structure in the random matching policy. Motivated by this, we propose an improved random matching policy and show optimality in the Appendix \ref{app: improved_random_matching}. We believe that this policy achieves $\delta$ equilibrium in the sense that achieved utility of all the servers is at most $\delta$ away from the maximum possible utility and $\delta \rightarrow 0$ as $\eta \rightarrow \infty$. This is left as a part of future work.
\subsection{Pessimistic Equilibrium} \label{sec: pess_nash_eq}
In this section, we allow the servers to choose equilibrium of their choice among the ones that maximize their utility. By considering this extension, we relax the implicit assumption in defining the cost function in \eqref{eq: cost_function}. In particular, we analyze the worst case scenario by considering adversarial servers choosing the equilibrium that results in the worst net profit for the system operator.

We will formulate this as a min-max problem and analyze the fluid model. Further, stochastic analysis will follow similar to the original model and is left as a part of future investigation.
%Now, under this new model, we will analyze the fluid model. But first, we need to introduce some notations. 
To analyze such a model, we need to consider the set of equilibrium given the price $\bp^{(1)}$. We define the set of arrival rates that are consistent with $\bp^{(1)}$ by
   \begin{align*}
     \mathcal{N}_\rho(\bp^{(1)})\triangleq\left\{
     \bmu\in \bbR_+^n:\exists \boldsymbol{\nu}
     \in \bbR_+^{n\times n}\:\: \text{satisfying} \:\: \eqref{eq: variational_inequality},\: 
     G_i(\hatmu_i) = u_i,\: 
     \mu_i = \sum_{l=1}^n \hatmu_{l}\nu_{li}\:\: \forall i\in[n]
     \right\}. 
 \end{align*}
Next, we define the cost function as a function of both arrival rate and price as follows:
\begin{align}
    \cost_\rho(\bp^{(1)},\bmu)\triangleq \begin{cases}
    \inner{\bmu}{\bp^{(1)}}  &\textit{if} \quad  \bmu \in\mathcal{N}_\rho(\bp^{(1)}) \\
    \infty &\textit{otherwise}.
    \end{cases} \label{eq: cost_function_pessimistic}
\end{align}
As the servers can choose equilibrium of their choice, we define a probability measure $\zeta_{\bp^{(1)},\bq}$ on the set of arrival rates resulting in equilibrium given the price vector and the queue length. As $\zeta_{\bp^{(1)},\bq}$ is determined by the servers, we will later minimize the net-profit with respect to it. For technical reasons, we define $\zeta_{\bp^{(1)},\bq}$ on the common probability space --- Borel subsets of $\{(\bmu,\bp^{(1)}) : \bmu \in \mathcal{N}_\rho(\bp^{(1)}),\bp \in \bbR_+^n\} \times \calS$ and impose the following condition:
\begin{align}
    \zeta_{\bp^{(1)},\bq}\left(\{(\bmu,\bp^{(1)}) : \bmu \in \mathcal{N}_\rho(\bp^{(1)}),\bp \in \bbR_+^n\} \times \calS \backslash \mathcal{N}_\rho(\bp^{(1)}) \times \{\bp^{(1)}\} \times \{\bq\}\right)=0. \label{eq: marginal_servers}
\end{align}
Thus, we can interpret $\zeta_{\bp^{(1)},\bq}$ as a probability measure over $\mathcal{N}_\rho(\bp^{(1)})$ given $\bp^{(1)}$ and $\bq$. For each state $\bq \in \calS$, the system operator sets a price $\bp^{(1)}$ according to the probability distribution $\alpha_\bq$ and the servers react to it by picking a probability distribution $\zeta_{\bp^{(1)},\bq}$. Now, we define stability as follows:
%Also, define $\zeta(\bmu,\bp^{(1)},\bq)$ to be a joint probability measure over all the Borel subsets of $\{(\bmu,\bp^{(1)},\bq) : \bmu \in \mathcal{M}(\bp^{(1)}),\bp \in \bbR_+^n, \bq \in \calS\}$. Denote such a probability measure by $\mathcal{P}^{EQ}_\rho$. Also, let $\alpha(\bp^{(1)},\bq)$ be a probability measure over all the Borel subsets of $\bbR_+^n \times \calS$. For each state $\bq \in \calS$, the system operator sets a price $\bp^{(1)}$ according to the joint probability distribution $\alpha(\bp^{(1)},\bq)$ and the servers react to it by picking a joint probability distribution $\zeta(\bmu,\bp^{(1)},\bq)$. Now, we can re-define stability as follows: 
\begin{definition}[Stability] \label{defn: stability_pessimistic}
The discrete time Markov chain is stable if under a given pricing and matching policy $(\blambda(\cdot),\alpha(\cdot),\bx(\cdot))$, there exists $\{\zeta_{\bp^{(1)},\bq}\}_{\bp^{(1)} \in \bbR_+^n,\bq \in \calS}$ such that the communicating class containing the state $\bzero_{n+m}$ is positive recurrent and all the other states (if any) are transient.
We use $\StablePol_\rho$ to denote the set of stationary Markovian pricing and matching  policies that make the system stable.  
\end{definition} 
%\begin{definition}[Stability] \label{defn: stability_pessimistic}
% The discrete time Markov chain is stable if under a given pricing and matching policy $(\blambda(\cdot),\alpha(\cdot),\bx(\cdot))$, there exists $\zeta(\bmu, \bp^{(1)},\bq)$ such that the communicating class containing the state $\bzero_{n+m}$ is positive recurrent and all the other states (if any) are transient.
% We use $\StablePol_\rho$ to denote the set of stationary Markovian pricing and matching  policies that make the system stable.  
% \end{definition} 
In addition, given a policy $\pi \in \StablePol_\rho$, let $Z_\rho(\pi)$ denote the set of measures $\{\zeta_{\bp^{(1)},\bq}\}_{\bp^{(1)}\in \bbR_+^n,\bq \in \calS}$ satisfying \eqref{eq: marginal_servers} such that $(\blambda(\cdot), \alpha(\cdot), \bx(\cdot), \zeta(\cdot))$ makes the system stable. By the definition of stability, $Z_\rho(\pi) \neq \emptyset$. Now, the objective of the system operator can be re written as follows:
\begin{subequations}
\begin{align}
    R^\star_\rho\triangleq \sup_{\pi=(\blambda(\cdot),\alpha(\cdot),\bx(\cdot))\in \StablePol_p}\bigg\{ \inf_{\zeta \in Z_\rho(\pi)}  \E{\bbarq}{\inner{F(\blambda(\bbarq))}{\blambda(\bbarq)} 
    -\E{\alpha_\bbarq}{\E{\zeta_{\bp^{(1)},\bbarq}}{\cost(\bmu,\bp^{(1)})}}-\inner{\bs}{\bbarq}}\bigg\} \span \label{eq: net_profit_stochastic_pessimistic}\\
    \textit{subject to,} \quad &\blambda(\bq)\in \bbR^m_+\quad \forall \bq\in \calS\\
    &\bx(\cdot)\quad \textit{satisfies} \:\: \eqref{eq: matching_constraints}  \\
    &\zeta_{\bp^{(1)},\bq}(\cdot) \quad \textit{satisfies \eqref{eq: marginal_servers}} \ \forall \bp^{(1)} \in \bbR_+^n \ \forall \bq \in \calS \\
    &\alpha_\bq(\bbR_+^n \times \calS \backslash \bbR_+^n \times\{\bq\}) =0 \quad \forall \bq \in \calS.
\end{align}
\label{eq:opt_stoch_pessimistic}
\end{subequations}
The inner infimum over all the possible equilibrium under which the system is stable is to make sure that the servers picks the set of arrival rates which minimizes the profit of the system operator. The outer supremum is the system operator picking a pricing policy which maximizes it's profit. We formulate the probabilistic fluid model of this formulation in the Appendix \ref{app: pess_nash_eq} and show that it provides an upper bound on the achievable profit under any pricing and matching policy. The stochastic analysis follows similarly and is omitted for brevity.
%%%%%%%%%%%%%%%%%%%%%%%%%%%%%%%%%%%%%%%%%%%%%%%%%%%%%%%%%%%%%%%
\section{Probabilistic Fluid Model}
\subsection{A Note on Server Pricing Policy}
A Server pricing policy is given by the set of measure $\alpha_\bq(\cdot)$ for all $\bq \in \calS$. This can also be interpreted as follows: the system operator chooses a joint measure $\alpha(\cdot)$ such that the marginal measure w.r.t. $\bq$ is consistent with the stationary distribution. By consistency, we mean the following holds:
\begin{align}
   \E{\alpha}{f(\bmu,\bq)}=\E{\bbarq}{\E{\alpha_\bbarq}{f(\bmu,\bbarq)}} \quad \forall f \textit{ Borel measurable} \label{eq: joint}
\end{align}
With this alternate formulation, the new decision variables is a joint measure $\alpha(\cdot)$ over $\Omega \times \calS$. This can be formalized as follows:
\begin{proposition}
\label{prop: equivalence_joint_marginal}
\begin{align*}
    &\left\{\alpha: \alpha(\Omega \times B)=\sum_{\bq \in B} \psi(\bq) \ \forall B \subseteq \calS\right\}=\left\{\alpha: \exists \alpha_\bq \ \forall  \{\bq \in \calS: \psi(\bq)>0\}, f \textit{ Borel measurable s.t. }\right.\\
    & \left.\E{\alpha}{f(\bmu,\bq)}=\E{\bbarq}{\E{\alpha_\bbarq}{f(\bmu,\bbarq)}}, \alpha_\bq(C)=\alpha_\bq(C \cap \sigma^{-1}(\bq)) \ \forall C \subseteq \Omega \times \calS, \right\},
\end{align*}
where $\sigma_p: \Omega \times \calS \rightarrow \calS$ is the projection function given by $\sigma_p(\bmu,\bq)=\bq$.
\end{proposition}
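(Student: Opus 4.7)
The plan is to prove the set equality by double inclusion, exploiting the fact that $\calS \subseteq \bbZ_+^{n+m}$ is countable, so that disintegration reduces to elementary conditioning rather than an appeal to the general disintegration theorem.

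For the inclusion $(\supseteq)$, I start from a joint measure $\alpha$ together with a family $\{\alpha_\bq\}_{\bq \in \calS : \psi(\bq) > 0}$ satisfying the stated conditions and verify that $\alpha(\Omega \times B) = \sum_{\bq \in B} \psi(\bq)$. Applying the consistency identity with the test function $f(\bmu, \bq) = \mathbbm{1}_{B}(\bq)$, the left-hand side equals $\alpha(\Omega \times B)$ while the right-hand side equals $\E{\bbarq}{\mathbbm{1}_B(\bbarq) \cdot \alpha_\bbarq(\Omega \times \calS)} = \sum_{\bq \in B} \psi(\bq)$, using that each $\alpha_\bq$ is a probability measure and $\bq$ is constant under $\alpha_\bq$. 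Hence the marginal of $\alpha$ on $\calS$ agrees with $\psi$, giving membership in the left-hand set.

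For the inclusion $(\subseteq)$, I start from a joint measure $\alpha$ whose marginal on $\calS$ is $\psi$, and construct the family $\{\alpha_\bq\}$ by the elementary conditional formula
\begin{equation*}
\alpha_\bq(C) \;\triangleq\; \frac{\alpha\bigl(C \cap (\Omega \times \{\bq\})\bigr)}{\psi(\bq)} \qquad \text{for all Borel } C \subseteq \Omega \times \calS,
\end{equation*}
for each $\bq$ with $\psi(\bq) > 0$. By construction $\alpha_\bq$ is a probability measure supported on $\Omega \times \{\bq\}$, i.e.\ $\alpha_\bq(C) = \alpha_\bq(C \cap \sigma_p^{-1}(\bq))$. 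It remains to verify the consistency condition; for any Borel measurable $f$, using countable additivity together with the facts that $\{\Omega \times \{\bq\}\}_{\bq \in \calS}$ partitions $\Omega \times \calS$ and that $\alpha$ assigns zero mass to the union of the $\Omega \times \{\bq\}$ with $\psi(\bq) = 0$ (a consequence of the marginal condition), one writes
\begin{equation*}
\E{\alpha}{f(\bmu, \bq)} = \sum_{\bq : \psi(\bq) > 0} \int_{\Omega \times \{\bq\}} f \, d\alpha = \sum_{\bq : \psi(\bq) > 0} \psi(\bq) \E{\alpha_\bq}{f(\bmu, \bq)} = \E{\bbarq}{\E{\alpha_\bbarq}{f(\bmu, \bbarq)}},
\end{equation*}
which is precisely \eqref{eq: joint}.

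The only subtlety, and the place where I would take care in writing this out, is the handling of states $\bq$ with $\psi(\bq) = 0$: the conditional measures $\alpha_\bq$ are only defined on the support of $\psi$, and one must argue that $\alpha$ itself places no mass on those excluded slices so the sum above really telescopes to $\int f \, d\alpha$. Since $\calS$ is countable this is an immediate consequence of the marginal identity $\alpha(\Omega \times \{\bq\}) = \psi(\bq) = 0$, so no measure-theoretic heavy machinery (such as the general Radon--Nikodym-based disintegration theorem) is required; the countability of $\calS$ is what makes the argument elementary.
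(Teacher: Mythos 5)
Your proof is correct, and it differs from the paper's in a way worth noting. The $(\supseteq)$ direction is essentially identical in both: the paper plugs the test function $\mathbbm{1}\{\Omega \times B\}$ into the consistency identity and unwinds; you do the same with $\mathbbm{1}_B(\bq)$, which is the same function under the identification $\sigma_p$. The divergence is in $(\subseteq)$. The paper first observes that the push-forward $\alpha \circ \sigma_p^{-1}$ equals $\psi$ and then invokes the abstract Disintegration Theorem to produce the family $\{\alpha_\bq\}_{\psi(\bq)>0}$; you instead construct $\alpha_\bq$ explicitly as the normalized restriction $\alpha_\bq(C) = \alpha(C \cap (\Omega \times \{\bq\}))/\psi(\bq)$ and verify the consistency identity by decomposing $\Omega \times \calS$ into the countably many slices $\Omega \times \{\bq\}$, discarding the null slices with $\psi(\bq)=0$. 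Because $\calS \subseteq \bbZ_+^{n+m}$ is countable, this direct construction is fully rigorous and buys you a self-contained argument that avoids the measure-theoretic overhead of the general disintegration machinery (existence of regular conditional probabilities on Polish spaces, etc.). Interestingly, the paper itself gestures at exactly your construction in the informal ``intuition'' paragraph following the proof, treating $\alpha$ as having a pmf, but the formal proof still appeals to the general theorem; your version promotes that intuition to a complete proof by observing that countability of $\calS$ is all that is needed. The one point you flag — that $\alpha$ assigns zero mass to slices with $\psi(\bq)=0$ — is indeed the only subtlety and you dispatch it correctly from the marginal identity $\alpha(\Omega \times \{\bq\}) = \psi(\bq)$.
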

 \begin{proof}{Proof}
 First we will show that given the first condition, the second condition is satisfied.
Note that for any $\bq \subseteq \calS$, we have
\begin{align*}
    \alpha( \sigma_p^{-1}(\bq))=\alpha(\Omega \times \{\bq\})=\psi(\bq).
\end{align*}
Thus, the push-forward measure $\alpha \circ \sigma_p^{-1}$ is the stationary distribution. Now, by applying the Disintegration theorem, for all $\bq$ such that $\psi(\bq)>0$, there exists $\alpha_\bq$ such that $\alpha_\bq(C)=\alpha_\bq(C \cap \sigma^{-1}(\bq))$ and \eqref{eq: joint} is satisfied. Now, given the second condition, the first condition can be shown to be satisfied by considering $f=\mathbbm{1}\{\Omega \times B\}$ for a Borel measurable set $B \subseteq \calS$.
\begin{align}
    \alpha(\Omega \times B)&= \sum_{\bq \in \calS} \alpha_\bq(\Omega \times B) \psi(\bq) \nonumber \\
    &= \sum_{\bq \in \calS: \psi(\bq)>0} \alpha_\bq(\Omega \times B) \psi(\bq) \nonumber \\
    &=\sum_{\bq \in \calS: \psi(\bq)>0} \alpha_\bq((\Omega \times B) \cap (\Omega \times \{\bq\})) \psi(\bq) \nonumber \\
    &=\sum_{\bq \in \calS: \psi(\bq)>0} \alpha_\bq(\Omega \times \{\bq\}) \psi(\bq) \mathbbm{1}_{\bq \in B} \nonumber \\
    &=\sum_{\bq \in \calS: \psi(\bq)>0} \alpha_\bq(\Omega \times \calS) \psi(\bq) \mathbbm{1}_{\bq \in B} \nonumber\\
    &=\sum_{\bq \in B} \psi(\bq) \quad \forall B \subseteq \calS. \nonumber \Halmos
\end{align}
 \end{proof}
To gain more intuition, let $\alpha(\cdot)$ be a discrete measure with a probability mass function. Then, the disintegration is trivial.
\begin{align*}
    \alpha(\bmu,\bq)=\frac{\alpha(\bmu,\bq)}{\psi(\bq)}\psi(\bq) \overset{\Delta}{=} \alpha_\bq(\bmu,\bq)\psi(\bq) \quad \forall \bmu \in \Omega, \bq \in \calS.
\end{align*}
For each $\bq$, the system operator chooses distribution $\alpha_\bq$ which is only non-zero on $\Omega \times \{\bq\}$. Thus, it can be thought of as choosing a pmf over $\Omega$ for all $\bq \in \calS$. Thus, for each $\alpha$, there exists a set of measures $\alpha_\bq$ and for each set of measures $\alpha_\bq$, there exists a joint measure $\alpha$ such that they are consistent.

Using the above proposition and defining $\alpha$ as a joint distribution over $\Omega \times \calS$, the optimization problem \ref{eq:opt_stoch} can be equivalently re-written as follows:
\begin{subequations}
\begin{align}
    R^\star\triangleq \sup_{(\blambda(\cdot),\alpha(\cdot),\bx(\cdot))\in \StablePol} \E{\bbarq}{\inner{F(\blambda(\bbarq))}{\blambda(\bbarq)}}-\E{\alpha}{\cost(\bmu)}-\E{\bbarq}{\inner{\bs}{\bbarq}} \span \\
    \textit{subject to,} \quad &\blambda(\bq)\in \bbR^m_+,\quad \forall \bq\in \calS\\
    &\bx(\cdot)\quad \textit{satisfies} \:\: \\
     &\alpha(\Omega \times B)=\sum_{\bbarq \in B} \psi(\bbarq) \quad \forall B \subseteq \calS  \\
     &\psi(\cdot)=\textit{Stationary Distribution} (\blambda(\cdot),\alpha(\cdot),\bx(\cdot)).
\end{align}
\label{eq:opt_stoch_equivalent}
\end{subequations}
In the further sections, we will work this alternate formulation.
\subsection{Proof of Lemma \ref{lemma: necessary_constraints}}
\proof{Proof}
As the DTMC is assumed to be stable, there exists a unique stationary distribution and we denote it by $\psi$.
By the hypothesis of the Lemma, we have $\E{\bbarq}{\inner{\bone_{n+m}}{\bbarq}}<\infty$. Thus, in steady state, we have $
    \E{}{\bbarq}=\E{}{\bbarq^+} \Rightarrow \E{}{\bbara}=\E{}{\bbarx}$, where  we denote the queue length one time slot after $\bbarq$ by $\bbarq^+=\bbarq+\bbara-\bbarx$.
Now, we will simplify the RHS and LHS separately. We have
\begin{align}
    \E{}{\bbara}&=\E{}{\E{}{\bbara|\bbarq}}=(\E{\bbarq}{\blambda},\E{\alpha}{\bmu})=(\btlambda,\E{\alpha}{\bmu})=(\btlambda,\E{\talpha}{\bmu}), \label{eq: prop_lhs}
\end{align}
where the third equality follows as $\tl_j=\E{}{\lambda_j}$ for all $j \in [m]$ and the last equality follows as the expectation of $\bmu$ over the joint distribution $\alpha$ of $(\bmu,\bbarq)$ is same as the expectation over the marginal distribution $\talpha$. The marginal distribution is given by
\begin{align*}
    \talpha(A)=\alpha(A \times \calS)=\sum_{\bq \in \calS} \alpha_\bq(A \times \calS)\psi(\bq)=\sum_{\bq \in \calS} \alpha_\bq(A \times \{\bq\})\psi(\bq) \quad \forall A \subseteq \Omega, \textit{ Borel}.
\end{align*}
Now, we will simplify the right hand side. First define $\E{}{\bary_{ij}}=\tchi_{ij}$ for all $(i,j)$. By \eqref{eq: matching_constraints}(c), we have $\tchi_{ij}=0$ for all $(i,j) \notin E$. Next, we have
\begin{align}
    \E{}{x_i^{(1)}(\bbarq)}&=\sum_{j=1}^m \E{}{\bary_{ij}}=\sum_{j=1}^m \tchi_{ij} \ \forall i \in [n], \quad
    \E{}{x_j^{(2)}(\bbarq)}=\sum_{i=1}^n \E{}{\bary_{ij}}=\sum_{i=1}^n \tchi_{ij} \ \forall j \in [m]. \label{eq: prop_rhs}
\end{align}
Now, simplifying $\E{}{\bbara}=\E{}{\bbarx}$ using \eqref{eq: prop_lhs} and \eqref{eq: prop_rhs}, we get the constraints of the optimization problem \eqref{eq: prob_fluid_model}. \hfill $\Halmos$
\endproof
\subsection{Proof of Proposition \ref{prop: fluid_model}}
\proof{Proof}
It suffices to consider only the set of pricing and matching policies under which $\E{}{\inner{\bone_{n+m}}{\bbarq}}<\infty$, as, otherwise, the net profit $R$ will be $-\infty$. We will consider the class of stationary Markovian policies. Note that, by Lemma \ref{lemma: necessary_constraints}, the constraints of the fluid problem \eqref{eq: prob_fluid_model}, are necessary constraints for stability. Now, under a given pricing and matching policy, we will upper bound the maximum profit and net profit. We have
\begin{align*}
    \E{}{\inner{F(\blambda(\bbarq))}{\blambda(\bbarq)}}-\E{\alpha}{\cost(\bmu)}-\E{}{\inner{\bs}{\bbarq}} &\leq  \E{}{\inner{F(\blambda(\bbarq))}{\blambda(\bbarq)}}-\E{\alpha}{\cost(\bmu)} \\
    &\leq \inner{F(\btlambda)}{\btlambda}-\E{\alpha}{\cost(\bmu)} \\
    &=\inner{F(\btlambda)}{\btlambda}-\E{\talpha}{\cost(\bmu)},
\end{align*}
where $\talpha$ is the marginal distribution of $\alpha$. Thus, the net profit obtained under any stationary pricing and matching policy is upper bounded by the solution of the probabilistic fluid model. This completes the proof. \hfill $\Halmos$
\endproof
\subsection{Proof of Proposition \ref{prop: convexity_LP_reduction}}
\proof{Proof}
We will first show that $\tilde{R}^\star_{co} \geq \tilde{R}^\star$. For a given $\talpha$, let us start by defining $\tilde{\tilde{\bmu}}\dfn\E{\talpha}{\btmu}$. Now, the objective function of \eqref{eq: prob_fluid_model} can be upper bounded by Jensen's inequality to get
\begin{align*}
    \inner{F(\btlambda)}{\btlambda}-\E{\talpha}{c(\btmu)} \leq \inner{F(\btlambda)}{\btlambda}-c\left(\E{\talpha}{\btmu}\right)=\inner{F(\btlambda)}{\btlambda}-c(\tilde{\tilde{\bmu}}).
\end{align*}
Thus, we have 
\begin{align*}
   \span \tilde{R}^\star \leq \max_{\btlambda,\talpha,\btchi} \inner{F(\btlambda)}{\btlambda}-c(\tilde{\tilde{\bmu}}) \\
     \textit{subject to,} \quad \tl_j&=\sum_{i=1}^n \tchi_{ij} \quad \forall j \in [m] \\
    \tilde{\tilde{\mu_i}}&=\sum_{j=1}^m \tchi_{ij} \quad \forall i \in [n]  \\
    \tchi_{ij}&=0 \quad \forall (i,j) \notin E, \quad \tchi_{ij} \geq 0 \quad \forall (i,j) \in E,
\end{align*}
Note that we can replace $\talpha$ by $\tilde{\tilde{\bmu}}$ in the arguments in terms of which we are maximizing as the objective function and constraints only depend on $\talpha$ through $\tilde{\tilde{\bmu}}$. Thus, by \eqref{eq: convex_prob_fluid_model}, we get $\tilde{R}^\star \leq \tilde{R}^\star_{co}$. Now, we will show the opposite inequality. Let the optimal solution of \eqref{eq: convex_prob_fluid_model} be $(\btlambda^\star,\tilde{\tilde{\bmu^\star}},\btchi^\star)$. Note that $(\btlambda^\star,\talpha^\star,\btchi^\star)$ is a feasible solution for \eqref{eq: prob_fluid_model}, with $\talpha^\star(\tilde{\tilde{\bmu^\star}})=1$. Under this feasible solution, the objective function value of \eqref{eq: prob_fluid_model} is $\tilde{R}^\star_{co}$. Thus, we have $\tilde{R}^\star \geq \tilde{R}_{co}^\star$. This completes the proof. \hfill $\Halmos$
\endproof
 \section{Proof of Theorem \ref{theo: two_price_policy}: Asymptotic Optimality}
\subsection{Proof of Lemma \ref{lemma: stability}}
\proof{Proof}
For all $\eta>0$, consider the Lyapunov functions $V(\bq_\eta)=\inner{\bone_{n+m}}{\bq_\eta^2}$ for all $\eta$.
We will calculate the drift of this Lyapunov function and show that it is negative outside a finite set. We have
\begin{align*}
    \E{}{\Delta V(\bq_\eta(k))|\bq_\eta(k)}&=\E{}{\inner{\bone_{n+m}}{\bq_\eta(k+1)^2}-\inner{\bone_{n+m}}{\bq_\eta(k)^2}|\bq_\eta(k)} \\
    &=\E{}{\inner{\bone_{n+m}}{\left(\bq_\eta(k)+\ba_\eta(k)-\bx_\eta(k)\right)^2}-\inner{\bone_{n+m}}{\bq_\eta(k)^2}|\bq_\eta(k)} \\
    &=\underbrace{\E{}{\inner{\bone_{n+m}}{\left(\ba_\eta(k)-\bx_\eta(k)\right)^2}|\bq_\eta(k)}}_{T_1}+2\underbrace{\E{}{\inner{\bq_\eta(k)}{\ba_\eta(k)-\bx_\eta(k)}|\bq_\eta(k)}}_{T_2}.
\end{align*}
Now, we will simplify $T_1$ separately. First note that 
\begin{align*}
    0 \leq \inner{\bone_{n+m}}{\bx_\eta(k)} \leq 2\inner{\bone_{n+m}}{\ba_\eta(k)} \quad \textit{w.p.} \ 1,
\end{align*}
as the matching policy is defined such that, at the beginning of each period, there won't be any customer-server compatible pairs waiting in the system. Thus, the maximum possible pairs that can be matched in one time epoch is the total number of arrivals. So, we have
\begin{align*}
    T_1&=\E{}{\inner{\bone_{n+m}}{\left(\ba_\eta(k)-\bx_\eta(k)\right)^2}|\bq_\eta(k)} \\
    &\leq \E{}{\inner{\bone_{n+m}}{\ba_\eta(k)^2}|\bq_\eta(k)}+\E{}{\inner{\bone_{n+m}}{\bx_\eta(k)^2}|\bq_\eta(k)} \\
    &=\sum_{i=1}^n \left(\Sigma_{i,i}^{(1)}(\alpha(\bq_\eta))+\E{\alpha_{\eta}(\bq_\eta)}{\mu_i}^2\right)+\sum_{j=1}^m \left(\Sigma_{j,j}^{(2)}(\blambda(\bq_\eta))+(\lambda_{i,\eta}(\bq_\eta))^2\right)+\E{}{\inner{\bone_{n+m}}{\bx_\eta(k)^2}|\bq_\eta(k)} \\
    &\leq  \sum_{i=1}^n \left(\Sigma_{i,i}^{(1)}(\talpha^\star)+\E{\talpha^\star}{\mu_i}^2\right)+\sum_{j=1}^m \left((\Sigma_{\max}^{(2)})_{j,j}+(\tl_i^\star+1)^2\right)+\E{}{\inner{\bone_{n+m}}{\bx_\eta(k)}^2|\bq_\eta(k)} \\
    &\leq  \sum_{i=1}^n \left(\Sigma_{i,i}^{(1)}(\talpha^\star)+\E{\talpha^\star}{\mu_i}^2\right)+\sum_{j=1}^m \left((\Sigma_{\max}^{(2)})_{j,j}+(\tl_i^\star+1)^2\right)+4\E{}{\inner{\bone_{n+m}}{\ba_\eta(k)}^2|\bq_\eta(k)} \\
    &\leq \sum_{i=1}^n \left(\Sigma_{i,i}^{(1)}(\talpha^\star)+\E{\talpha^\star}{\mu_i}^2\right)+\sum_{j=1}^m \left((\Sigma_{\max}^{(2)})_{j,j}+(\tl_i^\star+1)^2\right)+4(n+m)^2A_{\max}^2 \\
    &\dfn B_1.
\end{align*}
Now, we will simplify $T_2$ below.
\begin{align*}
    T_2&=\E{}{\inner{\bq_\eta(k)}{\ba_\eta(k)-\bx_\eta(k)}|\bq_\eta(k)} \\
    &=\E{}{\inner{\bq_\eta(k)}{\ba_\eta(k)}|\bq_\eta(k)}-\E{}{\inner{\bx_\eta(k)}{\bq_\eta(k)}|\bq_\eta(k)} \\
    &=\inner{\bq_\eta(k)}{\E{}{\ba_\eta(k)|\bq_\eta(k)}}-\E{}{\max_{\by \in \eqref{eq: matching_constraints}}\inner{\by}{\bq_\eta(k)}|\bq_\eta(k)} \\
    &=\inner{\bq_{\eta}^{(1)}(k)}{\E{\talpha^\star}{\bmu}}+\inner{\bq_{\eta}^{(2)}(k)}{\blambda_\eta(\bq_\eta(k))}-\E{}{\max_{\by \in \eqref{eq: matching_constraints}}\inner{\by}{\bq_\eta(k)}|\bq_\eta(k)} \\
    &=-\epsilon_{\eta}\inner{\bone_m}{\bq_{\eta}^{(2)}(k)}+\inner{\bq_{\eta}^{(1)}(k)}{\E{\talpha^\star}{\bmu}}+\inner{\bq_{\eta}^{(2)}(k)}{\btlambda^\star}-\E{}{\max_{\by \in \eqref{eq: matching_constraints}}\inner{\by}{\bq_\eta(k)}|\bq_\eta(k)} \\
    &=-\epsilon_{\eta}\inner{\bone_m}{\bq_{\eta}^{(2)}(k)}+\sum_{(i,j) \in E} \tchi_{ij}^\star(q_{i,\eta}^{(1)}(k)+q_{j,\eta}^{(2)}(k))-\E{}{\max_{\by \in \eqref{eq: matching_constraints}}\inner{\by}{\bq_\eta(k)}|\bq_\eta(k)}
\end{align*}
\begin{lemma} \label{lemma: negative_drift}  For all $\eta>0$, we have
\begin{align*}
   &\sum_{(i,j) \in E} \tchi_{ij}^\star(q_{i,\eta}^{(1)}(k)+q_{j,\eta}^{(2)}(k))-\E{}{\max_{\by \in \eqref{eq: matching_constraints}}\inner{\by}{\bq_\eta(k)}|\bq_\eta(k)} \leq -\epsilon_\eta \min_{i \in [n]}\left\{\sum_{j : (i,j) \in E}\frac{\tchi_{ij}^\star}{\tl_j^\star}\right\}\inner{\bone_n}{\bq^{(1)}_\eta(k)}+B_2, \\
   &\textit{where} \quad B_2=A_{\max}\sum_{(i,j) \in E} \tchi_{ij}^\star (\E{\talpha^\star}{\mu_i}+\tl_j^\star)+ A_{\max} \sum_{i=1}^n \left(\sum_{j : (i,j) \in E}\frac{\tchi^\star_{ij}}{\tl_j^\star}\E{\talpha^\star}{\mu_i}\right).
\end{align*}
\end{lemma}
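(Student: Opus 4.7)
The stated inequality is equivalent to the lower bound
\begin{equation*}
\E{}{\max_{\by \in \eqref{eq: matching_constraints}} \inner{\by}{\bq_\eta(k)}\,\big|\,\bq_\eta(k)} \geq \sum_{(i,j) \in E}\tchi_{ij}^\star\bigl(q_{i,\eta}^{(1)}(k) + q_{j,\eta}^{(2)}(k)\bigr) + \epsilon_\eta\min_i\left\{\sum_{j:(i,j) \in E}\frac{\tchi_{ij}^\star}{\tl_j^\star}\right\}\inner{\bone_n}{\bq^{(1)}_\eta(k)} - B_2.
\end{equation*}
Because the constraint matrix of \eqref{eq: matching_constraints} is totally unimodular, the LP relaxation is integer-valued and any fractional feasible matching $\tilde\by$ certifies the lower bound via $\max_{\by \in \eqref{eq: matching_constraints}} \inner{\by}{\bq_\eta(k)} \geq \inner{\tilde\by}{\bq_\eta(k)}$. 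My plan is therefore to exhibit a carefully chosen $\tilde\by$ and to estimate its conditional expected weight.

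\textbf{Candidate matching.} Guided by the fluid-optimal rates, I take
\begin{equation*}
\tilde y_{ij} \;=\; \tchi_{ij}^\star\,\min\!\left(\frac{q_i^{(1)}(k) + a_i^{(1)}(k)}{\E{\talpha^\star}{\mu_i}},\;\frac{q_j^{(2)}(k) + a_j^{(2)}(k)}{\tl_j^\star}\right)\quad\text{for } (i,j) \in E,
\end{equation*}
and $\tilde y_{ij} = 0$ otherwise. The flow-balance identities $\sum_{j}\tchi_{ij}^\star = \E{\talpha^\star}{\mu_i}$ and $\sum_i \tchi_{ij}^\star = \tl_j^\star$ imply $\sum_j \tilde y_{ij} \leq q_i^{(1)} + a_i^{(1)}$ and $\sum_i \tilde y_{ij} \leq q_j^{(2)} + a_j^{(2)}$, so $\tilde\by$ is feasible almost surely. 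The scaling factor is designed so that at nominal queue/arrival levels $\tilde\by$ reduces to the fluid-optimal matching $\tchi_{ij}^\star$, and it scales up whenever a side has excess inventory.

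\textbf{Extracting the drift.} Writing $X_{ij} = (q_j^{(2)} + a_j^{(2)})/\tl_j^\star$ and $Y_{ij} = (q_i^{(1)} + a_i^{(1)})/\E{\talpha^\star}{\mu_i}$, I have $\E{}{\tilde y_{ij}\,|\,\bq_\eta} = \tchi_{ij}^\star\,\E{}{\min(X_{ij},Y_{ij})\,|\,\bq_\eta}$. Using $\min(X,Y) = X - (X-Y)^+$ and the uniform bound $a_j^{(2)}, a_i^{(1)} \in [0,A_{\max}]$ to control $\E{}{(X_{ij}-Y_{ij})^+\,|\,\bq_\eta}$ by a constant depending only on $A_{\max}$ and the fluid rates, I lower-bound $\E{}{\min(X_{ij},Y_{ij})\,|\,\bq_\eta}$ by the customer-side rate $(q_j^{(2)} + \lambda_j(\bq_\eta))/\tl_j^\star$ minus an $(i,j)$-constant. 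Substituting the two-price policy $\lambda_j(\bq_\eta) = \tl_j^\star + \epsilon_\eta$ when $q_j^{(2)}=0$ and $\lambda_j(\bq_\eta) = \tl_j^\star - \epsilon_\eta$ when $q_j^{(2)} \geq 1$, multiplying by $\tchi_{ij}^\star(q_i^{(1)} + q_j^{(2)})$, summing, and applying $\sum_i \tchi_{ij}^\star = \tl_j^\star$, the leading contribution reconstructs $\sum_{(i,j) \in E}\tchi_{ij}^\star(q_i^{(1)}+q_j^{(2)})$. A case split on $q_j^{(2)}=0$ versus $q_j^{(2)}\geq 1$ (in the latter case, for $\epsilon_\eta \leq 1/2$ the queue contribution dominates the $-\epsilon_\eta$ from the price cut) shows the residual $\epsilon_\eta$-boost on the server side is at least $\epsilon_\eta\sum_{j:(i,j)\in E}\tchi_{ij}^\star/\tl_j^\star$ per type-$i$ server, yielding the drift term $\epsilon_\eta\min_i\{\sum_j\tchi_{ij}^\star/\tl_j^\star\}\inner{\bone_n}{\bq^{(1)}_\eta}$.

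\textbf{Main obstacle.} The principal technical difficulty is ensuring that the residual errors from bounding $\E{}{\min(X,Y)}$ from below aggregate into the specific constant $B_2 = A_{\max}\sum_{(i,j)\in E}\tchi_{ij}^\star(\E{\talpha^\star}{\mu_i}+\tl_j^\star) + A_{\max}\sum_i\E{\talpha^\star}{\mu_i}\sum_j\tchi_{ij}^\star/\tl_j^\star$ rather than scaling with $|\bq_\eta|$. A naive triangle inequality on $\E{}{(X-Y)^+}$ produces errors that, once multiplied by the queue-length weights $(q_i^{(1)}+q_j^{(2)})$, would be $O(|\bq_\eta|)$ and destroy the bound. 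The remedy is to split into the regime $\E X_{ij} \leq \E Y_{ij}$ (where the min is close to $X_{ij}$ and the error is $O(1)$) versus $\E X_{ij} > \E Y_{ij}$ (where $\E{}{\tilde y_{ij}}$ has a dominant quadratic-in-$\bq_\eta$ contribution that absorbs any linear error for free), and to exploit the $A_{\max}$-boundedness of arrivals so the per-$(i,j)$ correction is exactly the fluid-rate-times-$A_{\max}$ quantity appearing in $B_2$.
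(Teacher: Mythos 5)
Your candidate feasible matching $\tilde y_{ij}$ is exactly the one the paper uses, and the reformulation via the LP relaxation (total unimodularity) is also the right start. Where the proposal diverges is in extracting the drift, and I believe that step as written has a genuine gap.

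You write $\min(X_{ij},Y_{ij}) = X_{ij} - (X_{ij}-Y_{ij})^+$ and propose to lower-bound $\E{}{\min(X_{ij},Y_{ij})\mid\bq_\eta}$ by the customer-side rate $(q_j^{(2)}+\lambda_j(\bq_\eta))/\tl_j^\star$ minus an $O(1)$ correction. This is false precisely in the regime where the customer queue is large and the server queue is empty: there $X_{ij}\sim q_j^{(2)}/\tl_j^\star$ while $Y_{ij}=a_i^{(1)}/\E{\talpha^\star}{\mu_i}=O(1)$, so $(X_{ij}-Y_{ij})^+\sim q_j^{(2)}/\tl_j^\star$ is linear in the queue state, not a constant. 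Your proposed remedy — a case split on $\E X_{ij}\leq\E Y_{ij}$ versus $\E X_{ij}>\E Y_{ij}$, with the claim that in the second regime $\E{}{\tilde y_{ij}}$ has a ``quadratic-in-$\bq_\eta$ contribution that absorbs linear error'' — does not hold up: when $q_i^{(1)}=0$ (which is exactly what happens in the second regime under max-weight), $\E{}{\tilde y_{ij}}\leq\tchi_{ij}^\star\E{}{Y_{ij}}=O(1)$, so $\E{}{\tilde y_{ij}}(q_i^{(1)}+q_j^{(2)})$ is linear in $q_j^{(2)}$, and there is no quadratic margin to spare.

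The missing ingredient is the structural property of max-weight matching that the paper's proof leans on entirely: at the beginning of each time slot there are no compatible waiting pairs, i.e.\ $q_{\eta,i}^{(1)}(k)\,q_{\eta,j}^{(2)}(k)=0$ for all $(i,j)\in E$. This is what turns the analysis of $\min(X_{ij},Y_{ij})$ into a clean dichotomy: if $q_i^{(1)}>A_{\max}\E{\talpha^\star}{\mu_i}$ then $q_j^{(2)}=0$ for every neighbor $j$, so the minimum resolves deterministically to the customer side and $\E{}{\tilde y_{ij}\mid q_i^{(1)}>A_{\max}\E{\talpha^\star}{\mu_i}}=\tchi_{ij}^\star+\epsilon_\eta\tchi_{ij}^\star/\tl_j^\star$ (here the $\epsilon_\eta$ bonus appears because the two-price policy fires at rate $\tl_j^\star+\epsilon_\eta$ precisely when $q_j^{(2)}=0$); and symmetrically for $q_j^{(2)}>A_{\max}\tl_j^\star$, where it resolves to the server side with $\E{}{\tilde y_{ij}\mid q_j^{(2)}>A_{\max}\tl_j^\star}=\tchi_{ij}^\star$ exactly. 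There is no third case with both queues large, so the residual terms are genuinely $O(A_{\max})$ per pair and aggregate into the stated $B_2$. You never invoke the no-coexistence property, and without it the case analysis you sketch has an uncontrolled regime. If you replace the $X-(X-Y)^+$ decomposition with the indicator split on $q_i^{(1)}>A_{\max}\E{\talpha^\star}{\mu_i}$ (resp.\ $q_j^{(2)}>A_{\max}\tl_j^\star$) and explicitly use that these indicators cannot both vanish when either queue is large, the rest of your calculation goes through and matches the paper's.
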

The proof of the Lemma \ref{lemma: negative_drift} is provided at the end of the proof of the Lemma \ref{lemma: stability}. Now, using the Lemma \ref{lemma: negative_drift}, we have
\begin{align*}
    T_2 &\leq -\epsilon_{\eta}\inner{\bone_m}{\bq_{\eta}^{(2)}(k)}-\epsilon_\eta\min_{i \in [n]}\left\{\sum_{j : (i,j) \in E}\frac{\tchi_{ij}^\star}{\tl_j^\star}\right\}\inner{\bone_n}{\bq^{(1)}_\eta(k)}+B_2 \\
    &\leq -\epsilon_\eta\min_{i \in [n]}\left\{\sum_{j : (i,j) \in E}\frac{\tchi_{ij}^\star}{\tl_j^\star},1\right\}\inner{\bone_{n+m}}{\bq_\eta(k)}+B_2.
\end{align*}
Thus, we have
\begin{align*}
    \E{}{\Delta V(\bq_\eta)|\bq_\eta(k)}\leq B_1+2B_2-2\epsilon_\eta\min_{i \in [n]}\left\{\sum_{j : (i,j) \in E}\frac{\tchi_{ij}^\star}{\tl_j^\star},1\right\}\inner{\bone_{n+m}}{\bq_\eta(k)}.
\end{align*}
So, there exists a finite set $\calB_\eta$ such that for all $q \notin \calB_\eta$, we have $\E{}{\Delta V(\bq_\eta)} <-\delta$ for $\delta>0$ where $\calB_\eta$ is defined as:
\begin{align*}
    \calB_\eta=\left\{\bq_\eta \in \bbZ_+^{n+m}: \inner{\bone_{n+m}}{\bq_\eta(k)} \leq \frac{B_1+2B_2}{\epsilon_{\eta}\min_{i \in [n]}\left\{\sum_{j : (i,j) \in E}\frac{\tchi_{ij}^\star}{\tl_j^\star},1\right\}}\right\}.
\end{align*}
Thus, by the Foster-Lyapunov Theorem, the discrete time Markov chain for all $\eta>0$ is positive recurrent. Now, we can use the moment bound theorem, to get a bound on the expected queue length in steady state. We have
\begin{align*}
   \E{}{\inner{\bs}{\bbarq_\eta}}\leq ||\bs||_\infty \E{}{\inner{\bone_{n+m}}{\bbarq_\eta}}\leq \frac{B}{\epsilon_\eta} \dfn \frac{(B_1+2B_2)\|\bs\|_\infty}{2\epsilon_{\eta}\min_{i \in [n]}\left\{\sum_{j : (i,j) \in E}\frac{\tchi_{ij}^\star}{\tl_j^\star},1\right\}}. \Halmos
\end{align*}
 \endproof
\proof{Proof of Lemma \ref{lemma: negative_drift}}
In this proof, we will omit the $k$ and $\eta$ dependence and write $\bq$, $\ba$ and $\bx$ for $\bq_\eta(k)$, $\ba_\eta(k)$ and $\bx_\eta(k)$ respectively, for the ease of notation. The max-weight matching policy can be re-written as follows:
\begin{subequations}
\begin{align}
    \by=\arg\max_{\bv} \sum_{(i,j)\in E} v_{ij}(q_i^{(1)}+q_j^{(2)}) \span \\
    \textit{subject to } q_j^{(2)}+a_j^{(2)}-\sum_{i : (i,j)\in E} v_{ij} &\geq 0 \quad \forall j \in [m] \\
    q_i^{(1)}+a_i^{(1)}-\sum_{j: (i,j) \in E} v_{ij} &\geq 0 \quad \forall i \in [n] \\
    v_{ij} &\geq 0 \quad \forall (i,j) \in E.
\end{align}
\label{eq: max_weight_opt}
\end{subequations}
Note that $\bv$ can be relaxed to be a continuous variable. In fact, the constraint set we have is a polyhedron of the form $\{\bA \by \leq \bb\}$ where $\bb \in \bbZ_+^{n+m}$ and $\bA \in \{0,1\}^{(m+n) \times mn}$. 
Note that $A$ is the incidence matrix of the bipartite graph $G(N_1 \cup N_2, E)$ and thus, the polyhedron is integral.
% Now, note that any subset of rows $A$ admits an equitable row-bicoloring by a simple partition of the row of $A$ into the constraints corresponding to \eqref{eq: column_sums} and \eqref{eq: row_sums}. Thus, by \citep{}, $A$ is totally uni-modular and thus by \citep{} all extreme points of the resulting polyhedron are integral.
Note that for a given $\bq$ and $\ba$, a feasible solution to the above problem is 
\begin{align*}
    \hy_{ij}=\min\left\{(a_i^{(1)}+q_i^{(1)})\frac{\tchi^\star_{ij}}{\E{\talpha^\star}{\mu_i}}, (a_j^{(2)}+q_j^{(2)})\frac{\tchi^\star_{ij}}{\tl_j^\star}\right\} \quad \forall (i,j) \in E.
\end{align*}
This can be easily verified as follows:
\begin{align*}
    \sum_{i: (i,j) \in E} \hy_{ij} &\leq (a_j^{(2)}+q_j^{(2)})\frac{\sum_{i : (i,j) \in E} \tchi_{ij}^\star}{\tl_j^\star}=a_j^{(2)}+q_j^{(2)} \quad \forall j \in [m] \\
     \sum_{j: (i,j) \in E} \hy_{ij} &\leq (a_i^{(1)}+q_i^{(1)})\frac{\sum_{j : (i,j) \in E} \tchi_{ij}^\star}{\E{\talpha^\star}{\mu_i}}=a_i^{(1)}+q_i^{(1)} \quad \forall i \in [n].
\end{align*}
We will use this feasible solution to lower bound the objective function of \eqref{eq: max_weight_opt}. But before, observe that as we are using max-weight matching policy, we have for all $(i,j) \in E$ if $q_i^{(1)}>0$ then $q_j^{(2)}=0$. In other words, at the start of each epoch, there are no compatible pairs waiting to be matched. So, we have
\begin{align*}
    \lefteqn{\E{}{\hy_{ij}|q_i^{(1)}>A_{\max}\E{\talpha^\star}{\mu_i}}}\\
    &=\E{}{\min\left\{(a_i^{(1)}+q_i^{(1)})\frac{\tchi^\star_{ij}}{\E{\talpha^\star}{\mu_i}}, (a_j^{(2)}+q_j^{(2)})\frac{\tchi^\star_{ij}}{\tl_j^\star}\right\}\bigg|q_i^{(1)}>A_{\max}\E{\talpha^\star}{\mu_i}} \\
    &=\E{}{(a_j^{(2)}+q_j^{(2)})\frac{\tchi^\star_{ij}}{\tl_j^\star}\bigg|q_j^{(2)}=0} \\
    &=\E{}{a_j^{(2)}\frac{\tchi^\star_{ij}}{\tl_j^\star}\bigg|q_j^{(2)}=0}= \tchi_{ij}^\star+\epsilon_\eta\frac{\tchi^\star_{ij}}{\tl_j^\star}.
\end{align*}
Similarly, note that
\begin{align*}
   \lefteqn{ \E{}{\hy_{ij}|q_j^{(2)}>A_{\max}\tl_j^\star}}\\
   & =\E{}{\min\left\{(a_i^{(1)}+q_i^{(1)})\frac{\tchi^\star_{ij}}{\E{\talpha^\star}{\mu_i}}, (a_j^{(2)}+q_j^{(2)})\frac{\tchi^\star_{ij}}{\tl_j^\star}\right\}\bigg|q_j^{(2)}>A_{\max}\tl_j^\star} \\
    &=\E{}{(a_i^{(1)}+q_i^{(1)})\frac{\tchi^\star_{ij}}{\E{\talpha^\star}{\mu_i}}\bigg| q_i^{(1)}=0} \\
    &=\E{}{a_i^{(1)}\frac{\tchi^\star_{ij}}{\E{\talpha^\star}{\mu_i}}\bigg| q_i^{(1)}=0}= \tchi_{ij}^\star.
\end{align*}
Now, we can lower bound the objective function as follows:
\begin{align*}
 \lefteqn{\E{}{\max_{\bv \in \eqref{eq: matching_constraints}}\inner{\bv}{\bq}|\bq} \overset{(a)}{\geq} \E{}{\inner{\bhy}{\bq}|\bq}=\sum_{(i,j) \in E} \E{}{\hy_{ij}(q_i^{(1)}+q_j^{(2)})| \bq}} \\
   \overset{(b)}{=}{}&\sum_{(i,j) \in E} \E{}{\hy_{ij}(q_i^{(1)}+q_j^{(2)})| q_i^{(1)},q_j^{(2)}} \\
   \overset{(c)}{\geq}{}& \sum_{(i,j) \in E} \E{}{\hy_{ij}(q_i^{(1)}+q_j^{(2)})\mathbbm{1}_{\left\{q_i^{(1)}>A_{\max}\E{\talpha^\star}{\mu_i}\right\}}\bigg|q_i^{(1)}, q_j^{(2)}}\\
   &+\sum_{(i,j) \in E} \E{}{\hy_{ij}(q_i^{(1)}+q_j^{(2)})\mathbbm{1}_{\left\{q_j^{(2)}>A_{\max}\tl_j^\star\right\}}\bigg|q_i^{(1)}, q_j^{(2)}} \\
   \overset{(d)}{=}{}& \sum_{(i,j) \in E} \tchi^\star_{ij}\left(q_i^{(1)}\mathbbm{1}_{\left\{q_i^{(1)}>A_{\max}\E{\talpha^\star}{\mu_i}\right\}}+q_j^{(2)}\mathbbm{1}_{\left\{q_j^{(2)}>A_{\max}\tl^\star_j\right\}}\right)+\epsilon_\eta \sum_{(i,j) \in E} \frac{\tchi^\star_{ij}}{\tl_j^\star}q_i^{(1)}\mathbbm{1}_{\left\{q_i^{(1)}>A_{\max}\E{\talpha^\star}{\mu_i}\right\}} \\
   \geq{}& \sum_{(i,j) \in E}\tchi^\star_{ij}(q_i^{(1)}+q_j^{(2)})+\epsilon_\eta \min_{i \in [n]}\left\{\sum_{j : (i,j) \in E}\frac{\tchi_{ij}^\star}{\tl_j^\star}\right\}\inner{\bone_n}{\bq^{(1)}}-B_2,
\end{align*}
where 
\begin{align*}
    B_2=A_{\max}\sum_{(i,j) \in E} \tchi_{ij}^\star (\E{\talpha^\star}{\mu_i}+\tl_j^\star)+A_{\max} \sum_{i=1}^n \left(\sum_{j : (i,j) \in E}\frac{\tchi^\star_{ij}}{\tl_j^\star}\E{\talpha^\star}{\mu_i}\right).
\end{align*}
The inequality $(a)$ follows as $\hy$ is a feasible solution to the optimization problem. Next, $(b)$ follows as the feasible solution $\hy_{ij}$ only depends on $q_i^{(1)}$ and $q_j^{(2)}$. The inequality $(c)$ follows as for all $(i,j) \in E$, only one of $q_i^{(1)}$ and $q_j^{(2)}$ can be non zero. Finally, $(d)$ follows due to the following equation.
\begin{align*}
    \E{}{\hy_{ij}(q_i^{(1)}+q_j^{(2)})\mathbbm{1}_{\left\{q_j^{(2)}>A_{\max}\tl_j^\star\right\}}\bigg|q_i^{(1)}, q_j^{(2)}}&\overset{*}{=}\E{}{\hy_{ij}|q_i^{(1)},q_j^{(2)}}q_j^{(2)}\mathbbm{1}_{\left\{q_j^{(2)}>A_{\max}\tl_j^\star\right\}} \\
    &=\tchi^\star_{ij}q_j^{(2)}\mathbbm{1}_{\left\{q_j^{(2)}>A_{\max}\tl_j^\star\right\}},
\end{align*}
where $(*)$ is true as the max-weight matching policy makes sure there are no compatible pairs waiting in the system at the start of a time epoch. \hfill $\Halmos$
 \endproof
\subsection{Proof of Lemma \ref{lemma: first_order_optimality}}
We will prove the more general lemma give below and then use this lemma to prove the Lemma \ref{lemma: first_order_optimality}.
\begin{lemma} \label{lemma: first_order}
For the server pricing policy given by $\alpha_\bq=\talpha^\star$ for all $\bq \in \calS$ and any given pricing policy for customers and any matching policy under which the system is stable and $\E{}{\inner{\bone_{n+m}}{\bbarq}}<\infty$ the following holds:
\begin{align*}
    \sum_{j=1}^m \left(\tl_j^\star F'_j(\tl_j^\star)+F_j(\tl_j^\star)\right)\left(\E{}{\lambda_j(\bbarq)}-\tl_j^\star\right)=0.
\end{align*}
\end{lemma}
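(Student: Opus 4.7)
The plan is to exploit the optimality of $(\btlambda^\star, \talpha^\star, \btchi^\star)$ in the probabilistic fluid problem \eqref{eq: prob_fluid_model} by restricting to the slice $\talpha = \talpha^\star$ and constructing a feasible perturbation direction from the stochastic system whose ``negative'' is also feasible. First, I would apply Lemma \ref{lemma: necessary_constraints} to the policy under consideration. Since the server pricing policy is state-independent with $\alpha_\bq = \talpha^\star$ for every $\bq \in \calS$, the marginal measure defined in that lemma satisfies $\talpha(A) = \sum_{\bq \in \calS} \talpha^\star(A)\,\psi(\bq) = \talpha^\star(A)$ for every Borel set $A \subseteq \Omega$. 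Writing $\btlambda' \triangleq \E{}{\blambda(\bbarq)}$ and $\btchi' \triangleq \E{}{\bbarx}$, we conclude that $(\btlambda', \talpha^\star, \btchi')$ is feasible in \eqref{eq: prob_fluid_model}.

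Next, I would freeze $\talpha = \talpha^\star$ in the fluid problem and view the resulting finite-dimensional program in the variables $(\btlambda, \btchi)$. The objective reduces to $\inner{F(\btlambda)}{\btlambda}$ up to the additive constant $-\E{\talpha^\star}{\cost(\btmu)}$, and is concave by Assumption \ref{ass: concave}. Both $(\btlambda^\star, \btchi^\star)$ and $(\btlambda', \btchi')$ are feasible points; moreover, the constraints \eqref{eq: cust_rate_balance}--\eqref{eq: compatibility} with $\talpha^\star$ fixed are linear, and the WLOG assumption made just before \eqref{eq: two_price_policy} ensures $\tchi_{ij}^\star > 0$ for every $(i,j)\in E$. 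Hence $(\btlambda^\star, \btchi^\star)$ lies in the relative interior of the feasible polytope.

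Using this interior-ness, I would define the direction $(\Delta\btlambda, \Delta\btchi) \triangleq (\btlambda' - \btlambda^\star, \btchi' - \btchi^\star)$ and argue that for all sufficiently small $t>0$ both points
\begin{equation*}
(\btlambda^\star \pm t\,\Delta\btlambda,\; \btchi^\star \pm t\,\Delta\btchi)
\end{equation*}
remain feasible: the linear equality constraints are preserved by any affine combination of feasible points, and the strict positivity $\tchi_{ij}^\star > 0$ on $E$ absorbs small signed perturbations of $\btchi$. By the optimality of $(\btlambda^\star, \btchi^\star)$ for the restricted concave maximization, the one-sided directional derivatives in the two opposing directions $\pm(\Delta\btlambda, \Delta\btchi)$ must each be nonpositive, which forces the directional derivative to vanish. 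Computing this derivative at $\btlambda^\star$ yields
\begin{equation*}
\sum_{j=1}^m \left(F_j(\tl_j^\star) + \tl_j^\star F_j'(\tl_j^\star)\right)\left(\E{}{\lambda_j(\bbarq)} - \tl_j^\star\right) = 0,
\end{equation*}
which is the desired identity.

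The main obstacle is rigorously justifying the two-sided feasibility of the perturbation. Beyond invoking $\tchi_{ij}^\star > 0$ on $E$, one must check that the perturbed $\btlambda$ coordinates remain in the domain of $F_j$; this follows from $\tl_j^\star = \sum_i \tchi_{ij}^\star > 0$ together with the small step size. A small bookkeeping point is to verify that $\sum_j(\tl_j' - \tl_j^\star) = 0$, which holds since both $\btlambda^\star$ and $\btlambda'$ are balanced against the same total server inflow $\sum_i \E{\talpha^\star}{\tmu_i}$ through \eqref{eq: cust_rate_balance}--\eqref{eq: serv_rate_balance}; this ensures the perturbation direction is consistent with the equality constraints.
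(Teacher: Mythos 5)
Your proposal is correct and follows the same essential route as the paper's proof: restrict the fluid problem \eqref{eq: prob_fluid_model} to the slice $\talpha=\talpha^\star$ (noting that $(\btlambda^\star,\btchi^\star)$ remains optimal there), invoke Lemma \ref{lemma: necessary_constraints} to obtain a feasible point $(\btlambda',\btchi')$ from the stochastic policy, and exploit the WLOG positivity $\tchi^\star_{ij}>0$ on $E$ to turn the resulting feasible direction into a vanishing first-order condition. The only difference is presentational: the paper writes the stationarity equation with explicit Lagrange multipliers and then pairs it against the feasible direction $\bd$ (after verifying constraint qualification), whereas you argue two-sided feasibility directly from relative interiority and force the directional derivative to zero—both are the same first-order optimality argument on the same restricted problem.
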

\proof{Proof of Lemma \ref{lemma: first_order}}
Firstly, we will define a matrix to vector operation by stacking columns on top of each other. For the matrix $\btchi$, we will denote the corresponding vector by $\bhchi$. We define it as follows:
\begin{align*}
    \hchi_k \dfn \tchi_{ij} \quad \textit{where, } i=k \% (n+1), \ j=\lceil\frac{k}{n}\rceil \quad \forall k \in [nm].
\end{align*}
Here $k\%n$ denotes the reminder obtained when $k$ is divided by $n$ and $\lceil . \rceil$ is the ceiling function which returns the smallest integer greater than or equal to the argument.

Add a constraint $\talpha=\talpha^\star$ in the probabilistic fluid model to get the following optimization problem:
\begin{subequations}
\begin{align}
    \tilde{R}^{*}_1=\lefteqn{\max_{\btlambda,\btchi} \ g(\btlambda,\btchi)= \inner{F(\btlambda)}{\btlambda}-\E{\talpha^\star}{c(\btmu)}}\\
 \textit{subject to} \quad h_j^{(2)}&=\tl_j-\sum_{i=1}^n \tchi_{ij}=0 \quad \forall j \in [m]  \\
    h_i^{(1)}&=\E{\talpha^\star}{\tmu_i}-\sum_{j=1}^m \tchi_{ij}=0 \quad \forall i \in [n] \\
    \tchi_{ij}&=0 \quad \forall (i,j) \notin E, \quad \tchi_{ij} \geq 0 \quad \forall (i,j) \in E, 
\end{align}
\label{eq: mod_prob_fluid_model}
\end{subequations}
As the optimal value of \eqref{eq: prob_fluid_model} is achieved by the feasible point $(\btlambda^\star,\btchi^\star)$ for \eqref{eq: mod_prob_fluid_model}, we have $\tilde{R}^\star_1 \geq \tilde{R}^\star$. In addition, as we added a constraint, the feasible region of \eqref{eq: mod_prob_fluid_model} is a subset of the feasible region of \eqref{eq: prob_fluid_model}, we have $\tilde{R}^\star \geq \tilde{R}^\star_1$. Thus, we have $\tilde{R}^\star=\tilde{R}_1^\star$ and the optimal solution of \eqref{eq: mod_prob_fluid_model} is $(\btlambda^\star,\btchi^\star)$. Now, we will use the KKT conditions in the following steps:
\begin{enumerate}
    \item First, we will show that the optimal point is a regular point, that is all the binding constraints are linearly independent.
    \item Then, we will use the given pricing policy to find a feasible direction for the optimization problem above.
    \item Finally, we will use the first order KKT optimality conditions as the objective function is concave and the feasible region is a polyhedron. 
\end{enumerate}

\textit{Part 1:} As we assume that $\tchi_{ij}^\star>0$ for all $i \in [n], j \in [m]$. In addition, as the feasible region is a polyhedron, the gradient vectors of all the active constraints at the optimal solution are linearly independent. Thus, the optimal point is a regular point. 

\textit{Part 2:} By hypothesis of the lemma, the DTMC operating under the given pricing and matching policy is stable. Thus, by Lemma \ref{lemma: necessary_constraints}, $(\E{}{\blambda(\bbarq)}),\E{\talpha^\star}{\btmu},\bchi^{\dagger})$ is a feasible solution to the fluid problem \eqref{eq: prob_fluid_model}, where $\bchi^\dagger$ is the corresponding `average' rate assignment matrix ($\btchi$) for the given policy. Thus, a feasible direction at the optimal point is given by 
\begin{align*}
    d=\begin{cases}
   \E{}{\lambda_j(\bbarq)}-\tl_j^\star  &\forall k \in [m] \\
    \hchi^\dagger_k-\hchi^\star_k &\forall k \in [nm]\backslash [m].
    \end{cases}
\end{align*}

\textit{Part 3:} Now, we will use the first order KKT optimality conditions for the optimization problem \eqref{eq: mod_prob_fluid_model}. There exists unique Lagrangian multipliers $(\bkappa,\bxi) \in \bbR^{m+n}\times \bbR_+^{mn}$ such that,
\begin{align*}
    \nabla g(\btlambda^\star,\btchi^\star)+\nabla \bh(\btlambda^\star,\btchi^\star)\bkappa+\hspace{-21pt}\sum_{k : (k \% (n+1),  \lceil\frac{k}{n}\rceil) \in E} \hspace{-2pt} \xi_k\be_{k+n+m} \mathbbm{1}_{\hchi^\star_k=0}+\hspace{-21pt}\sum_{k : (k \% (n+1),  \lceil\frac{k}{n}\rceil) \notin E} \hspace{-2pt} \xi_k\be_{k+n+m} =\bzero_{n+m+nm},
\end{align*}
where $\nabla g(\btlambda^\star,\btchi^\star)$ is the gradient of the objective function at the optimal point given by
\begin{align*}
    \nabla g(\btlambda^\star,\btchi^\star)=\left(F'(\btlambda^\star)\btlambda^\star+F(\btlambda^\star),\bzero_{nm}\right).
\end{align*}
In addition, as $h: \bbR^{n+nm} \rightarrow \bbR^{n+m}$, its gradient $\nabla h((\btlambda^\star,\btchi^\star))$ is a matrix in $\bbR^{(m+mn)\times (m+n)}$. Now, we will take the inner product of the optimality equation with the feasible direction $d$. Observe that
\begin{align*}
    \inner{\bd}{\nabla h_j^{(2)}}=h_j^{(2)}(\btlambda^\star,\btchi^\star)-h_j^{(2)}(\E{}{\blambda(\bbarq)},\bchi^\dagger)&=0 \quad \forall j \in [m] \\
     \inner{\bd}{\nabla h_i^{(1)}}=h_i^{(1)}(\btlambda^\star,\btchi^\star)-h_i^{(1)}(\E{}{\blambda(\bbarq)},\bchi^\dagger)&=0 \quad \forall i \in [n] \\
     \sum_{k : (k \% (n+1),  \lceil\frac{k}{n}\rceil) \in E} \xi_k(\hchi_k^\star-\hchi_k^\dagger) \mathbbm{1}_{\hchi^\star_k=0}&=0 \quad 
     \textit{(By the assumption $\btchi^\star>\bzero_{n\times m}$)} \\
     \sum_{k : (k \% (n+1),  \lceil\frac{k}{n}\rceil) \notin E}  \xi_k(\hchi_k^\star-\hchi_k^\dagger)&=0 \quad \textit{(As $\tchi_{ij}^\star=\tchi_{ij}^\dagger=0 \ \forall (i,j) \notin E$ )}
\end{align*}
Thus, we have $\inner{\nabla g(\btlambda^\star,\btchi^\star)}{\bd}=0$. This gives us the lemma. \hfill $\Halmos$
 \endproof
\proof{Proof of Lemma \ref{lemma: first_order_optimality}}
First note that, the sequence of DTMC operating under the pricing and matching policy given by \eqref{eq: two_price_policy} \eqref{eq: modified_max_weight_matching} is stable. In addition, $\E{}{\inner{\bone_{n+m}}{\bbarq_\eta}}<\infty$ by the hypothesis of the Lemma \ref{lemma: first_order_optimality}. Thus, we can use Lemma \ref{lemma: first_order}. We have
\begin{align*}
    \E{}{\lambda_{j,\eta}(\bbarq)}=\tl^\star_j+\epsilon_{\eta}\left(\P{}{\barq_{j,\eta}^{(2)}=0}-\P{}{\barq_{j,\eta}^{(2)}>0}\right) \quad \forall j\in [m].
\end{align*}
This give us
\begin{align*}
    \sum_{j=1}^m \left(\tl^\star_jF'(\tl^\star_j)+F_j(\tl^\star_j)\right)\left(\P{}{\barq_{j,\eta}^{(2)}>0}-\P{}{\barq_{j,\eta}^{(2)}=0}\right)=0. \Halmos
\end{align*}
 \endproof
\subsection{Proof of Lemma \ref{lemma: profit_loss}}
\proof{Proof}
We have 
\begin{align*}
    \frac{L^P_\eta(\pi_\eta)}{\eta}&=\tilde{R}^\star-P_\eta(\pi_\eta) \\
    ={}&\inner{F(\btlambda^\star)}{\btlambda^\star}-\E{\talpha^\star}{c(\btmu)}-\E{}{\inner{F\left(\blambda_\eta(\bbarq_\eta)\right)}{\blambda_\eta(\bbarq_\eta)}}+\E{\talpha^\star}{c(\btmu)} \\
    ={}&\inner{F(\btlambda^\star)}{\btlambda^\star}-\E{}{\inner{F\left(\blambda_\eta(\bbarq_\eta)\right)}{\blambda_\eta(\bbarq_\eta)}} \\
    \overset{(a)}{=}{}&\inner{F(\btlambda^\star)}{\btlambda^\star}-\sum_{j=1}^m \left(\tl_j^\star+\epsilon_\eta\right)F_j\left(\tl_j^\star+\epsilon_\eta\right)\P{}{\barq_{j,\eta}^{(2)}=0}\\
    &-\sum_{j=1}^m \left(\tl_j^\star-\epsilon_\eta\right)F_j\left(\tl_j^\star-\epsilon_\eta\right)\P{}{\barq_{j,\eta}^{(2)}>0} \\
    \overset{(b)}{=}{}&\inner{F(\btlambda^\star)}{\btlambda^\star}-\sum_{j=1}^m \left(\tl_j^\star+\epsilon_\eta\right)\left(F_j(\tl_j^\star)+\epsilon_\eta F'_j(\tl_j^\star)+\frac{\epsilon_\eta^2}{2}F''(\tl_j^\star)+O\left(\epsilon_\eta^3\right)\right)\P{}{\barq_{j,\eta}^{(2)}=0}\\
    &-\sum_{j=1}^m \left(\tl_j^\star-\epsilon_\eta\right)\left(F_j(\tl_j^\star)-\epsilon_\eta F'_j(\tl_j^\star)+\frac{\epsilon_\eta^2}{2}F''(\tl_j^\star)+O\left(\epsilon_\eta^3\right)\right)\P{}{\barq_{j,\eta}^{(2)}>0} \\
    ={}& \epsilon_\eta \sum_{j=1}^m \left(\tl_j^\star F_j'(\tl_j^\star)+F_j(\tl_j^\star)\right)\left(\P{}{\barq_{j,\eta}^{(2)}>0}-\P{}{\barq_{j,\eta}^{(2)}=0}\right)\\
    &-\epsilon_\eta^2\sum_{j=1}^m \left(\frac{\tl^\star_jF''(\tl^\star_j)}{2}+F'_j(\tl_j^\star)\right)+O\left(\epsilon_\eta^3\right) \\
    \overset{(c)}{=}{}& -\epsilon_\eta^2\sum_{j=1}^m \left(\frac{\tl^\star_jF''(\tl^\star_j)}{2}+F'_j(\tl_j^\star)\right)+O\left(\epsilon_\eta^3\right) 
\end{align*}
where $(a)$ follows by the definition of the pricing policy given by \eqref{eq: two_price_policy}. Next, $(b)$ follows by Taylor's series expansion and using the Assumption \ref{ass: monotonic} that the demand curve $F_j(.)$ is twice continuously differentiable. Finally, $(c)$ follows by Lemma \ref{lemma: first_order_optimality}. Also, note that 
\begin{align*}
    \frac{1}{2}\inner{F(\btlambda^\star)}{\btlambda^\star}+\inner{\bone_m}{F'(\btlambda^\star)} &=\frac{1}{2}\left(\inner{F(\btlambda^\star)}{\btlambda^\star}+\inner{\bone_m}{F'(\btlambda^\star)}\right)+\frac{1}{2}\inner{\bone_m}{F'(\btlambda^\star)}<0,
\end{align*}
as by Assumption \ref{ass: concave}, $F_j(\lambda_j)\lambda_j$ is a concave function, thus the second derivative is non positive and by Assumption \ref{ass: monotonic}, the demand function is strictly decreasing, thus the derivative is negative. \hfill $\Halmos$
 \endproof
\subsection{Proof of Theorem \ref{theo: two_price_policy}}
\proof{Proof}
The net profit loss is given by
\begin{align*}
    L_\eta&=\tilde{R}_\eta^\star-R_\eta \\
    &= \tilde{R}^\star_\eta- P_\eta +  \E{}{\inner{\bs}{\bbarq_\eta}} \\
    &\overset{(a)}{=} -\eta\epsilon_\eta^2\sum_{j=1}^m \left(\frac{\tl^\star_jF''(\tl^\star_j)}{2}+F'_j(\tl_j^\star)\right)+O\left(\eta\epsilon_\eta^3\right)+  \E{}{\inner{\bs}{\bbarq_\eta}} \\
    & \overset{(b)}{\leq} -\eta\epsilon_\eta^2\sum_{j=1}^m \left(\frac{\tl^\star_jF''(\tl^\star_j)}{2}+F'_j(\tl_j^\star)\right)+\frac{B}{\epsilon_\eta}+O\left(\eta\epsilon_\eta^3\right) \\
    &\overset{(c)}{=} O\left(\eta^{1/3}\right),
\end{align*}
where $(a)$ follows by Lemma \ref{lemma: profit_loss}, $(b)$ follows by Lemma \ref{lemma: stability} and $(c)$ follows by picking $\epsilon_\eta=\eta^{-1/3}$ considering the trade-off between the profit loss ($\eta\epsilon_\eta^2$) and the expected queue length ($\frac{1}{\epsilon_\eta}$). This completes the proof. \hfill $\Halmos$
 \endproof
\section{Proof of Theorem \ref{theo: lowerbound_queue} and \ref{theo: lowerbound_profit_loss}: Lower bound}
\subsection{Proof of Theorem \ref{theo: lowerbound_queue}}
\proof{Proof}
We will start by defining the imbalance of the DTMC given by 
\begin{align*}
    z \dfn \inner{\bone_n}{\bq^{(1)}}-\inner{\bone_m}{\bq^{(2)}}.
\end{align*}
The update equation of imbalance given the queue length vector $\bq(k)$ can be written as
\begin{align*}
    z(k+1)=z(k)+\inner{\bone_n}{\ba^{(1)}(k)}-\inner{\bone_m}{\ba^{(2)}(k)}.
\end{align*}
Note, that $z$ itself is not a Markov chain as the arrival vector $(\ba(k))$ depends on the queue length $(\bq(k))$. Denote $\gamma\dfn\max\{m,n\}$ and also denote $ p\dfn\P{}{\barz\geq 0}$, that is
\begin{align*}
    p=\sum_{x=0}^{\infty}\sum_{\bq: \inner{\bone_n}{\bq^{(1)}}-\inner{\bone_m}{\bq^{(2)}}=x} \P{}{\bbarq=\bq}.
\end{align*}
For this proof, we will couple the absolute value of the imbalance with a single server queue denoted by $q^\dagger$ and arrival and service rate denoted by $a^\dagger$ and $s^\dagger$. In particular, we will carry out the proof in the following steps:
\begin{enumerate}
    \item First, we will construct the arrival and service process of the single server queue $q^\dagger(k)$.
    \item Then, we will couple the single server queue with the imbalance such that $q^\dagger(k) \leq |z(k)|$ for all $k \in \bbZ_+$
    \item Then we will calculate $\E{}{\barq^\dagger}$ and use this to lower bound $\E{}{\inner{\bone_{n+m}}{\bbarq}}$.
\end{enumerate}

\textit{Step 1 (Single Server Queue):}  For all $k \geq 0$, generate the following random variables independent of all the other random variables:
\begin{align*}
    s_1^\dagger(k) &\stg \inner{\bone_n}{\ba^{(1)}(k)}; \quad \E{}{s_1^\dagger(k)}=\inner{\bone_m}{\btlambda^\star}+\gamma\epsilon, \ \Var{s_1^\dagger(k)}=\bone_{n \times n} \circ \Sigma^{(1)} \\
     s_2^\dagger(k) &\stg \inner{\bone_n}{\ba^{(2)}(k)}; \quad \E{}{s_2^\dagger(k)}=\inner{\bone_m}{\btlambda^\star}+\gamma\epsilon,\ \Var{s_2^\dagger(k)}=\bone_{n \times n} \circ \Sigma^{(2)}_s \\
     a_1^\dagger(k) &\stl \inner{\bone_n}{\ba^{(1)}(k)}; \quad \E{}{a_1^\dagger(k)}=\inner{\bone_m}{\btlambda^\star}-\gamma\epsilon,\ \Var{a_1^\dagger(k)}=\bone_{n \times n} \circ \Sigma^{(1)} \\
     a_2^\dagger(k) &\stl \inner{\bone_n}{\ba^{(2)}(k)}; \quad \E{}{a_2^\dagger(k)}=\inner{\bone_m}{\btlambda^\star}-\gamma\epsilon,\ \Var{a_2^\dagger(k)}=\bone_{n \times n} \circ \Sigma^{(2)}_a.
\end{align*}
where $\Sigma^{(1)}=\Sigma^{(1)}(\talpha^\star)$ and $\Sigma^{(2)}_{\min} \leq \Sigma^{(2)}_l \leq \Sigma^{(2)}_{\max}$ for $l \in \{a,s\}$. In addition, we also have $s_i^\dagger(k) \leq \gamma A_{\max}$ and $a_i^\dagger(k) \leq \gamma A_{\max}$ with probability 1 for $i \in \{1,2\}$. Note that $\inner{\bone_m}{\btlambda^\star}=\inner{\bone_n}{\E{\talpha^\star}{\bmu}}$ by the constraints of the probabilistic fluid model \eqref{eq: prob_fluid_model}. Thus, it is possible to generate such random variables as their mean is greater than or equal to the corresponding arrival process of the imbalance and their variance can be picked appropriately. For example, it suffices to just consider $s_2^\dagger(k)$ to have the same distribution as $\inner{\bone_n}{\ba^{(2)}(k)}+\epsilon\left(\frac{\gamma}{n}-1\right)$ where the price $\bp(k)$ is such that $\blambda(k)=\E{}{\ba^{(2)}(k)}=\btlambda^\star+\epsilon\bone_n$. Now, we define the arrival and service process of the single server queue. Consider a random variable $y(k) \sim \textrm{Bernoulli}(p)$ independent of $s_1^\dagger$, $s_2^\dagger$, $a_1^\dagger$ and $a_2^\dagger$.  For all $k \in \bbZ_+$, we define
\begin{align*}
    s^\dagger(k)&=s_1^\dagger(k)\mathbbm{1}_{\left\{y(k)<0\right\}}+s_2^\dagger(k)\mathbbm{1}_{\left\{y(k) \geq 0\right\}}\\
     a^\dagger(k)&=a_1^\dagger(k)\mathbbm{1}_{\left\{y(k)\geq0\right\}}+a_2^\dagger(k)\mathbbm{1}_{\left\{y(k) < 0\right\}}
\end{align*}
The marginal distribution of the arrival and service process is given by
\begin{align*}
    \P{}{s^\dagger(k) \leq x}&=(1-p)\P{}{s_1^\dagger(k) \leq x}+p\P{}{s_2^\dagger(k) \leq x} \quad \forall x \in \bbR \\
    \P{}{a^\dagger(k) \leq x}&=p\P{}{a_1^\dagger(k) \leq x}+(1-p)\P{}{a_2^\dagger(k) \leq x} \quad \forall x \in \bbR.
\end{align*}
Note that the arrival and service process are not independent and the mean and variance of them are
\begin{align*}
    \E{}{s^\dagger(k)}&=\inner{\bone_m}{\btlambda^\star}+\gamma\epsilon; \quad  \Var{s^\dagger(k)}=(1-p)\bone\circ\Sigma^{(1)}+p\bone\circ\Sigma^{(2)}_s\\
    \E{}{a^\dagger(k)}&=\inner{\bone_m}{\btlambda^\star}-\gamma\epsilon; \quad \Var{a^\dagger(k)}=p\bone\circ\Sigma^{(1)}+(1-p)\bone\circ\Sigma^{(2)}_a  \\
    \span \Cov{a^\dagger(k)}{s^\dagger(k)}=0.
\end{align*}
The variance can be calculated as follows
\begin{align*}
    \Var{a^\dagger(k)}&=\E{}{(a^\dagger(k))^2}-\E{}{a^\dagger(k)}^2 \\
    &=\E{}{\left(a_1^\dagger(k)\mathbbm{1}_{\left\{y(k)\geq0\right\}}+a_2^\dagger(k)\mathbbm{1}_{\left\{y(k) < 0\right\}}\right)^2}-\E{}{a_1^\dagger(k)\mathbbm{1}_{\left\{y(k)\geq0\right\}}+a_2^\dagger(k)\mathbbm{1}_{\left\{y(k) < 0\right\}}}^2 \\
    &=\E{}{(a_1^\dagger(k))^2}p+\E{}{(a_2^\dagger(k))^2}(1-p)-\left(\inner{\bone_m}{\btlambda^\star}-\gamma\epsilon\right)^2 \\
    &=\left(\Var{a_1^\dagger(k)}+\E{}{a_1^\dagger(k)}^2\right)p+\left(\Var{a_2^\dagger(k)}+\E{}{a_2^\dagger(k)}^2\right)(1-p)-\left(\inner{\bone_m}{\btlambda^\star}-\gamma\epsilon\right)^2 \\
    &=p\bone_{n \times n}\circ\Sigma^{(1)}+(1-p)\bone_{m \times m}\circ\Sigma^{(2)}_a.
\end{align*}
Similarly, we can also calculate the variance of $s^\dagger(k)$ and we omit it here as the steps are repetitive. In addition, we can also find the co-variance between the arrival and service process as follows:
\begin{align*}
    \Cov{a^\dagger(k)}{s^\dagger(k)}={}&\E{}{a^\dagger(k)s^\dagger(k)}-\E{}{a^\dagger(k)}\E{}{s^\dagger(k)} \\
    ={}&\E{}{\left(a_1^\dagger(k)\mathbbm{1}_{\left\{y(k)\geq0\right\}}+a_2^\dagger(k)\mathbbm{1}_{\left\{y(k) < 0\right\}}\right)\left(s_1^\dagger(k)\mathbbm{1}_{\left\{y(k)<0\right\}}+s_2^\dagger(k)\mathbbm{1}_{\left\{y(k) \geq 0\right\}}\right)}\\
    &-\E{}{a_1^\dagger(k)\mathbbm{1}_{\left\{y(k)\geq0\right\}}+a_2^\dagger(k)\mathbbm{1}_{\left\{y (k)< 0\right\}}}\E{}{s_1^\dagger(k)\mathbbm{1}_{\left\{y(k)<0\right\}}+s_2^\dagger(k)\mathbbm{1}_{\left\{y(k) \geq 0\right\}}} \\
    ={}& p\E{}{a_1^\dagger(k) s_2^\dagger(k)}+(1-p)\E{}{a_2^\dagger(k) s_1^\dagger(k)}-\inner{\bone_m}{\btlambda^\star}^2+\gamma^2\epsilon^2=0,
\end{align*}
where the last equality follows as $a_1^\dagger$, $a_2^\dagger$, $s_1^\dagger$ and $s_2^\dagger$ are independent of each other.

\textit{Step 2 (Coupling):} We couple the arrival and service process of the multiple link two sided queue and the single server queue as follows: If $z(k)\geq 0$ then $s^\dagger(k) \geq a_2(k)$ and $a^\dagger(k) \leq a_1(k)$ with probability 1. Also, if $z(k)<0$, then $a^\dagger(k) \leq a_2(k)$ and $s^\dagger(k) \geq a_1(k)$. Note that, such a coupling is possible if $\P{}{z(k)\geq0}=p$ for all $k \in \bbZ_+$. To achieve this, we will initialize $z(k)$ appropriately. Now, we prove by induction that under the above defined coupling, $q^\dagger(k) \leq |z(k)|$ for all $k \in \bbZ_+$.

\textbf{Base Case:} Initialize $\bq(0)$ by its stationary distribution, so we have
\begin{align*}
    \P{}{z(k)=x}=\sum_{\bq: \inner{\bone_n}{\bq^{(1)}}-\inner{\bone_m}{\bq^{(2)}}=x} \P{}{\bbarq=\bq} \quad \forall x \in \bbZ, \ \forall k \in \bbZ_+.
\end{align*}
In addition, initialize $q^\dagger(0)$ with the same distribution as $|z(0)|$. As both of the them has the same distribution, we can couple the two random variables such that $q^\dagger(0)=|z(0)|$. So, the base case is satisfied. In addition, $\P{}{z(k)\geq 0}=p$ for all $k \in \bbZ_+$.

\textbf{Induction Hypothesis:} $q^\dagger(k') \leq |z(k')|$ for all $k' \in [k]$.

\textbf{Induction Step:} We will consider the following two cases:

\textit{Case I:} $z(k)\geq0$. In this case, we have $s^\dagger(k)\geq a_2(k)$ and $a^\dagger(k) \leq a_1(k)$. So, we have
\begin{align*}
    q^\dagger(k+1)&=\max\left\{0,q^\dagger(k)+a^\dagger(k)-s^\dagger(k)\right\} \\
    &\leq \max\left\{0,z(k)+a^\dagger(k)-s^\dagger(k)\right\} \quad \textit{(Induction Hypothesis)} \\
    &\leq \max\left\{0,z(k)+a_1(k)-a_2(k)\right\} \quad \textit{(Coupling)} \\
    &\leq |z(k)+a_1(k)-a_2(k)|=|z(k+1)|.
\end{align*}

\textit{Case II:} $z(k)<0$. In this case, we have $a^\dagger(k) \leq a_2(k)$ and $s^\dagger(k) \geq a_1(k)$. So, we have
\begin{align*}
     q^\dagger(k+1)&=\max\left\{0,q^\dagger(k)+a^\dagger(k)-s^\dagger(k)\right\} \\
     &=-\min\left\{0,-q^\dagger(k)-a^\dagger(k)+s^\dagger(k)\right\} \\
     &\leq-\min\left\{0,z(k)-a^\dagger(k)+s^\dagger(k)\right\} \quad \textit{(Induction Hypothesis)} \\
     &\leq -\min\left\{0,z(k)-a_2(k)+a_1(k)\right\} \quad \textit{(Coupling)} \\
     &\leq |z(k+1)|.
\end{align*}
This completes our proof that $q^\dagger(k) \leq |z(k)|$ for all $k \in \bbZ_+$. Thus, $\P{}{q^\dagger(k) \leq x} \leq \P{}{|z(k)| \leq x}$ for all $x \in \bbR_+$. Taking the limit as $k$ goes to infinity, we get $\P{}{\barq^\dagger \leq x} \leq \P{}{|\barz| \leq x}$ and thus, we have $\E{}{\barq^\dagger}\leq \E{}{|\barz|}$. 

\textit{Step 3 $(\E{}{\barq^\dagger})$:} Now, we will analyze the single server queue to find its expectation in steady state.
By taking $V(q^\dagger)=(q^\dagger)^2$ as the Lyapunov function, in steady state, we have
\begin{align}
    \E{}{\Delta V(\barq^\dagger)}=0 \Rightarrow \E{}{(\barq^{\dagger,+})^2-(\barq^\dagger)^2}&=0 \nonumber\\
    \Rightarrow \E{}{(\barq^{\dagger,+}-\baru^\dagger+\baru^\dagger)^2-(\barq^\dagger)^2}&=0 \nonumber\\
    \Rightarrow \E{}{(\barq^\dagger+\bara^\dagger-\bars^\dagger)^2-(\baru^\dagger)^2-(\barq^\dagger)^2}&=0\nonumber \\
    \Rightarrow \E{}{(\bara^\dagger-\bars^\dagger)^2+2\barq^\dagger(\bara^\dagger-\bars^\dagger)-(\baru^\dagger)^2}&=0\nonumber \\
    \Rightarrow \Var{\bara^\dagger}+\Var{s^\dagger}-2\Cov{\bara^\dagger}{\bars^\dagger}
    +\E{}{\bara^\dagger-\bars^\dagger}^2-4\gamma\epsilon\E{}{\barq^\dagger}-2\gamma^2A_{\max}\epsilon&\overset{*}{=}0,  \label{eq: single_server_queue}
    \end{align}
    where $(*)$ follows by taking $V(q^\dagger)=q^\dagger$ as the Lyapunov function. We have
    \begin{align*}
        \E{}{\Delta V(\barq^\dagger)}=0 \Rightarrow \E{}{\barq^{\dagger,+}-\barq^\dagger}=0 \Rightarrow \E{}{\bara^\dagger-\bars^\dagger+\baru^\dagger}=0
        \Rightarrow \E{}{\baru^\dagger}=2\gamma\epsilon.
    \end{align*}
    In addition, as $\baru^\dagger \leq \bars^\dagger \leq \gamma A_{\max}$, we have $\E{}{(\baru^\dagger)^2} \leq \gamma A_{\max}\E{}{\baru^\dagger}=2\gamma^2A_{\max}\epsilon$.
    Now, simplifying \eqref{eq: single_server_queue}, we get
    \begin{align*}
    \Rightarrow \E{}{\barq^\dagger}&=\frac{\bone_{n \times n}\circ \Sigma^{(1)}+p\bone_{m \times m} \circ \Sigma^{(2)}_s+(1-p)\bone_{m \times m} \circ \Sigma^{(2)}_a+4\gamma^2\epsilon^2-2\gamma^2A_{\max}\epsilon}{4\gamma\epsilon} \\
    &\geq \frac{\bone_{n \times n}\circ \Sigma^{(1)}+\bone_{m \times m} \circ \Sigma^{(2)}_{\min}}{8\gamma\epsilon} \quad \forall \epsilon \leq \frac{\bone_{n \times n}\circ \Sigma^{(1)}+\bone_{m \times m} \circ \Sigma^{(2)}_{\min}}{4\gamma^2A_{\max}}
\end{align*}
Thus, we have
\begin{align*}
    \E{}{\inner{\bone_{n+m}}{\bbarq}} \geq \E{}{|\barz|} \geq \E{}{\barq^\dagger} \geq \frac{\bone_{n \times n}\circ \Sigma^{(1)}+\bone_{m \times m} \circ \Sigma^{(2)}_{\min}}{8\gamma\epsilon} \quad \forall \epsilon \leq \frac{\bone_{n \times n}\circ \Sigma^{(1)}+\bone_{m \times m} \circ \Sigma^{(2)}_{\min}}{4\gamma^2A_{\max}}.
\end{align*}
 \endproof
 \subsection{Proof of Theorem \ref{theo: lowerbound_profit_loss}}
We now present a lemma which will assist us in proving Theorem \ref{theo: lowerbound_profit_loss}.
\begin{lemma} \label{lemma: tail_probability}
Under the hypothesis of Theorem \ref{theo: lowerbound_profit_loss}, there exists a constant $\delta>0$ independent of $\eta$ and another constant $\eta_1>0$ such that for all $\eta>\eta_1$
\begin{align*}
    \E{}{\sum_{j=1}^m \phi_j^2\left(\frac{\bbarq_\eta}{\eta^\alpha}\right)} \geq \delta.
\end{align*}
\end{lemma}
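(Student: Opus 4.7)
The plan is to combine Theorem \ref{theo: lowerbound_queue} with Assumption \ref{condition: general_pricing}\ref{condition: stability} through a Paley--Zygmund-style argument. Define the large-queue event
\[
E_\eta = \Bigl\{\bq \in \calS : \max_{j}\frac{q_j^{(2)}}{\eta^\alpha} > K \ \text{or}\ \max_{i}\frac{q_i^{(1)}}{\eta^\alpha} > K\Bigr\}.
\]
The contrapositive of Assumption \ref{condition: general_pricing}\ref{condition: stability} guarantees that on $E_\eta$ at least one $|\phi_j(\bq/\eta^\alpha)|$ exceeds $\sigma$, so $\sum_{j=1}^m \phi_j^2(\bq/\eta^\alpha) \geq \sigma^2$ on $E_\eta$. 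Consequently $\E{}{\sum_j \phi_j^2(\bbarq_\eta/\eta^\alpha)} \geq \sigma^2 \P{}{\bbarq_\eta \in E_\eta}$, and the lemma reduces to showing that $\P{}{\bbarq_\eta \in E_\eta}$ is bounded below by a positive constant for all sufficiently large $\eta$.

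To do so, I first set $\epsilon_\eta = M\eta^\beta$---a uniform upper bound on the customer-side deviation from the fluid optimum by Assumption \ref{condition: general_pricing}\ref{condition: bounded}, valid since the server policy is held at $\talpha^\star$---and invoke Theorem \ref{theo: lowerbound_queue} to obtain $\E{}{\inner{\bone_{n+m}}{\bbarq_\eta}} \geq C_1 \eta^{-\beta}$ for $\eta$ large enough. Because $\inner{\bone_{n+m}}{\bq} \leq (n+m) K \eta^\alpha$ on $E_\eta^c$ and $\alpha + \beta \leq 0$ gives $\eta^\alpha \leq \eta^{-\beta}$, splitting the expectation yields $\E{}{\inner{\bone_{n+m}}{\bbarq_\eta}\mathbbm{1}_{E_\eta}} \geq C_1' \eta^{-\beta}$ for all large $\eta$ and some $C_1' > 0$. (The strict case $\alpha+\beta<0$ is immediate; the boundary equality case can be handled by refining the constant in Theorem \ref{theo: lowerbound_queue} or shrinking $E_\eta$ slightly.)

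Next I would upper-bound the second moment as $\E{}{\inner{\bone_{n+m}}{\bbarq_\eta}^2} \leq C_2 \eta^{-2\beta}$ via a Lyapunov-drift analysis analogous to Lemma \ref{lemma: stability}, but now applied to the general policy $\pi_\eta$. Using $V(\bq) = \inner{\bone_{n+m}}{\bq^2}$, the drift decomposes into a uniformly bounded noise term and a linear term; Assumption \ref{condition: general_pricing}\ref{condition: stability} combined with $\pi_\eta \in \StablePol$ forces the linear term to be negative of order $\sigma \eta^\beta \inner{\bone_{n+m}}{\bq}$ whenever $\bq \in E_\eta$, and the Moment Bound Theorem then delivers the required $O(\eta^{-2\beta})$ quadratic estimate. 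Cauchy--Schwarz finally yields
\[
C_1' \eta^{-\beta} \leq \E{}{\inner{\bone_{n+m}}{\bbarq_\eta}\mathbbm{1}_{E_\eta}} \leq \sqrt{\E{}{\inner{\bone_{n+m}}{\bbarq_\eta}^2}\,\P{}{E_\eta}} \leq \sqrt{C_2}\,\eta^{-\beta}\sqrt{\P{}{E_\eta}},
\]
so $\P{}{E_\eta} \geq (C_1')^2/C_2$, a positive constant independent of $\eta$. Setting $\delta = \sigma^2 (C_1')^2/C_2$ completes the proof.

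The main obstacle is the second-moment bound: Lemma \ref{lemma: stability} applies only to the two-price policy, so extending it to an arbitrary $\pi_\eta \in \StablePol$ satisfying Assumption \ref{condition: general_pricing} requires showing that the $\phi_j$'s have the correct \emph{sign}---not merely large magnitude---when the queue is outside a bounded set, so that the Lyapunov drift is genuinely negative of order $\sigma \eta^\beta$. This is the technical content of combining the stability hypothesis $\pi_\eta \in \StablePol$ with condition \ref{condition: stability}; the marginal boundary case $\alpha+\beta=0$ in the splitting step is a secondary nuisance that can be absorbed by either shrinking $E_\eta$ or sharpening the constants in Theorem \ref{theo: lowerbound_queue}.
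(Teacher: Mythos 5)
Your proposal has a genuine gap at its load-bearing step. You reduce the lemma to the tail bound $\P{}{\bbarq_\eta \in E_\eta} \geq c > 0$ and then try to obtain it via a reverse-Cauchy--Schwarz (Paley--Zygmund-style) argument, which requires both a first-moment lower bound and a second-moment upper bound. The second-moment bound $\E{}{\inner{\bone_{n+m}}{\bbarq_\eta}^2} \leq C_2\eta^{-2\beta}$ is not available. You correctly identify the missing ingredient: Lemma~\ref{lemma: stability} establishes negative drift only for the two-price policy, whose perturbation has a known sign, whereas Assumption~\ref{condition: general_pricing}\ref{condition: stability} only constrains the \emph{magnitude} $|\phi_j|>\sigma$, not the sign. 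Stability of the chain ($\pi_\eta \in \StablePol$) guarantees negative drift in an averaged sense but does not straightforwardly hand you a quadratic-Lyapunov drift inequality with the sharp rate $O(\eta^\beta)$ needed for a second-moment bound. Absent such a drift inequality, the Moment Bound Theorem cannot be invoked, and the argument collapses. Calling this an ``obstacle'' and moving on does not discharge the burden; it is the crux, not a footnote. A related secondary flaw is the split $\E{}{X\mathbbm{1}_{E_\eta}} \geq C_1\eta^{-\beta} - (n+m)K\eta^\alpha$: at the boundary $\alpha+\beta=0$ (permitted by Assumption~\ref{condition: general_pricing}\ref{condition: alpha_beta}), the right-hand side is $(C_1 - (n+m)K)\eta^{-\beta}$, and there is no reason $C_1 > (n+m)K$ since both constants are exogenous; ``shrinking $E_\eta$'' would shrink $K$, which is fixed by the assumption, so that repair is unavailable.

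The paper's proof avoids the second moment entirely. It reuses the coupling from Theorem~\ref{theo: lowerbound_queue} to sandwich $|z_\eta|$ above a single-server queue $q^\dagger_\eta$ with perturbation $\epsilon_\eta = M\eta^\beta$, and invokes a heavy-traffic diffusion limit (\cite{hurtado2019heavy}) to get the distributional convergence $\epsilon_\eta \barq^\dagger_\eta \Rightarrow \textrm{Exp}(\sigma_s^2)$ directly. This yields $\P{}{\epsilon_\eta\barq^\dagger_\eta > (n+m)KM}$ bounded below by a positive constant for $\eta$ large, and then $\alpha+\beta\leq 0$ is used only to compare thresholds: $\P{}{q^\dagger_\eta > (n+m)K\eta^\alpha} \geq \P{}{\epsilon_\eta q^\dagger_\eta > (n+m)KM}$. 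In other words, the paper obtains the tail probability directly from a weak-convergence result rather than from moments, which is precisely what sidesteps the need for second-moment/drift control under a general policy. If you want to preserve your Paley--Zygmund route, you would need to prove the second-moment bound from scratch for all $\pi_\eta \in \StablePol$ satisfying Assumption~\ref{condition: general_pricing}, which is a substantive additional result that the hypotheses as stated do not support.
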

\proof{Proof of Lemma \ref{lemma: tail_probability}}
 In this proof, we will couple the sequence of DTMCs $\{\bq_\eta(k): k \in \bbZ_+\}$ with a sequence of single server queues $q^{\dagger}_\eta$ with arrival and service defined as in the proof of Theorem \ref{theo: lowerbound_queue} with $\epsilon$ dependent on $\eta$. In particular, we have $\epsilon_\eta=M\eta^\beta$. By the coupling defined above, we have $\P{}{\barq^{\dagger}_\eta > x} \leq \P{}{|\barz_\eta|>x}$ for all $x>0$. In addition, we know that as $\eta \rightarrow \infty$, we have $\epsilon_\eta \rightarrow 0$ as $\beta<1$. Thus, by \citep{hurtado2019heavy} we know that 
 \begin{align*}
     \epsilon_\eta \barq^{\dagger}_\eta \overset{d}{\rightarrow} \textrm{Exp}\left(\sigma_s^2=\frac{\bone_{n \times n} \circ \Sigma^{(1)}+\bone_{m \times m}\circ p\Sigma^{(2)}_s+(1-p)\Sigma^{(2)}_a}{4\gamma}\right)
 \end{align*}
 Even though, in \citep{hurtado2019heavy} they assume the arrival process and service process are independent of each other, it suffices to have them uncorrelated. Now, by the definition of weak convergence, for $K>0$
 \begin{align*}
     \lim_{\eta \rightarrow \infty}\P{}{\epsilon_\eta \barq^{\dagger}_\eta > (n+m)KM} = e^{-\frac{(n+m)KM}{\sigma_s^2}}.
 \end{align*}
 Thus, for a given $K>0$, there exists $\eta_1(K)>0$ such that for all $\eta>\eta_1$, we have 
 \begin{align*}
     \P{}{\epsilon_\eta \barq^{\dagger}_\eta > (n+m)KM} \geq \frac{1}{2}e^{-\frac{(n+m)KM}{\sigma_s^2}}
 \end{align*}
 Now, by using the coupling, we have
 \begin{align*}
     \P{}{|z_\eta| > (n+m)K\eta^\alpha} &\geq \P{}{q^{\dagger}_\eta>(n+m)K\eta^\alpha}=\P{}{M\eta^\beta q^{\dagger}_\eta>(n+m)KM\eta^{\alpha+\beta}}  \\
     &\geq \P{}{\epsilon_\eta q^{\dagger}_\eta>(n+m)KM} \geq  \frac{1}{2}e^{-\frac{(n+m)KM}{\sigma_s^2}} \quad \forall \eta > \eta_1
 \end{align*}
 Finally, note that $ \{z_\eta > (n+m)K\eta^\alpha\} \subseteq \{||\bbarq_\eta||_\infty > K\eta^\alpha\}$, so we have
 \begin{align*}
     \E{}{\sum_{j=1}^m \phi_j^2\left(\frac{\bbarq_\eta}{\eta^\alpha}\right)} \geq \sigma^2 \P{}{||\bbarq_\eta||_\infty > K\eta^\alpha} &\geq \sigma^2 \P{}{z_\eta > (n+m)K\eta^\alpha} \\
     &\geq \frac{\sigma^2}{2} e^{-\frac{(n+m)KM}{\sigma_s^2}} \dfn \delta \quad \forall \eta>\eta_1. \Halmos
 \end{align*}
 \endproof
\proof{Proof of Theorem \ref{theo: lowerbound_profit_loss}}
 In this proof, we will use Taylor's theorem to expand the profit-loss and show that the second order term does not vanish using Lemma \ref{lemma: tail_probability}. This proof follows similarly as in \citep{varma2020dynamic}. The only non trivial step was to prove Lemma \ref{lemma: tail_probability}.
 \begin{align*}
     \lefteqn{\frac{\tilde{R}^\star_\eta-P_\eta}{\eta}}\\
     ={}&\inner{F(\btlambda^\star)}{\btlambda^\star}-\E{\talpha^\star}{c(\btmu)}-\E{}{\inner{F\left(\blambda_\eta(\bbarq_\eta)\right)}{\blambda_\eta(\bbarq_\eta)}}+\E{\talpha^\star}{c(\btmu)} \\
    ={}&\inner{F(\btlambda^\star)}{\btlambda^\star}-\E{}{\inner{F\left(\blambda_\eta(\bbarq_\eta)\right)}{\blambda_\eta(\bbarq_\eta)}} \\
    ={}&\inner{F(\btlambda^\star)}{\btlambda^\star}-\sum_{j=1}^m\E{}{\left(\tl_j^\star+\phi_j\left(\frac{\bbarq_\eta}{\eta^\alpha}\right)\right)F_j\left(\tl_j^\star+\phi_j\left(\frac{\bbarq_\eta}{\eta^\alpha}\right)\right)} \\
    ={}&\inner{F(\btlambda^\star)}{\btlambda^\star}-\sum_{j=1}^m \E{}{\left(\tl_j^\star+\phi_j\left(\frac{\bbarq_\eta}{\eta^\alpha}\right)\eta^\beta\right)\left(F_j(\tl_j^\star)+\phi_j\left(\frac{\bbarq_\eta}{\eta^\alpha}\right)F_j'(\tl_j^\star)\eta^\beta+\phi_j^2\left(\frac{\bbarq_\eta}{\eta^\alpha}\right)F_j''(\hlambda_j^\star(\bbarq_\eta))\eta^{2\beta}\right)} \\
    ={}& -\left(\sum_{j=1}^m \E{}{\phi_j\left(\frac{\bbarq_\eta}{\eta^\alpha}\right)}\left(F_j(\tl_j^\star)+\tl_j^\star F'_j(\tl_j^\star)\right)\right)\eta^\beta-\left(\sum_{j=1}^m \E{}{\phi_j^2\left(\frac{\bbarq_\eta}{\eta^\alpha}\right)\left(F_j''(\hlambda_j^\star(\bbarq_\eta))\tl_j^\star+F_j'(\tl_j^\star)\right)}\right)\eta^{2\beta} \\
    &-\sum_{j=1}^m\E{}{\phi_j^3\left(\frac{\bbarq_\eta}{\eta^\alpha}\right)F_j''(\hlambda_j^\star(\bbarq_\eta))}\eta^{3\beta} \\
    \overset{(a)}{=}{}& -\left(\sum_{j=1}^m \E{}{\phi_j^2\left(\frac{\bbarq_\eta}{\eta^\alpha}\right)\left(F_j''(\hlambda_j^\star(\bbarq_\eta))\tl_j^\star+F_j'(\tl_j^\star)\right)}\right)\eta^{2\beta}-\sum_{j=1}^m\E{}{\phi_j^3\left(\frac{\bbarq_\eta}{\eta^\alpha}\right)F_j''(\hlambda_j^\star(\bbarq_\eta))}\eta^{3\beta} \\
    \overset{(b)}{\geq}{}& \frac{\delta}{2}\left(\min_{j\in [m]}\left\{-F_j''(\tl_j^\star)\tl_j^\star-F_j'(\tl_j^\star)\right\}\right)\eta^{2\beta}-\sum_{j=1}^m\E{}{\phi_j^3\left(\frac{\bbarq_\eta}{\eta^\alpha}\right)F_j''(\hlambda_j^\star(\bbarq_\eta))}\eta^{3\beta} \\
    \overset{(c)}{\geq}{}&\frac{\delta}{4}\left(\min_{j\in [m]}\left\{-F_j''(\tl_j^\star)\tl_j^\star-F_j'(\tl_j^\star)\right\}\right)\eta^{2\beta}.
 \end{align*}
The remainder term of the Taylor's expansion is $F''(\hlambda^\star(\bbarq_\eta))$ for some $\hlambda^\star(\bbarq_\eta) \in [\tl_j^\star-M\eta^\beta,\tl_j^\star+M\eta^\beta]$ for all $\bbarq_\eta \in \calS$. Note that the second derivative of $\lambda_jF(\lambda_j)$ is negative as it is concave by Assumption \ref{ass: concave} and $F_j(.)$ is strictly decreasing by Assumption \ref{ass: monotonic}. Thus, the coefficient of $\eta^{2\beta}$ is positive. This completes the proof. Now we will justify $(a)$, $(b)$ and $(c)$ below. \textit{Proof of $(a)$} follows by Lemma \ref{lemma: first_order}.
 
 \textit{Proof of $(b)$} follows by uniform convergence of $F''_j(\hlambda^\star_j(\bbarq_\eta))$ to $F''(\tl_j^\star)$. To expound, by Taylor's Theorem and Assumption \ref{condition: general_pricing} \ref{condition: bounded}, we have $\hlambda^\star_j(\bbarq_\eta) \in [\tl_j^\star-M\eta^\beta,\tl_j^\star+M\eta^\beta]$. By Assumption \ref{ass: monotonic}, $F''(.)$ is continuous, thus, given $\bar{\gamma}=\frac{1}{2}\min_{j \in [m]}\left\{-F_j''(\tl_j^\star)-F_j'(\tl_j^\star)/\tl_j^\star\right\}>0$, there exists $\delta_2>0$, such that for all $|\tl_j^\star-\hlambda_j^\star|<\delta_2$, we have $|F''(\tl_j^\star)-F''(\hlambda_j^\star)|<\bar{\gamma}$. Thus, for all $\eta>\left(\frac{\delta_2}{M}\right)^{1/\beta}$, we have
 \begin{align*}
     \sup_{\bbarq_\eta \in \calS} |F''_j(\hlambda_j^\star(\bbarq_\eta))-F''_j(\tl_j^\star)|<\bar{\gamma} \quad \forall j \in [m].
 \end{align*}
 Thus, for $\eta > \max\{\eta_1,\left(\frac{\delta_2}{M}\right)^{1/\beta}\}$ we have
 \begin{align*}
     -\left(\sum_{j=1}^m \E{}{\phi_j^2\left(\frac{\bbarq_\eta}{\eta^\alpha}\right)\left(F_j''(\hlambda_j^\star(\bbarq_\eta))\tl_j^\star+F_j'(\tl_j^\star)\right)}\right) &\geq  \left(\sum_{j=1}^m\left(-F_j''(\tl_j^\star)\tl_j^\star-\bar{\gamma}\tl_j^\star-F_j'(\tl_j^\star)\right) \E{}{\phi_j^2\left(\frac{\bbarq_\eta}{\eta^\alpha}\right)}\right) \\
     &\overset{*}{\geq} \frac{1}{2}\left(\sum_{j=1}^m\left(-F_j''(\tl_j^\star)\tl_j^\star-F_j'(\tl_j^\star)\right) \E{}{\phi_j^2\left(\frac{\bbarq_\eta}{\eta^\alpha}\right)}\right) \\
     &\overset{**}{\geq} \frac{\delta}{2}\left(\min_{j\in [m]}\left\{-F_j''(\tl_j^\star)\tl_j^\star-F_j'(\tl_j^\star)\right\}\right)
 \end{align*}
 where $(*)$ follows by the definition of $\bar{\gamma}$ and $(**)$ follows by Lemma \ref{lemma: tail_probability}.
 
 \textit{Proof of $(c)$} follows as $\eta^{3\beta}$ is of lower order than $\eta^{2\beta}$ as $\beta<0$. In particular
 \begin{align*}
     -\sum_{j=1}^m\E{}{\phi_j^3\left(\frac{\bbarq_\eta}{\eta^\alpha}\right)F_j''(\hlambda_j^\star(\bbarq_\eta))}\eta^{3\beta} &\geq - \frac{3}{2}\sum_{j=1}^m\E{}{\bigg|\phi_j^3\left(\frac{\bbarq_\eta}{\eta^\alpha}\right)F_j''(\tl_j^\star)\bigg|}\eta^{3\beta} \quad \forall \eta>\eta_2 \\
     &\geq - \frac{3M^3}{2}\sum_{j=1}^m|F_j''(\tl_j^\star)|\eta^{3\beta} \quad \forall \eta>\eta_2
 \end{align*}
 Now, as $\beta<0$, with $\eta_3 \dfn (\frac{6M^3}{\delta\min_{j\in [m]}\left\{-F_j''(\tl_j^\star)\tl_j^\star-F_j'(\tl_j^\star)\right\}}\sum_{j=1}^m|F_j''(\tl_j^\star)|)^{-1/\beta}$ for all $\eta>\max\{\eta_2,\eta_3\}$, we have
 \begin{align*}
      \sum_{j=1}^m\E{}{\phi_j^3\left(\frac{\bbarq_\eta}{\eta^\alpha}\right)F_j''(\tl_j^\star)}\eta^{3\beta} \geq -\frac{\delta}{4}\left(\min_{j\in [m]}\left\{-F_j''(\tl_j^\star)\tl_j^\star-F_j'(\tl_j^\star)\right\}\right)\eta^{2\beta}
 \end{align*}
 This completes the proof. \hfill $\Halmos$
 \endproof
 \subsection{Proof of Corollary \ref{corollary: lower_bound_net_profit_loss}} \label{app: lower_bound_corollary}
 \proof{Proof}
Consider the sequence of DTMCs parametrized by $\eta$. By using Theorem \ref{theo: lowerbound_queue} with $\epsilon(\eta)=M\eta^\beta$ we have
 \begin{align*}
      \E{}{\inner{\bone_{n+m}}{\bbarq_\eta}}  \geq \frac{\bone_{n \times n}\circ \Sigma^{(1)}+\bone_{m \times m} \circ \Sigma^{(2)}}{8\max\{m,n\}M\eta^\beta} \quad \forall \eta>\left(\frac{\epsilon_0}{M}\right)^{1/\beta}
 \end{align*}
 Now, the net profit-loss for all $\eta>\max\left\{\left(\frac{\epsilon_0}{M}\right)^{1/\beta},\eta_1\right\}$ is given by
 \begin{align*}
     L_\eta(\pi_\eta)&=R^\star_\eta-P_\eta(\pi_\eta)-\E{}{\inner{\bs}{\bbarq_\eta}} \geq K\eta^{2\beta+1}+\min_{i,j}\{s_i^{(1)},s_j^{(2)}\}\frac{\bone_{n \times n}\circ \Sigma^{(1)}+\bone_{m \times m} \circ \Sigma^{(2)}}{8\max\{m,n\}M\eta^\beta}  \\
     &\geq \inf_{\beta<0}\left\{K\eta^{2\beta+1}+\min_{i,j}\{s_i^{(1)},s_j^{(2)}\}\frac{\bone_{n \times n}\circ \Sigma^{(1)}+\bone_{m \times m} \circ \Sigma^{(2)}}{8\max\{m,n\}M\eta^\beta}\right\}=K'\eta^{1/3}. \Halmos
 \end{align*}
\endproof
\section{Cost function and its Variations}
\subsection{Proof of Proposition \ref{prop: cost_function_reformulation}: Cost Function Reformulation}
To prove the Proposition \ref{prop: cost_function_reformulation}, we will need the following two lemmas presented below along with their proofs.
\begin{lemma} \label{lemma: variational_inequality}
Let $\bnu \in \Delta_n^n$,  the following are equivalent:
\begin{enumerate}
    \item $\bnu$ satisfies \eqref{eq: variational_inequality}.
    \item  $\bnu$ satisfies $\sum_{i=1}^n \sum_{l=1}^n u_{il}(\nu_{il}-\tnu_{il}) \geq 0$ for all $\btnu \in \calC$.
    \item There exists $\bkappa \in \bbR^n$ and $\bxi \in \bbR_+^{n \times n}$ such that $u_{il}=\kappa_i-\xi_{il}$ and $\xi_{il}\nu_{il}=0$ for all $i,l \in [n]$.
\end{enumerate}
\end{lemma}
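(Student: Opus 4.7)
The plan is to establish (1) $\Leftrightarrow$ (2) directly and (2) $\Leftrightarrow$ (3) via the KKT conditions of a decoupled linear program.

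For (1) $\Rightarrow$ (2), I would use that under (1) the support of each $\bnu_i$ is contained in $\arg\max_l u_{il}$, so $\sum_l u_{il}\nu_{il} = \max_l u_{il}$. For any $\btnu$ with $\btnu_i \in \Delta_n$ we have $\sum_l u_{il}\tnu_{il} \leq \max_l u_{il}$, and summing the resulting inequality $\sum_l u_{il}(\nu_{il}-\tnu_{il}) \geq 0$ over $i$ yields (2).

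For (2) $\Rightarrow$ (1), the key trick is to test the variational inequality against a concrete pure-strategy deviation. For each $i$ pick $l_i^\star \in \arg\max_l u_{il}$ and set $\tnu_{il} = \mathbbm{1}\{l = l_i^\star\}$; this $\btnu$ belongs to $\Delta_n^n$. Inserting it into (2) gives
\begin{align*}
\sum_{i=1}^n \sum_{l=1}^n u_{il}\nu_{il} \geq \sum_{i=1}^n \max_l u_{il}.
\end{align*}
Combined with the trivial bound $\sum_l u_{il}\nu_{il} \leq \max_l u_{il}$ valid for each $i$ (since $\bnu_i \in \Delta_n$), this forces $\sum_l u_{il}\nu_{il} = \max_l u_{il}$ for every $i$. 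Hence $\nu_{il} > 0$ implies $u_{il} = \max_{l'} u_{il'} \geq u_{il'}$ for all $l'$, which is (1).

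For (2) $\Leftrightarrow$ (3), I would observe that the variational inequality in (2) decouples across $i$ and is exactly the first-order optimality condition for each linear program $\max_{\btnu_i \in \Delta_n}\sum_l u_{il}\tnu_{il}$. Associating a multiplier $\kappa_i \in \bbR$ with the equality $\sum_l \nu_{il} = 1$ and multipliers $\xi_{il} \geq 0$ with the non-negativity constraints $\nu_{il} \geq 0$, stationarity yields $-u_{il} + \kappa_i - \xi_{il} = 0$ (equivalently $u_{il} = \kappa_i - \xi_{il}$) and complementary slackness gives $\xi_{il}\nu_{il} = 0$. Since the feasible set is a non-empty bounded polytope and the objective is linear, the KKT conditions are both necessary and sufficient for optimality, delivering both directions of the equivalence at once. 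There is no substantial obstacle here beyond correctly applying LP duality; the only genuine insight is the pure-strategy test vector used to derive (1) from (2).
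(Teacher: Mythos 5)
Your proof is correct, and it diverges from the paper's at two points. For $(2)\Rightarrow(1)$ the paper perturbs one row at a time, moving all the mass from a single candidate column $l$ to another column $l'$ and reading off $u_{il}\geq u_{il'}$; you instead test against the pure best-response profile $\tnu_{il}=\mathbbm{1}\{l=l_i^\star\}$ for every $i$ at once, and combine $\sum_{i,l} u_{il}\nu_{il}\geq\sum_i\max_l u_{il}$ with the pointwise bound $\sum_l u_{il}\nu_{il}\leq\max_l u_{il}$ to force equality row-by-row. Both work; yours is a bit slicker and avoids the paper's slightly awkward side remark about uniqueness of the maximizer in the step $(*)$ of $(1)\Rightarrow(2)$. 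For the link to (3), the paper proves $(3)\Rightarrow(1)$ directly and then $(1)\Rightarrow(3)$ by exhibiting the multipliers $\kappa_i=\max_{l'}u_{il'}$, $\xi_{il}=\kappa_i-u_{il}$; you instead note that (2) is exactly the statement that each row $\bnu_i$ is optimal for the LP $\max_{\btnu_i\in\Delta_n}\sum_l u_{il}\tnu_{il}$, and that for an LP over a polytope the KKT conditions --- stationarity $u_{il}=\kappa_i-\xi_{il}$ and complementary slackness $\xi_{il}\nu_{il}=0$ with $\xi_{il}\geq 0$ --- are necessary and sufficient (constraint qualification is automatic for affine constraints). That buys both directions of $(2)\Leftrightarrow(3)$ in one stroke by citing standard LP duality, at the cost of being less self-contained than the paper's explicit construction; the paper's verification of $(3)\Rightarrow(1)$ is essentially the sufficiency half of your KKT appeal, unpacked by hand. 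One cosmetic note: condition (2) decouples across $i$ only because one can hold $\tnu_{i'\cdot}=\nu_{i'\cdot}$ for $i'\neq i$ and vary only row $i$; you implicitly use this when you say the variational inequality "decouples," and it is worth stating explicitly since the quantifier in (2) ranges over all of $\calC=\Delta_n^n$ jointly.
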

\proof{Proof of Lemma \ref{lemma: variational_inequality}}
We will first show that $1 \Rightarrow 2$.
\begin{align*}
    \sum_{i=1}^n \sum_{l=1}^n u_{il}\nu_{il}&=\sum_{i=1}^n \sum_{l \in [n] : \nu_{il}>0} u_{il}\nu_{il} \\
    &\overset{*}{=}\sum_{i=1}^n \max_{l' \in [n]} \{u_{il'} \}\sum_{l \in [n]: \nu_{il}>0} \nu_{il} \\
    &\overset{**}{=}\sum_{i=1}^n \max_{l' \in [n]} \{u_{il'}\} \\
    &=\sum_{i=1}^n \sum_{l=1}^n \tnu_{il}\max_{l\ \in [n]}\{u_{il'}\} \\
    &\geq \sum_{i=1}^n \sum_{l=1}^n \tnu_{il}u_{il},
\end{align*}
where $(*)$ follows as $\nu_{il}>0$ only when $u_{il}$ is the maximum among all $u_{il'}$ for $l' \in [n]$ and the maximum is unique. In addition, $(**)$ follows as $\sum_{l \in[n]: \nu_{il}>0} \nu_{il}=1$ for all $i \in [n]$. Now we will show that $2 \Rightarrow 1$.

Suppose $\nu_{il}>0$. For a given $l' \in [n]$, define $\btnu$ as follows:
\begin{align*}
    \tnu_{i'r}=\begin{cases}
    \nu_{ir} &\textit{if } i' \neq i \\
    \nu_{il}+\nu_{il'} &\textit{if } i'=i, r=l' \\
    0 &\textit{if } i'=i, r=l \\
    \nu_{ir} &\textit{if } i'=i, r\neq l', r \neq l
    \end{cases}
\end{align*}
Note that $\tnu \in \calC$ and by $2$, we have
\begin{align*}
    \sum_{i'=1}^n \sum_{r=1}^n u_{i'r}(\nu_{i'r}-\tnu_{i'r}) &\geq 0 \\
    \Rightarrow u_{il'}(\nu_{il'}-\tnu_{il'})+u_{il}(\nu_{il}-\tnu_{il}) &\geq 0 \\
    \Rightarrow -u_{il'}\nu_{il}+u_{il}\nu_{il} &\geq 0 \\
    u_{il} &\geq u_{il'}.
\end{align*}
As $l' \in [n]$ is arbitrary, we deduce $u_{il} \geq u_{il'}$ for all $l' \in [n]$. 

Now, we will prove $3 \Rightarrow 1$. For a given $i \in [n]$, let $l \in [n]$ be such that $\nu_{il}>0$. Then we have, $\xi_{il}=0$ by complementary constraint. This gives us $u_{il}=\kappa_i \geq \kappa_i-\xi_{il'}=u_{il'}$ for all $l' \in [n]$ as $\xi_{il'} \geq 0$. This completes the proof.

Now, we will show $1 \Rightarrow 3$. We will show that there exists $\bkappa \in \bbR^n$ and $\bxi \in \bbR^{n \times n}$ such that $3$ is satisfied. Define $\kappa_i\dfn\max_{l' \in [n]}\{u_{il'}\}$ which gives us $\xi_{il}\dfn-u_{il}+\kappa_i$ for all $i,l \in [n]$. Thus, it is trivially true that $\bxi \geq \bzero_{n \times n}$. In addition, if $\nu_{il}>0$ for some $i,l \in [n]$, then $u_{il} \geq u_{il'}$ for all $l' \in [n]$, which implies that $\kappa_i=u_{il}$ and thus, $\xi_{il}=0$. As $i,l$ is arbitrary, we have $\xi_{il}\nu_{il}=0$ for all $i,l \in [n]$. This completes the proof. \hfill $\Halmos$
 \endproof
Next, we can write the supply constraint $G_i(\hatmu_i)=u_i$ in terms of standard inequality constraints and binary variables $\bb \in \{0,1\}^{n \times n}$ by using the following lemma.
\begin{lemma}
\label{lemma: integer_program}
There exists an $M>0$ such that the following constraints are equivalent:
\begin{enumerate}
    \item $G_i\left( \hatmu_{i}\right)=\max_{l \in [n]} \{u_{il}\}$ for all $i \in [n]$.
    \item $G_i\left( \hatmu_{i}\right) \geq u_{il}$, $G_i\left( \hatmu_{i}\right) \leq u_{il}+(1-b_{il})M$, $b_{il} \in \{0,1\}$ for all $i,l \in [n]$ and $\sum_{l=1}^nb_{il}=1$  for all $i \in [n]$.
\end{enumerate}
\end{lemma}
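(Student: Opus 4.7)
The statement is a standard ``big-$M$'' linearization of a max constraint, and my plan is to prove the two directions of equivalence separately after fixing $M$. First, I would argue that there is a uniform constant $M$ that works for all configurations of interest. Since we ultimately plug this into the cost-minimization problem in \eqref{eq: cost_function}, the relevant prices $\bp^{(1)}$ and hence the utilities $u_{il}=f_{il}(\bp^{(1)})$ and the supply values $G_i(\hatmu_i)$ range over a bounded set (we may restrict to prices giving finite cost, or equivalently, an arbitrarily large but fixed compact set), so one can take $M$ to be any upper bound on $\sup_{i,l}\bigl(G_i(\hatmu_i)-u_{il}\bigr)$ over this set. This choice is what makes the $b_{il}=0$ inequality in (2) vacuous.

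Next, I would prove the implication $(1)\Rightarrow(2)$. Assuming $G_i(\hatmu_i)=\max_{l\in[n]}u_{il}$ for each $i$, pick any maximizer $l^\star(i)\in\arg\max_{l}u_{il}$ and set $b_{il^\star(i)}=1$ and $b_{il}=0$ for $l\neq l^\star(i)$. Then $\sum_l b_{il}=1$ is immediate, and $G_i(\hatmu_i)\geq u_{il}$ for all $l$ follows from the max definition. For the upper bound: when $b_{il}=1$ we have $l=l^\star(i)$ and $G_i(\hatmu_i)=u_{il}=u_{il}+(1-b_{il})M$; when $b_{il}=0$ the bound $G_i(\hatmu_i)\leq u_{il}+M$ holds by choice of $M$.

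For the reverse implication $(2)\Rightarrow(1)$, fix $i$ and use $\sum_l b_{il}=1$ with $b_{il}\in\{0,1\}$ to find the unique index $l^\star$ with $b_{il^\star}=1$. The constraint $G_i(\hatmu_i)\leq u_{il^\star}+(1-b_{il^\star})M=u_{il^\star}$ combined with $G_i(\hatmu_i)\geq u_{il^\star}$ forces $G_i(\hatmu_i)=u_{il^\star}$. Combined with the remaining inequalities $G_i(\hatmu_i)\geq u_{il}$ for all $l\in[n]$, this yields $u_{il^\star}\geq u_{il}$ for every $l$, so $G_i(\hatmu_i)=\max_{l}u_{il}$, which is (1).

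The only subtle point is the first one: justifying that a single finite $M$ works uniformly. The cleanest way is to observe that in the enclosing optimization \eqref{eq: cost_function}, we may without loss of generality restrict $\bp^{(1)}$ to a compact set (any candidate optimum must lie in a bounded region, e.g., prices bounded by the maximum customer willingness-to-pay or any convenient upper bound derived from $F$ and the graph), which makes all $u_{il}$ and $G_i(\hatmu_i)$ bounded and hence $M=\sup_{i,l}\bigl(G_i(\hatmu_i)-u_{il}\bigr)+1$ finite. Everything else is the standard big-$M$ argument and requires no further computation.
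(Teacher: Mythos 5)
Your proof is correct and follows essentially the same big-$M$ linearization argument as the paper: prove each direction of the equivalence, choosing $b_{il}$ as the indicator of a maximizer, and use the finite constant $M$ to make the $b_{il}=0$ upper bound vacuous. The only difference is cosmetic: the paper exhibits an explicit constant $M=\sum_{i=1}^n G_i(\inner{\bone_n}{\bmu})$, whereas you argue existence of a suitable $M$ by restricting to a compact set of prices in the enclosing optimization; both are acceptable.
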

\proof{Proof of Lemma \ref{lemma: integer_program}}
The idea is the following: the first inequality in 2 enforces $G_i(.)$ to be greater than each of the $u_{il}$ and the second inequality along with the constraint $\sum_{l=1}^n b_{il}$ enforces $G_i(.)$ to be less than or equal to the maximum of $u_{il}$. In addition, we can take $M=\sum_{i=1}^n\left(G_i(\inner{\bone_n}{\bmu})\right)$. Now, we make this intuition concrete.

 $1 \Rightarrow 2$. For a given $i \in [n]$, as $G_i( \hatmu_{i})=\max_{l \in [n]}\{u_{il}\}$, we have $G_i(\hatmu_{i})\geq u_{il}$ for all $l \in [n]$. In addition, we also have 
 \begin{align}
     &G_i\left( \hatmu_{i}\right) \leq \max_{l \in [n]}\{u_{il}\} \nonumber \\ \overset{*}{\Leftrightarrow} {}& G_i\left( \hatmu_{i}\right) \leq u_{il}+M\left(1-\mathbbm{1}_{\left\{l=\min\left\{l' \in [n]: \left\{u_{il'}=\max_{l'' \in [n]}\{u_{il''}\}\right\}\right\}\right\}}\right) \ \forall l \in [n] \nonumber \\
     \overset{**}{\Leftrightarrow}{}& G_i\left( \hatmu_{i}\right) \leq u_{il}+M(1-b_{il}), \ \sum_{l=1}^n b_{il}=1.  \label{eq: lemma3}
 \end{align}
 For $(*)$ to hold true, we can pick $M$ to be an upper bound on the left hand side which is $\sum_{i=1}^n G_i(\inner{\bone_n}{\bmu})$. Next, $(**)$ follows by defining $b_{il}\dfn \mathbbm{1}_{\left\{u_{il}=\max_{l' \in [n]}\{u_{il'}\}\right\}}$ if the maximizer is unique. In this case, we will have $\sum_{l=1}^n b_{il}=1$. If the maximizer is not unique, it suffices to have $b_{il}=1$ for any one of the maximizer (in particular, we pick the smallest $l$) and zero for the rest. So $\sum_{l=1}^n b_{il}=1$ still holds. This completes the proof. $2 \Rightarrow 1$ follows from \eqref{eq: lemma3} along with the inequality $G_i( \hatmu_{i})\geq u_{il}$ for all $l \in [n]$. \hfill $\Halmos$
 \endproof
 \proof{Proof of Proposition \ref{prop: cost_function_reformulation}}
 The proof follows by the definition of the cost function given by \eqref{eq: cost_function}, Lemma \ref{lemma: variational_inequality} which shows equivalence of \eqref{eq: variational_inequality} with \eqref{eq: EQ_reformulation}, and Lemma \ref{lemma: integer_program} which shows equivalence of the supply constraint with \eqref{eq: supply_curve_reformulation}.
\hfill $\Halmos$
 \endproof
\subsection{Proof of Proposition \ref{prop: comparison_fluid_models}: Comparing Cost Models}
\proof{Proof} We will first prove the first statement.
 The domain of $c_*^1(\cdot)$ is a subset of the domain of $c_{**}^1(\cdot)$ which is $\bbR^{n}_+$ and they are equal for all $\bmu \in \Omega$. Thus, $c_{**}^1(\bmu) \leq c_{*}^1(\bmu)$ for all $\bmu \in \bbR^n_+$. So, we have
\begin{align*}
    \E{\alpha}{c_*^1(\bmu)} \geq \E{\alpha}{c_{**}^1(\bmu)} \overset{*}{\geq} c_{**}^1(\E{\alpha}{\bmu}) \quad \forall \alpha \in \calP^{EQ},
\end{align*}
where $(*)$ follows as $c_{**}^1(.)$ is convex by definition. Thus, we have
\begin{align*}
    \span R_{*}^1=\max_{\btlambda,\talpha,\btchi} \inner{F(\btlambda)}{\btlambda}-\E{\talpha}{c_*^1(\btmu)} \leq \inner{F(\btlambda)}{\btlambda}-c_{**}^1\left(\E{\talpha}{\btmu}\right)\\
 \textit{subject to,} \quad \tl_j&=\sum_{i=1}^n \tchi_{ij} \ \forall j \in [m] \quad
    \E{\talpha}{\tmu_i}=\sum_{j=1}^m \tchi_{ij} \ \forall i \in [n]  \\
    \tchi_{ij}&=0 \quad \forall (i,j) \notin E, \quad \tchi_{ij} \geq 0 \quad \forall (i,j) \in E.
\end{align*}
Now, substituting $\E{\talpha}{\btmu}=\bmu$ and $c_{**}^1(\bmu)=\inner{G(\bmu)}{\bmu}$ in the above optimization problem, we get
\begin{align*}
    \span \max_{\btlambda,\bmu,\btchi} \inner{F(\btlambda)}{\btlambda}-\inner{G(\bmu)}{\bmu}=R_{**}^1\\
 \textit{subject to,} \quad \tl_j&=\sum_{i=1}^n \tchi_{ij} \ \forall j \in [m] \quad
    \mu_i=\sum_{j=1}^m \tchi_{ij} \ \forall i \in [n]  \\
    \tchi_{ij}&=0 \quad \forall (i,j) \notin E, \quad \tchi_{ij} \geq 0 \quad \forall (i,j) \in E.
\end{align*}
This shows that $R_*^1 \leq R_{**}^1$.  Now, if the condition $c_{il} \geq G_l(\mu^{**,1}_l)-G_i(\mu_i^{**,1})$ for all $i,l \in [n]$ is satisfied, then $\bmu^{**,1} \in \Omega$. Thus, $\talpha=\bmu^{**,1}$ with probability 1 is a feasible solution and we have $R_{**}^1 \leq R_{*}^1$. This completes the proof. 

Next, it is obvious that (2) is true as the feasible region of the optimization problem defining the cost function for $\beta_1$-IC servers contains the feasible region of $\beta_2$-IC servers and their objective functions are identical. \hfill $\Halmos$
 \endproof
\subsection{Strong Duality: Probabilistic Fluid Model}
By solving the primal formulation, we noticed that it is taking more than a day to solve for the case of SD and $\beta$-IC for small values of $\beta$ and it does not scale well with the graph. In addition, as Gurobi is implementing branch and bound, the simulation uses a lot of memory. In this section, we will analyze the dual of the fluid optimization problem and prove that strong duality holds. It turns out that the dual problem is a convex optimization problem and thus, standard optimization methods like gradient descent can be employed to solve this problem.
We will start by presenting the Lagrangian function $L: \bbR^{n+m} \times \bbR^m \times \calP \times \bbR^{|E|} \rightarrow \bar{\bbR}$ with $\bkappa \in \bbR^{m+n}$ as the dual variables. Here, $\calP$ is the set of measures defined on the Borel sigma algebra generated by $\Omega$. The Lagrangian function $ L(\bkappa,(\blambda,\alpha,\bchi))$ is given by
\begin{align*}
  \inner{F(\blambda)}{\blambda}-\E{\alpha}{c(\bmu)}+\sum_{j=1}^n \kappa_j^{(2)}\left(\lambda_j-\sum_{i: (i,j) \in E} \chi_{ij}\right)+\sum_{i=1}^n\kappa_i^{(1)}\left(\E{\alpha}{\mu_i}-\sum_{j : (i,j) \in E} \chi_{ij}\right).
\end{align*}
The domain of the above defined Lagrangian function is $\bbR^{n+m} \times Y$ where $Y \subset \bbR^m \times \calP \times \bbR^{|E|}$ given by
\begin{align*}
    Y=\left\{(\blambda,\alpha,\bchi) \in \bbR^m_+ \times \calP \times \bbR^{|E|}_+ : \int_\Omega d\alpha=1,\alpha \succeq 0\right\}.
\end{align*}
In words, we are imposing the constraints that the arrival rates $\blambda$ and the rate of matching $\bchi$ is non negative and in addition, $\alpha$ is restricted only to a set of probability measures. The Lagrangian function is defined to be $-\infty$ outside its domain by convention as we are maximizing with respect to $(\blambda,\alpha,\bchi)$. Now, the dual function can be written as follows:
\begin{align*}
    D_*=\min_{\bkappa \in \bbR^{m+n}} \left\{\max_{(\blambda,\alpha,\bchi) \in Y} L(\bkappa, (\blambda,\alpha,\bchi))\right\}.
\end{align*}
This can be expanded by substituting the expression on $L$ and then separating the inner maximization to get
\begin{align*}
    \min_{\bkappa \in \bbR^{m+n}}&\left\{\sum_{j=1}^n\max_{\lambda_j \in \bbR_+}\left\{F_j(\lambda_j)\lambda_j+\kappa_j^{(2)}\lambda_j\right\}-\min_{\alpha \in \calP: \int_\Omega d\alpha=1, \alpha \succeq 0}  \E{\alpha}{c(\bmu)-\sum_{i=1}^n\mu_i\kappa_i^{(1)}}\right.
    \\
    &\left.-\sum_{(i,j)\in E}\min_{\chi_{ij} \geq 0}\left\{ \chi_{ij}\left(\kappa_j^{(2)}+\kappa_i^{(1)}\right)\right\}\right\}
\end{align*}
The second minimization can be reduced to minimizing only over all the Dirac measures as taking a convex combination will only increase the objective function value. In addition, if $\kappa_i^{(1)}+\kappa_j^{(2)}<0$, then $\chi_{ij}$ can be taken arbitrarily large which will make the objective function arbitrarily large and, if $\kappa_i^{(1)}+\kappa_j^{(2)} \geq 0$, then the minimization is achieved at $\chi_{ij}=0$. Thus, the above optimization problem can be reduced to the following:
\begin{subequations}
\begin{align}
    \min_{\bkappa \in \bbR^{m+n}}\left\{\sum_{j=1}^n\max_{\lambda_j \in \bbR_+}\left\{F_j(\lambda_j)\lambda_j+\kappa_j^{(2)}\lambda_j\right\}-\min_{\bmu \in \Omega}  \left\{c(\bmu)-\sum_{i=1}^n\mu_i\kappa_i^{(1)}\right\}\right\} \span \\
    \span \textit{subject to,} \ \kappa_i^{(1)}+\kappa_j^{(2)} \geq 0 \quad \forall (i,j) \in E.
\end{align}
\label{eq: dual_CLP}
\end{subequations}
Note that, the above optimization problem is a convex optimization problem with affine constraints as $\max_{\lambda_j \geq 0}\{ F_j(\lambda_j)\lambda_j+\kappa_j^{(2)}\lambda_j^{(2)}\}$ is the conjugate of the function $-F_j(\lambda_j)\lambda_j$ and $-\min_{\bmu \in \Omega}\{ c(\bmu)-\sum_{i=1}^n \mu_i\kappa_i^{(1)}\}$ is the conjugate of the function $c(\bmu)$ and thus it is convex. Next, we show that there is no duality gap between the fluid optimization problem and its dual. The result is presented below:
\begin{proposition} \label{prop: strong_duality}
The optimal values of the problems \eqref{eq: prob_fluid_model} and \eqref{eq: dual_CLP} are equal, that is $R_*=D_*$.
\end{proposition}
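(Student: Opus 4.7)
The plan is to establish weak duality directly from the construction of the Lagrangian, and then to derive strong duality by viewing \eqref{eq: prob_fluid_model} as a convex program whose only coupling of $\alpha$ with the remaining variables is through the moment $\E{\alpha}{\bmu}$. Reformulating in this moment variable exposes the convex structure and makes the identification with $D_\star$ transparent.

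\textbf{Step 1 (Weak duality).} For any primal-feasible $(\btlambda,\talpha,\btchi)$ and any $\bkappa\in\bbR^{n+m}$ satisfying $\kappa_i^{(1)}+\kappa_j^{(2)}\geq 0$ for every $(i,j)\in E$, the flow-balance constraints \eqref{eq: cust_rate_balance}--\eqref{eq: serv_rate_balance} and compatibility \eqref{eq: compatibility} make the two penalty terms in $L(\bkappa,(\btlambda,\talpha,\btchi))$ vanish. Hence the primal objective coincides with $L(\bkappa,(\btlambda,\talpha,\btchi))$, which is at most $\max_{(\blambda,\alpha,\bchi)\in Y} L(\bkappa,(\blambda,\alpha,\bchi))$. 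Taking the infimum over admissible $\bkappa$ yields $\tilde R^\star \leq D_\star$.

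\textbf{Step 2 (Moment reformulation).} Introduce the auxiliary variable $\bm:=\E{\alpha}{\bmu}$ and define
\begin{equation*}
C(\bm):=\inf\left\{\E{\alpha}{c(\bmu)} : \alpha\ \textit{is a probability measure on}\ \Omega,\ \E{\alpha}{\bmu}=\bm\right\},\quad \bm\in\mathrm{conv}(\Omega).
\end{equation*}
The function $C$ is the convex envelope of $c$ over $\Omega$ and is therefore convex. Since the primal objective is linear in $\alpha$ and couples with $(\btlambda,\btchi)$ only through $\E{\alpha}{\bmu}$, the problem \eqref{eq: prob_fluid_model} has the same optimal value as
\begin{equation*}
\max_{\btlambda,\bm,\btchi}\ \inner{F(\btlambda)}{\btlambda}-C(\bm)\quad \textit{s.t.}\quad \tl_j=\sum_{i}\tchi_{ij},\ m_i=\sum_j \tchi_{ij},\ \btchi\in\bbR_+^{|E|},\ \bm\in\mathrm{conv}(\Omega),
\end{equation*}
with $\tchi_{ij}=0$ on $(i,j)\notin E$. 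By Assumption \ref{ass: concave} and convexity of $C$, this is a genuine convex program.

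\textbf{Step 3 (Strong duality for the reformulation and match with $D_\star$).} Apply Rockafellar's strong duality theorem for convex programs (or, alternatively, Sion's minimax theorem once the Lagrangian is seen to be convex-concave on a compact-times-convex domain). The Slater-type constraint qualification is verified by any strictly positive $\btchi$ on $E$---feasibility guaranteed by Appendix~\ref{app: existence}---which places $(\btlambda,\bm,\btchi)$ in the relative interior of the feasible set. Dualising the two flow equalities with multipliers $\bkappa^{(1)},\bkappa^{(2)}$ and enforcing $\btchi\geq\bzero$ via $\kappa_i^{(1)}+\kappa_j^{(2)}\geq 0$ yields the dual value
\begin{equation*}
\min_{\bkappa:\,\kappa_i^{(1)}+\kappa_j^{(2)}\geq 0\ \forall(i,j)\in E}\left\{\max_{\btlambda\geq\bzero}\inner{F(\btlambda)+\bkappa^{(2)}}{\btlambda}-\min_{\bm\in\mathrm{conv}(\Omega)}\{C(\bm)-\inner{\bkappa^{(1)}}{\bm}\}\right\}.
\end{equation*}
The final identification with $D_\star$ follows from the chain
\begin{equation*}
\min_{\bm\in\mathrm{conv}(\Omega)}\{C(\bm)-\inner{\bkappa^{(1)}}{\bm}\} = \inf_\alpha \E{\alpha}{c(\bmu)-\inner{\bkappa^{(1)}}{\bmu}} = \min_{\bmu\in\Omega}\{c(\bmu)-\inner{\bkappa^{(1)}}{\bmu}\},
\end{equation*}
where the first equality substitutes the definition of $C$ and the second observes that the infimum of $\E{\alpha}{f(\bmu)}$ over probability measures on $\Omega$ equals $\inf_{\bmu\in\Omega}f(\bmu)$, attained at a Dirac mass. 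Comparing with \eqref{eq: dual_CLP} and combining with Step 1 gives $\tilde R^\star = D_\star$.

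\textbf{Main obstacle.} The principal technical hurdle is verifying the constraint qualification cleanly in infinite dimensions---in particular, lower semicontinuity and properness of $C$ on $\mathrm{conv}(\Omega)$ when $\Omega$ may be noncompact and $c$ is only Borel-measurable. A lighter-weight alternative, already used earlier in the paper via \cite{shapiro2014lectures}, Prop.~6.40, is first to compress $\alpha$ to at most $n+1$ atoms through the problem-of-moments reformulation and then to invoke finite-dimensional convex duality on the resulting compact feasible set, which sidesteps the infinite-dimensional constraint qualification entirely.
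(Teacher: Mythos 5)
Your argument reaches the correct conclusion, but by a genuinely different route than the paper. The paper invokes a saddle--point theorem \citep[Theorem~7.10]{shapiro2014lectures} for the Lagrangian $L$ directly on the infinite-dimensional space of measures: it checks that $L(\bkappa,\cdot)$ is concave, that $L(\cdot,(\blambda,\alpha,\bchi))$ is convex and lower semicontinuous in $\bkappa$, and that the dual \eqref{eq: dual_CLP} has a nonempty bounded set of optima, which is obtained by showing the dual objective is coercive via connectivity of the bipartite graph. You instead compress $\talpha$ to its first moment $\bm=\E{\talpha}{\bmu}$, exhibit the penalty term as the convex envelope $C(\bm)$ of $c$ over $\Omega$, and invoke finite-dimensional convex duality under a Slater-type constraint qualification. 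Your key identification
$\min_{\bm}\{C(\bm)-\inner{\bkappa^{(1)}}{\bm}\}=\min_{\bmu\in\Omega}\{c(\bmu)-\inner{\bkappa^{(1)}}{\bmu}\}$
(the conjugates of $C$ and $c$ coincide) is correct and is what makes the two dual values match. What you gain is a finite-dimensional program and a familiar CQ argument; what you pay is that you now need $C$ to be proper and lower semicontinuous --- hence effectively $\Omega$ compact and $c$ lsc --- for $C$ really to be the closed convex envelope and for the Rockafellar relative-interior CQ to apply, whereas the paper's coercivity route imposes no such regularity on $c$. You correctly flag this and propose patching it through the problem-of-moments compression \citep[Proposition~6.40]{shapiro2014lectures}, which is in fact what the paper uses elsewhere for the finite-dimensional reformulation. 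The remaining small gaps in your write-up --- checking that a strictly positive $\btchi$ produces a moment $\bm$ in the relative interior of $\mathrm{conv}(\Omega)$, and that $C$ is closed on its domain --- are real but localized; the architecture of the argument is sound and would be an acceptable alternative proof once those points are pinned down.
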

\proof{Proof of Proposition \ref{prop: strong_duality}}
We will use the Theorem 7.10 from the lectures on stochastic programming \citep{shapiro2014lectures}. We will verify the following three conditions:
\begin{enumerate}
    \item For every $\bkappa \in \bbR^{n+m}$, the function $L(\bkappa,.)$ is concave.
    \item For every $(\blambda,\alpha,\bchi)$, the function $L(.,(\blambda,\alpha,\bchi))$ is convex and lower semi continuous.
    \item The dual optimization problem \eqref{eq: dual_CLP} has a nonempty and bounded set of optimal solutions.
\end{enumerate}

Proof of 1. If $\bkappa: \kappa_i^{(1)}+\kappa_j^{(2)} \geq 0 \ \forall (i,j)\in E$, then we know that $L(\bkappa,.)$ is concave with respect to $\blambda$ by Assumption \ref{ass: concave}, and affine with respect to $\alpha$ and $\bchi$ in its domain $Y$ which is convex. In addition, if $\bkappa$ does not belong to the above set, then $L(\bkappa,.)=\infty$ which is concave.

Proof of 2. For $(
\blambda,\alpha,\bchi) \in Y$, $L(.,(
\blambda,\alpha,\bchi))$ is an affine function of $\bkappa$, thus it is convex and lower semi continuous. For $(
\blambda,\alpha,\bchi) \notin Y$, we have $L(.,(
\blambda,\alpha,\bchi))=-\infty$ everywhere and thus it is convex and lower semi continuous. 

Proof of 3. We already know that the dual objective function is convex and the constraints are affine. Now, we will show that the objective function is coercive which will suffice to show that the optimal solution is nonempty and bounded. Let $\tl^\star_j>0$ and $\tl^\star_j \in \textrm{dom} F_j$ for all $j \in [m]$ and $\bmu^\star>\bzero_m$ and $\bmu^\star \in \Omega$. Then we have
\begin{align*}
    \sup_{\lambda_j \geq 0}\left\{F_j(\lambda_j)\lambda_j+\kappa_j^{(2)}\lambda_j\right\} &\geq \max\left\{0,F_j(\tl_j^\star)\tl_j^\star+\kappa_j^{(2)}\tl_j^\star\right\} \\
    \inf_{\bmu \geq \bzero_m}\left\{c(\bmu^\star)-\inner{\bkappa^{(1)}}{\bmu^\star}\right\} &\leq \min\left\{0,c(\bmu)-\inner{\bkappa^{(1)}}{\bmu}\right\}
\end{align*}
The above results in the following lower bound on the objective function of the dual \eqref{eq: dual_CLP}.
\begin{align}
    \sum_{j=1}^m \max\left\{0,F_j(\tl_j^\star)\tl_j^\star+\kappa_j^{(2)}\tl_j^\star\right\}-\min\left\{0,c(\bmu^\star)-\inner{\bkappa^{(1)}}{\bmu^\star}\right\}. \label{eq: lower_bound_obj_dual}
\end{align}
Now, if $||\bkappa||\rightarrow \infty$ such that $\kappa_i^{(1)}+\kappa_j^{(2)} \geq 0$ for all $(i,j) \in E$, then there exists an $i \in [n]$ or a $j \in [m]$ such that either $\kappa_i^{(1)}\rightarrow \infty$ or $\kappa_j^{(2)}\rightarrow \infty$. This is true by the assumption that the bipartite graph is connected. This implies that \eqref{eq: lower_bound_obj_dual} $\rightarrow \infty$. Thus, the objective function of the dual \eqref{eq: dual_CLP} $\rightarrow \infty$. Thus, it is coercive. This completes the proof. \hfill $\Halmos$
 \endproof
 \section{Simulations: A Generic City Model} \label{app: simulation}
\begin{figure}[ht]
\FIGURE{
\begin{minipage}{0.6\textwidth}
 \begin{tikzpicture}[scale=0.6]
\draw[black, very thick] (0,0) -- (2,0) -- (2,1) -- (0,1);
\node[black,very thick] at (0.25,0.5) {2 \footnotesize (E)};
\draw[black, very thick] (0,1.5) -- (2,1.5) -- (2,2.5) -- (0,2.5);
\draw[black, very thick] (0,-0.5) -- (2,-0.5) -- (2,-1.5) -- (0,-1.5);
\draw[black, very thick] (0,-2) -- (2,-2) -- (2,-3) -- (0,-3);
\node[black,very thick] at (0.25,2) {1 \footnotesize (N)};
\node[black,very thick] at (0.25,-1) {3 \footnotesize (W)};
\node[black,very thick] at (0.25,-2.5) {4 \footnotesize (S)};
\draw[black,very thick] (8,0) -- (6,0) -- (6,1) -- (8,1);
\node[black,very thick] at (7.75,0.5) {2};
\draw[black, very thick] (8,1.5) -- (6,1.5) -- (6,2.5) -- (8,2.5);
\node[black,very thick] at (7.75,2) {1};
\node[black,very thick] at (7.75,-1) {3};
\node[black,very thick] at (7.75,-2.5) {4};
\node[black,very thick] at (7.75,-4) {5};
\draw[black,very thick] (13,0) -- (11,0) -- (11,1) -- (13,1);
\node[black,very thick] at (12.75,0.5) {2 \footnotesize (N,W)};
\node[black,very thick] at (12.75,-1) {3 \footnotesize(S,E)};
\node[black,very thick] at (12.75,-2.5) {4 \footnotesize(S,W)};
\node[black,very thick] at (12.75,-4) {5 \footnotesize(N,E,W,S)};
\draw[black, very thick] (13,1.5) -- (11,1.5) -- (11,2.5) -- (13,2.5);
\draw[black, very thick] (13,-0.5) -- (11,-0.5) -- (11,-1.5) -- (13,-1.5);
\draw[black, very thick] (13,-2) -- (11,-2) -- (11,-3) -- (13,-3);
\draw[black, very thick] (13,-3.5) -- (11,-3.5) -- (11,-4.5) -- (13,-4.5);
\draw[black, very thick] (13,1.5) -- (11,1.5) -- (11,2.5) -- (13,2.5);
\draw[black, very thick] (8,-0.5) -- (6,-0.5) -- (6,-1.5) -- (8,-1.5);
\draw[black, very thick] (8,-2) -- (6,-2) -- (6,-3) -- (8,-3);
\draw[black, very thick] (8,-3.5) -- (6,-3.5) -- (6,-4.5) -- (8,-4.5);
\node[black,very thick] at (12.75,2) {1 \footnotesize (N,E)};
\draw[black,thick]  (2.75, 2.1) edge[<->]  (5.25, 2.1);
\draw[black,thick]  (2.75, 0.6) edge[<->]  (5.25, 1.9);
\draw[black,thick]  (2.75, 1.9) edge[<->]  (5.25, 0.6);
\draw[black,thick]  (2.75, -0.9) edge[<->]  (5.25, 0.4);
\draw[black,thick]  (2.75, 0.4) edge[<->]  (5.25, -0.9);
\draw[black,thick]  (2.75, -2.4) edge[<->]  (5.25, -1.1);
\draw[black,thick]  (2.75, -1.1) edge[<->]  (5.25, -2.4);
\draw[black,thick]  (2.75, -2.6) edge[<->]  (5.25, -2.6);
\draw[black,thick]  (2.75, 1.7) edge[<->]  (5.25, -3.7);
\draw[black,thick]  (2.75, 0.2) edge[<->]  (5.25, -3.9);
\draw[black,thick]  (2.75, -1.3) edge[<->]  (5.25, -4.1);
\draw[black,thick]  (2.75, -2.8) edge[<->]  (5.25, -4.3);
\node[black, align=center] at (1,3) {\footnotesize Customer};
\node[black, align=center] at (4,3) {\footnotesize \shortstack{Compatible\\Matchings}};
\node[black, align=center] at (7,3) {\footnotesize \shortstack {Server \\  Queue}};
\node[black, align=center] at (12,3) {\footnotesize \shortstack {Server \\ Type}};
\node[black, align=center] at (9.5,3) {\footnotesize \shortstack {Complete \\ Graph}};
\draw[gray,ultra thin]  (10.5, -2.5) edge[-]  (8.5, -4);
\draw[gray,ultra thin]  (10.5, -4) edge[-]  (8.5, -4);
\draw[gray,ultra thin]  (10.5, -1) edge[-]  (8.5, -4);
\draw[gray,ultra thin]  (10.5, 0.5) edge[-]  (8.5, -4);
\draw[gray,ultra thin]  (10.5, 2) edge[-]  (8.5, -4);
\draw[gray,ultra thin]  (10.5, -2.5) edge[-]  (8.5, -2.5);
\draw[gray,ultra thin]  (10.5, -4) edge[-]  (8.5, -2.5);
\draw[gray,ultra thin]  (10.5, -1) edge[-]  (8.5, -2.5);
\draw[gray,ultra thin]  (10.5, 0.5) edge[-]  (8.5, -2.5);
\draw[gray,ultra thin]  (10.5, 2) edge[-]  (8.5, -2.5);
\draw[gray,ultra thin]  (10.5, -2.5) edge[-]  (8.5, -1);
\draw[gray,ultra thin]  (10.5, -4) edge[-]  (8.5, -1);
\draw[gray,ultra thin]  (10.5, -1) edge[-]  (8.5, -1);
\draw[gray,ultra thin]  (10.5, 0.5) edge[-]  (8.5, -1);
\draw[gray,ultra thin]  (10.5, 2) edge[-]  (8.5, -1);
\draw[gray,ultra thin]  (10.5, -2.5) edge[-]  (8.5, 0.5);
\draw[gray,ultra thin]  (10.5, -4) edge[-]  (8.5, 0.5);
\draw[gray,ultra thin]  (10.5, -1) edge[-]  (8.5, 0.5);
\draw[gray,ultra thin]  (10.5, 0.5) edge[-]  (8.5, 0.5);
\draw[gray,ultra thin]  (10.5, 2) edge[-]  (8.5, 0.5);
\draw[gray,ultra thin]  (10.5, -2.5) edge[-]  (8.5, 2);
\draw[gray,ultra thin]  (10.5, -4) edge[-]  (8.5, 2);
\draw[gray,ultra thin]  (10.5, -1) edge[-]  (8.5, 2);
\draw[gray,ultra thin]  (10.5, 0.5) edge[-]  (8.5, 2);
\draw[gray,ultra thin]  (10.5, 2) edge[-]  (8.5, 2);
\end{tikzpicture}
\end{minipage}
\begin{minipage}{0.4\textwidth}
$$
    c=\begin{bmatrix}
    0 & 2 & 2 & 10 & 5 \\
     2 & 0 & 10 & 2 & 5 \\
      2 & 10 & 0 & 2 & 5 \\
       10 & 2 & 2 & 0 & 5 \\
        0 & 0 & 0 & 0 & 0 
    \end{bmatrix} \times \bar{c}
$$
\end{minipage}}
{\centering{
A Generic City Model.}
    \label{fig: generic_city}}{}
\end{figure}
\subsection{IC vs FB: IC}
Motivated by our ridehailing example, we simulate the network given by Fig. \ref{fig: generic_city} with linear supply and demand curves. In particular, the demand curves are $F_1(\lambda_1)=10-\lambda_1/2$, $F_2(\lambda_2)=12-\lambda_3/2$, $F_3(\lambda_3)=12-\lambda_3/2$, $F_4(\lambda_4)=18-\lambda_4$ and the supply curves are $G_1(\mu_1)=3\mu_1-3$, $G_2(\mu_2)=2\mu_2$, $G_3(\mu_3)=\mu_3$, $G_4(\mu_4)=2.5\mu_4$ and $G_5(\mu_5)=\mu_5$. Each type of customer is described by the destination they wish to go and each type of server is described the list of destinations or a single destination they wish to go. The compatibility between a pair of customer and server holds if they wish to go to the same destination. The penalty due to waiting ($\bc$) is given in Fig. \ref{fig: generic_city} and is parametrized by a scalar $\bar{c}$. For a given $(i,l)$ pair, $c_{il}$ is high if the destinations are in the opposite directions and lower otherwise. For example, $c_{14}$ is high as the choices of destination of type 1 server does not match at all with type 2 server. Now, we compare the solution of the fluid model for IC and FB:IC for different values of $\bc$. The result is plotted in Fig. \ref{fig: fluid_vs_IC}.
We can observe that the two optimal solutions are not too different from each other. In addition, we parametrize the supply curve $G_5=i\times\frac{\mu_5}{10}$ and analyze the fluid solution of FB: IC as $i$ varies. The results are plotted in Fig. \ref{fig: fluid_g5} and Fig. \ref{fig: fluid_mu5}.
\begin{figure}
\FIGURE{
     \includegraphics[width=.5\linewidth]{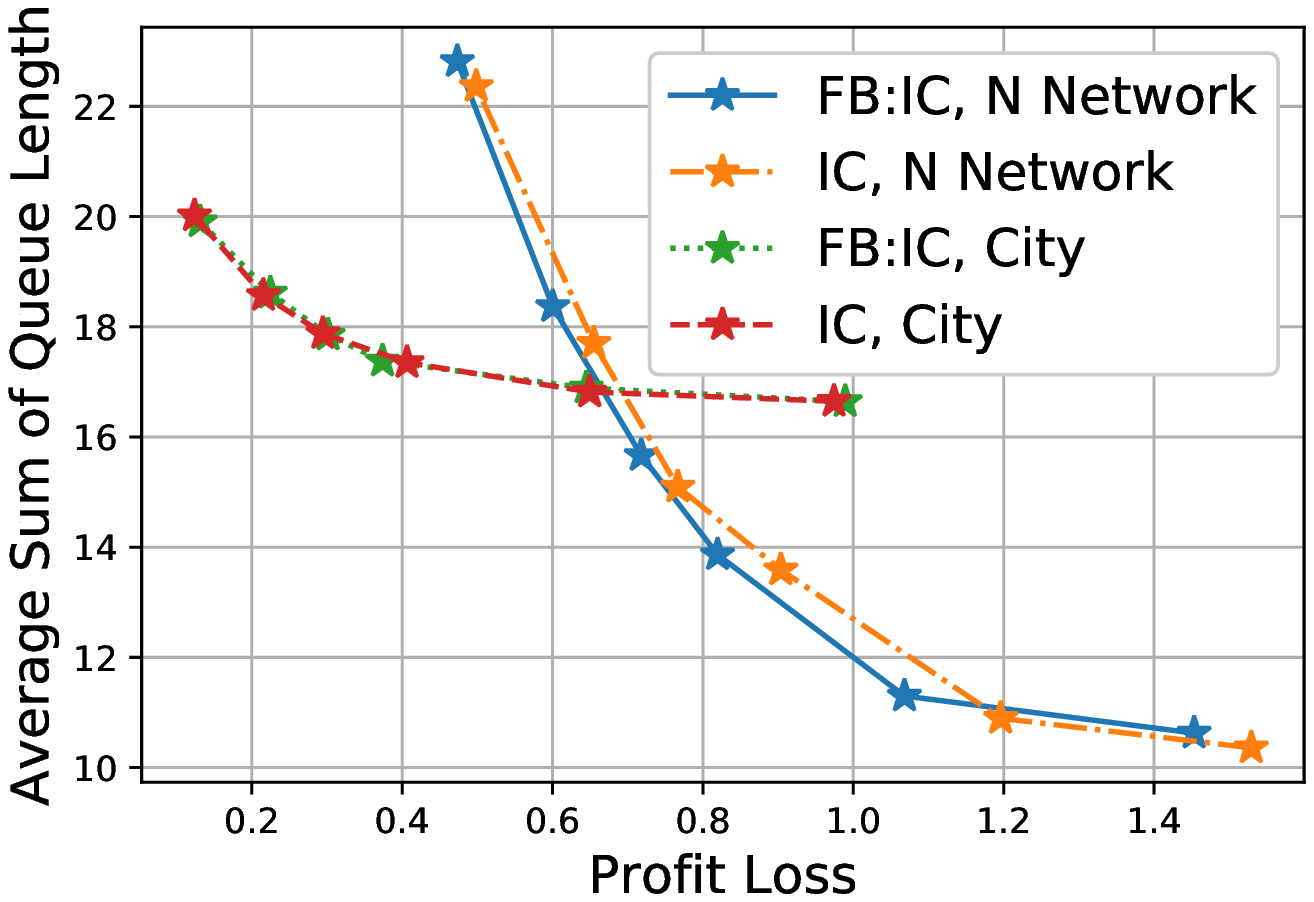}}{
     \centering{Average sum of queue length vs loss in profit for non asymptotic systems with affine supply curves for N-Network and generic city model}
     \label{fig: q_pl_models}}{}
\end{figure}
\begin{figure}
    \begin{minipage}{0.48\textwidth}
     \caption{Optimal objective value of fluid model for FB: IC vs $i$}
     \includegraphics[width=.9\linewidth]{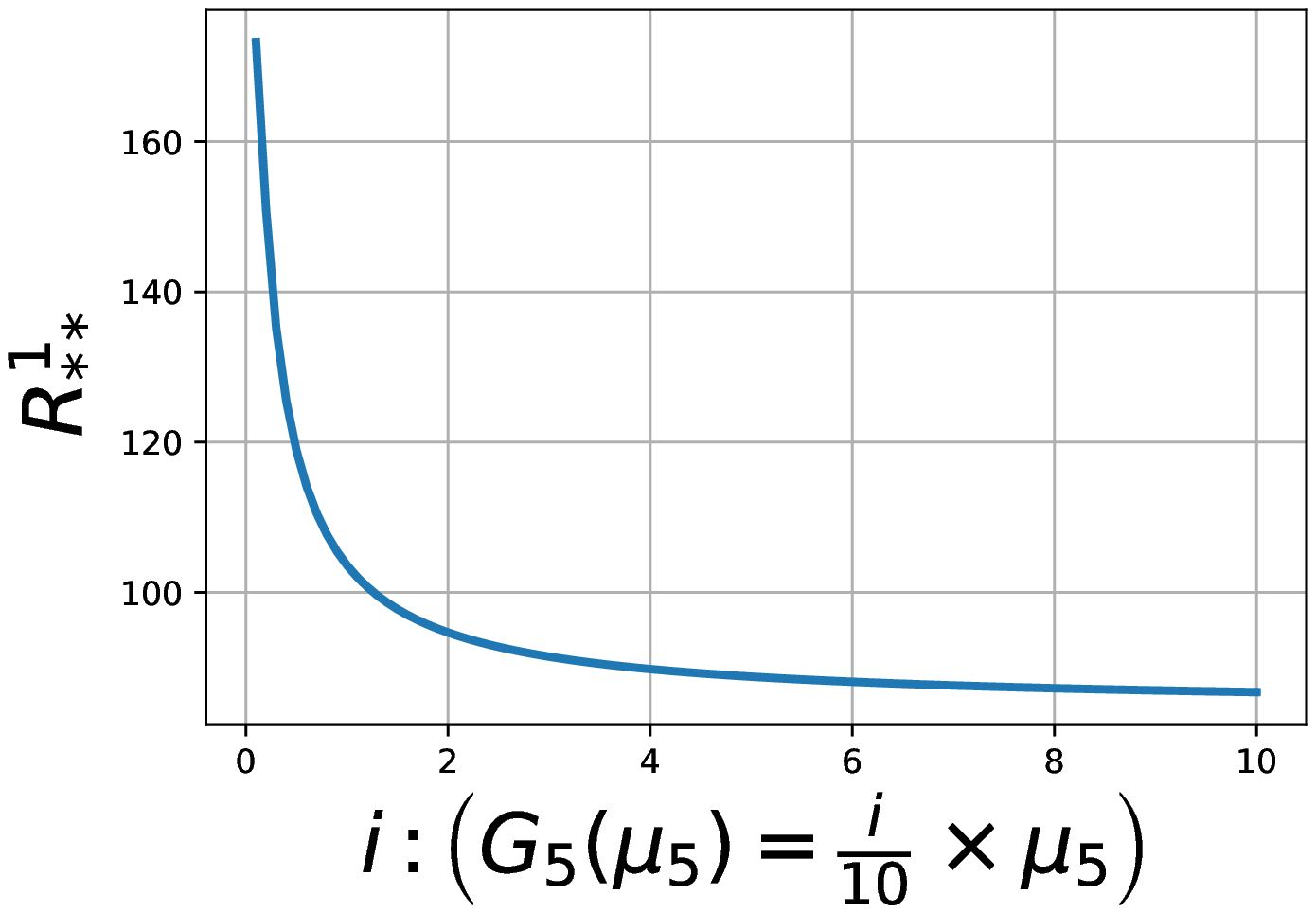}
    \label{fig: fluid_g5}
    \end{minipage}
    \begin{minipage}{0.48\textwidth}
    \caption{Optimal arrival rate and price for type 5 server vs $i$ for FB: IC}
     \includegraphics[width=.9\linewidth]{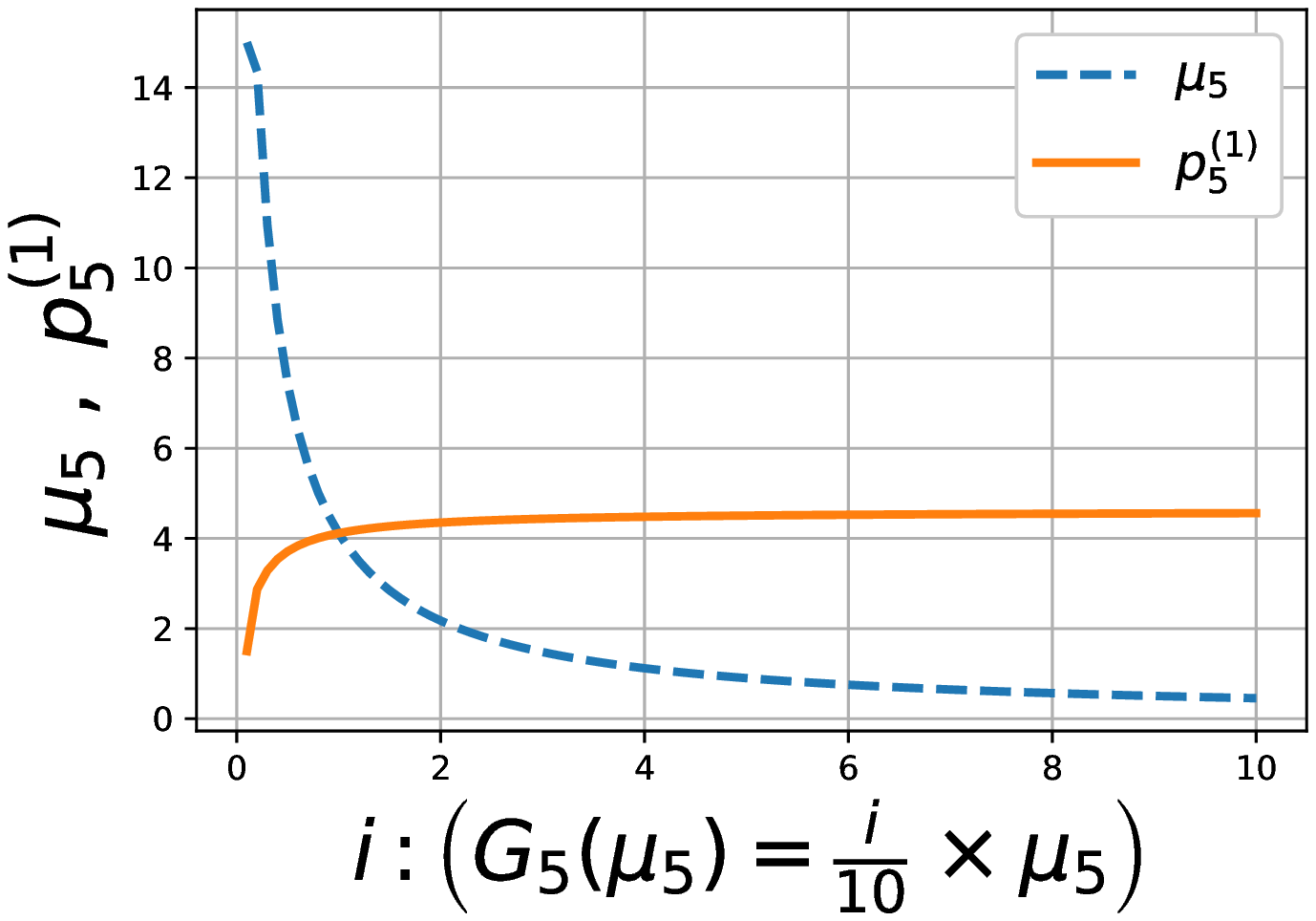}
    \label{fig: fluid_mu5}
    \end{minipage}
\end{figure}
\begin{figure}
    \begin{minipage}{0.48\textwidth}
    \caption{Optimal objective value of IC vs first best: IC vs penalty due to lying}
    \includegraphics[width=0.9\linewidth]{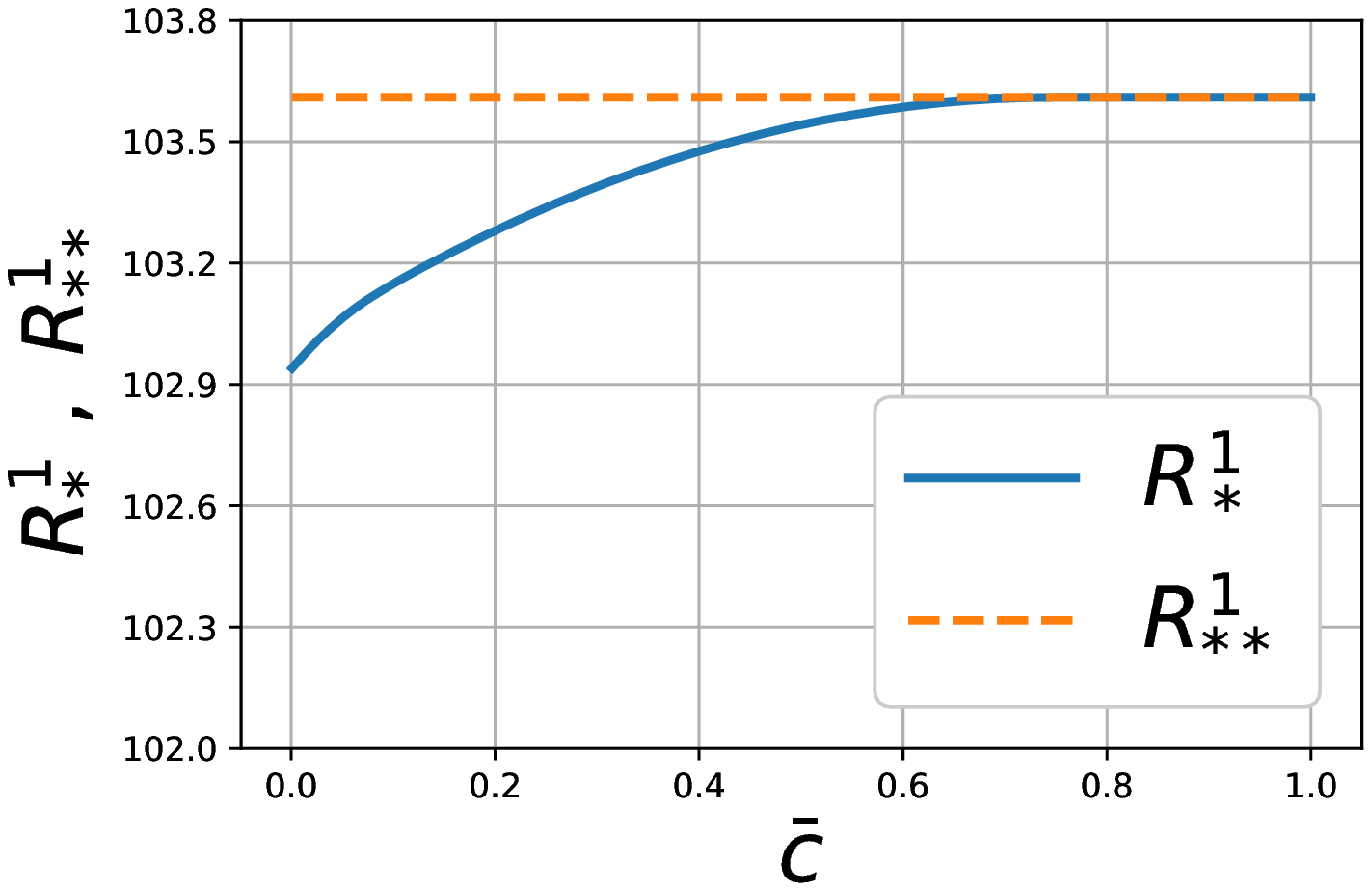}
    \label{fig: fluid_vs_IC}
    \end{minipage}
     \begin{minipage}{0.48\textwidth}
      \caption{Percentage Loss vs $\eta$ for the generic city model under different cost function models}
    \includegraphics[width=0.9\linewidth]{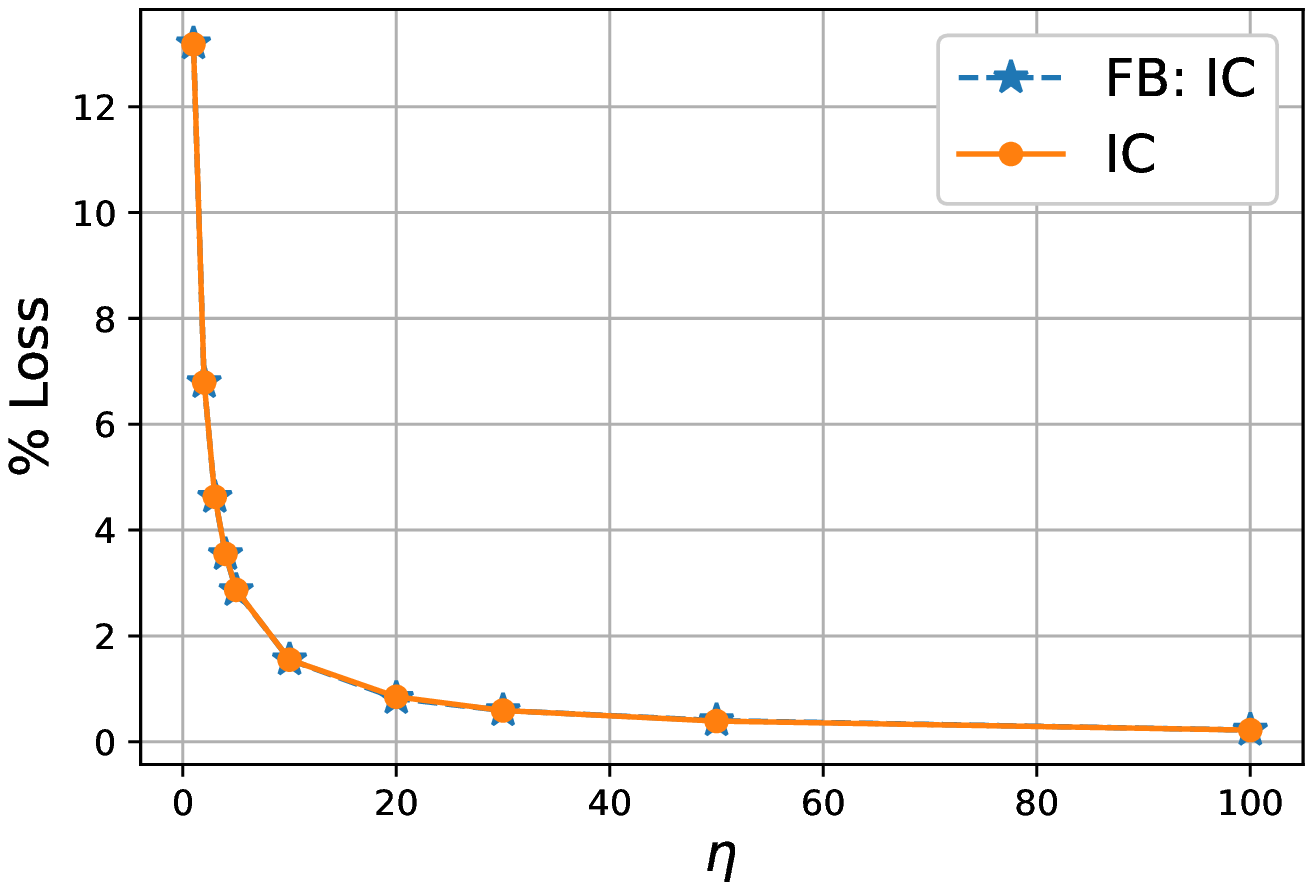}
   \label{fig: big_g_pre_limit}
    \end{minipage}
    \end{figure}
    \subsection{Stochastic Simulations}
We analyze the proposed two price policy and max weight matching policy for $\eta=\{1,5,10,20,30,50,100\}$ and report the percentage loss compared to the fluid upper bound for the case of IC and FB: IC. We pick $G_5=\mu_5/5$ and $\bar{c}=0$. The results are summarized in Fig. \ref{fig: big_g_pre_limit}. Thus, we conclude that the percentage loss is also robust for the choice of the bipartite graph and decays very fast as $\eta$ increases.
Finally, we also simulate the system free of any asymptotic regime for the generic city model and plot the trade off between the average sum of queue length and profit loss in Fig. \ref{fig: q_pl_models}. The trade-off is not sensitive to the cost function but it is sensitive to the size of the network.
\section{Extensions}
\subsection{Proof of Theorem \ref{theo: lower_bound}: Quality Driven View}
\proof{Proof of Theorem \ref{theo: lower_bound}}
In this proof, for a given pricing and matching policy $\pi \in \StablePol$, we will use Theorem \ref{theo: lowerbound_queue} to infer that $\E{}{\inner{\bone_{n+m}}{\bbarq}}\sim 1/\epsilon$ and use Theorem \ref{theo: lowerbound_profit_loss} to infer that $L(\pi) \sim \epsilon^2$. Lastly, if $\E{}{\inner{\bone_{n+m}}{\bbarq}}=C$, we obtain that $\epsilon \sim 1/C$ which shows that $L(\pi) \sim 1/C^2$. Now, we will make this intuition concrete.

By Theorem \ref{theo: lowerbound_queue}, there exists an $\epsilon_0>0$ such that for all $\epsilon<\epsilon_0$, we have
\begin{align*}
    \E{}{\inner{\bone_{n+m}}{\bbarq}}  \geq \frac{\bone_{n \times n}\circ \Sigma^{(1)}+\bone_{m \times m} \circ \Sigma^{(2)}_{\min}}{8\max\{m,n\}\epsilon}.
\end{align*}
As, $\E{}{\inner{\bone_{n+m}}{\bbarq}}=C$, we have
\begin{align}
    \epsilon \geq \frac{\bone_{n \times n}\circ \Sigma^{(1)}+\bone_{m \times m} \circ \Sigma^{(2)}_{\min}}{8\max\{m,n\}}\frac{1}{C}. \label{eq: lower_bound_epsilon}
\end{align}
Now, to use Theorem \ref{theo: lowerbound_profit_loss}, we will construct a sequence of pricing and matching policies $\pi_\eta \in \StablePol$ which satisfies the hypothesis of Theorem \ref{theo: lowerbound_profit_loss}. The sequence $\pi_\eta$ is such that $\pi_\eta=\pi$ for some $\eta>\eta_1$ where $\eta_1$ is a parameter such that Theorem \ref{theo: lowerbound_profit_loss} holds for all $\eta>\eta_1$. This will give us a bound on $L(\pi_\eta)=L(\pi)$. 

In particular, consider the sequence of pricing policies of the form \eqref{eq: general_pricing_policy}. Now, we will specify the parameters $\alpha,\beta,\{\phi_j(\cdot)\}_{j \in [m]}$ which will completely specify $\pi_\eta$. Let $\alpha=0,\beta=-0.5$ and $\{\phi_j(\cdot)\}_{j \in [m]}$ given by
\begin{align*}
    \phi_j(\bq)=\frac{\lambda_j(\bq)-\tl_j^\star}{\epsilon} \quad \forall j \in [m], \bq \in S.
\end{align*}
Now, we will verify Assumption \ref{condition: general_pricing} holds. In particular, Assumption \ref{condition: general_pricing} \ref{condition: bounded} holds with $M=1$ as follows:
\begin{align*}
    |\phi_j(\bq)|=\frac{|\lambda_j(\bq)-\tl_j^\star|}{\epsilon} \leq 1 \quad \forall j \in [m].
\end{align*}
Next, Assumption \ref{condition: general_pricing} \ref{condition: alpha_beta} holds by our choice of $\alpha$ and $\beta$. Lastly, we verify Assumption \ref{condition: general_pricing} \ref{condition: stability} holds. Consider a $\bq \in S$ such that either $q_i^{(1)}>K$ or there exists $j \in [m]$ such that $(i,j) \in E$ and $q_j^{(2)}>K$, then we have
\begin{align*}
    |\phi_j(\bq)|=\frac{|\lambda_j(\bq)-\tl_j^\star|}{\epsilon} \geq \frac{\sigma\epsilon}{\epsilon}> \sigma \quad \forall j \in [m].
\end{align*}
%Consider $C_0=\frac{\bone_{n \times n}\circ \Sigma^{(1)}+\bone_{m \times m} \circ \Sigma^{(2)}_{\min}}{8\max\{m,n\}}\frac{2}{\epsilon_1}$ such that an $\epsilon<\epsilon_1$ exists that satisfies \eqref{eq: lower_bound_epsilon} for all $C>C_0$. 
Now, note that the pricing policy for $\pi_{\epsilon^{-2}}$ is same as the pricing policy for $\pi$ as we have
\begin{align*}
    \tl^\star_j+\phi_j(\bq)(\epsilon^{-2})^{-0.5}=\tl^\star_j+\phi_j(\bq)\epsilon=\lambda_j(\bq) \quad \forall j \in [m].
\end{align*}
Finally, define $\epsilon_1=\min\{\epsilon_0,(1+\eta_1)^{-0.5}\}$ so that for any $\epsilon<\epsilon_1$, we have $\epsilon^{-2}>1+\eta_1$. Thus, for all $\epsilon<\epsilon_1$, by Theorem \ref{theo: lowerbound_profit_loss}, we get
\begin{align*}
    \tilde{R}^\star-P(\pi)=\frac{\tilde{R}^\star_{\epsilon^{-2}}-P_{\epsilon^{-2}}(\pi_{\epsilon^{-2}})}{\epsilon^{-2}} \geq  K\epsilon^2 \geq K\left(\frac{\bone_{n \times n}\circ \Sigma^{(1)}+\bone_{m \times m} \circ \Sigma^{(2)}_{\min}}{8\max\{m,n\}}\right)^2\frac{1}{C^2}
\end{align*}
This completes the first part of the proof. Now, we will prove the theorem for two price policy and max-weight matching policy which we denote by $\pi$. We will use Lemma \ref{lemma: stability}, Theorem \ref{theo: lowerbound_queue} and Lemma \ref{lemma: profit_loss}. First, by Lemma \ref{lemma: stability}, with $\epsilon_\eta=\epsilon'$, we have
\begin{align*}
    \E{}{\inner{\bs}{\bbarq}}\leq \frac{B}{\epsilon'}.
\end{align*}
As, $\E{}{\inner{\bone_{n+m}}{\bbarq}}=C$, we have
\begin{align*}
    \epsilon' \leq \frac{B}{\min_{i,j}\{s_i^{(1)},s_j^{(2)}\}C}.
\end{align*}
Now, denote by $\pi_\eta$ the sequence of two-price policy and max-weight matching policy. As $\epsilon_\eta \downarrow 0$, there exists $\eta$ such that $\epsilon_\eta=\epsilon'$. Thus, by Lemma \ref{lemma: profit_loss}, we have
\begin{align*}
    L_P(\pi)\overset{*}{=}\frac{L_\eta^P(\pi_\eta)}{\eta}&= -(\epsilon')^2\sum_{j=1}^m \left(\frac{\tl^\star_jF''(\tl^\star_j)}{2}+F'_j(\tl_j^\star)\right)+O\left((\epsilon')^3\right)  \\
   &\overset{**}{ \leq}-\sum_{j=1}^m \left(\frac{\tl^\star_jF''(\tl^\star_j)}{2}+F'_j(\tl_j^\star)\right)\left(\frac{B}{\min_{i,j}\{s_i^{(1)},s_j^{(2)}\}}\right)^2 \frac{1}{C^2}+O\left(\frac{1}{C^3}\right)
\end{align*}
where, $(*)$ follows by Definition \ref{defn: asymptotic_regime} and as $\sum_{j=1}^m \left(\frac{\tl^\star_jF''(\tl^\star_j)}{2}+F'_j(\tl_j^\star)\right)<0$, $(**)$ follows. This completes the proof. \hfill $\Halmos$
 \endproof
 \subsection{Proof of Proposition \ref{prop: waiting_time}: Waiting Time Model}
\proof{Proof of Proposition \ref{prop: waiting_time}}
Consider a sequence of policies $\pi_\eta \in \StablePol$ which satisfies the hypothesis given in Proposition \ref{prop: waiting_time}. By Little's Law, we have
\begin{align*}
    \E{}{\barq_j^{(2)}}&=\eta \E{}{\lambda_j(\bbarq)}\E{}{\barw_j^{(2)}} \quad \forall j \in [m] \\
    \E{}{\barq_i^{(1)}}&=\eta \E{}{\E{\alpha_\bbarq}{\mu_i}}\E{}{\barw_i^{(1)}} \quad \forall i \in [n]
\end{align*}
The factor of $\eta$ on the RHS is due to the definition of asymptotic regime (Definition \ref{defn: asymptotic_regime}). By using the form of the pricing policy given by \eqref{eq: general_pricing_policy}, we get
\begin{align*}
    &\E{}{\inner{\bs}{\bbarw}} \\
    \overset{(a)}{\geq}{}& \frac{1}{\eta}\left(\sum_{j=1}^m \frac{s_j^{(2)}}{\tl_j^\star+M\eta^\beta}\E{}{\barq_j^{(2)}}+\sum_{i=1}^n \frac{s_i^{(1)}}{\E{\talpha^\star}{\mu_i}}\E{}{\barq_i^{(1)}}\right) \\
    \overset{(b)}{\geq}{}& \min_{i \in [n], j \in [m]}\left\{\frac{2s_j^{(2)}}{3\tl_j^\star},\frac{s_i^{(1)}}{\E{\talpha^\star}{\mu_i}}\right\} \frac{\E{}{\inner{\bone_{n+m}}{\bbarq}}}{\eta} \quad \forall \eta>\left(\frac{\tl_j^\star}{2M}\right)^{1/\beta} \\
    \overset{(c)}{\geq}{}& \min_{i \in [n], j \in [m]}\left\{\frac{2s_j^{(2)}}{3\tl_j^\star},\frac{s_i^{(1)}}{\E{\talpha^\star}{\mu_i}}\right\}\frac{\bone_{n \times n}\circ \Sigma^{(1)}(\talpha^\star)+\bone_{m \times m} \circ \Sigma^{(2)}_{\min}}{8\max\{m,n\}M} \eta^{-\beta-1} \quad \forall \eta>\left(\frac{\tl_j^\star}{2M}+\frac{\epsilon_0}{M}\right)^{1/\beta},
\end{align*}
where $(a)$ follows by Assumption \ref{condition: general_pricing} \ref{condition: bounded}, i.e. $\phi_j(\cdot)$ is uniformly bounded by $M$ for all $j \in [m]$. Next, $(b)$ follows by upper bounding $\eta^{\beta}$ for all $\eta>\left(\frac{\tl^\star_j}{2M}\right)^{1/\beta}$. Lastly, $(c)$ follows by Theorem \ref{theo: lowerbound_queue} with $\epsilon=M\epsilon^\beta$. Now, by using Theorem \ref{theo: lowerbound_profit_loss}, for all $\eta>\max\{\eta_1, \left(\frac{\tl_j^\star}{2M}+\frac{\epsilon_0}{M}\right)^{1/\beta}\}$ we get 
\begin{align*}
     L_\eta(\pi_\eta)&=R^\star_\eta-P_\eta(\pi_\eta)-\E{}{\inner{\bs}{\bbarw_\eta}}   \\
     &\geq \inf_{\beta<0}\left\{K\eta^{2\beta+1}+\min_{i \in [n], j \in [m]}\left\{\frac{2s_j^{(2)}}{3\tl_j^\star},\frac{s_i^{(1)}}{\E{\talpha^\star}{\mu_i}}\right\}\frac{\bone_{n \times n}\circ \Sigma^{(1)}(\talpha^\star)+\bone_{m \times m} \circ \Sigma^{(2)}_{\min}}{8\max\{m,n\}M} \eta^{-\beta-1}\right\}\\
     &=K_w\eta^{-1/3},
 \end{align*}
 where the last equality follows by equating $2\beta+1=-\beta-1$ which results in the best trade off between profit and waiting time. This proves the first part of the proof. Now, let $\pi_\eta$ be the sequence of two-price policy and max-weight matching policy. Then, we have
 \begin{align*}
    \E{}{\inner{\bs}{\bbarw}}
    \leq{}& \frac{1}{\eta}\left(\sum_{j=1}^m \frac{s_j^{(2)}}{\tl_j^\star-\epsilon_\eta}\E{}{\barq_j^{(2)}}+\sum_{i=1}^n \frac{s_i^{(1)}}{\E{\talpha^\star}{\mu_i}}\E{}{\barq_i^{(1)}}\right) \\
    \leq{}& \max_{i \in [n], j \in [m]}\left\{\frac{1}{\tl_j^\star-1},\frac{1}{\E{\talpha^\star}{\mu_i}}\right\} \frac{\E{}{\inner{\bs}{\bbarq}}}{\eta} \\
    \leq{}&B\max_{i \in [n], j \in [m]}\left\{\frac{1}{\tl_j^\star-1},\frac{1}{\E{\talpha^\star}{\mu_i}}\right\} \frac{1}{\epsilon_\eta \eta},
\end{align*}
where the last inequality follows by Lemma \ref{lemma: stability}. Now, by Lemma \eqref{lemma: profit_loss}, we get
\begin{align*}
    L_\eta(\pi_\eta) \leq -\eta\epsilon_\eta^2\sum_{j=1}^m \left(\frac{\tl^\star_jF''(\tl^\star_j)}{2}+F'_j(\tl_j^\star)\right)+O\left(\eta\epsilon_\eta^3\right) +B\max_{i \in [n], j \in [m]}\left\{\frac{1}{\tl_j^\star-1},\frac{1}{\E{\talpha^\star}{\mu_i}}\right\} \frac{1}{\epsilon_\eta \eta}=O(\eta^{-1/3}),
\end{align*}
where the last equality follows by picking $\epsilon_\eta=\eta^{-2/3}$ to optimize the trade-off between $\eta\epsilon_\eta^2$ and $1/(\eta\epsilon_\eta)$. This completes the proof. \hfill
$\Halmos$
\endproof
 \subsection{General Utility Function} \label{sec: general_utility}
\subsubsection{Probabilistic Fluid Model} \label{app: prob_fluid_model_general}
The probabilistic fluid model with the re-defined utility function is similar to \eqref{eq: prob_fluid_model} and is defined below:
\begin{subequations}
\label{eq: general_prob_fluid_model}
\begin{align}
     \tilde{R}^\star_e\triangleq \lefteqn{\max_{(\btlambda,\talpha,\btchi): \talpha(\Omega \backslash \Omega(\btchi))=0} \inner{F(\btlambda)}{\btlambda}-\E{\talpha}{\cost(\btmu,\btchi)}}\\
 \textit{subject to} \quad \tl_j&=\sum_{i=1}^n \tchi_{ij} \quad \forall j \in [m] \\
    \E{\talpha}{\tmu_i}&=\sum_{j=1}^m \tchi_{ij} \quad \forall i \in [n] \\
    \tchi_{ij}&=0 \quad \forall (i,j) \notin E, \quad \tchi_{ij} \geq 0 \quad \forall (i,j) \in E.
\end{align}
\end{subequations}
Here, $c(\cdot)$ is a function of the fluid server arrival rates $\btmu$ and fluid matching rates $\btchi$ and the probability distribution $\talpha$ is defined on the space of fluid server arrival rates $\Omega$. Now, we will show that Proposition \ref{prop: fluid_model} still holds for this general setting. 
\begin{proposition} \label{prop: generalized_fluid_model}
Let $\pi=(\blambda(\cdot),\alpha(\cdot),\bx(\cdot))$ be a feasible solution of \eqref{eq:opt_stoch_extended}, then 
\begin{equation*}
R(\pi)\leq P(\pi)
%\E{\bbarq}{\inner{F(\blambda(\bbarq))}{\blambda(\bbarq)}}-\E{\alpha(\bbarq)}{\cost(\bmu)}
    \leq \tilde{R}^\star_e.
\end{equation*}
That is, the fluid profit is an upper bound for the stochastic profit and net profit. 
\end{proposition}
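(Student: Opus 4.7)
\proof{Proof Plan.}
The plan is to mimic the argument used for Proposition \ref{prop: fluid_model}, with two adjustments to accommodate the endogenous dependence of $\cost(\cdot)$ on $\btchi$. Fix a policy $\pi = (\blambda(\cdot),\alpha(\cdot),\bx(\cdot)) \in \StablePol_e$ with $\E{}{\inner{\bone_{n+m}}{\bbarq}} < \infty$ (otherwise $R(\pi) = -\infty$ and the claim is trivial). Let $\psi_e$ denote the stationary distribution on $S_e$, and define the candidate fluid triple
\begin{align*}
\btlambda \;=\; \E{}{\blambda(\bbarq_e)}, \qquad \btchi \;=\; \E{}{\bbarx}, \qquad \talpha(A) \;=\; \sum_{\bq_e \in S_e}\alpha_{\bq_e}(A \times S_e)\,\psi_e(\bq_e) \quad \text{for Borel } A \subseteq \Omega.
\end{align*}

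First I would establish that $(\btlambda,\talpha,\btchi)$ is feasible in \eqref{eq: general_prob_fluid_model}. Taking expectations in the queue-evolution equation $\bq(k+1) = \bq(k)+\ba(k)-\bx(k)$ and invoking stability plus finite first moment, the argument of Lemma \ref{lemma: necessary_constraints_extended} (the extended analogue of Lemma \ref{lemma: necessary_constraints}) yields the flow-conservation equalities $\tl_j = \sum_i \tchi_{ij}$ and $\E{\talpha}{\tmu_i} = \sum_j \tchi_{ij}$, together with the compatibility constraint $\tchi_{ij}=0$ for $(i,j)\notin E$. The one new constraint to verify is the support condition $\talpha(\Omega \setminus \Omega(\btchi)) = 0$. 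This is where the constraint in \eqref{eq:opt_stoch_extended} enters: for every $\bq_e \in S_e$, $\alpha_{\bq_e}$ is supported on $\Omega(\E{}{\bbarx}) \times \{\bq_e\} = \Omega(\btchi) \times \{\bq_e\}$ by construction, so marginalizing over $S_e$ preserves the support in $\Omega(\btchi)$.

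Next I would bound the objective. For the revenue term, Assumption \ref{ass: concave} makes $\lambda_j F_j(\lambda_j)$ concave, so Jensen's inequality gives $\E{}{\inner{F(\blambda(\bbarq_e))}{\blambda(\bbarq_e)}} \leq \inner{F(\btlambda)}{\btlambda}$. For the cost term, the key simplification is that $\cost(\bmu,\btchi)$ depends on $\btchi$ through the \emph{deterministic} quantity $\E{}{\bbarx}$ (not through a random inner integral). Hence, by the tower property and the definition of $\talpha$ as the $\Omega$-marginal of the joint measure induced by $\alpha(\cdot)$ and $\psi_e$,
\begin{align*}
\E{}{\E{\alpha_{\bbarq_e}}{\cost(\bmu,\E{}{\bbarx})}} \;=\; \E{\talpha}{\cost(\btmu,\btchi)}.
\end{align*}
Putting these together, $P(\pi) \leq \inner{F(\btlambda)}{\btlambda} - \E{\talpha}{\cost(\btmu,\btchi)} \leq \tilde{R}^\star_e$, and $R(\pi) \leq P(\pi)$ follows from $\bs \geq \bzero$.

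The main subtlety — and the step I would treat with the most care — is the support condition $\talpha(\Omega \setminus \Omega(\btchi)) = 0$, because $\btchi$ is itself defined as the stochastic average $\E{}{\bbarx}$ of the same policy whose equilibrium measure $\alpha$ must be compatible with it. This is a self-consistency condition that is baked into the stochastic problem \eqref{eq:opt_stoch_extended} via its equilibrium constraint, and the proof essentially reads that consistency off and transfers it to the fluid problem; verifying the measurability details (that $\talpha$ is a well-defined Borel probability measure and that the marginalization exchanges with the cost integration) follows from the disintegration argument of Proposition \ref{prop: equivalence_joint_marginal} applied on the expanded state space $S_e$. $\Halmos$
\endproof
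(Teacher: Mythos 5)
Your proposal is correct and follows essentially the same route as the paper: drop the penalty term, apply Jensen to the revenue, rewrite the cost term using $\btchi=\E{}{\bbarx}$ and the marginal $\talpha_e$, and verify feasibility via Lemma~\ref{lemma: necessary_constraints_extended} plus the support check $\talpha_e(\Omega\setminus\Omega(\btchi))=0$. The only cosmetic difference is that you argue the support condition from the conditional measures $\alpha_{\bq_e}$ while the paper argues directly from the joint measure $\alpha$; these are equivalent by Proposition~\ref{prop: equivalence_joint_marginal}.
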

We use the following key lemma in the proof which is presented below and the details of the proof of Proposition \ref{prop: generalized_fluid_model} is deferred to Appendix \ref{app: generalized_fluid_model}.
\begin{lemma} \label{lemma: necessary_constraints_extended}
For a given stationary (w.r.t. $\bq_e$) Markovian pricing and matching policy $(\blambda(\cdot),\alpha(\cdot),\bx(\cdot))$, let $\btlambda=\E{}{\blambda(\bbarq_e)}$, $\talpha_e(A)=\alpha_e(A \times \calS)$ for all Borel $A \subseteq \Omega$ and $\btchi=\E{}{\bx(\bbarq_e)}$. If $(\blambda(\cdot),\alpha(\cdot),\bx(\cdot))\in \StablePol_e$, $\E{}{\inner{\bone_{n+m}}{\bbarq}}<\infty$ and $\bx(\cdot)$ satisfies \eqref{eq: matching_constraints},
then $(\btlambda,\talpha_e,\btchi)$ is feasible in the probabilistic fluid problem \eqref{eq: general_prob_fluid_model}.
%the constraints of the optimization problem \eqref{eq: prob_fluid_model} are necessary for stability.
\end{lemma}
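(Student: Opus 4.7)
The plan is to adapt the proof strategy of Lemma \ref{lemma: necessary_constraints} to the extended setting, with one additional ingredient: verifying that the marginal measure $\talpha_e$ is supported on $\Omega(\btchi)$ so that the integral $\E{\talpha_e}{\cost(\btmu,\btchi)}$ in \eqref{eq: general_prob_fluid_model} is well-defined.

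First, I would reproduce the flow balance argument. Since the policy lies in $\StablePol_e$ and $\E{}{\inner{\bone_{n+m}}{\bbarq}} < \infty$, the (extended) Markov chain $\{\bq_e(k)\}$ admits a stationary distribution $\psi_e$ under which $\bbarq$ (which is a deterministic function of $\bbarq_e$) has finite mean, and hence $\E{}{\bbarq^+} = \E{}{\bbarq}$. Using the queue evolution $\bq(k+1) = \bq(k) + \ba(k) - \bx(k)$ this gives $\E{}{\bbara} = \E{}{\bbarx}$. Conditioning on $\bbarq_e$ and applying the tower property, the LHS equals $(\btlambda, \E{\talpha_e}{\bmu})$, where I use that for $\bmu$-dependent integrands the integral against the joint measure $\alpha_e$ over $\Omega \times S_e$ agrees with integration against the marginal $\talpha_e$. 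The RHS equals $\btchi$ component-wise after summing rows/columns, with $\tchi_{ij}=0$ for $(i,j)\notin E$ inherited from \eqref{eq: matching_constraints}(c). These equalities are exactly constraints (a)--(c) of \eqref{eq: general_prob_fluid_model}.

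Second, I would verify the support condition $\talpha_e(\Omega \setminus \Omega(\btchi)) = 0$. By the definition of the feasible set in \eqref{eq:opt_stoch_extended}, for every $\bq_e \in S_e$, $\alpha_{\bq_e}$ assigns zero mass to $(\Omega \times S_e) \setminus (\Omega(\E{}{\bbarx}) \times \{\bq_e\})$. Since $\btchi = \E{}{\bbarx}$ by definition, the fluid argument of $\Omega(\cdot)$ agrees with the stochastic argument, so $\alpha_{\bq_e}((\Omega \setminus \Omega(\btchi)) \times S_e) = 0$ for each $\bq_e$. Taking any Borel $A \subseteq \Omega \setminus \Omega(\btchi)$, the marginal identity
\begin{equation*}
\talpha_e(A) = \alpha_e(A \times S_e) = \sum_{\bq_e \in S_e} \alpha_{\bq_e}(A \times \{\bq_e\})\, \psi_e(\bq_e) = 0
\end{equation*}
delivers the required support condition.

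The main obstacle is the new support constraint, since in the extended model the set $\Omega(\btchi)$ depends on the decision variable $\btchi$ itself, a coupling that is absent from the original lemma. The key observation that resolves this is that both the stochastic and fluid definitions anchor this dependence to the same quantity: the stochastic constraint uses $\E{}{\bbarx}$ while the fluid constraint uses $\btchi$, and the definition $\btchi := \E{}{\bbarx}$ makes them coincide. Once this consistency is in place, the rest of the argument reduces to the same first-moment balance and straightforward measure-theoretic bookkeeping as in Lemma \ref{lemma: necessary_constraints}.
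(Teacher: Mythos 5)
Your proof is correct and follows essentially the same route as the paper: the flow-balance constraints are obtained by the same first-moment argument as in Lemma \ref{lemma: necessary_constraints}, and the support condition $\talpha_e(\Omega\setminus\Omega(\btchi))=0$ is deduced from the stochastic feasibility constraint on $\alpha_{\bq_e}$ together with the identification $\btchi=\E{}{\bbarx}$. The only organizational difference is that the paper verifies this support condition as a step inside the proof of Proposition \ref{prop: generalized_fluid_model} (computing $\talpha_e(\Omega(\btchi))=1$ in the joint-measure formulation), whereas you fold it into the lemma where it logically belongs; the two calculations are equivalent via Proposition \ref{prop: equivalence_joint_marginal}.
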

\subsubsection{Multiple Pricing Policy and Random Matching}
 \proof{Proof of Corollary \ref{corollary: optimality_nash_equilibrium}}
Using Lemma \ref{lemma: stability} on the secondary queues, we know that the system is stable and the expected queue length is finite. Now, by Lemma \ref{lemma: necessary_constraints} and the one to one compatibility structure of the bipartite graph formed by the secondary queues, we have $\E{}{\bary_{ij}}=\tchi_{ij,e}^\star$ for all $(i,j) \in E$. In particular, for a given $(i,j) \in E$, let $r$ and $d$ be such that $ir$ and $jd$ are compatible. Then, by taking expectation of \eqref{eq: multiple_pricing_policy}, we get
\begin{align}
   \E{}{\lambda_{jd,\eta}(\bbarq_e)}=\E{}{\E{\alpha_{\bq_e,\eta}}{\mu_{ir}}}&=\E{}{\bary_{ij}} \quad \forall (i,j) \in E \nonumber \\
    \Rightarrow \tchi_{ij,e}^\star+\epsilon_\eta\left(\P{}{q_{jd}^{(2)}=0}-\P{}{q_{jd}^{(2)}>0}\right)=\tchi_{ij,e}^\star&=\E{}{\bary_{ij}} \quad \forall (i,j) \in E. \label{eq: zero_firstorderterm}
\end{align}
In addition, as we have $\alpha_{\bq_e,\eta}=\talpha^\star$ for all $\bq_e \in S_e$, the system is in equilibrium. Now, we will show optimality. Applying Lemma \ref{lemma: stability} on the secondary queues, we have
 \begin{align*}
     \E{}{\inner{\bone_{|E|}}{\bbarq_e}}=\E{}{\inner{\bone_{n+m}}{\bbarq}} \leq B \frac{1}{\epsilon_\eta} 
 \end{align*}
 for some $B>0$. Now, we will show the profit loss is of the order $\eta \epsilon_\eta^2$ similar to the proof of Lemma \ref{lemma: profit_loss}. By Taylor's series expansion, we have
 \begin{align*}
    &\frac{ L_\eta^P(\pi_\eta)}{\eta}\\
    ={}&\left(\sum_{j=1}^m F_j(\tl_{j,e}^\star)\tl_{j,e}^\star-\E{\talpha^\star_e}{c(\bmu,\btchi^\star_e)}\right)-\E{}{\sum_{j=1}^m F_j\left(\lambda_{j,\eta}(\bbarq_e)\right)\lambda_{j,\eta}(\bbarq_e)-\E{\talpha^\star_e}{c\left(\bmu,\E{}{\bbarx}\right)}} \\
    ={}&\sum_{j=1}^m F_j(\tl_{j,e}^\star)\tl_{j,e}^\star-\E{}{\sum_{j=1}^m F_j\left(\lambda_{j,\eta}(\bbarq_e)\right)\lambda_{j,\eta}(\bbarq_e)} \\
    ={}& \sum_{j=1}^m F_j(\tl_{j,e}^\star)\tl_{j,e}^\star\\
    &-\E{}{\sum_{j=1}^m \left(\tl_{j,e}^\star-\epsilon_\eta \sum_{l=1}^{|N(j)|} \mathbbm{1}_{\left\{q_{jd}^{(2)}>0\right\}}+\epsilon_\eta \sum_{l=1}^{|N(j)|} \mathbbm{1}_{\left\{q_{jd}^{(2)}=0\right\}}\right)F_j\left(\tl_{j,e}^\star-\epsilon_\eta \sum_{l=1}^{|N(j)|} \mathbbm{1}_{\left\{q_{jd}^{(2)}>0\right\}}+\epsilon_\eta \sum_{l=1}^{|N(j)|} \mathbbm{1}_{\left\{q_{jd}^{(2)}=0\right\}}\right)} \\
    \overset{*}{=}{}& \epsilon_\eta \sum_{j=1}^m\left(\sum_{l=1}^{|N(j)|} \P{}{q_{jd}^{(2)}=0}-\P{}{q_{jd}^{(2)}>0}\right)\left(\tl^\star_{j,e} F_j'(\tl^\star_{j,e})+\tl^\star_{j,e}\right)+O(\epsilon_\eta^2)\overset{**}{=}O(\epsilon_\eta^2),
 \end{align*}
 where $(*)$ follows by the Taylor's series expansion and the higher order terms are denoted by $O(\epsilon_\eta^2)$ and $(**)$ follows by \eqref{eq: zero_firstorderterm}. This completes the proof.
 \endproof
\subsubsection{Two Pricing Policy and Improved Random Matching} \label{app: improved_random_matching}
Max-weight in the context of two-sided queues is equivalent to join the shortest queue (JSQ) algorithm in the context of load balancing. In addition, random matching in both these contexts are equivalent. The improved random matching we will present in this section is equivalent to join the idle queue (JIQ) in the context of load balancing. It is known in the literature that JIQ is better than random matching but worse than JSQ. In each decision epoch, we match $y_{ij}(k)$ number of $i$ type of servers to $j$ type of customers in expectation where $y_{ij}(k)$ is defined as follows:
\begin{align*}
    y_{ij}(k)=\min\left\{\frac{\tchi_{ij,e}^\star}{\E{\talpha^\star_e}{\mu_i}}(q_i^{(1)}(k)+a_i^{(1)}(k)), \frac{\tchi_{ij,e}^\star}{\tl_{j,e}^\star}(q_j^{(2)}(k)+a_j^{(2)}(k))\right\}.
\end{align*}
In particular, we divide the customers and servers waiting in the queues proportional to the fluid solution and then match as many as possible. This is precisely the feasible solution that we have used to prove Lemma \ref{lemma: negative_drift}. Thus, the optimality of two price policy with improved random matching will follow similarly and we omit the details for brevity. In addition, note that $\E{}{y_{ij}}$ is at least $\tchi_{ij}^\star$ if either one of $q_i^{(1)}$ or $q_j^{(2)}$ is larger than a constant. Thus, to show equilibrium, we need to analyze $\P{}{\max\{q_i^{(1)},q_j^{(2)}\}>c}$ and show that it converges to 1 as $\eta \rightarrow \infty$. This is intuitive as the queue lengths scale as $\eta^{1/3}$. Although, we do not have handle on the individual queue lengths and this is the major challenge to show equilibrium.
\subsubsection{Proof of Proposition \ref{prop: generalized_fluid_model}} \label{app: generalized_fluid_model}
We omit the details of the proof of Lemma \ref{lemma: necessary_constraints_extended} as it follows similarly as the proof of Lemma \ref{lemma: necessary_constraints}. Now, by Proposition \ref{prop: equivalence_joint_marginal} the optimization problem \eqref{eq:opt_stoch_extended} can be equivalently re-written as follows:
\begin{subequations}
\begin{align}
    R^\star_e= \sup_{(\blambda(\cdot),\alpha_e(\cdot),\bx(\cdot))\in \StablePol_e} \E{\bbarq_e}{\inner{F(\blambda(\bbarq))}{\blambda(\bbarq)}}-\E{\alpha_e}{\cost(\bmu,\E{\bbarq_e}{\bbarx})}-\E{\bbarq_e}{\inner{\bs}{\bbarq}} \span \\
    \textit{subject to,} \quad &\blambda(\bq_e)\in \bbR^m_+,\quad \forall \bq_e\in S_e\\
    &\bx(\cdot)\quad \textit{satisfies} \:\: \eqref{eq: matching_constraints}\\
     &\alpha_e(\Omega(\E{}{\bbarx}) \times B_e)=\sum_{\bbarq_e \in B_e} \psi_e(\bbarq_e) \quad \forall B_e \subseteq S_e  \\
     &\psi_e(\cdot)=\textit{Stationary Distribution} (\blambda(\cdot),\alpha(\cdot),\bx(\cdot)).
\end{align}
\label{eq:opt_stoch_equivalent_extended}
\end{subequations}
 \proof{Proof of Proposition \ref{prop: generalized_fluid_model}}
 Consider a feasible policy $\pi=(\blambda(\cdot),\alpha(\cdot),\bx(\cdot))$ under which we have a stable Markov chain $\{\bq_e(k): k \in \bbZ_+\}$ as the policy is a stationary policy w.r.t. $\bq_e \in S_e$. Now, we will upper bound the net profit under $\pi$. We have
 \begin{align*}
     \lefteqn{\E{\bbarq_e}{\inner{F(\blambda(\bbarq_e))}{\blambda(\bbarq_e)} }-\E{\alpha_e}{c(\bmu, \E{\bbarq_e}{\bbarx})}-\E{\bbarq_e}{\inner{\bs}{\bbarq}} }\\
     &\leq\E{\bbarq_e}{\inner{F(\blambda(\bbarq_e))}{\blambda(\bbarq_e)} }-\E{\alpha_e}{c(\bmu, \E{\bbarq_e}{\bbarx})} \\
     &\leq\inner{F(\btlambda)}{\btlambda}-\E{\alpha_e}{c(\bmu, \E{\bbarq_e}{\bbarx})} \\
    &=\inner{F(\btlambda)}{\btlambda}-\E{\alpha_e}{c(\bmu, \btchi)}\\
     &=\inner{F(\btlambda)}{\btlambda}-\E{\talpha_e}{c(\bmu, \btchi)},
 \end{align*}
where the last two equality follows by the definition of $\btlambda$, $\talpha_e$ and $\btchi$. Now, note that
\begin{align*}
    \talpha_e(\Omega(\btchi))=\alpha(\Omega(\btchi) \times S_e)=\alpha(\Omega(\E{}{\bbarx}) \times S_e)=\sum_{\bbarq_e \in S_e} \psi_e(\bbarq_e)=1.
\end{align*}
Thus, we have $\talpha_e(\Omega \backslash\Omega(\btchi))=0$ and as $\pi$ is an arbitrary policy, this proves the proposition.
 \endproof
 \subsection{Pessimistic Equilibrium} \label{app: pess_nash_eq}
 Similar to the Proposition \ref{prop: equivalence_joint_marginal}, we can show the following:
 \begin{lemma} \label{lemma: pessimistic: joint_distribution_formulation}
 \begin{align*}
    &\left\{(\zeta,\alpha): \alpha(\bbR_+^n \times B)=\sum_{\bbarq \in B} \psi(\bbarq), \zeta(\sigma^{-1}(C) \times B)=\alpha(C \times B) \ \forall B \subseteq \calS, C \subseteq \bbR_+^n \right\} \\
    ={}&\bigg\{(\zeta,\alpha): \exists \{\zeta_{\bp^{(1)},\bq}\}_{\alpha \textit{a.e.} \bbR_+^n \times \calS}, \{\alpha_\bq\}_{\bq : \psi(\bq)>0} \textit{ s.t. } \forall f : \sigma^{-1}(\bbR_+^n) \times \calS \rightarrow \bbR, g : \bbR_+^n \times \calS \rightarrow \bbR \textit{ we have }\\
    & \E{\zeta}{f(\bmu,\bp^{(1)},\bbarq)}=\E{\bbarq}{\E{\alpha_\bbarq}{\E{\zeta_{\bp^{(1)},\bbarq}}{f(\bmu,\bp^{(1)},\bbarq)}}}, \E{\alpha}{g(\bp^{(1)},\bbarq)}=\E{\bbarq}{\E{\alpha_\bbarq}{g(\bp^{(1)},\bbarq))}},\\
    &\zeta_{\bp^{(1)},\bbarq}(A)=\zeta_{\bp^{(1)},\bbarq}(A \cap \sigma^{-1}(\bp^{(1)},\bbarq))), \alpha_\bq(C)=\alpha_\bq(C \cap \sigma^{-1}(\bq)), \forall A \subseteq \sigma^{-1}(\bbR_+^n) \times \calS, C \subseteq \bbR_+^n \times \calS\bigg\},
 \end{align*}
 where only Borel measurable sets and functions are considered.
  \end{lemma}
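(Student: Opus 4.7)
\proof{Proof Plan.}
The plan is to mimic the strategy used for Proposition \ref{prop: equivalence_joint_marginal}, but now applying the Disintegration theorem twice (once for $\alpha$ and once for $\zeta$) because the pessimistic model has two nested layers of randomization: the system operator chooses the price via $\alpha$, and then the servers choose the equilibrium via $\zeta$. I will show both inclusions set-theoretically.

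For the forward inclusion ($\subseteq$), suppose $(\zeta, \alpha)$ lies in the first set. First, applying exactly the argument of Proposition \ref{prop: equivalence_joint_marginal} to $\alpha$: the push-forward of $\alpha$ under the projection $\sigma_p: \bbR_+^n \times \calS \to \calS$ equals $\psi$, so by the Disintegration theorem there exist conditional measures $\{\alpha_\bq\}_{\bq:\psi(\bq)>0}$ supported on $\sigma_p^{-1}(\bq)=\bbR_+^n \times \{\bq\}$ such that $\E{\alpha}{g(\bp^{(1)},\bq)} = \E{\bbarq}{\E{\alpha_\bbarq}{g(\bp^{(1)},\bbarq)}}$ for every Borel measurable $g$. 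Second, I apply the Disintegration theorem to $\zeta$ relative to the projection $\sigma: \sigma^{-1}(\bbR_+^n) \times \calS \to \bbR_+^n \times \calS$. Since the hypothesis $\zeta(\sigma^{-1}(C)\times B) = \alpha(C \times B)$ for all Borel $C \subseteq \bbR_+^n,\ B \subseteq \calS$ says that the push-forward of $\zeta$ under $\sigma$ is $\alpha$, the theorem yields conditional measures $\{\zeta_{\bp^{(1)},\bq}\}$, defined $\alpha$-a.e., each supported on $\sigma^{-1}(\bp^{(1)},\bq)$, such that $\E{\zeta}{f(\bmu,\bp^{(1)},\bq)} = \E{\alpha}{\E{\zeta_{\bp^{(1)},\bq}}{f}}$; combining with the disintegration of $\alpha$ gives the nested identity of the second set.

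For the reverse inclusion ($\supseteq$), I plug simple indicator test functions into the nested identities. Taking $g = \mathbbm{1}_{\bbR_+^n \times B}$ and using $\alpha_\bq(C) = \alpha_\bq(C \cap \sigma_p^{-1}(\bq))$, the same chain of equalities as at the end of the proof of Proposition \ref{prop: equivalence_joint_marginal} yields $\alpha(\bbR_+^n \times B) = \sum_{\bq \in B}\psi(\bq)$. Next, taking $f = \mathbbm{1}_{\sigma^{-1}(C) \times B}$, the nested expectation together with the support condition $\zeta_{\bp^{(1)},\bq}(A) = \zeta_{\bp^{(1)},\bq}(A \cap \sigma^{-1}(\bp^{(1)},\bq))$ reduces to $\zeta_{\bp^{(1)},\bq}(\sigma^{-1}(\bp^{(1)},\bq)) = 1$, giving $\zeta(\sigma^{-1}(C) \times B) = \E{\bbarq}{\E{\alpha_\bbarq}{\mathbbm{1}_{C\times B}(\bp^{(1)},\bbarq)}} = \alpha(C \times B)$, as required.

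The only genuine obstacle is ensuring the hypotheses of the Disintegration theorem are met; this requires Polish/standard-Borel structure on the underlying spaces $\bbR_+^n$, $\calS$, and $\sigma^{-1}(\bbR_+^n) \times \calS$, and $\sigma$-finiteness (here, probability) of the measures being disintegrated. Since $\calS \subseteq \bbZ_+^{n+m}$ is countable and the other factors are subsets of Euclidean space, these regularity conditions hold, and the proof reduces to bookkeeping analogous to the single-level case already handled in Proposition \ref{prop: equivalence_joint_marginal}. $\Halmos$
\endproof
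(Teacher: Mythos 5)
Your proposal is correct and takes essentially the same approach as the paper: both directions are handled by the same two-step disintegration (first of $\alpha$ through the push-forward onto $\calS$, then of $\zeta$ through the push-forward onto $\bbR_+^n\times\calS$) for the forward inclusion, and by the same indicator-function test ($g=\mathbbm{1}_{\bbR_+^n\times B}$, then $f=\mathbbm{1}_{\sigma^{-1}(C)\times B}$ exploiting the support conditions) for the reverse inclusion. Your added remark on Polish/standard-Borel regularity is a reasonable sanity check but does not change the route.
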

  Note that, with a slight abuse of notation, we denote all the projection functions by $\sigma$. The domain and the range of the function will be clear from the context. The proof of the Lemma is provided at the end of the section. Now, using the result, we can re-write the pessimistic formulation in terms of joint distributions $(\zeta,\alpha)$ as follows:
\begin{subequations}
\begin{align}
    R^\star_\rho\triangleq \sup_{\pi=(\blambda(\cdot),\alpha(\cdot),\bx(\cdot))\in \StablePol_p}&\bigg\{ \inf_{\zeta(\cdot) \in Z_\rho(\pi)}  \E{\bbarq}{\inner{F(\blambda(\bbarq))}{\blambda(\bbarq)}} 
    -\E{\zeta}{\cost(\bmu,\bp^{(1)})}-\E{\bbarq}{\inner{\bs}{\bbarq}}\bigg\}\\
    \textit{subject to,} \quad &\blambda(q)\in \bbR^m_+,\quad \forall q\in \calS\\
    &\bx(\cdot)\quad \textit{satisfies} \:\: \eqref{eq: matching_constraints} . \\
    &\zeta(\sigma^{-1}(C) \times B)=\alpha(C \times B) \quad \forall C \subseteq \bbR_+^n, B \subseteq \calS \textit{ and } C,B \textit{ Borel} \label{eq: consistency_zeta_alpha} \\
    &\alpha(\bbR_+^n \times B)=\sum_{\bbarq \in B} \psi(\bbarq) \quad \forall B \subseteq \calS \textit{ and } B \textit{ Borel} \label{eq: consistency_alpha_nu}\\
    &\psi(\cdot)=\textit{Stationary Distribution}(\blambda(\cdot),\alpha(\cdot),\bx(\cdot),\zeta(\cdot)), \label{eq: stationary_nu}
\end{align}
\label{eq:opt_stoch_pessimistic_joint}
\end{subequations}
where $\sigma^{-1}(C)=\{(\bmu,\bp^{(1)}): \bmu \in \mathcal{M}(\bp^{(1)}), \bp^{(1)} \in C\}$. The inner infimum over all the possible equilibrium under which the system is stable is to make sure that the servers picks the set of arrival rates which minimizes the profit of the system operator. The outer supremum is the system operator picking a pricing policy which maximizes it's profit. The constraint \eqref{eq: consistency_zeta_alpha} is the consistency conditions which makes sure that the marginal distribution of $\zeta(\cdot)$ with respect to $(\bp^{(1)},\bbarq)$ follows the the joint distribution of the pricing policy and stationary distribution $\alpha$. Intuitively, the drivers only have freedom to choose the distribution over $\bmu$ as the prices are set by the system operator and the stationary distribution is the response of the system dynamics. Similarly, \eqref{eq: consistency_alpha_nu} is the consistency condition for $\alpha(\cdot)$. The marginal distribution of the pricing policy should coincide with the stationary distribution.
For the above defined pessimistic model, the probabilistic fluid model can be written as follows:
\begin{subequations}
\begin{align}
  \tilde{R}_\rho^\star=\sup_{\btlambda,\talpha,\btchi}\left\{\inf_{\tzeta(\bmu,\bp^{(1)})}\left\{\inner{F(\btlambda)}{\btlambda}-\E{\tzeta}{c(\bp^{(1)},\bmu)}\right\}\right\} \span \\
    \textit{subject to, } \tl_j&=\sum_{i=1}^n \tchi_{ij} \quad \forall j \in [m] \\
    \E{\tzeta}{\mu_i}&=\sum_{j=1}^m \tchi_{ij} \quad \forall i \in [n] \\
    \tchi_{ij}&=0 \quad \forall (i,j) \notin E \\
    \tzeta(\sigma^{-1}(C))&=\talpha(C) \quad \forall C \subseteq \bbR_+^n \textit{ and } C \textit{ Borel} \label{eq: consistency_fluid} \\
    \tzeta(\sigma^{-1}(\bbR_+^n))&=1.
\end{align}
\label{eq: prob_fluid_model_pessimistic}
\end{subequations}
Similar to the model given by \eqref{eq:opt_stoch_pessimistic}, we impose a consistency condition \eqref{eq: consistency_fluid} between the `averaged' response of drivers $\tzeta(\cdot)$ and the `averaged' pricing policy $\talpha(\cdot)$. The last constraint restricts the drivers' responses to the set of rates under which the system is in equilibrium. Now, we will show that this fluid model provides an upper bound on the achievable profit under any given policy.
\begin{proposition} \label{prop: fluid_model_pessimistic}
Let $\pi=(\blambda(\cdot),\alpha(\cdot),\bx(\cdot))$ be a feasible solution of \eqref{eq:opt_stoch_pessimistic} then 
\begin{equation*}
R_\rho(\pi)\leq P_\rho(\pi)
%\E{\bbarq}{\inner{F(\blambda(\bbarq))}{\blambda(\bbarq)}}-\E{\alpha(\bbarq)}{\cost(\bmu)}
    \leq \tilde{R}_\rho^\star.
\end{equation*}
That is, the fluid profit is an upper bound for the stochastic profit and net profit. 
\end{proposition}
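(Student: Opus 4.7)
The argument parallels that of Proposition~\ref{prop: fluid_model}, adapted to the $\sup$--$\inf$ structure of the pessimistic formulation. The inequality $R_\rho(\pi)\le P_\rho(\pi)$ is immediate since $\bs\ge \bzero$ and $\E{}{\inner{\bs}{\bbarq}}\ge 0$. I restrict attention to $\pi$ with $\E{}{\inner{\bone_{n+m}}{\bbarq}}<\infty$, since otherwise $P_\rho(\pi)=-\infty$ and the bound is trivial. The remaining task is to show $P_\rho(\pi)\le \tilde{R}_\rho^\star$.

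Fix $\pi\in\StablePol_\rho$ and any $\zeta\in Z_\rho(\pi)$. Define the fluid averages $\tilde{\lambda}\triangleq \E{}{\blambda(\bbarq)}$, $\tilde{\alpha}(C)\triangleq \alpha(C\times \calS)$, $\tilde{\zeta}(A)\triangleq \zeta(A\times \calS)$, and $\tilde{\chi}_{ij}\triangleq \E{}{\bary_{ij}}$. The first step is a pessimistic analogue of Lemma~\ref{lemma: necessary_constraints}: stability combined with $\E{}{\inner{\bone}{\bbarq}}<\infty$ gives the steady-state flow balance $\E{}{\bbara}=\E{}{\bbarx}$, which translates to the customer constraint $\tilde{\lambda}_j=\sum_i\tilde{\chi}_{ij}$ and the server constraint $\E{\tilde{\zeta}}{\mu_i}=\sum_j\tilde{\chi}_{ij}$ (using $\E{}{\bbara^{(1)}}=\E{\bbarq}{\E{\alpha_{\bbarq}}{\E{\zeta_{\bp^{(1)},\bbarq}}{\bmu}}}=\E{\tilde{\zeta}}{\bmu}$). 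The compatibility condition is inherited from \eqref{eq: matching_constraints}, while the marginal-consistency condition $\tilde{\zeta}(\sigma^{-1}(C))=\tilde{\alpha}(C)$ follows from \eqref{eq: consistency_zeta_alpha}, and $\tilde{\zeta}(\sigma^{-1}(\bbR_+^n))=1$ since $\zeta$ is a probability measure in steady state. Consequently $(\tilde{\lambda},\tilde{\alpha},\tilde{\chi})$ is outer-feasible and $\tilde{\zeta}$ is inner-feasible for \eqref{eq: prob_fluid_model_pessimistic}.

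The second step is Jensen's inequality applied to the concave revenue function $\lambda_j F_j(\lambda_j)$ (Assumption~\ref{ass: concave}), together with linearity of expectation for the cost term. This yields
\begin{equation*}
\E{}{\inner{F(\blambda(\bbarq))}{\blambda(\bbarq)}}-\E{\zeta}{c(\bp^{(1)},\bmu)}\;\le\; \inner{F(\tilde{\lambda})}{\tilde{\lambda}}-\E{\tilde{\zeta}}{c(\bp^{(1)},\bmu)}.
\end{equation*}
The third and critical step is to bridge from a single inner-feasible $\tilde{\zeta}$ to the inner infimum. Given $\epsilon>0$, pick $\zeta^{\epsilon}\in Z_\rho(\pi)$ that $\epsilon$-achieves $P_\rho(\pi)$, which is equivalent to $\epsilon$-maximizing $\E{\zeta}{c}$ (since the revenue term depends on $\zeta$ only through its effect on the stationary distribution, and I will argue this dependence is negligible up to an $O(\epsilon)$ term). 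I then show by a lifting/realizability construction that the induced marginal $\tilde{\zeta}^{\epsilon}$ is near-optimal for the fluid inner problem: any fluid-feasible $\tilde{\zeta}'$ can be reproduced as the marginal of a stochastic adversary that preserves stability, because stability under $\pi$ is characterized at the fluid level precisely by the flow-balance and marginal-consistency constraints already embedded in \eqref{eq: prob_fluid_model_pessimistic}. Hence $\E{\tilde{\zeta}^{\epsilon}}{c}\ge \sup_{\tilde{\zeta}'}\E{\tilde{\zeta}'}{c}-\epsilon$, and
\begin{equation*}
P_\rho(\pi)\;\le\; \inner{F(\tilde{\lambda})}{\tilde{\lambda}}-\E{\tilde{\zeta}^{\epsilon}}{c}+\epsilon \;\le\; \inf_{\tilde{\zeta}'}\bigl\{\inner{F(\tilde{\lambda})}{\tilde{\lambda}}-\E{\tilde{\zeta}'}{c}\bigr\}+2\epsilon \;\le\; \tilde{R}_\rho^\star+2\epsilon,
\end{equation*}
and letting $\epsilon\downarrow 0$ gives the claim.

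The main obstacle is the lifting argument in the third step: showing that the stochastic adversary's feasible set, restricted to marginals, is at least as rich as the fluid adversary's feasible set. The delicate point is a measure-theoretic disintegration that produces, from a fluid $\tilde{\zeta}'$, a family of conditional distributions $\{\zeta'_{\bp^{(1)},\bq}\}$ satisfying \eqref{eq: marginal_servers} and keeping the induced Markov chain stable. Since stability is controlled by the expected drift, and the fluid flow-balance constraints coincide exactly with the necessary first-moment conditions for stability, this lifting is structurally natural but requires careful bookkeeping.
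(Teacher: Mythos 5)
Your proof follows the same high-level route as the paper's: invoke Lemma~\ref{lemma: necessary_constraints_pessimistic} for flow balance and consistency of the induced fluid triple $(\btlambda,\talpha,\btchi)$, apply Jensen's inequality to the concave revenue term, and then bridge from the stochastic inner infimum over $\zeta\in Z_\rho(\pi)$ to the fluid inner infimum over $\tzeta$. The paper's own treatment of this final bridging step is terse---it writes $\inf_{\zeta\in Z_\rho(\pi)}\{\cdot\}=\inf_{\tzeta:\tzeta(\sigma^{-1}(\bbR_+^n))=1}\{\cdot\}$, checks the consistency constraint via \eqref{eq: consistency_zeta_alpha}, and stops---so you are right that this is where the crux lies.

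The issue is with your justification of the lifting. You assert that ``stability under $\pi$ is characterized at the fluid level precisely by the flow-balance and marginal-consistency constraints,'' but flow balance is only a \emph{necessary} condition for stability, not a sufficient one --- that is exactly the content of Lemma~\ref{lemma: necessary_constraints} and Lemma~\ref{lemma: necessary_constraints_pessimistic}, and the paper stresses the gap explicitly when explaining why operating at the fluid-optimal rates yields a null-recurrent random walk (discussion following Proposition~\ref{prop: convexity_LP_reduction}). Consequently you cannot conclude that an arbitrary fluid-feasible $\tzeta'$ can be reproduced as the marginal of some $\zeta'\in Z_\rho(\pi)$: a fluid-feasible adversary response may drive the DTMC out of positive recurrence and hence out of $Z_\rho(\pi)$. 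A second, smaller imprecision: you pick $\zeta^{\epsilon}$ to $\epsilon$-achieve $P_\rho(\pi)$ and treat this as equivalent to $\epsilon$-maximizing $\E{\zeta}{c}$, but the revenue term $\E{\psi_\zeta}{\inner{F(\blambda(\bbarq))}{\blambda(\bbarq)}}$ genuinely varies with $\zeta$ through the stationary distribution, and the claim that this variation is $O(\epsilon)$ is neither obvious nor argued. The cleaner ordering is to take an $\epsilon$-minimizer $\tzeta^{\epsilon}$ of the fluid inner problem first and then lift it into $Z_\rho(\pi)$---but the lifting itself is precisely the step that needs a different argument than the one you give.
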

Thus, this proposition provides us with an upper bound on the net average profit achievable under any policy. We now present the following lemma which is a crucial step in the proof of the proposition.
\begin{lemma} \label{lemma: necessary_constraints_pessimistic}
For a given stationary Markovian pricing and matching policy $(\blambda(\cdot),\alpha(\cdot),\bx(\cdot))$, let $\btlambda=\E{}{\blambda(\bbarq)}$, $\talpha(C)=\alpha(C \times \calS)$ for all Borel $C \subseteq \bbR_+^n$ and $\btchi=\E{}{\bx(\bbarq)}$. If $(\blambda(\cdot),\alpha(\cdot),\bx(\cdot))\in \StablePol_\rho$, $\E{}{\inner{\bone_{n+m}}{\bbarq}}<\infty$ and $\bx(\cdot)$ satisfies \eqref{eq: matching_constraints},
then $(\btlambda,\talpha,\btchi)$ is feasible in the probabilistic fluid problem \eqref{eq: prob_fluid_model_pessimistic}.
%the constraints of the optimization problem \eqref{eq: prob_fluid_model} are necessary for stability.
\end{lemma}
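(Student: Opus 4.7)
\proof{Proof sketch.}
The plan mirrors that of Lemma \ref{lemma: necessary_constraints}, with two extra bookkeeping items owing to the adversarial server distribution $\zeta$: first, we must produce the candidate measure $\tzeta$ (which is not explicitly given in the hypothesis), and second, we must verify the consistency condition \eqref{eq: consistency_fluid}, which links $\tzeta$ back to $\talpha$.

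Since $\pi=(\blambda(\cdot),\alpha(\cdot),\bx(\cdot))\in \StablePol_\rho$, by Definition \ref{defn: stability_pessimistic} the set $Z_\rho(\pi)$ is nonempty; fix any $\zeta(\cdot)\in Z_\rho(\pi)$ making the chain positive recurrent. Using Lemma \ref{lemma: pessimistic: joint_distribution_formulation}, disintegrate $\zeta$ as $\{\zeta_{\bp^{(1)},\bq}\}$, with $\zeta$ satisfying the consistency constraint \eqref{eq: consistency_zeta_alpha}. Define the candidate fluid distribution as the marginal
\begin{equation*}
  \tzeta(A)\triangleq \zeta(A \times \calS) \quad \forall A \subseteq \sigma^{-1}(\bbR_+^n),\ A \text{ Borel}.
\end{equation*}
Then $\tzeta$ is a probability measure concentrated on $\sigma^{-1}(\bbR_+^n)$, which immediately gives $\tzeta(\sigma^{-1}(\bbR_+^n))=1$. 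Moreover, for any Borel $C\subseteq \bbR_+^n$, \eqref{eq: consistency_zeta_alpha} with $B=\calS$ yields $\tzeta(\sigma^{-1}(C))=\zeta(\sigma^{-1}(C)\times\calS)=\alpha(C\times\calS)=\talpha(C)$, which is exactly \eqref{eq: consistency_fluid}.

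It remains to derive the flow balance constraints. Since $\E{}{\inner{\bone_{n+m}}{\bbarq}}<\infty$ and the chain is positive recurrent, we can use the standard steady-state argument (writing $\bbarq^+=\bbarq+\bbara-\bbarx$ and taking expectations) to obtain $\E{}{\bbara}=\E{}{\bbarx}$. On the matching side, by \eqref{eq: matching_constraints}, $\bx^{(2)}_j(\bbarq)=\sum_{i}\bary_{ij}$ and $\bx^{(1)}_i(\bbarq)=\sum_{j}\bary_{ij}$, with $\bary_{ij}=0$ for $(i,j)\notin E$; defining $\tchi_{ij}=\E{}{\bary_{ij}}$ gives the compatibility constraint and the right-hand sides of the two flow balance equations. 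On the arrival side, $\E{}{a^{(2)}_j(\bbarq)}=\tl_j$ by definition of $\btlambda$, while for servers the arrival rate in state $\bq$ is $\E{\alpha_\bq}{\E{\zeta_{\bp^{(1)},\bq}}{\mu_i}}$, so the tower property together with the definition of $\tzeta$ gives $\E{}{a^{(1)}_i(\bbarq)}=\E{\tzeta}{\mu_i}$. Equating arrivals and departures coordinate-wise delivers both flow balance constraints of \eqref{eq: prob_fluid_model_pessimistic}.

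The only delicate step is the consistency verification: we must be careful that the disintegration provided by Lemma \ref{lemma: pessimistic: joint_distribution_formulation} yields a $\tzeta$ supported on $\sigma^{-1}(\bbR_+^n)$ (which is where the pessimistic cost function $\cost_\rho$ is finite); this is where the support restrictions \eqref{eq: marginal_servers} on $\zeta_{\bp^{(1)},\bq}$ are used. Once that is in place, the remaining arithmetic parallels the original proof of Lemma \ref{lemma: necessary_constraints} verbatim. $\Halmos$
\endproof
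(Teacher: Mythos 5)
Your proof takes essentially the same route as the paper's: steady-state balance $\E{}{\bbarq}=\E{}{\bbarq^+}$ gives $\E{}{\bbara}=\E{}{\bbarx}$, which after conditioning yields the two flow-balance constraints with $\tzeta$ defined as the $(\bmu,\bp^{(1)})$-marginal of a fixed $\zeta\in Z_\rho(\pi)$. The only difference is bookkeeping: you verify the consistency condition \eqref{eq: consistency_fluid} and the support condition $\tzeta(\sigma^{-1}(\bbR_+^n))=1$ inside the lemma, whereas the paper proves only the flow-balance part in the lemma and defers the construction of $\tzeta$ and the consistency check to the proof of Proposition~\ref{prop: fluid_model_pessimistic}; your version is therefore slightly more self-contained but not a distinct argument.
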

We first use this lemma to prove the proposition and later prove the lemma.
\proof{Proof of Proposition \ref{prop: fluid_model_pessimistic}}
Given $(\blambda(\cdot),\alpha(\cdot),\bx(\cdot))\in \StablePol_\rho$, the profit under this policy can be written as follows:
\begin{align*}
    \inf_{\zeta \in Z_\rho(\pi)} \left\{\E{\bbarq}{\inner{F(\blambda(\bbarq))}{\blambda(\bbarq)}}-\E{\zeta}{c(\bmu,\bp^{(1)})}\right\} &\leq \inf_{\zeta \in Z_\rho(\pi)} \left\{\inner{F(\btlambda)}{\btlambda}-\E{\zeta}{c(\bmu,\bp^{(1)})}\right\} \\
    &=\inf_{\tzeta: \tzeta(\sigma^{-1}(\bbR_+^n))=1} \left\{\inner{F(\btlambda)}{\btlambda}-\E{\tzeta}{c(\bmu,\bp^{(1)})}\right\},
\end{align*}
where $\tzeta(A)=\zeta(A \times \calS)$ for all Borel $A \subseteq \sigma^{-1}(\bbR_+^n)$. Note that the consistency condition for $\talpha$ and $\tzeta$ can be checked as follows:
\begin{align*}
    \tzeta(\sigma^{-1}(C))=\zeta(\sigma^{-1}(C) \times \calS)= \alpha(C \times \calS) = \talpha(C) \quad \forall C \subseteq \bbR_+^n \textit{ and } C \textit{ Borel}. \hfill \Halmos
\end{align*}
\endproof
\proof{Proof of Lemma \ref{lemma: necessary_constraints_pessimistic}}
By the hypothesis of the Lemma, we have $\E{\bbarq}{\inner{\bone_{n+m}}{\bbarq}}<\infty$. Thus, in steady state, we have $
    \E{}{\bbarq}=\E{}{\bbarq^+} \Rightarrow \E{}{\bbara}=\E{}{\bbarx}$, where  we denote the queue length one time slot after $\bbarq$ by $\bbarq^+=\bbarq+\bbara-\bbarx$.
Now, we will simplify the RHS and LHS separately. We have
\begin{align}
    \E{}{\bbara}=(\E{\bbarq}{\blambda(\bbarq)},\E{\zeta}{\bmu})=(\btlambda,\E{\zeta}{\bmu})=(\btlambda,\E{\tzeta}{\bmu}). \label{eq: a_fluid_pess}
\end{align}
Also, note that
\begin{subequations}
\begin{align}
    \E{\bbarq}{\barx_i^{(1)}}&=\sum_{j=1}^m \E{\bbarq}{\bary_{ij}}=\sum_{j=1}^m \tchi_{ij} \\
     \E{\bbarq}{\barx_j^{(2)}}&=\sum_{i=1}^n \E{\bbarq}{\bary_{ij}}=\sum_{i=1}^n \tchi_{ij} 
\end{align}
\label{eq: x_fluid_pess}
\end{subequations}
Now, equating \eqref{eq: a_fluid_pess} and \eqref{eq: x_fluid_pess} gives the lemma.
$\Halmos$
\endproof
Once we obtain a fluid model which provides an upper bound on the achievable profit under any pricing and matching policy, we can define a two price policy and max-weight matching policy similar to the optimistic case. Retracing the same steps, we should be able to show that the resultant policy is asymptotically optimal. As this will mostly be repetition of the main content of the paper, we omit the details here and conclude our discussion on the pessimistic, probabilistic fluid model. Now, we conclude our discussion by presenting the proof of Lemma \ref{lemma: pessimistic: joint_distribution_formulation}.
\proof{Proof of Lemma \ref{lemma: pessimistic: joint_distribution_formulation}}
Note that the push forward measure $\alpha\circ \sigma^{-1}$ is given by
\begin{align*}
    \alpha\circ \sigma^{-1}(\bbarq)=\alpha(\bbR_+^n \times\{\bbarq\})=\psi(\bbarq) \quad \forall \bbarq \in \calS.
\end{align*}
Thus, by the disintegration theorem, there exists $\psi$ almost everywhere $\{\alpha_\bbarq\}_{\calS}$ such that
\begin{align}
    \alpha_\bbarq(C)=\alpha_\bbarq(C \cap \sigma^{-1}(\bbarq)) \quad \forall C \subseteq \bbR_+^n \textit{ and } \E{\alpha}{g(\bp^{(1)},\bbarq)}=\E{\bbarq}{\E{\alpha_\bbarq}{g(\bp^{(1)},\bbarq)}}. \label{eq: joint_pessimistic}
\end{align}
Next, note that the push forward measure $\zeta \circ \sigma^{-1}$ is given by
\begin{align*}
    \zeta \circ \sigma^{-1}(C \times B)=\zeta(\sigma^{-1}(C) \times B)=\alpha(C \times B) \quad \forall C \subseteq \bbR_+^n, B \subseteq \calS.
\end{align*}
Thus, again by the disintegration theorem, there exists $\alpha$ almost everywhere $\{\zeta_{\bp^{(1)},\bbarq}\}_{\bbR_+^n \times \calS}$ such that 
\begin{align*}
    \zeta_{\bp^{(1)},\bbarq}(A)&=\zeta_{\bp^{(1)},\bbarq}(A \cap \sigma^{-1}(\bp^{(1)},\bbarq)) \quad \forall A \subseteq \sigma^{-1}(\bbR^n_+) \times \calS \\
    \E{\zeta}{f(\bmu,\bp^{(1)},\bbarq)}&=\E{\alpha}{\E{\zeta_{\bp^{(1)},\bbarq}}{f(\bmu,\bp^{(1)},\bbarq)}}=\E{\bbarq}{\E{\alpha_\bbarq}{\E{\zeta_{\bp^{(1)},\bbarq}}{f(\bmu,\bp^{(1)},\bbarq)}}},
\end{align*}
where the last equality follows by \eqref{eq: joint_pessimistic}. This proves that the first set is a subset of the second set. Now, we will prove the opposite. For a set $B \subseteq \calS$ and $C \subseteq \bbR_+^n$, take $g(\bp^{(1)},\bbarq)=\mathbbm{1}\{C \times B\}$. We have
\begin{align*}
    \alpha(C \times B)&=\E{\alpha}{g(\bp^{(1)},\bbarq)}=\E{\bbarq}{\E{\alpha_\bbarq}{g(\bp^{(1)},\bbarq)}}=\E{\bbarq}{\alpha_\bbarq(C \times B)}\\
    &=\E{\bbarq}{\alpha_\bbarq(C \times \{\bbarq\})\mathbbm{1}\{\bbarq \in B\}}=\sum_{\bbarq \in B} \alpha_\bbarq(C \times \{\bbarq\})\psi(\bbarq) \\
    &=\sum_{\bbarq \in B}\alpha_\bbarq(C \times \calS)\psi(\bbarq).
\end{align*}
Take $C=\bbR_+^n$ to get $\alpha(\bbR_+^n \times B)=\sum_{\bbarq \in B} \psi(\bbarq)$. Now, take $f(\bmu,\bp^{(1)},\bbarq)=\mathbbm{1}\{\sigma^{-1}(C) \times B\}$ to get
\begin{align*}
    \zeta(\sigma^{-1}(C) \times B)&=\E{\zeta}{f(\bmu,\bp^{(1)},\bbarq)}=\E{\bbarq}{\E{\alpha_\bbarq}{\E{\zeta_{\bp^{(1)},\bbarq}}{f(\bmu,\bp^{(1)},\bbarq)}}} \\
    &=\E{\alpha}{\E{\zeta_{\bp^{(1)},\bbarq}}{f(\bmu,\bp^{(1)},\bbarq)}}=\E{\alpha}{\zeta_{\bp^{(1)},\bbarq}(\sigma^{-1}(C) \times B)} \\
    &=\E{\alpha}{\zeta_{\bp^{(1)},\bbarq}(\mathcal{M}(\bp^{(1)}) \times \{\bp^{(1)}\} \times \{\bbarq\})) \mathbbm{1}\{\bp^{(1)} \in C, \bbarq \in B\}} \\
    &=\E{\alpha}{\zeta_{\bp^{(1)},\bbarq}(\sigma^{-1}(\bbR_+^n) \times \calS) \mathbbm{1}\{\bp^{(1)} \in C, \bbarq \in B\}} \\
    &=\E{\alpha}{ \mathbbm{1}\{\bp^{(1)} \in C, \bbarq \in B\}} \\
    &=\alpha(C \times B).
\end{align*}
This proves that the second set is a subset of the first set. Thus, the proof is complete. \hfill $\Halmos$
\endproof
\section{Technical Details}
\subsection{Borel Measurability of the Cost Function} \label{app: borel}
We recap the cost function is given in \eqref{eq: cost_function} below for the convenience of the reader.
\begin{align*}
    c(\bmu)=\min_{\bp^{(1)} \in \mathcal{M}(\bmu)} \inner{\bp^{(1)}}{\bmu}.
\end{align*}
Note that, using the Lemma \ref{lemma: variational_inequality} and Lemma \ref{lemma: integer_program}, we can write the cost function as the value function of a non-linear mixed-integer optimization program. In particular, the only integer variables are $\bb \in \{0,1\}^{n \times n}$. Note that, we can equivalently write the cost function as follows:
\begin{align*}
    c(\bmu)=\min_{\bar{\bb} \in \{0,1\}^{n \times n}, \sum_{l=1}^n \bar{b}_{il}=1 \ \forall i \in [n]}\left\{ \min_{\bp^{(1)} \in \mathcal{M}(\bmu)} \inner{\bp^{(1)}}{\bmu} \ \textit{s.t. } \bb=\bar{\bb}\right\}.
\end{align*}
Now, denote the inner minimum as the function $c_{\bar{\bb}}(\bmu)$ and note that it is of the form of standard non-linear optimization program. The objective function is continuous and it can be easily checked that all the constraints are also continuous in $\bmu$, $\bp^{(1)}$ and auxiliary variables. Thus, by \cite[Theorem 14.36]{variational_analysis} we conclude that the set mapping defined by the constraint set is closed and measurable. Next, by \cite[Example 14.32]{variational_analysis} the sum of the objective function and the indicator of the constraint set is normal integrand. Finally, by \cite[Theorem 14.37]{variational_analysis} the function $c_{\bar{\bb}}(\bmu)$ is Borel measurable. This implies that the cost function is Borel measurable as the outer minimum is over a finite set. \hfill $\Halmos$
\subsection{Existence of Optimal Solution of Probabilistic Fluid Model} \label{app: existence}
By \citep[Proposition 6.40]{shapiro2014lectures}, the probabilistic fluid model can be equivalently written as an optimization program over a finite set of variables given by 
\begin{align*}
    \max_{\blambda,\{\bmu^l\}_{l=1}^{n+1},\bchi,\bbeta} \inner{F(\blambda)}{\blambda}-\sum_{l=1}^{n+1}c(\bmu^{l})\beta_l \span \\
    \textit{subject to, } \lambda_j&=\sum_{j=1}^n \chi_{ij} \ \forall j \in [m], \quad
    \sum_{l=1}^{n+1}\beta_l \mu^l_i=\sum_{j=1}^m \chi_{ij} \quad \forall i \in [n] \\
    \chi_{ij}&=0 \ \forall (i,j) \notin E, \quad \chi_{ij} \geq 0 \ \forall (i,j) \in E, \quad
    \inner{\bone_{n+1}}{\bbeta}=1, \ \bbeta \geq \bzero_{n+1}.
\end{align*}
Now, by substituting the definition of the cost function, we get
\begin{align*}
    \max_{\blambda,\{\bmu^l,\bp^{(1),l}\}_{l=1}^{n+1},\bchi,\bbeta} \inner{F(\blambda)}{\blambda}-\sum_{l=1}^{n+1}\inner{\bmu^l}{\bp^{(1),l}}\beta_l \span \\
    \textit{subject to, } \lambda_j&=\sum_{j=1}^n \chi_{ij} \ \forall j \in [m], \quad
    \sum_{l=1}^{n+1}\beta_l \mu^l_i=\sum_{j=1}^m \chi_{ij} \quad \forall i \in [n] \\
    \chi_{ij}&=0 \ \forall (i,j) \notin E, \quad \chi_{ij} \geq 0 \ \forall (i,j) \in E, \quad
    \inner{\bone_{n+1}}{\bbeta}=1, \ \bbeta \geq \bzero_{n+1} \\
    \bp^{(1),l} &\in \mathcal{M}(\bmu^l) \quad \forall l \in [n+1].
\end{align*}
Note that the objective function is continuous and the feasible region is closed (given by inequality, equality constraints and binary variables). We assume that following:
\begin{assumption}
There exists $M>0$ such that $(\blambda,\bmu,\bp^{(1)}) \leq M \bone_{m+2n}$.
\end{assumption}
Under this mild assumption, the constraint set is also bounded. Thus, the constraint set is compact. As the objective function is continuous and the constraint set is compact, there exists an optimal solution.
\end{APPENDICES}
\end{document}